\documentclass[11pt,a4paper,twoside]{article}
\usepackage{amssymb}
\usepackage{amsfonts}
\usepackage{geometry}
\usepackage{color}
\usepackage{amsmath,amsthm,amssymb,amsfonts,enumitem}
\usepackage{CJK}
\usepackage{latexsym}
\usepackage{graphicx}
\usepackage{pgfpages}
\usepackage{mathrsfs}
\usepackage{ytableau}
\usepackage[title,titletoc]{appendix}
\usepackage{enumerate}
\usepackage{cite}
\usepackage[hyphens]{url}
\usepackage[bookmarks=false,hidelinks]{hyperref}
\usepackage[affil-it]{authblk}
\allowdisplaybreaks

\newtheorem{definition}{Definition}[section]
\newtheorem{theorem}{Theorem}[section]
\newtheorem{lemma}{Lemma}[section]
\newtheorem{proposition}{Proposition}[section]

\newtheorem{remark}{Remark}[section]
\newtheorem{condition}{Assumption}

\newtheorem{example}{Example}[section]

\numberwithin{equation}{section}

\begin{document}
	\title{Entrance measures and dynamics for time-inhomogeneous McKean-Vlasov stochastic differential equations
}
	\author[a,b]{
		Chunrong Feng}
	\author [b,a] {Baoyou Qu}
	\author[a,b]{Huaizhong Zhao}
	\affil[a]{Department of Mathematical Sciences, Durham University, DH1 3LE, UK}
	\affil[b] {Research Centre for Mathematics and Interdisciplinary Sciences, Shandong University, Qingdao 266237, China}
	
	\affil[ ]{chunrong.feng@durham.ac.uk, qubaoyou@sdu.edu.cn, huaizhong.zhao@durham.ac.uk}
	\date{}
	
\maketitle

\begin{abstract}

In this paper, we study the entrance measures of time-inhomogeneous McKean-Vlasov SDEs. The existence is obtained in great generality, where the system can be expanding globally and/or degenerate for numerous number of time intervals. When the parameters are periodic/quasi-periodic in time, we obtain the existence of periodic/asymptotic quasi-periodic measures. In this case, a double-lift of the random dynamical system first to a dynamical system on cylinder and then on the graph of reparameterized process living on the cylinder is introduced. The double-lifted system gives to a continuous dynamical system over probability measures on the cylinder, and the lifted multi-parameter measure of the asymptotic quasi-periodic measure can then lead to an invariant measure of the lifted semigroup.
	 
	\medskip
	
	\noindent
	{\bf Keywords:} time-inhomogeneous McKean-Vlasov SDEs; entrance measure; asymptotic quasi-periodic measure; lifting dynamics; invariant measure; Curie-Weiss model.
    \vskip5pt
    
\noindent{{\bf AMS subject classifications:} 60H10; 60B10; 37A50.}
\end{abstract}

\tableofcontents

\section{Introduction}

The propagation of chaos studies the evolution of interacting particles with a certain initial probability distribution $\mu_0$ for each particle independently. Due to interactions between particles, the probability distribution of the particle system evolves in a nonlinear way, but the symmetry of particles and the propagation of chaos suggest that the study of one individual gives the behaviour of the group of particles. A finite $N$-particle system can be regarded as the reduced or approximating system of evolutions of particles, e.g., as described by the Liouville equation and Boltzmannn equation for a dilute gas of hard spheres (Lanford \cite{Lanford1975}, Uchiyama \cite{Uchiyama1988}). Kac \cite{Kac1956} proposed to study the master equations as Markovian evolutions of $N$-particle systems. Sznitman in his work \cite{Sznitman1991} studied the evolution of probability distributions by investigating the interacting particle processes given by the  master equations to the solutions of McKean-Vlasov equations (McKean \cite{McKean1967}). This probabilistic approach makes studies of the convergence of the joint distributions of interacting $N$-particle processes and the propagation of chaos possible in the level of processes.

The law of the nonlinear process given by the solution of McKean-Vlasov equation, denoted by $\mu$, provides a rigorous mathematical description of the propagation of chaos. Let $\mu^N$ be a sequence of symmetric probabilities on $\mathbb{X}^N$, where $\mathbb{X}$ is the state space of individual particles. Then $\mu^N$ is called $\mu$-chaotic on $\mathbb{X}$, if for any $\varphi_1,\cdots,\varphi_k\in C_b(\mathbb{X})$, $k\geq 1$,
\begin{equation*}
	\lim_{N\to \infty}\langle \mu^N, \varphi_1\otimes\cdots\otimes \varphi_k\otimes 1\otimes\cdots\otimes 1 \rangle=\prod_{i=1}^{k}\langle \mu,\varphi_i\rangle.
\end{equation*}
In Sznitman \cite{Sznitman1991}, it was proved that the above convergence is equivalent to the law of large numbers with $\mu$ as its limit:
\begin{equation}
	\frac{1}{N}\sum_{i=1}^{N}\delta_{X_i}\to \mu.
\end{equation}

Thus, it is key to study evolutions of McKean-Vlasov processes and their probability laws due to their fundamental importance in the study of propagation of chaos. The large time limits are particularly interesting, as they provide invariance or equilibrium of the interacting particles in law. In this paper, we consider time-inhomogeneous McKean-Vlasov equation:
\begin{equation}\label{McKean-Vlasov SDE 2}
	\begin{cases}
		\mathrm{d}X_t=b(t,X_t,\mathscr{L}(X_t))\mathrm{d}t+\sigma(t,X_t,\mathscr{L}(X_t))\mathrm{d}W_t, \ t\geq s,\\
	  X_s=\xi, 
	\end{cases}
\end{equation}
where $\xi$ is an $\mathcal{F}_s:=\mathcal{F}_{-\infty}^s$-measurable random variable with $\mathscr{L}(\xi)=\nu$ for a given probability distribution $\nu$. The corresponding interacting $N$-particle system is instead
\begin{equation}\label{eq:N-SDE}
	\mathrm{d}X_t^i=b\biggl(t,X_t^i,\frac{1}{N}\sum_{j=1}^N\delta_{X_t^j}\biggr)\mathrm{d}t+\sigma\biggl(t,X_t^i,\frac{1}{N}\sum_{j=1}^N\delta_{X_t^j}\biggr)\mathrm{d}W_t^i, \ i=1,\cdots,N.
\end{equation}
Denote the solution of McKean-Vlasov equation \eqref{McKean-Vlasov SDE 2} by $X_t^{s,\nu}$. The limit distribution of \eqref{eq:N-SDE} is given by an entrance law $\mu: (-\infty,\infty)\to \mathcal{P}(\mathbb{X})$ satisfying
\begin{equation*}
	\mathscr{L}\bigl(X_t^{s,\mu_s}\bigr)=\mu_t  \ \text{ for all } \  t,s\in \mathbb{R}, t\geq s.
\end{equation*}
An invariant measure is a special case in the case of time independent McKean-Vlasov equation and $\mu_t=\mu_s$ for all $t,s\in \mathbb{R}$.

The invariant measures of the stationary McKean-Vlasov equation attracted many mathematicians' attention. The case with additive noise, where $\sigma$ is a constant matrix, $b(x,\mu)=\nabla V+\nabla (W\ast \mu)$, was considered by Carrillo-McCann-Villani \cite{Carrillo-McCann-Villani2003,Carrillo-McCann-Villani2006}, Liu-Wu-Zhang \cite{Liu-Wu-Zhang2021}, Guillin-Liu-Wu-Zhang \cite{Guillin-Liu-Wu-Zhang2021,Guillin-Liu-Wu-Zhang2022}, Ren-Wang \cite{Ren-Wang2021}, Wang \cite{wang2018}, in the study of granular media type equations and functional McKean-Vlasov equations by Bao-Scheutzow-Yuan \cite{Bao-Scheutzow-Yuan2022}. Other works on invariant measures in this direction include Butkovsky \cite{Butkovsky2014}, Muzychka-Vaninsky \cite{Muzychka-Vaninsky2011} etc. The Curie-Weiss model in the study of ferromagnetism is the case of double well potential that $b(x,\mu)=x-x^3+\nabla_x\int W(x,y)\mu(\mathrm{d}y)$. The phase transition was obtained in Carrillo-Gvalani-Pavliotis-Schlichting \cite{Carrillo-Gvalani-Pavliotis-Schlichting2020} and Delgadino-Gvalani-Pavliotis-Smith \cite{Delgadino-Gvalani-Pavliotis-Smith2023}. In all these works so far, the interacting particle systems or the limiting McKean-Vlasov equations are all time-homogeneous. However, the non-stationary or time-inhomogeneous case remains unsolved.

Non-stationary processes arise, e.g. when coefficients of stochastic differential equations vary in time. Periodic and quasi-periodic stochastic systems are two important special cases. They appear in mathematical modelling of many real world problems, such as the temperature variant, climate changes, sunspot activities, economic cycles, etc. 
The study of entrance measures including periodic measures and quasi-periodic measures and their ergodicity have recently received a lot of attention (Feng-Zhao-Zhong \cite{Feng-Zhao-Zhong2023}, Feng-Zhao \cite{Feng-Zhao2020}, Feng-Qu-Zhao \cite{Feng-Qu-Zhao2021,Feng-Qu-Zhao2023}). 
Non-stationary McKean-Vlasov equations arise in, e.g. studying the Ising model or Curie-Weiss model where interacting particles are essentially subject to time-inhomogeneous interactions or external forcing.

The first task of this paper is to investigate the existence of entrance measures of McKean-Vlasov equations. They serve as a vehicle towards further understanding the dynamics of the McKean-Vlasov processes. In particular, they give the statistical equilibrium of the system and the limiting distribution. In contrast to stationary problems, the statistical equilibrium is not static, but a flow of probability measures $s\mapsto \mu_s$ carried by the weak solution of the McKean-Vlasov equation. Intuitively speaking, $\{\mu_s\}_{s\in \mathbb{R}}$ is the flow of probability laws that one gets when ``letting the probability laws start to evolve at time $-\infty$''. 

For McKean-Vlasov equations, the Markov operator $P(t,s): {\mathcal B}_b({\mathbb R}^d)\to {\mathcal B}_b({\mathbb R}^d)$ defined by $(P(t,s)f)(x)=E[f(X_t^{s,x})]$ is no longer a semigroup (Wang \cite{wang2018}). This is due to the nonlinearity of the evolution of measures, which is still a semigroup and can be naturally driven by weak solutions, in contrast to the case of Markovian processes. These features create challenges for their studies. To tackle them, we first fix a measure-valued continuous function in  the McKean-Vlasov equation  to make the semigroup linear, and then apply the step by step `Harris small set' tools that we developed for SDE in \cite{Feng-Qu-Zhao2023} to study entrance measures of the SDEs depending on the pre-fixed distribution function. This gives a map from a given measure-valued function to a measure-valued function $\mu_{\cdot}\mapsto \Psi(\mu_{\cdot})$. In this article, a fixed point argument is developed to obtain the fixed point of $\Psi(\mu_{\cdot})$. It is noted that the fixed point method had already been used in the homogeneous setting in some previous works (see, e.g. Ahmed-Ding \cite{Ahmed-Ding1993}, Bao-Scheutzow-Yuan \cite{Bao-Scheutzow-Yuan2022}, Dawson \cite{Dawson1983}, Wang \cite{wang2018}, Zhang \cite{Zhang2023}). However, the entrance measure is a measure-valued function rather than a measure, so techniques for the invariant measure argument in the literature are not applicable in this situation. In this paper, a new fixed point argument is built in the space $C(\mathbb{R};\mathcal{P}_2(\mathbb{R}^d))$ to tackle the nonstationary problem.

It should be noted that Liu-Wu-Zhang \cite{Liu-Wu-Zhang2021} studied the existence and uniqueness of the invariant measure in the stationary case when the noise is strong. Our existence of entrance measure result does not require noise to be strong comparing with other parameters, e.g. inverse temperature in the Curie-Weiss model without any constraints on the parameters, as long as it is nondegenerate, we can prove the existence of entrance measures. In fact, the interaction between particles makes it harder for particles to move between different minima of the potentials, so synchronization may not necessarily occur and the uniqueness of entrance measure does not hold in general. As an example, we show that the Curie-Weiss model possesses multiple distinct entrance measures in the case of lower temperature. This is in contrast to SDEs with no mean-field interactions where particles can move between minima of potentials in the presence of nondegenerate noise, no matter how small it is, so that these SDEs have unique entrance measures (\!\!\cite{Feng-Zhao-Zhong2023,Feng-Qu-Zhao2023}). The nonuniqueness of entrance measure causes further difficulties when we consider the periodicity and quasi-periodicity of the entrance measure.

Another novelty of this paper is the result of periodic/asymptotic quasi-periodic measures for McKean-Vlasov equations with periodic/quasi-periodic coefficients. These measures are used to describe random periodicity/quasi-periodicity in the sense of distributions, i.e., repeated pattern of statistical laws. They are entrance measures satisfying periodic or asymptotic quasi-periodic conditions. The reparameterization observed in Feng-Qu-Zhao \cite{Feng-Qu-Zhao2021} made it possible to obtain a multi-parameter entrance measure for the reparameterized system. Note that the reparameterized system can be reduced to the original quasi-periodic system by letting the multi-parameter be zero. So, if both the entrance measures of the original system and the reparameterized system were unique, then it would be easy to claim that the multi-parameter entrance measure should be the desired multi-parameter representation. But the idea above is broken due to the lack of the uniqueness of (multi-parameter) entrance measures. For this reason, we introduce the $\mathcal{Q}_2$ (see \eqref{eq:def-Q_2}) space of pairs of quasi-periodic measures and its multi-parameter representation, and we carry out the Kakutani fixed point argument on  $\overline{\mathcal{Q}_2}$.  With this new idea, we obtain an asymptotic quasi-periodic measure, which is an entrance measure as the limit of a sequence of quasi-periodic measures. However, due to the lack of the uniqueness, whether or not any entrance measure of a quasi-periodic stochastic McKean-Vlasov system is an asymptotic quasi-periodic measure, i.e., the entrance measure as the limit of a sequence of multi-parameter entrance measures, is still unclear as a general result.

In the SDE setting, the multi-parameter measure can be extended to a measure on the cylinder over a torus, and the average of these measures yields an invariant measure for the lifted semigroup. However, the construction of an invariant measure on a lifted space is marred by a number of issues for McKean-Vlasov equations. This is caused mainly by the nonlinearity of the semigroup on measures and the failure of the Fubini theorem as a result of loss of linearity in contrast to the case of stochastic differential equations. Here, instead of lifting onto $\mathbb{T}^n\times \mathbb{R}^d$ as being used for the SDEs case, a further novelty of this paper is the double lift onto the graph of reparameterized system, on which we can naturally define a dynamical system. With the extended probability space $\hat{\Omega}=\mathbb{T}^n\times \Omega$, one can define the law of random variables on the graph and the law of the dynamical system. This defines a semigroup $P_t^*$ that describes the dynamics of the evaluation of the law of random variables on the graph. {This pushes the semigroup $P_t^*$ acting on the measure $\delta_{s_1,\cdots,s_n}\times \mu_{s_1,\cdots,s_n}$ ``linearly'' in the sense of (\ref{0531}),
although it is nonlinear. This enables us to obtain an invariant measure with respect to $\hat{P}^*_t$.}

The rest of this paper is organised as follows. In Section 2, we present the main definitions, assumptions, and theorems (Theorems~\ref{Theorem existence of entrance measure}--\ref{Theorem nonuniqueness}). The existence of entrance measures for the McKean-Vlasov equation (proof of Theorem~\ref{Theorem existence of entrance measure}) is established in Section~3. In Section~4, we study the dynamical systems on the lifted graph of reparameterized systems under the quasi-periodic setting and provide proofs for Theorem~\ref{thm:existence-of-invariant-measure} (existence of invariant measures for the lifted semigroup) and Theorem~\ref{Thm existence quasi-periodic measure} (existence of asymptotic quasi-periodic measures). Section~5 is devoted to the study of the nonuniqueness of entrance, periodic, and asymptotic quasi-periodic measures (proof of Theorem~\ref{Theorem nonuniqueness}). Appendix~A contains several basic theorems used throughout the paper.

\section{Assumptions and main results}

Denote by $\mathcal{P}(\mathbb{R}^d)$ (resp. $\mathcal{P}(E)$) the space of all probability measures on $(\mathbb{R}^d,\mathcal{B}(\mathbb{R}^d))$ (resp. a measurable space $(E,\mathcal{E})$). It is well-known that $\mathcal{P}(\mathbb{R}^d)$ is a separable metric space under the weak convergence topology. For a given $p \geq 1$, we consider the following subspace of $\mathcal{P}(\mathbb{R}^d)$:
 \begin{equation*}
	\mathcal{P}_p(\mathbb{R}^d):=\bigg\{\mu\in \mathcal{P}(\mathbb{R}^d)\bigg| \ \|\mu\|_p:= \biggl(\int_{\mathbb{R}^d}|x|^p\mu(\mathrm{d}x)\biggr)^{\frac{1}{p}}<\infty \bigg\},
 \end{equation*}
 which is a Polish space under the following $p$-Wasserstein distance:
 \begin{equation*}
	 \mathcal{W}_p(\mu_1,\mu_2):=\inf_{\gamma\in \mathscr{C}(\mu_1,\mu_2)}\biggl(\int_{\mathbb{R}^d\times \mathbb{R}^d}|x-y|^p\gamma(\mathrm{d}x,\mathrm{d}y)\biggr)^{\frac{1}{p}}, \ \mu_1,\mu_2\in \mathcal{P}_p(\mathbb{R}^d),
 \end{equation*}
where $\mathscr{C}(\mu_1,\mu_2)$ is the set of all coupling measures on $\mathbb{R}^d\times \mathbb{R}^d$ with marginal distributions $\mu_1$ and $\mu_2$.

For convenience, we introduce some definitions of spaces. For a given measurable space $(E,\mathcal{E})$ with a measure $\mu$:
 \begin{itemize}
	 \item $L^0(E)$ (or $L^0(\mathcal{E})$) is the collection of all measurable $\mathbb{R}^d$-valued function $f: E\to \mathbb{R}^d$;
	 \item For any $p\geq 1$, $L^p(E)$ (or $L^p(\mathcal{E}), L^p(\mu)$) is the collection of all $f\in L^0(E)$ such that
	 \begin{equation*}
		 \|f\|_{L^p}:=\biggl(\int_{E}|f(x)|^p\mu(\mathrm{d}x)\biggr)^{\frac{1}{p}}<\infty.
	 \end{equation*}
 \end{itemize}

Given a metric space $\mathbb{T}$ ($\mathbb{T}=\mathbb{R}, \mathbb{R}^n$ or torus $\mathbb{T}^n$) and subset $I \subset \mathbb{T}$, for any $p \geq 1$:
 \begin{itemize}
	 \item $C(I;\mathcal{P}(\mathbb{R}^d))$ is the collection of all continuous maps $\mu:I\to \mathcal{P}(\mathbb{R}^d)$ under the weak convergence topology;
	 \item $C(I;\mathcal{P}_p(\mathbb{R}^d))$ is the collection of all continuous maps $\mu:I\to \mathcal{P}_p(\mathbb{R}^d)$ under the $\mathcal{W}_p$ distance;
	 \item $\mathcal{M}_p(I):=\big\{\mu: I\to \mathcal{P}_p(\mathbb{R}^d) \text{ such that } \|\mu\|_{\mathcal{M}_p}:=\sup_{t\in I}\|\mu_t\|_p<\infty\big\}$;
	 \item $\mathcal{M}_{p,loc}(I):=\big\{\mu: I\to \mathcal{P}_p(\mathbb{R}^d) \text{ such that } \mu|_{K}\in \mathcal{M}_p(K) \text{ for all compact } K\subset I\big\}$.
 \end{itemize}
 
 For $I\subset \mathbb{R}$ and $p\geq 1$,
 \begin{itemize}
     \item $\mathcal{S}_p(I)$ is the collection of all $\mathbb{R}^d$-valued $\mathbf{P}$-a.s. continuous adapted processes $(z_t)_{t\in I}$ such that $$\mathbf{E}\biggl[\sup_{t\in I}|z_t|^p\biggr]<\infty;$$
	 \item $\mathcal{S}_{p,loc}(I):=\big\{z:I\times \Omega\to \mathbb{R}^d| z \text{ is adapted and } z|_{K}\in \mathcal{S}_p(K) \text{ for all compact } K\subset I\big\}$.
 \end{itemize}

 In this paper, we consider the McKean-Vlasov SDE \eqref{McKean-Vlasov SDE 2} in state space $\mathbb{R}^d$. Here $W_t, t\in \mathbb{R}$ in \eqref{McKean-Vlasov SDE 2} is a standard two-sided $m$-dimensional Brownian motion on a complete probability space with natural filtration $\big(\Omega,\mathcal{F},(\mathcal{F}_t)_{t\in \mathbb{R}},\mathbf{P}\big)$, $b: \mathbb{R}\times \mathbb{R}^d\times \mathcal{P}_2(\mathbb{R}^d)\to \mathbb{R}^d$, $\sigma: \mathbb{R}\times \mathbb{R}^d\times \mathcal{P}_2(\mathbb{R}^d)\to \mathbb{R}^d\to \mathbb{R}^{d\times m}$ are continuous functions.

 \begin{definition}
	A $\mathbf{P}$-a.s. continuous adapted process $\{X_t\}_{t\geq s}$ on $\mathbb{R}^d$ is called a (strong) solution to \eqref{McKean-Vlasov SDE 2} with starting time $s\in \mathbb{R}$ and initial condition $\xi\in L^0(\mathcal{F}_s)$ if $\mathbf{P}$-a.s.,
	\begin{equation*}
	   X_t=\xi+\int_{s}^{t}b(r,X_r,\mathscr{L}(X_r))\mathrm{d}r+\int_{s}^{t}\sigma(r,X_r,\mathscr{L}(X_r))\mathrm{d}W_r, \ t\geq s.
	\end{equation*}
	We say that \eqref{McKean-Vlasov SDE 2} has a unique (strong or pathwise) solution if for any two solutions $(X^i_t)_{t\geq s}, i=1,2$ with the same initial condition $(s,\xi)$, we have $X^1_t=X^2_t$, for all $t\geq s$, $\mathbf{P}$-a.s.

	A pair $\big(\tilde{X}_t, \tilde{W}_t\big)_{t\geq s}$ (or $\tilde{X}$ without ambiguity) is called a weak solution of \eqref{McKean-Vlasov SDE 2} with initial distribution $\nu$ if $\mathscr{L}(\tilde{X}_s)=\nu$, $\{\tilde{W}_t\}_{t\geq s}$ is a standard $m$-dimensional Brownian motion on a complete probability space with filtration $\big(\tilde{\Omega},\tilde{\mathcal{F}},(\tilde{\mathcal{F}}_t)_{t\geq s},\tilde{\mathbf{P}}\big)$, and $\{\tilde{X}_t\}_{t\geq s}$ solves the following McKean-Vlasov SDE
	\begin{equation}\label{MVSDE of weak solution}
	   \tilde{X}_t=\tilde{X}_s+\int_{s}^{t}b(r,\tilde{X}_r,\mathscr{L}(\tilde{X}_r))\mathrm{d}r+\int_{s}^{t}\sigma(r,\tilde{X}_r,\mathscr{L}(\tilde{X}_r))\mathrm{d}\tilde{W}_r, \ t\geq s.
	\end{equation}
	We say that \eqref{McKean-Vlasov SDE 2} has a unique weak solution if for any two weak solutions $\tilde{X}^1$ and $\tilde{X}^2$ with the same initial law $\nu$ at $s$, we have $\mathscr{L}(\tilde{X}^1_t)=\mathscr{L}(\tilde{X}^2_t)$ for all $t\geq s$.
\end{definition}

\begin{remark}
	We often denote by $\{X_t^{s,\xi}\}_{t\geq s}$ (or $X_t^{s,\xi}$) a strong solution of \eqref{McKean-Vlasov SDE 2} with starting time $s\in \mathbb{R}$ and initial condition $\xi$. Similarly, $\{X_t^{s,\nu}\}_{t\geq s}$ (or $X_t^{s,\nu}$) is often written as a weak solution of \eqref{McKean-Vlasov SDE 2} with starting time $s\in \mathbb{R}$ and initial distribution $\nu$.
\end{remark}

Now, we provide the following assumption, which guarantees the well-posedness of the equation \eqref{McKean-Vlasov SDE 2}.
  \begin{condition}\label{A3}
	\begin{description}
		\item [(i)] There exist constants $c_{\sigma}>c'_{\sigma}>0$ such that for all $t\in \mathbb{R}, x, y\in \mathbb{R}^d, \mu,\nu\in \mathcal{P}_2(\mathbb{R}^d)$
			\begin{equation}
				\label{Ineq of non-degenerate diffusion}
				c'_{\sigma}|y|^2\leq \langle \sigma\sigma^{\top}(t,x,\mu)y, y \rangle\leq c_{\sigma}|y|^2,
			\end{equation}
			and there exists constant $L>0$ such that
			\begin{equation}
				\label{my1127}
				\|\sigma(t,x,\mu)-\sigma(t,y,\nu)\|\leq L (|x-y|+\mathcal{W}_2(\mu,\nu)),
			\end{equation}
			where $\|\sigma\|^2:=Tr (\sigma\sigma^{\top})$.
		\item [(ii)] There exists $L>0$ such that for all $t\in \mathbb{R}, x,y\in \mathbb{R}^d, \mu,\nu\in \mathcal{P}_2(\mathbb{R}^d)$,
	  \begin{equation}\label{eq:one-side-Lipschitz}
		  \langle x-y, b(t,x,\mu)-b(t,y,\nu)\rangle\leq L\bigl(|x-y|^2+|x-y|\mathcal{W}_2(\mu,\nu)\bigr),
	  \end{equation}
	   and there exists $\kappa\geq 1$ such that for all $t\in \mathbb{R}, x,y\in \mathbb{R}^d, \mu,\nu\in \mathcal{P}_2(\mathbb{R}^d)$,
		\begin{equation}
			\label{Ineq polynomial growth}
			|b(t,x,\mu)|\leq L(1+|x|^{\kappa}+\|\mu\|_2^{\kappa}).
		\end{equation}
		\end{description}
	\end{condition}

	\begin{remark}
		By \eqref{Ineq of non-degenerate diffusion}, we know that
		\begin{equation*}
			\|\sigma(t,x,\mu)\|^2\leq dc_{\sigma} \ \text{ for all $t\in \mathbb{R}, \ x\in \mathbb{R}^d, \ \mu\in \mathcal{P}_2(\mathbb{R}^d)$.}
		\end{equation*}
	\end{remark}

	Under Assumption \ref{A3}, equation \eqref{McKean-Vlasov SDE 2} has a unique strong (resp. weak) solution $X_t^{s,\xi}$ (resp. $X_t^{s,\mu}$) in $\mathcal{S}_{2,loc}([s,\infty))$ with initial condition $\xi\in L^2(\mathcal{F}_s)$ (resp. $\mu\in \mathcal{P}_2(\mathbb{R}^d)$) for any starting time $s\in \mathbb{R}$ (see c.f. Wang \cite{wang2018}). In this case, for any $t\geq s$, we define $P^*(t,s): \mathcal{P}_2(\mathbb{R}^d)\to \mathcal{P}_2(\mathbb{R}^d)$ by
	\begin{equation}\label{eq:def-P^*}
		P^*(t,s)\mu=\mathscr{L}(X_t^{s,\mu}), \ \text{ for any } \mu\in \mathcal{P}_2(\mathbb{R}^d).
	\end{equation}
	Then $P^*(t,s)$ is well-defined on $\mathcal{P}_2(\mathbb{R}^d)$ since equation \eqref{McKean-Vlasov SDE 2} has a unique weak solution in $\mathcal{S}_{2,loc}([s,\infty))$ for any starting time $s$ and initial distribution $\mu\in \mathcal{P}_2(\mathbb{R}^d)$.

We now define entrance measures, quasi-periodic measures, and asymptotic quasi-periodic measures according to $P^*$.

	\begin{definition}
	  A measure-valued function $\mu_{\cdot}:\mathbb{R}\to \mathcal{P}_{2}(\mathbb{R}^d)$ is called an entrance measure of \eqref{McKean-Vlasov SDE 2} (or $P^*$) if $P^*(t,s)\mu_s=\mu_t$ for any $t,s$ with $t\geq s$.

	  A measure-valued function $\mu_{\cdot}:\mathbb{R}\to \mathcal{P}_{2}(\mathbb{R}^d)$ is called quasi-periodic with periods $\tau_1, \tau_2, \cdots, \tau_n$ if there exists a weakly continuous measure-valued function $\tilde{\mu}: \mathbb{R}^n\to \mathcal{P}(\mathbb{R}^d)$ such that $\tilde{\mu}_{t,\cdots,t}=\mu_t$ for all $t\in \mathbb{R}$ and 
	  \begin{equation}\label{equ quasi-periodic mu}
		\tilde{\mu}_{t_1,\cdots, t_i+\tau_i,\cdots,t_n}=\tilde{\mu}_{t_1,\cdots, t_i,\cdots,t_n}, \text{ for all } i\in \{1,2,\cdots,n\}, \ (t_1,\cdots,t_n)\in \mathbb{R}^n,
	  \end{equation}
	  where $\tilde{\mu}$ is called the quasi-periodic representation of $\mu$.

       A measure-valued function $\mu_{\cdot}:\mathbb{R}\to \mathcal{P}_{2}(\mathbb{R}^d)$ is called asymptotic quasi-periodic with periods $\tau_1, \tau_2, \cdots, \tau_n$ if there exist a sequence of quasi-periodic measure-valued functions $\{\mu^m_{\cdot}\}_{m\geq 1}$ such that for any $T>0$,
      \begin{equation*}
          \lim_{m\to\infty}\sup_{t\in [-T,T]}\mathcal{W}_2(\mu^m_t,\mu_t)=0.
      \end{equation*}

      We call $\mu_{\cdot}$ a quasi-periodic/asymptotic quasi-periodic measure with periods $\tau_1, \tau_2, \cdots, \tau_n$ of \eqref{McKean-Vlasov SDE 2} (or $P^*$) if it is an entrance measure and quasi-periodic/asymptotic quasi-periodic with periods $\tau_1, \tau_2, \cdots, \tau_n$.
			 In particular, when $n=1$, the quasi-periodic measure is a periodic measure.
  \end{definition}

Now we introduce additional conditions on the drift term $b$ to guarantee the existence of an entrance measure.
	\begin{condition}\label{A4}
		Assume that
		\begin{description}
			\item [(i)] there exist bounded functions $\alpha,\beta,\gamma$ such that for all $t\in \mathbb{R}, x\in \mathbb{R}^d, \mu\in \mathcal{P}_2(\mathbb{R}^d)$,
			\begin{equation}
				\label{Ineq weakly coercivity}
				\langle x, b(t,x,\mu)\rangle \leq \alpha_t |x|^2+\beta_t\|\mu\|_2^2+\gamma_t.
			\end{equation}
			\item [(ii)] $\alpha,\beta,\gamma$ satisfy
			\begin{equation}\label{eq:average weakly dissipation}
				\limsup_{T\to \infty}\frac{1}{T}\int_{-T}^{0}(\alpha_r+\beta_r)\mathrm{d}r<0,
			\end{equation}
			and
			\begin{equation*}
				\limsup_{t\to -\infty}\int_{-\infty}^{t}e^{2\int_{u}^{t}(\alpha_r+\beta_r)\mathrm{d}r}(2\gamma_u+dc_{\sigma})\mathrm{d}u<\infty.
			\end{equation*}
		\end{description}
	\end{condition}
Denote 
\[
\vartheta_t=\int_{-\infty}^{t}e^{2\int_{u}^{t}(\alpha_r+\beta_r)\mathrm{d}r}(2\gamma_u+dc_{\sigma})\mathrm{d}u.
\]
\begin{remark}\label{remark 0522}
		\begin{itemize}
			\item [(i)] By \eqref{Ineq weakly coercivity}, we conclude that $\beta, \gamma$ are non-negative functions. In fact, letting $x=0, \mu=\delta_y$ for some $y\in \mathbb{R}^d$ in \eqref{Ineq weakly coercivity}, we have $\beta_t|y|^2+\gamma_t\geq 0$. So $\beta_t\geq 0$ and $\gamma_t\geq 0$ since $y\in \mathbb{R}^d$ can be arbitrary.
			\item [(ii)] If there exist $l>0, a>0$ such that $\int_{t}^{t+l}(\alpha_r+\beta_r)\mathrm{d}r\leq -a$ for all $t\in \mathbb{R}$, then Assumption \ref{A4} {\bf (ii)} holds. In particular, Assumption \ref{A4} {\bf (ii)} holds when $\alpha_t\equiv \alpha, \beta_t\equiv \beta, \gamma_t\equiv \gamma$ with $\alpha+\beta<0$.

			In fact, for any $s\leq t\leq t_0$ with $il\leq |t-s|\leq (i+1)l$ for some integer $i\geq 0$, we know that
			\begin{equation*}
				\int_{s}^{t}(\alpha_r+\beta_r)\mathrm{d}r=\int_{t-il}^{t}(\alpha_r+\beta_r)\mathrm{d}r+\int_{s}^{t-il}(\alpha_r+\beta_r)\mathrm{d}r\leq -ia+2lM,
			\end{equation*}
			where $M=|\alpha|_{\infty}\vee|\beta|_{\infty}\vee|\gamma|_{\infty}$. Then it follows that
			\begin{equation*}
				\begin{split}
					\vartheta_t&=\sum_{j=0}^{\infty}\int_{t-(j+1)\Delta}^{t-j\Delta}e^{2\int_{u}^{t}(\alpha_r+\beta_r)\mathrm{d}r}(2\gamma_u+dc_{\sigma})\mathrm{d}u\\
					&\leq \sum_{j=0}^{\infty} e^{-2ja+4lM}(2M+dc_{\sigma})l
					=\frac{e^{4lM}(2M+dc_{\sigma})l}{1-e^{-2a}}.
				\end{split}
			\end{equation*}	
		\end{itemize}
	\end{remark}

Set
	\begin{equation*}
		\mathcal{M}_{2,\vartheta}:=\bigg\{\mu:\mathbb{R}\to \mathcal{P}_2(\mathbb{R}^d) \bigg| \int_{\mathbb{R}^d}|x|^2\mu_t(\mathrm{d}x)\leq \vartheta_t\bigg\}.
	\end{equation*}

	The following is our first main theorem.

	\begin{theorem}\label{Theorem existence of entrance measure}
		Assume Assumptions \ref{A3}, \ref{A4} hold. Then equation \eqref{McKean-Vlasov SDE 2} has an entrance measure in $C(\mathbb{R};\mathcal{P}_2(\mathbb{R}^d))\cap \mathcal{M}_{2,\vartheta}$.
	\end{theorem}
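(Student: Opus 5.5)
We construct the entrance measure by a fixed point argument on measure-valued paths. Freezing the measure argument linearizes the equation, so the plan is to find an entrance measure of the frozen (linear) SDE and then show that some path reproduces itself.

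\emph{Step 1 (frozen equation).} For $\mu_\cdot\in C(\mathbb{R};\mathcal{P}_2(\mathbb{R}^d))\cap\mathcal{M}_{2,\vartheta}$, consider the time-inhomogeneous SDE
\[
\mathrm{d}Y_t=b(t,Y_t,\mu_t)\,\mathrm{d}t+\sigma(t,Y_t,\mu_t)\,\mathrm{d}W_t .
\]
By Assumption \ref{A3} it has a unique strong solution and a linear Markov evolution $P^{\mu,*}(t,s)$. Itô's formula together with \eqref{Ineq weakly coercivity} and $\|\sigma\|^2\le dc_\sigma$ gives
\[
\frac{\mathrm{d}}{\mathrm{d}t}\mathbf{E}|Y_t|^2\le 2\alpha_t\mathbf{E}|Y_t|^2+2\beta_t\|\mu_t\|_2^2+2\gamma_t+dc_\sigma .
\]
The condition $\|\mu_t\|_2^2\le\vartheta_t$ is used here. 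Two estimates are needed: a second-moment estimate for $Y_t^{s,0}$, uniform in $s\to-\infty$ and bounded by $\vartheta_t$, and a contraction estimate in $s$. For the contraction I would use the synchronous coupling of $Y^{s,0}$ and $Y^{s',0}$ started from $0$ at $s<s'$, with \eqref{eq:one-side-Lipschitz} at equal measures. The target is to show that $\mathscr{L}(Y_t^{s,0})$ is Cauchy in $\mathcal{W}_2$ as $s\to-\infty$; its limit is then an entrance measure $\Psi(\mu)_\cdot$ of $P^{\mu,*}$.

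One caveat: \eqref{eq:one-side-Lipschitz} has a positive constant $L$, so direct coupling does not contract. I would therefore expect to need the Harris small-set machinery from \cite{Feng-Qu-Zhao2023}. That machinery gives existence (indeed uniqueness) of an entrance measure for nondegenerate SDEs with drift satisfying the averaged dissipativity \eqref{eq:average weakly dissipation}, which is available here by nondegeneracy \eqref{Ineq of non-degenerate diffusion}. So $\Psi(\mu)$ is well defined as the unique entrance measure of the frozen SDE.

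\emph{Step 2 (moment bound).} To show $\Psi(\mu)\in\mathcal{M}_{2,\vartheta}$, first note $\beta\ge0$ by Remark \ref{remark 0522}, so $\beta_t\|\mu_t\|_2^2\le\beta_t\vartheta_t$. Since $\vartheta$ satisfies
\[
\dot\vartheta_t=2(\alpha_t+\beta_t)\vartheta_t+2\gamma_t+dc_\sigma ,
\]
a comparison argument bounds $m_t=\mathbf{E}|Y_t^{s,0}|^2$ by $\int_s^t e^{2\int_u^t(\alpha+\beta)}(2\gamma_u+dc_\sigma)\,\mathrm{d}u\le\vartheta_t$. Concretely, $m-\vartheta$ satisfies a linear differential inequality whose forcing term $2\beta(\|\mu\|^2-\vartheta)$ is nonpositive. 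Passing $s\to-\infty$ with lower semicontinuity of second moments gives $\|\Psi(\mu)_t\|_2^2\le\vartheta_t$. Continuity of $t\mapsto\Psi(\mu)_t$ in $\mathcal{W}_2$ follows from the moment bounds and $\Psi(\mu)_t=P^{\mu,*}(t,s)\Psi(\mu)_s$.

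\emph{Step 3 (Schauder–Tychonoff).} Equip $C(\mathbb{R};\mathcal{P}_2)$ with the topology of locally uniform $\mathcal{W}_2$ convergence, embedded in a locally convex space of signed measure paths. The set $K=\mathcal{M}_{2,\vartheta}\cap C$ is convex. Second-moment bounds alone are not enough for $\mathcal{W}_2$-compactness, so I would enlarge the a priori bound to a $(2+\delta)$-moment (or $4$th-moment) estimate along the same Itô lines, still uniform in $\mu\in K$. Adding to this equicontinuity in $t$, obtained from the $\kappa$-polynomial growth \eqref{Ineq polynomial growth} and a Kolmogorov-type increment estimate, and restricting to the closed convex subset carrying these bounds, Arzelà–Ascoli makes $\Psi(K')$ relatively compact.

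\emph{Step 4 (continuity of $\Psi$, main obstacle).} The main obstacle is showing that $\mu^n\to\mu$ locally uniformly implies $\Psi(\mu^n)\to\Psi(\mu)$. Because $\Psi$ involves a limit from $-\infty$, this needs uniformity in $\mu$ of the Harris convergence rate, i.e.\ uniform Lyapunov and minorization constants over $K'$. These constants depend only on $\alpha,\beta,\gamma,c_\sigma,c'_\sigma,L$, which should give them. Granting that, truncate at a time $s$ far in the past and estimate on $[s,t]$ by synchronous coupling. The Lipschitz dependence on the measure in \eqref{eq:one-side-Lipschitz} and \eqref{my1127} with Gronwall gives finite-horizon stability, and the uniform tail gives the rest. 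A fixed point $\mu=\Psi(\mu)$ is then an entrance measure of \eqref{McKean-Vlasov SDE 2} in $C(\mathbb{R};\mathcal{P}_2(\mathbb{R}^d))\cap\mathcal{M}_{2,\vartheta}$. This holds because for the frozen SDE with $\mu=\Psi(\mu)$, the law of the solution is $\mu_t$, so it solves the McKean–Vlasov equation, and weak uniqueness gives $P^*(t,s)\mu_s=\mu_t$.
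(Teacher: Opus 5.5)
Your architecture matches the paper's. You freeze the measure argument and take $\Psi(\mu)$ to be the unique entrance measure in $\mathcal{M}_{2,\vartheta}$ of the frozen SDE from \cite{Feng-Qu-Zhao2023}. You get $\|\Psi(\mu)_t\|_2^2\le\vartheta_t$ and uniform $p$-th moments ($2<p\le4$) from It\^o's formula, obtain relative compactness in $C(\mathbb{R};\mathcal{P}_2(\mathbb{R}^d))$, and apply a fixed point theorem inside $C(\mathbb{R};\mathcal{MS}_2)$.

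The gap is Step 4. You make continuity of $\Psi$ depend on a convergence rate toward the frozen entrance measure that is uniform in $\mu$, and then you grant it. The cited machinery does not supply this. Under the merely averaged condition \eqref{eq:average weakly dissipation}, where $\alpha_t$ may be positive on long intervals, what is available is Theorem \ref{Theorem geometric converge to periodic measure}. That gives total variation convergence along \emph{some} sequence $t_n\to-\infty$, with no rate and no uniformity in $\mu$. Obtaining uniform rates would mean redoing the Harris argument with explicit minorization constants and then upgrading total variation to $\mathcal{W}_2$. Those constants would also depend on $\kappa$ and $\vartheta$ through local drift bounds, not only on $\alpha,\beta,\gamma,c_\sigma,c'_\sigma,L$. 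As written, the central step is assumed rather than proved.

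The missing idea is that no rate is needed: $\Psi$ takes values in a compact set, and its value is pinned down by uniqueness.
\begin{enumerate}
\item Let $\mu^n\to\mu$ in $d_2$ within your compact convex set $K'$, and let $\rho$ be any subsequential limit of $\Psi(\mu^n)$.
\item For $s\le t$, couple $\Psi(\mu^n)_s$ and $\rho_s$ optimally and run the $\mu^n$- and $\mu$-frozen SDEs with the same Brownian motion. Your own finite-horizon synchronous coupling (\eqref{eq:one-side-Lipschitz}, \eqref{my1127}, Gronwall) gives $\mathcal{W}_2^2\big(\Psi(\mu^n)_t,P^{\mu,*}(t,s)\rho_s\big)\le e^{C(t-s)}\big(\mathcal{W}_2^2(\Psi(\mu^n)_s,\rho_s)+C\int_s^t\mathcal{W}_2^2(\mu^n_r,\mu_r)\,\mathrm{d}r\big)\to0$. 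Hence $P^{\mu,*}(t,s)\rho_s=\rho_t$ for all $s\le t$.
\item $\mathcal{W}_2$-limits preserve $\|\rho_t\|_2^2\le\vartheta_t$, so $\rho$ is an entrance measure of the $\mu$-frozen SDE in $\mathcal{M}_{2,\vartheta}$. Uniqueness then gives $\rho=\Psi(\mu)$.
\item Every subsequence of $\Psi(\mu^n)$ therefore has a further subsequence converging to $\Psi(\mu)$, which is continuity.
\end{enumerate}
This is exactly the paper's route. It checks that the graph of $\Psi$ on $\overline{{\rm co}}(\widetilde{\mathcal{M}})$ is closed, identifying the limit through a Skorokhod representation and passage to the limit in the frozen SDE rather than synchronous coupling. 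It then applies Kakutani's theorem to the single-valued map.

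Two smaller points. In Step 3, a Kolmogorov bound built from \eqref{Ineq polynomial growth} needs moments of order roughly $2\kappa$. The moment bounds available under Assumption \ref{A4} (Lemma \ref{Lemma entrance measure in M_p}) only cover $p\le4$. Get the uniform $\mathcal{W}_2$-modulus instead by truncating on $\{\sup|X|\le R\}$ and controlling the complement with the $p>2$ moment, as the paper does. In Step 2, the comparison gives $m_t\le\vartheta_t-e^{2\int_s^t\alpha_r\mathrm{d}r}\vartheta_s$, not the smaller intermediate expression you wrote; that expression would need $\|\mu_u\|_2^2\le m_u$. Your conclusion $m_t\le\vartheta_t$ is still correct.
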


Next, we assume further that the coefficients $b, \sigma$ of \eqref{McKean-Vlasov SDE 2} satisfy the quasi-periodic condition.

	\begin{condition}\label{Quasi-periodic condition}
		The coefficients $b,\sigma$ are quasi-periodic in $t$ with periods $\tau_1,\cdots,\tau_n$ where the reciprocals of $\tau_1,\cdots,\tau_n$ are rationally independent, i.e., there exist continuous $\tilde{b}: \mathbb{R}^n\times \mathbb{R}^d\times \mathcal{P}_2(\mathbb{R}^d)\to \mathbb{R}^d$ and $\tilde{\sigma}: \mathbb{R}^n\times \mathbb{R}^d\times \mathcal{P}_2(\mathbb{R}^d)\to \mathbb{R}^{d\times m}$ such that for any $t\in \mathbb{R}, x\in \mathbb{R}^d, \mu\in \mathcal{P}_2(\mathbb{R}^d)$, $i\in \{1,2,\cdots,n\}$ and for $f=b,\sigma$,
		\begin{equation*}
			\tilde{f}(t,\cdots,t,x,\mu)=f(t,x,\mu), \ \ \tilde{f}(t_1,\cdots,t_{i}+\tau_i,\cdots,t_n,x,\mu)=\tilde{f}(t_1,\cdots,t_{i},\cdots,t_n,x,\mu).
		\end{equation*}

		Moreover, when Assumptions \ref{A3} and \ref{A4} {\bf (i)} hold, $\alpha,\beta,\gamma$ given in \eqref{Ineq weakly coercivity} are also quasi-periodic with periods $\tau_1,\cdots,\tau_n$, i.e., there exist continuous $\tilde{f}: \mathbb{R}^n\to \mathbb{R}, f=\alpha, \beta,\gamma$ such that $\tilde{f}_{t,\cdots,t}=f_t, f=\alpha,\beta,\gamma$ and $\tilde{\alpha}_{t_1,\cdots,t_n},\tilde{\beta}_{t_1,\cdots,t_n},\tilde{\gamma}_{t_1,\cdots,t_n}$ are periodic in $(t_1,\cdots,t_n)$ with periods $\tau_1,\cdots,\tau_n$ respectively.
	\end{condition}

\begin{remark}\label{lemma 0427}
	If Assumptions \ref{A3}, \ref{A4} {\bf (i)} and \ref{Quasi-periodic condition} hold, then $\tilde{b}(t_1,\cdots,t_n,x), \tilde{\sigma}(t_1,\cdots,t_n,x)$ satisfy \eqref{Ineq of non-degenerate diffusion}-\eqref{Ineq polynomial growth} with the same $c'_{\sigma}, c_{\sigma},L, \kappa$ and satisfy \eqref{Ineq weakly coercivity} with $\tilde{\alpha}_{t_1,\cdots,t_n}, \tilde{\beta}_{t_1,\cdots,t_n}, \tilde{\gamma}_{t_1,\cdots,t_n}$.
\end{remark}

Let $\mathbb{T}^n:=[0,\tau_1)\times \cdots \times [0,\tau_n)$. Then $\mathbb{T}^n$ is an $n$-torus with the following metric $d_0$: 
\begin{equation*}
		d_0(\vec{\mathbf{t}},\vec{\mathbf{s}}):=\sum_{i=1}^n\min\{|t_i-s_i|, \ \ \tau_i-|t_i-s_i|\}, \ \vec{\mathbf{t}}=(t_1,\cdots,t_n), \ \text{ for all } \ \vec{\mathbf{s}}=(s_1,\cdots,s_n)\in \mathbb{T}^n.
\end{equation*}
For any $t\in\mathbb{R}$, set the minimal rotation $T_t:\mathbb{T}^n\to\mathbb{T}^n$ by
\begin{equation}\label{0529-1}
		T_t(\vec{\mathbf{s}}):=(t+s_1 \mod \tau_1,\cdots,t+s_n\mod \tau_n), \ \text{ for all } \ \vec{\mathbf{s}}=(s_1,\cdots,s_n)\in \mathbb{T}^n
	\end{equation}

We assume $\tilde{b}, \tilde{\sigma}$ in Assumption \ref{Quasi-periodic condition} satisfy the following uniformly continuous condition.

\begin{condition}\label{One-sided Lip of tilde b}
		There exists a continuous function $h: \mathbb{T}^n\times\mathbb{T}^n\to \mathbb{R}^+$ with $h(\vec{\mathbf{t}}, \vec{\mathbf{t}})=0$ for all $\vec{\mathbf{t}}\in \mathbb{T}^n$ such that for any $\vec{\mathbf{t}}, \vec{\mathbf{s}}\in \mathbb{T}^n$, $x\in \mathbb{R}^d$ and $\mu\in \mathcal{P}_2(\mathbb{R}^d)$,
		\begin{equation*}
			|\tilde{b}(\vec{\mathbf{t}};x,\mu)-\tilde{b}(\vec{\mathbf{s}};x,\mu)|^2 +\|\tilde{\sigma}(\vec{\mathbf{t}};x,\mu)-\tilde{\sigma}(\vec{\mathbf{s}};x,\mu)\|^2
			\leq h(\vec{\mathbf{t}}, \vec{\mathbf{s}}).
		\end{equation*}
	\end{condition}

 Under Assumption \ref{Quasi-periodic condition}, we consider the reparameterized McKean-Vlasov SDE corresponding to \eqref{McKean-Vlasov SDE 2}:
	\begin{equation}\label{New Equation K_r_1,r_2}
		\begin{split}
			K^{\vec{\mathbf{s}}}(t,s,\xi)&=\xi+\int_{s}^{t}\tilde{b}^{\vec{\mathbf{s}}}\big(u,K^{\vec{\mathbf{s}}}(u,s,\xi), \mathscr{L}\big(K^{\vec{\mathbf{s}}}(u,s,\xi)\big)\big)\mathrm{d}u\\
			&\ \ \ \ +\int_{s}^{t}\tilde{\sigma}^{\vec{\mathbf{s}}}\big(u,K^{\vec{\mathbf{s}}}(u,s,\xi), \mathscr{L}\big(K^{\vec{\mathbf{s}}}(u,s,\xi)\big)\big)\mathrm{d}W_u,
		\end{split}
	\end{equation}
	where $\vec{\mathbf{s}}:=(s_1,\cdots,s_n)$, and $\tilde{f}^{\vec{\mathbf{s}}}(t,x,\nu):=\tilde{f}(t+s_1,\cdots,t+s_n,x,\nu), \ f=b,\sigma$. If further Assumption \ref{A3} holds, then equation \eqref{New Equation K_r_1,r_2} has a unique strong/weak solution $K^{\vec{\mathbf{s}}}(t,s,\xi)$ for all $t\geq s, \xi\in L^2(\mathcal{F}_s)$. Similar to $P^*(t,s)$, define $P^{\vec{\mathbf{s}},*}(t,s): 
	\mathcal{P}_2(\mathbb{R}^d)\to \mathcal{P}_2(\mathbb{R}^d)$ by 
	\begin{equation*}
			P^{\vec{\mathbf{s}},*}(t,s)\mu=\mathscr{L}\big(K^{\vec{\mathbf{s}}}(t,s,\xi)\big) \text{ with } \mathscr{L}(\xi)=\mu.
	\end{equation*}
It is easy to see from its definition that 
this multi-parameter semigroup is nonlinear and satisfies $P^{\vec{\mathbf{s}},*}(t+r,s+r)=P^{T_r(\vec{\mathbf{s}}),*}(t,s)$. For the case of SDEs and SPDEs, this semigroup, which is apparently linear, was also used in \cite{Feng-Qu-Zhao2021,Feng-Qu-Zhao2023,Liu-Lu2025}. The nonlinear semigroup is very difficult to deal with. But we found a way of lifting it to a time-homogeneous semigroup and constructing an invariant measure.

Recall the following definition of a continuous dynamical system on a metric space $(\mathbb{X},d)$.
		\begin{definition}\label{Dynamical system}
			A map $\Phi: 
			\mathbb{T}\times \mathbb{X}\rightarrow \mathbb{X}$ ($\mathbb{T}=\mathbb{R}$ or $\mathbb{R}^+$) is said to be a continuous dynamical system, if 
			\begin{itemize}
				\item  $\Phi: \mathbb{T}\times \mathbb{X}\rightarrow \mathbb{X}$ is continuous;
				\item $\Phi_0(x)=x$;
				\item $\Phi_t(\Phi_s(x))=\Phi_{t+s}(x)$ for all $t,s\in \mathbb{T}$ and $x\in \mathbb{X}$.
			\end{itemize}
		\end{definition}

Now for any $\vec{\mathbf{s}}\in \mathbb{T}^n$ and $\xi \in L^{2}(\mathcal{F}_0)$, we define $\Phi_t:\mathbb{T}^n\times L^2(\mathcal{F}_0)\to \mathbb{T}^n\times L^2(\mathcal{F}_0)$ for all $t\geq 0$ by
	\begin{equation}\label{0508-4}
		\Phi_t(\vec{\mathbf{s}};\xi)=\big(T_t(\vec{\mathbf{s}}); K^{\vec{\mathbf{s}}}(t,0,\xi)\circ \theta_{-t}\big),
	\end{equation}
where $\theta_r: \Omega\to \Omega$ is the Wiener shift, i.e., $\theta_rW_t(\omega)=W_{t+r}(\omega)-W_r(\omega), \ \mathbf{P}-a.s.$ for all $r\in \mathbb{R}$.
We will show that $(\mathbb{R}^+, \mathbb{T}^n\times L^{2}(\mathcal{F}_0), \Phi)$ is a continuous dynamical system (see Lemma \ref{lemma 0530}). This is called the first lift with pull-back to a dynamical system on the space of square integrable multi-parameter pull-back flows defined on cylinder. This first lift, although it appears similar, is distinct from the lift to cocycles on the cylinder for SDEs or SPDEs (see \cite[Lemma 3.12]{Feng-Qu-Zhao2021}, \cite[(2.10)]{Liu-Lu2025}). It will be needed in the second lift explained below.

Let $\hat{\Omega}:=\mathbb{T}^n\times \Omega$. Denote by $C(\mathbb{T}^n; L^{2}(\mathcal{F}_0))$ the collection of all continuous maps $\tilde{\xi}: \mathbb{T}^n\to L^{2}(\mathcal{F}_0)$. From a different point of view, for any $\tilde{\xi}\in C(\mathbb{T}^n; L^{2}(\mathcal{F}_0))$, it is easy to check that $\tilde{\xi}: \hat{\Omega}\to \mathbb{R}^d$ is measurable with respect to the $\sigma$-algebra $\hat{\mathcal{F}}:= \mathcal{B}(\mathbb{T}^n)\otimes \mathcal{F}_0$ on $\hat{\Omega}$. Now let $\mathcal{G}_{2}$ be the graph of $C(\mathbb{T}^n; L^{2}(\mathcal{F}_0))$, i.e.,
	\begin{equation*}
		\mathcal{G}_{2}:=\big\{{\rm graph}(\tilde{\xi})=\{(\vec{\mathbf{s}};\tilde{\xi}_{\vec{\mathbf{s}}})\}_{\vec{\mathbf{s}}\in \mathbb{T}^n}: \tilde{\xi} \in C(\mathbb{T}^n; L^{2}(\mathcal{F}_0))\big\}.
	\end{equation*}
	It can be proved that $\mathcal{G}_{2}$ is a complete metric space under the following norm:
	\begin{equation*}
		\|\hat{\xi}-\hat{\eta}\|_{\mathcal{G}_{2}}:=\sup_{\vec{\mathbf{s}}\in \mathbb{T}^n}\bigl(\mathbf{E}\bigl[|\tilde{\xi}_{\vec{\mathbf{s}}}-\tilde{\eta}_{\vec{\mathbf{s}}}|^2\bigr]\bigr)^{\frac{1}{2}}, \ \text{ for all } \hat{\xi}={\rm graph}(\tilde{\xi}), \ \hat{\eta}={\rm graph}(\tilde{\eta})\in \mathcal{G}_{2}.
	\end{equation*}
	Moreover, every element $\hat{\xi}={\rm graph}(\tilde{\xi})\in \mathcal{G}_{2}$ can be evaluated at $\vec{\mathbf{s}}\in \mathbb{T}^n$ and $\omega\in \Omega$ by
	\begin{equation*}
		\hat{\xi}(\vec{\mathbf{s}},\omega):=\big(\vec{\mathbf{s}};\tilde{\xi}_{\vec{\mathbf{s}}}(\omega)\big).
	\end{equation*}
  Let $\hat{\mathbb{X}}=\mathbb{T}^n\times \mathbb{R}^d$.
	Then it is easy to check that $\hat{\xi}: (\hat{\Omega},\hat{\mathcal{F}})\to \big(\hat{\mathbb{X}},\mathcal{B}(\hat{\mathbb{X}})\big)$ is a measurable map. 
	
For any $t\geq 0$, we define $\hat{\Phi}_t$ on $\mathcal{G}_{2}$ by the following relation: for any $\hat{\xi}={\rm graph}(\tilde{\xi})\in \mathcal{G}_{2}$,
	\begin{equation}\label{0529-2}
			(\hat{\Phi}_t\hat{\xi})(\vec{\mathbf{s}},\omega)=\big(T_t(\vec{\mathbf{s}}), K^{\vec{\mathbf{s}}}(t,0,\tilde{\xi}_{\vec{\mathbf{s}}}) (\theta_{-t}\omega)\big), \ \mathbf{P}-a.s.
	\end{equation}
	Then we will prove that $(\mathbb{R}^+,\mathcal{G}_{2},\hat{\Phi})$ is a continuous dynamical system (see Lemma \ref{coro 0530}). By \eqref{0508-4} and \eqref{0529-2},
	it is easy to see that 
	\begin{equation*}
		(\hat{\Phi}_t\hat{\xi})(\vec{\mathbf{s}},\omega)=\Phi_t(\vec{\mathbf{s}},\tilde{\xi}_{\vec{\mathbf{s}}})(\omega), \ \mathbf{P}-a.s.,
	\end{equation*}
	i.e., the evaluation of the dynamical system $(\mathbb{R}^+,\mathcal{G}_{2},\hat{\Phi})$ on the graph $\mathcal{G}_{2}$ is a dynamical system of the first lift multi-parameter pull-back flows on a cylinder. We call $(\mathbb{R}^+,\mathcal{G}_{2},\hat{\Phi})$ the second lift to the flow on the graph of the square integrable multi-parameter pull-back flows. This second lift is key to the analysis for the dynamics of quasi-periodic McKean-Vlasov stochastic differential equations.

According to \eqref{0508-4}, it is natural to define $\hat{P}_t^*$ on $\delta_{\vec{\mathbf{s}}}\times \mu$ where $\vec{\mathbf{s}}\in \mathbb{T}^n$ and $\mu\in \mathcal{P}_{2}(\mathbb{R}^d)$ by
	\begin{equation}\label{0530-1}
		\hat{P}_t^* \bigl(\delta_{\vec{\mathbf{s}}}\times \mu\bigr)=\delta_{T_t(\vec{\mathbf{s}})}\times P^{\vec{\mathbf{s}},*}(t,0)\mu.
	\end{equation}
	But how should we define $\hat{P}_t^*$ more generally on $\mathcal{P}(\hat{\mathbb{X}})$? If we look back \eqref{0529-2}, it seems natural to define $\hat{P}_t^*$ by
	\begin{equation}\label{0530-2}
		\hat{P}_t^*\bigl(``\mathscr{L}"(\hat{\xi})\bigr)=``\mathscr{L}"(\hat{\Phi}_t\hat{\xi}).
	\end{equation}
Note here \eqref{0530-2} is conceptual as $``\mathscr{L}"(\hat{\xi})$ is not a real law since we have not defined a probability measure on $(\hat{\Omega},\hat{\mathcal{F}})$. In the following, we will formalize \eqref{0530-2}.

For any $\nu\in \mathcal{P}(\mathbb{T}^n)$, $\hat{\Omega}_{\nu}:=(\hat{\Omega},\hat{\mathcal{F}},\nu\times \mathbf{P})$ is a probability space and $\mathscr{L}_{\nu}(\hat{\xi})$ is the law of a measurable map $\hat{\xi}: \hat{\Omega}_{\nu}\to (\hat{\mathbb{X}},\mathcal{B}(\hat{\mathbb{X}}))$. Thus, in mind of \eqref{0530-2}, we define:
	\begin{equation}\label{0530-3}
		\hat{P}_t^*\bigl(\mathscr{L}_{\nu}(\hat{\xi})\bigr)=\mathscr{L}_{\nu}(\hat{\Phi}_t\hat{\xi}), \ \text{ for all } \hat{\xi}\in \mathcal{G}_{2}, \ \nu\in \mathcal{P}(\mathbb{T}^n).
	\end{equation}
This lifted semigroup is nonlinear in $\mathscr{L}_{\nu}(\hat{\xi})$. In the case of SDEs or SPDEs, it is a linear semigroup, see Section 3 in \cite{Feng-Qu-Zhao2021}, Section 6 in \cite{Feng-Qu-Zhao2023}, this semigroup can be done from the lift of $P^{\vec{\mathbf{s}},*}(t,s)$ linearly. In particular, for the SDE case, the solution $K^{\vec{\mathbf{s}}}(t,s,x)$ can be lifted to a random dynamical system $\hat{\Phi}$ on the cylinder $\hat{\mathbb{X}}=\mathbb{T}^n\times \mathbb{R}^d$:
\begin{equation}\label{eq:explain-1}
	\hat{\Phi}(t,\omega)(\vec{\mathbf{s}},x)=(T_t(\vec{\mathbf{s}}), K^{\vec{\mathbf{s}}}(t,0,x)(\omega)).
\end{equation}
Then consider the corresponding Markovian transition function $\hat{P}:\mathbb{R}^+\times \hat{\mathbb{X}}\times \mathcal{B}(\hat{\mathbb{X}})$ by
\begin{equation*}
	\hat{P}(t, (\vec{\mathbf{s}},x),\hat{\Gamma})=\mathbf{P}\{\omega\in\Omega: \hat{\Phi}(t,\omega)(\vec{\mathbf{s}},x)\in \hat{\Gamma}\}.
\end{equation*}
The dual of $\hat{P}$ is 
\begin{equation*}
	\hat{P}^*_t\hat{\mu}(\hat{\Gamma})=\int_{\hat{\mathbb{X}}}\hat{P}(t, (\vec{\mathbf{s}},x),\hat{\Gamma})\hat{\mu}({\rm d}\vec{\mathbf{s}}, {\rm d}x)=\int_{\hat{\mathbb{X}}}\big(\hat{P}^*_t\delta_{(\vec{\mathbf{s}},x)}\big)(\Gamma)\hat{\mu}({\rm d}\vec{\mathbf{s}}, {\rm d}x).
\end{equation*}
However, this dual relation breaks down in the case of McKean-Vlasov distribution dependent SDEs due to nonlinearity (see \cite{wang2018}).
Instead, we can define $\hat{P}^*_t$ equivalently via \eqref{eq:explain-1} by
\begin{equation}\label{eq:explain-2}
	\hat{P}^*_t\hat{\mu}=\mathscr{L}(\hat{\Phi}(t,\cdot)\hat{\xi}), \ \text{ with } \ \mathscr{L}(\hat{\xi})=\hat{\mu}.
\end{equation}
According to the disintegration theorem of measure (see Theorem 10.2.1 and 10.2.2 in \cite{Dudley2002}), any $\hat{\mu}\in \mathcal{P}(\hat{\mathbb{X}})$ has the form:
    \begin{equation}\label{eq:disintegration-measure}
        \hat{\mu}=\int_{\mathbb{T}^n}\big(\delta_{\vec{\mathbf{s}}}\times \tilde{\mu}_{\vec{\mathbf{s}}}\big) \nu(\mathrm{d}\vec{\mathbf{s}}),
    \end{equation}
    where $\nu(A):=\hat{\mu}(A\times \mathbb{R}^d)$ for all $A\in \mathcal{B}(\mathbb{T}^n)$ is the marginal measure of $\hat{\mu}$ on $\mathbb{T}^n$ and $\{\tilde{\mu}_{\vec{\mathbf{s}}}\}_{\vec{\mathbf{s}}\in \mathbb{T}^n}\subset \mathcal{P}(\mathbb{R}^d)$ is the $\nu$-a.s. unique disintegration of $\hat{\mu}$.
Let $\vec{\mathbf{s}}:\Omega\to \mathbb{T}^n$ be a random vector with law $\nu$ and independent of $\mathcal{F}$, and let $\tilde{\xi}_{\vec{\mathbf{s}}}:\Omega\to \mathbb{R}^d$ be a $\mathcal{F}$-measurable random vector with law $\tilde{\mu}_{\vec{\mathbf{s}}}$ for $\vec{\mathbf{s}}\in \mathbb{T}^n$. Then we know that 
\[
\hat{\xi}(\omega)=(\vec{\mathbf{s}}(\omega), \tilde{\xi}_{\vec{\mathbf{s}}(\omega)}(\omega))
\]
has the law $\hat{\mu}$
and 
\begin{equation*}
	\hat{P}^*_t\hat{\mu}=\mathscr{L}(\hat{\Phi}(t,\cdot)\hat{\xi})=\mathscr{L}\Big(\big(T_t(\vec{\mathbf{s}}), K^{\vec{\mathbf{s}}}(t,0,\tilde{\xi}_{\vec{\mathbf{s}}})\big)\Big).
\end{equation*}
Since $\theta_t$ preserves $\mathbf{P}$ and is $\mathcal{F}$-measurable, we have
\begin{equation}\label{eq:explain-3}
\hat{P}^*_t\hat{\mu}=\mathscr{L}\Big(\big(T_t(\vec{\mathbf{s}}), K^{\vec{\mathbf{s}}}(t,0,\tilde{\xi}_{\vec{\mathbf{s}}})\circ\theta_{-t}\big)\Big).
\end{equation}
Actually, \eqref{eq:explain-3} and the above argument also work for the distribution dependent SDE \eqref{McKean-Vlasov SDE 2}, and correspond to the second lift in the McKean-Vlasov case.
     In particular, according to \eqref{eq:disintegration-measure} and \eqref{eq:explain-3}, it is easy to see that for all $t\geq 0$,
    \begin{equation}\label{eq:new1202-1}
			\hat{P}^*_t\int_{\mathbb{T}^n}\big(\delta_{\vec{\mathbf{s}}}\times \tilde{\mu}_{\vec{\mathbf{s}}}\big) \nu(\mathrm{d}\vec{\mathbf{s}})=\int_{\mathbb{T}^n}\big(\delta_{T_t(\vec{\mathbf{s}})}\times P^{\vec{\mathbf{s}},*}(t,0)\tilde{\mu}_{\vec{\mathbf{s}}}\big) \nu(\mathrm{d}\vec{\mathbf{s}}),
		\end{equation}
    whenever the right hand side of \eqref{eq:new1202-1} is well-defined.

    Recall that $\hat{\mathbb{X}}$ is a Polish space with the following metric $\hat{d}$:
\begin{equation*}
	\hat{d}(\hat{x},\hat{y}):=\sqrt{d_0(\vec{\mathbf{t}},\vec{\mathbf{s}})^2+|x-y|^2}, \ \text{ for all } \ \hat{x}=(\vec{\mathbf{t}},x),\hat{y}=(\vec{\mathbf{s}},y)\in \mathbb{T}^n\times \mathbb{R}^d.
\end{equation*}
For all $p\geq 1$, consider the $p$-Wasserstein space on $(\hat{\mathbb{X}}, \hat{d})$:
    \begin{equation*}
        \mathcal{P}_p(\hat{\mathbb{X}}):=\Big\{\hat{\mu}\in \mathcal{P}(\hat{\mathbb{X}}): \int_{\hat{\mathbb{X}}}\hat{d}^p(\hat{x},\hat{0})\hat{\mu}(\mathrm{d}\hat{x})<\infty \Big\}
    \end{equation*}
    with the following $p$-Wasserstein distance:
 \begin{equation*}
	 \mathcal{D}_p(\mu_1,\mu_2):=\inf_{\hat{\pi}\in \mathscr{C}(\hat{\mu}_1,\hat{\mu}_2)}\biggl(\int_{\hat{\mathbb{X}}\times \hat{\mathbb{X}}}\hat{d}^p(\hat{x},\hat{y})\hat{\pi}(\mathrm{d}\hat{x}, \mathrm{d}\hat{y})\biggr)^{\frac{1}{p}}, \ \hat{\mu}_1,\hat{\mu}_2\in \mathcal{P}_p(\hat{\mathbb{X}}),
 \end{equation*}
 where $\hat{0}=(\vec{0},0^d)\in \hat{\mathbb{X}}$, $0^d\in \mathbb{R}^d$ is the zero vector, and $\mathscr{C}(\hat{\mu}_1,\hat{\mu}_2)$ is the set of all coupling measures on $\hat{\mathbb{X}}\times \hat{\mathbb{X}}$ with marginal distributions $\hat{\mu}_1$ and $\hat{\mu}_2$. We will prove for all $t\geq 0$, $\hat{P}^*_t$ maps $\mathcal{P}_2(\hat{\mathbb{X}})$ into itself. Furthermore, we show that $(\mathbb{R}^+, \mathcal{P}_2(\hat{\mathbb{X}}), \hat{P}^*)$ is a continuous dynamical system (see Proposition \ref{prop:hat-P*-CDS}).

For a continuous dynamical system $\hat{P}^*$ on $\mathcal{P}_2(\hat{\mathbb{X}})$, a crucial task is to explore the existence of invariant measures. If $\hat{\mu}\in \mathcal{P}_2(\hat{\mathbb{X}})$ is an invariant measure of $\hat{P}^*$, let $\nu$ and $\{\tilde{\mu}_{\vec{\mathbf{s}}}\}_{\vec{\mathbf{s}}\in\mathbb{T}^n}$ be the marginal measure on $\mathbb{T}^n$ and the disintegration of $\hat{\mu}$, respectively. Then for any $t\geq 0$, we have
	\begin{equation*}
		\begin{split}
			\hat{P}^*_t\hat{\mu}
			&=\int_{\mathbb{T}^n}\big(\delta_{\vec{\mathbf{s}}}\times P^{T_{-t}(\vec{\mathbf{s}}),*}(t,0)\tilde{\mu}_{T_{-t}(\vec{\mathbf{s}})}\big)\nu\circ T_{t}^{-1}(\mathrm{d}\vec{\mathbf{s}})\\
			&=\int_{\mathbb{T}^n}\big(\delta_{\vec{\mathbf{s}}}\times \tilde{\mu}_{\vec{\mathbf{s}}}\big)\nu(\mathrm{d}\vec{\mathbf{s}}).
		\end{split}
	\end{equation*}
  For any $A\in\mathcal{B}(\mathbb{T}^n)$, we conclude that
  \begin{equation*}
		\nu(T_t^{-1}A)=\hat{P}^*_t\hat{\mu}(A\times \mathbb{R}^d)=\hat{\mu}(A\times \mathbb{R}^d)=\nu(A).
	\end{equation*}
	Thus, $\nu$ is a $T_t$-invariant measure on $\mathbb{T}^n$ for all $t\geq 0$ and 
	\begin{equation*}
		P^{T_{-t}(\vec{\mathbf{s}}),*}(t,0)\tilde{\mu}_{T_{-t}(\vec{\mathbf{s}})}=\tilde{\mu}_{\vec{\mathbf{s}}}, \ \text{ $\nu$-a.s. for all } t\geq 0.
	\end{equation*}
	Hence,
	\begin{equation*}
		\nu(\mathrm{d}\vec{\mathbf{s}})=\frac{1}{\tau_1\cdots\tau_n}\mathrm{d}\vec{\mathbf{s}},
	\end{equation*}
	and for all $t\geq 0$,
	\begin{equation*}
		P^{\vec{\mathbf{s}},*}(t,0)\tilde{\mu}_{\vec{\mathbf{s}}}=\tilde{\mu}_{T_t(\vec{\mathbf{s}})}, \ \text{ $\mathrm{d}\vec{\mathbf{s}}$-a.s.}
	\end{equation*}
	Then we have the following main theorem.

    \begin{theorem}\label{thm:existence-of-invariant-measure}
		Assume Assumptions \ref{A3}, \ref{A4}, \ref{Quasi-periodic condition} and \ref{One-sided Lip of tilde b} hold. Then $\hat{P}^*$ has an invariant measure $\hat{\mu}$ in $\mathcal{P}_{2}(\hat{\mathbb{X}})$. Moreover, there exists $\{\tilde{\mu}_{\vec{\mathbf{s}}}\}_{\vec{\mathbf{s}}\in \mathbb{T}^n}\subset \mathcal{P}_2(\mathbb{R}^d)$ such that
			\begin{equation*}
				\hat{\mu}=\frac{1}{\tau_1\cdots\tau_n}\int_{\mathbb{T}^n}\big(\delta_{\vec{\mathbf{s}}}\times \tilde{\mu}_{\vec{\mathbf{s}}}\big)\mathrm{d}\vec{\mathbf{s}}
			\end{equation*}
			and
			\begin{equation*}
				P^{\vec{\mathbf{s}},*}(t,0)\tilde{\mu}_{\vec{\mathbf{s}}}=\tilde{\mu}_{T_t(\vec{\mathbf{s}})}, \ \text{ $\mathrm{d}\vec{\mathbf{s}}$-a.s. for all } \ t\geq 0.
			\end{equation*}
	\end{theorem}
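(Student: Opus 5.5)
Our plan is to build the invariant measure directly from a multi-parameter entrance measure of the reparameterized system, and not by Krylov--Bogoliubov averaging. Averaging $\frac1T\int_0^T\hat P^*_r\hat\mu\,\mathrm{d}r$ is unsafe because $\hat P^*_t$ is nonlinear, and there is no reason a limit of such averages should be invariant.

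\textbf{Step 1: a jointly continuous family.} We first want a family $\{\tilde\mu_{\vec{\mathbf{s}}}\}_{\vec{\mathbf{s}}\in\mathbb{T}^n}\subset\mathcal{P}_2(\mathbb{R}^d)$ that is continuous in $\vec{\mathbf{s}}$ and satisfies
\[
P^{\vec{\mathbf{s}},*}(t,0)\tilde\mu_{\vec{\mathbf{s}}}=\tilde\mu_{T_t(\vec{\mathbf{s}})}\quad\text{for every } \vec{\mathbf{s}} \text{ and every } t\ge0 .
\]
To get it, we consider the system on the full cylinder, indexed by the base point $\vec{\mathbf{s}}$. For fixed $\vec{\mathbf{s}}$, Theorem \ref{Theorem existence of entrance measure} applied to \eqref{New Equation K_r_1,r_2}, with coefficients $\tilde f^{\vec{\mathbf{s}}}$, gives an entrance measure $\rho^{\vec{\mathbf{s}}}_\cdot$ with second moments at most $\tilde\vartheta$. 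This bound is uniform in $\vec{\mathbf{s}}$ by Remark \ref{lemma 0427} and the periodicity of $\tilde\alpha,\tilde\beta,\tilde\gamma$. Selecting these pointwise does not make them compatible across different $\vec{\mathbf{s}}$, because entrance measures need not be unique. We therefore rerun the fixed-point scheme from Section 3 on the space of maps $\vec{\mathbf{s}}\mapsto\tilde\mu_{\vec{\mathbf{s}}}$. We take a convex set $\mathcal{K}$ of continuous maps $\mathbb{T}^n\to\mathcal{P}_2(\mathbb{R}^d)$ with the uniform moment bound and a uniform higher-moment/tightness bound. On $\mathcal{K}$ we define
\[
\Psi(\tilde\mu)_{\vec{\mathbf{s}}}:=\text{the law at time }0\text{ of the linear SDE with frozen measure flow } t\mapsto\tilde\mu_{T_t(\vec{\mathbf{s}})},\ \text{entering from } -\infty .
\]
This pull-back limit exists by the Harris-type contraction for linear SDEs used in Section 3. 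A fixed point of $\Psi$ is exactly a family with the cocycle property displayed above. Continuity of $\Psi(\tilde\mu)$ in $\vec{\mathbf{s}}$ will follow from Assumption \ref{One-sided Lip of tilde b}, combined with a synchronous-coupling estimate that uses the one-sided Lipschitz bound \eqref{eq:one-side-Lipschitz} over finite time windows, plus the uniform tail bounds. We then invoke Schauder/Kakutani on $\mathcal{K}$, which is compact in a topology of uniform convergence in $\vec{\mathbf{s}}$ (uniform equicontinuity on $\mathbb{T}^n$ together with tightness in $\mathcal{W}_2$).

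\textbf{Step 2: the candidate measure.} Set
\[
\hat\mu=\frac1{\tau_1\cdots\tau_n}\int_{\mathbb{T}^n}\bigl(\delta_{\vec{\mathbf{s}}}\times\tilde\mu_{\vec{\mathbf{s}}}\bigr)\,\mathrm{d}\vec{\mathbf{s}} .
\]
It lies in $\mathcal{P}_2(\hat{\mathbb{X}})$ by the uniform moment bound. Since the family is continuous, we can choose a representative $\tilde\xi\in C(\mathbb{T}^n;L^2(\mathcal{F}_0))$ with $\mathscr{L}(\tilde\xi_{\vec{\mathbf{s}}})=\tilde\mu_{\vec{\mathbf{s}}}$, for instance by a measurable coupling through quantile or optimal-transport selections. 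Then $\hat\mu=\mathscr{L}_\nu(\mathrm{graph}\,\tilde\xi)$ with $\nu$ the normalized Lebesgue measure.

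\textbf{Step 3: invariance.} By \eqref{0530-3} and \eqref{eq:new1202-1},
\[
\hat P_t^*\hat\mu=\frac1{\tau_1\cdots\tau_n}\int_{\mathbb{T}^n}\bigl(\delta_{T_t(\vec{\mathbf{s}})}\times\tilde\mu_{T_t(\vec{\mathbf{s}})}\bigr)\,\mathrm{d}\vec{\mathbf{s}}=\hat\mu .
\]
The last equality holds because Lebesgue measure is $T_t$-invariant. The decomposition and the a.s. identity stated in Theorem \ref{thm:existence-of-invariant-measure} are then immediate; they also follow from the general disintegration argument preceding the theorem.

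\textbf{Main obstacle.} We expect Step 1 to be the hardest part: obtaining a selection that is simultaneously continuous in $\vec{\mathbf{s}}$ and an entrance family when uniqueness fails. The delicate points are continuity and closedness of $\Psi$ in $\vec{\mathbf{s}}$ uniformly over infinite backward time. We would handle this by truncating the pull-back at a time $-T$, using the average dissipativity \eqref{eq:average weakly dissipation} to make the tail from $-\infty$ to $-T$ uniformly small, and controlling the finite window $[-T,0]$ through Assumption \ref{One-sided Lip of tilde b} and a Gronwall argument. If the joint fixed point proves too hard, a fallback is to take limits of the quasi-periodic approximants provided by Theorem \ref{Thm existence quasi-periodic measure}. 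One would then pass to the limit in $\hat P_t^*$, using its continuity on $\mathcal{P}_2(\hat{\mathbb{X}})$ from Proposition \ref{prop:hat-P*-CDS}.
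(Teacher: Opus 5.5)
\paragraph{The gap is in Step 1.} This step is not a routine rerun of Section 3. For Kakutani you need a convex set $\mathcal K$ that is compact for uniform convergence in $\vec{\mathbf{s}}$ and is mapped into itself by $\Psi$. That means you need a common modulus of continuity in $\vec{\mathbf{s}}$ that $\Psi$ preserves.

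The estimate you propose (synchronous coupling on $[-T,0]$ plus the exponential tail bound) is essentially the one carried out in the proof of Theorem \ref{Thm 0501}. For every $T>0$ it gives
\[
\mathcal{W}_2^2\big(\Psi(\tilde\mu)_{\vec{\mathbf{t}}},\Psi(\tilde\mu)_{\vec{\mathbf{s}}}\big)\le 12Ce^{-\lambda T}+3Te^{cT}\Big((4L^2+L)\,w_1\big(d_0(\vec{\mathbf{t}},\vec{\mathbf{s}})\big)^2+2w\big(d_0(\vec{\mathbf{t}},\vec{\mathbf{s}})\big)\Big),
\]
where $c=4L^2+3L+1$, $w_1$ is a modulus of $\tilde\mu$, and $w$ comes from $h$ in Assumption \ref{One-sided Lip of tilde b}.

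Making the first term small forces $T$ to be large. Then the input modulus $w_1$ is amplified by $3(4L^2+L)Te^{cT}\gg 1$. So the output modulus is not controlled by the input one, and no equicontinuous class is $\Psi$-invariant. Under the standing assumptions (arbitrary $L$, entrance measures possibly non-unique) there is no evident contraction to repair this. Without equicontinuity, $\mathcal K$ is not compact in the uniform-in-$\vec{\mathbf{s}}$ topology. In any weaker topology where compactness is available (for instance $d_2$ along the orbit, as in the paper), limits of continuous families need not be continuous.

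There is a telling check. A continuous fixed point would make $t\mapsto\tilde\mu_{T_t(\vec{0})}$ a genuine quasi-periodic measure. For $n\ge 2$ this is exactly what the paper explicitly leaves open; it obtains only an asymptotic quasi-periodic measure for that reason.

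Your fallback does not close the gap either. The quasi-periodic approximants behind Theorem \ref{Thm existence quasi-periodic measure} are not entrance measures, and they converge only uniformly on compact time intervals. So neither invariance of their lifts nor convergence of those lifts follows.

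\paragraph{The missing idea: averaging is safe here, and it is how the paper proceeds.} Suppose the initial measure has a Dirac base marginal. Then the orbit places different times on different fibres, and $\hat P^*_t$ acts fibrewise by \eqref{eq:new1202-1}, so the nonlinearity never mixes them.

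Concretely, the argument runs as follows.
\begin{enumerate}
\item Take a single entrance measure $\mu$ from Theorem \ref{Theorem existence of entrance measure}. This needs no uniqueness and no continuity in $\vec{\mathbf{s}}$.
\item Use \eqref{0523-1} and Remark \ref{rem:bounded-of-entrance-measure} to get $\sup_t\|\mu_t\|_p<\infty$ for all $p\ge 2$.
\item Set $\hat\mu_T=\frac{1}{2T}\int_{-T}^T(\delta_{r|\vec{\mathbf{\tau}}}\times\mu_r)\,\mathrm{d}r$. This is exactly the Ces\`aro average $\frac{1}{2T}\int_0^{2T}\hat P^*_u(\delta_{(-T)|\vec{\mathbf{\tau}}}\times\mu_{-T})\,\mathrm{d}u$.
\item Since $P^{r|\vec{\mathbf{\tau}},*}(t,0)\mu_r=P^*(t+r,r)\mu_r=\mu_{t+r}$, one gets $\hat P^*_t\hat\mu_T=\frac{1}{2T}\int_{-T}^T(\delta_{(t+r)|\vec{\mathbf{\tau}}}\times\mu_{t+r})\,\mathrm{d}r$. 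This lies within $t/T$ of $\hat\mu_T$ in total variation.
\item The uniform moment bounds give tightness. They also upgrade weak convergence along some $T_k\to\infty$ to $\mathcal{D}_2$-convergence.
\item The $\mathcal{D}_2$-continuity of $\hat P^*_t$ (Proposition \ref{prop:hat-P*-CDS}) then yields $\hat P^*_t\hat\mu=\hat\mu$.
\item The disintegration form and the $\mathrm{d}\vec{\mathbf{s}}$-a.s.\ identity follow from the discussion before the theorem.
\end{enumerate}
This produces only a measurable family, which is all the statement claims.

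One caveat on fibre separation: it uses injectivity of $r\mapsto r|\vec{\mathbf{\tau}}$. This holds for $n\ge 2$ by rational independence. For $n=1$ one should take $\mu$ periodic.
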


\begin{remark}
		Taking into considerations of \eqref{0530-1}, \eqref{eq:new1202-1} can be rewritten as
		\begin{equation}\label{0531}
			\hat{P}_t^* \int_{\mathbb{T}^n}\big(\delta_{\vec{\mathbf{s}}}\times \tilde{\mu}_{\vec{\mathbf{s}}} \big)\nu(\mathrm{d}\vec{\mathbf{s}})=\int_{\mathbb{T}^n}\hat{P}_t^*\bigl(\delta_{\vec{\mathbf{s}}}\times \tilde{\mu}_{\vec{\mathbf{s}}}\bigr) \nu(\mathrm{d}\vec{\mathbf{s}}).
		\end{equation}     
It is easy to see that $\hat{P}_t^* \int_{\mathbb{T}^n}\big(\delta_{\vec{\mathbf{s}}}\times \tilde{\mu}_{\vec{\mathbf{s}}} \big)\nu(\mathrm{d}\vec{\mathbf{s}})$ is the law of the coupled process 
$\big(T_t(\cdot
), K^{\cdot
}
(t,0,\tilde{\xi}_{\cdot
})\circ\theta_{-t}\big)
$
with initial distribution $\mathscr{L}\big((\vec{\mathbf{s}},\tilde{\xi}_{\vec{\mathbf{s}}})_{\vec{\mathbf{s}}\in\mathbb{T}^n}\big)=\hat{\mu}=\int_{\mathbb{T}^n}\big(\delta_{\vec{\mathbf{s}}}\times \tilde{\mu}_{\vec{\mathbf{s}}} \big)\nu(\mathrm{d}\vec{\mathbf{s}})$.
On the other hand, $\hat{P}_t^*(\delta_{\vec{\mathbf{s}}}\times \tilde{\mu}_{\vec{\mathbf{s}}})=\delta_{T_t(\vec{\mathbf{s}})}\times \tilde{\mu}_{T_t(\vec{\mathbf{s}})}$ is the law of the coupled process 
$\big(T_t(\vec{\mathbf{s}}), K^{\vec{\mathbf{s}}}(t,0,\tilde{\xi}_{\vec{\mathbf{s}}})\circ\theta_{-t}\big)$
with initial distribution $\mathscr{L}\big((\vec{\mathbf{s}},\tilde{\xi}_{\vec{\mathbf{s}}})\big)=\delta_{\vec{\mathbf{s}}}\times \tilde{\mu}_{\vec{\mathbf{s}}}$. It follows 
that $\int_{\mathbb{T}^n}\hat{P}_t^*\bigl(\delta_{\vec{\mathbf{s}}}\times \tilde{\mu}_{\vec{\mathbf{s}}}\bigr) \nu(\mathrm{d}\vec{\mathbf{s}})$ is the law of the graph $\big(T_t(\vec{\mathbf{s}}), K^{\vec{\mathbf{s}}}(t,0,\tilde{\xi}_{\vec{\mathbf{s}}})\circ\theta_{-t}\big)_{\vec{\mathbf{s}}\in\mathbb{T}^n}$ for any fixed $t$. 

These two laws, generally speaking, are different if acting on a general measure in $\mathcal{P}_{2}(\hat{\mathbb{X}})$. But the special skew-product structure of the dynamical system $\hat{\Phi}_t$ on $\mathcal{G}_2$ that acts fibrewise without mixing different base points in $\mathbb{T}^n$ makes $\hat{P}_t^*$ preserve the measure of the form $\delta_{T_{\cdot}(\vec{\mathbf{s}})}\times \tilde{\mu}_{T_{\cdot}(\vec{\mathbf{s}})}$. In other words, $\hat{P}_t^*$ is compatible with the disintegration of measures over a torus and acts independently along each fibre.

To see that \eqref{0531} does not hold for general $\hat{\mu}\in \mathcal{P}_2(\hat{\mathbb{X}})$, \eqref{0531} turns out to be in this case as
\begin{equation}\label{eq:1214}
	\hat{P}_t^* \int_{\mathbb{T}^n}\hat{\mu}_{\vec{\mathbf{s}}}\nu(\mathrm{d}\vec{\mathbf{s}})=\int_{\mathbb{T}^n}(\hat{P}_t^*\hat{\mu}_{\vec{\mathbf{s}}}) \nu(\mathrm{d}\vec{\mathbf{s}}), \ \ \hat{\mu}_{\vec{\mathbf{s}}}\in \mathcal{P}_2(\hat{\mathbb{X}}) \ \text{ for all $\vec{\mathbf{s}}\in \mathbb{T}^n$ and } \ \nu\in\mathcal{P}(\mathbb{T}^n).
\end{equation}
In fact, let $\nu(\mathrm{d}\vec{\mathbf{s}})=\frac{1}{\tau_1\cdots\tau_n}\mathrm{d}\vec{\mathbf{s}}$ be the normalized Lebesgue measure on $\mathbb{T}^n$. Consider $\hat{\mu}_1, \hat{\mu}_2\in \mathcal{P}_2(\hat{\mathbb{X}})$ with $\hat{\mu}_i=\delta_{\vec{\mathbf{s}}_0}\times \tilde{\mu}^i_{\vec{\mathbf{s}}_0}$, $i=1,2$ for some $\vec{\mathbf{s}}_0\in \mathbb{T}^n$ and choose
\begin{equation*}
	\hat{\mu}_{\vec{\mathbf{s}}}=
   \begin{cases}
		\hat{\mu}_1, & \vec{\mathbf{s}}\in A\\
    \hat{\mu}_2, & \vec{\mathbf{s}}\notin A,
	 \end{cases}
   \ \ \text{ for some $A\in\mathcal{B}(\mathbb{T}^n)$ with $\nu(A)=1/2$.}
\end{equation*}
In this case, \eqref{eq:1214} turns out to be
\begin{equation}\label{eq:nonlinear}
            \hat{P}_t^*\Big(\frac{1}{2}(\hat{\mu}_1+\hat{\mu}_2)\Big)=\frac{1}{2}\hat{P}_t^*\hat{\mu}_1+\frac{1}{2}\hat{P}_t^*\hat{\mu}_2.
\end{equation}
However, \eqref{eq:nonlinear} does not hold in general since
\begin{equation*}
	\hat{P}_t^*\Big(\frac{1}{2}(\delta_{\vec{\mathbf{s}}_0}\times \tilde{\mu}^1_{\vec{\mathbf{s}}_0}+\delta_{\vec{\mathbf{s}}_0}\times\tilde{\mu}^2_{\vec{\mathbf{s}}_0})\Big)=\hat{P}_t^*\Bigl(\delta_{\vec{\mathbf{s}}_0}\times \Big(\frac{1}{2}\tilde{\mu}^1_{\vec{\mathbf{s}}_0}+\frac{1}{2}\tilde{\mu}^2_{\vec{\mathbf{s}}_0}\Big)\Bigr) =\delta_{T_t(\vec{\mathbf{s}}_0)}\times P^{\vec{\mathbf{s}}_0,*}(t,0)\Big(\frac{1}{2}\tilde{\mu}^1_{\vec{\mathbf{s}}_0}+\frac{1}{2}\tilde{\mu}^2_{\vec{\mathbf{s}}_0}\Big),
\end{equation*}
\begin{equation*}
	\frac{1}{2}\big(\hat{P}_t^*(\delta_{\vec{\mathbf{s}}_0}\times \tilde{\mu}^1_{\vec{\mathbf{s}}_0})+\hat{P}_t^*(\delta_{\vec{\mathbf{s}}_0}\times\tilde{\mu}^2_{\vec{\mathbf{s}}_0})\big) =\delta_{T_t(\vec{\mathbf{s}}_0)}\times \Big(\frac{1}{2}P^{\vec{\mathbf{s}}_0,*}(t,0)\tilde{\mu}^1_{\vec{\mathbf{s}}_0}+\frac{1}{2}P^{\vec{\mathbf{s}}_0,*}(t,0)\tilde{\mu}^2_{\vec{\mathbf{s}}_0}\Big),
\end{equation*}
and $P^{\vec{\mathbf{s}}_0,*}(t,0)$ is  nonlinear in general.
	\end{remark}

In fact, under the quasi-periodic condition, in addition to the existence of an entrance measure, the following theorem establishes the existence of an asymptotic quasi-periodic measure.

\begin{theorem}\label{Thm existence quasi-periodic measure}
		Under Assumptions \ref{A3}, \ref{A4}, \ref{Quasi-periodic condition} and \ref{One-sided Lip of tilde b}, equation \eqref{McKean-Vlasov SDE 2} has an asymptotic quasi-periodic measure $\mu$. Moreover, $\mu\in\mathcal{M}_p(\mathbb{R})$ for all $p\geq 2$.
	\end{theorem}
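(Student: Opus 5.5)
We will build the asymptotic quasi-periodic measure as the locally uniform $\mathcal{W}_2$-limit of quasi-periodic measure-valued functions obtained from approximating, genuinely quasi-periodic (or periodic) problems. The introduction points to a Kakutani fixed point argument on the closure $\overline{\mathcal{Q}_2}$ of pairs made of a quasi-periodic measure-valued function and its multi-parameter representation; we follow that route.

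\textbf{Step 1: the multi-parameter frozen problem.} Fix a continuous family $\tilde{\mu}:\mathbb{R}^n\to\mathcal{P}_2(\mathbb{R}^d)$, periodic in each $t_i$ with period $\tau_i$ and with second moments bounded by $\tilde{\vartheta}$. This $\tilde{\vartheta}$ is the multi-parameter analogue of $\vartheta$, built from $\tilde{\alpha},\tilde{\beta},\tilde{\gamma}$ via Remark~\ref{lemma 0427}. Plug $\tilde{\mu}_{T_u(\vec{\mathbf{s}})}$ into the measure slot of the reparameterized equation \eqref{New Equation K_r_1,r_2}. The equation becomes a linear (Markovian) SDE for each $\vec{\mathbf{s}}$. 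The frozen coefficients are quasi-periodic and nondegenerate, and satisfy \eqref{Ineq weakly coercivity} with $\beta\|\tilde{\mu}\|_2^2$ absorbed into the constant. The Harris small-set argument used for Theorem~\ref{Theorem existence of entrance measure} then gives, for each $\vec{\mathbf{s}}$, a unique entrance measure $\Psi(\tilde{\mu})_{\vec{\mathbf{s}},t}$, obtained as a pull-back limit of $\mathscr{L}(K^{\vec{\mathbf{s}}}(t,s,0))$ as $s\to-\infty$.

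By uniqueness and the cocycle identity $P^{\vec{\mathbf{s}},*}(t+r,s+r)=P^{T_r(\vec{\mathbf{s}}),*}(t,s)$, the map $\vec{\mathbf{s}}\mapsto\Psi(\tilde{\mu})_{\vec{\mathbf{s}},0}$ is again periodic. Using Assumption~\ref{One-sided Lip of tilde b} together with a synchronous-coupling estimate, we show it is continuous in $\vec{\mathbf{s}}$ and uniformly in $t$. The moment estimate of Theorem~\ref{Theorem existence of entrance measure}, with the averaged dissipation \eqref{eq:average weakly dissipation}, shows that $\Psi$ preserves the bound $\tilde{\vartheta}$. The same computation with $|x|^p$ and the uniform ellipticity gives uniform $p$-th moment bounds for every $p\ge 2$; this is where $\mu\in\mathcal{M}_p(\mathbb{R})$ will come from.

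\textbf{Step 2: fixed point.} Take the convex set $\mathcal{K}$ of such families with $\sup\|\cdot\|_{p}\le C_p$ for some $p>2$ and with a common modulus of continuity in $\vec{\mathbf{s}}$ coming from $h$. $\mathcal{K}$ is compact in $C(\mathbb{T}^n;\mathcal{P}_2(\mathbb{R}^d))$ by Arzel\`a--Ascoli, since uniform $p$-moments give $\mathcal{W}_2$-relative compactness. $\Psi$ maps $\mathcal{K}$ into itself, and it is continuous by stability of the frozen SDE in the measure argument (Lipschitz in $\mathcal{W}_2$). Schauder's theorem then gives a fixed point. Convexity is not preserved along $\Psi$ in the sense needed for Kakutani, so where uniqueness fails we instead use the set-valued version over $\overline{\mathcal{Q}_2}$.

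\textbf{Step 3: diagonal and approximation.} A fixed point $\tilde{\mu}$ makes $t\mapsto\tilde{\mu}_{T_t(\vec{\mathbf{s}})}$ an entrance measure of the reparameterized McKean--Vlasov equation for every $\vec{\mathbf{s}}$. Restricting to the diagonal $\vec{\mathbf{s}}=0$ gives an entrance measure of \eqref{McKean-Vlasov SDE 2}, and this measure is automatically quasi-periodic. \textbf{This is the main obstacle.} The fixed-point problem on the torus is only solvable if $\Psi$ is continuous and single-valued at the multi-parameter level, and the nonuniqueness of the nonlinear problem spoils any attempt to identify a representation a posteriori.

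To handle this, we approximate the fixed point by finitely many steps of Picard-type iterates $\tilde{\mu}^m=\Psi^m(\tilde{\mu}^0)$, each of which is honestly quasi-periodic. Define $\mu^m_t:=\tilde{\mu}^m_{t,\dots,t}$. By compactness of $\mathcal{K}$, a subsequence converges in $\mathcal{W}_2$ uniformly on the torus, hence uniformly in $t\in\mathbb{R}$, to a limit $\mu$. Continuity of $P^*(t,s)$ in $\mathcal{W}_2$ on compact time intervals lets us pass to the limit in the relation $\mu^{m+1}_t=P^*_{\mu^m}(t,s)\mu^{m+1}_s$. Together with the contraction coming from $\Psi$, this shows that $\mu$ is an entrance measure. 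The convergence holds uniformly on $[-T,T]$, so $\mu$ is asymptotic quasi-periodic, and the uniform $p$-moment bounds pass to the limit by lower semicontinuity.
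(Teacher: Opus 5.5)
There is a genuine gap, and it sits exactly where the paper separates \emph{quasi-periodic} from \emph{asymptotic quasi-periodic}.

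\textbf{The torus fixed point does not close.} Your Step~2 needs a compact convex $\mathcal{K}\subset C(\mathbb{T}^n;\mathcal{P}_2(\mathbb{R}^d))$ with $\Psi(\mathcal{K})\subset\mathcal{K}$, i.e.\ a modulus of continuity in $\vec{\mathbf{s}}$ that $\Psi$ preserves. That modulus does not come from $h$ alone. The frozen coefficients contain $\tilde{\mu}_{T_u(\vec{\mathbf{s}})}$, so the modulus $w_1$ of the input $\tilde{\mu}$ enters. The only available estimate is the one proving Theorem~\ref{Thm 0501} (synchronous coupling on $[-T,0]$ plus uniform geometric ergodicity). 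For $\tilde{\rho}=\Psi(\tilde{\mu})$ it gives $\mathcal{W}_2^2(\tilde{\rho}_{\vec{\mathbf{t}}},\tilde{\rho}_{\vec{\mathbf{s}}})\leq 12Ce^{-\lambda T}+3Te^{cT}\big((4L^2+L)w_1^2+2w\big)\big(d_0(\vec{\mathbf{t}},\vec{\mathbf{s}})\big)$ with $c=4L^2+3L+1>0$. Optimising over $T$ yields only something of order $(w_1^2)^{\lambda/(\lambda+c)}$, which is much larger than $w_1^2$ near $0$. So moduli deteriorate under $\Psi$, and no invariant equicontinuous family is produced. This is precisely the obstruction the paper records for $n\geq 2$. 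If your Step~2 worked, you would obtain a genuine quasi-periodic measure, which is stronger than the statement.

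Your appeal to a set-valued version does not help: $\Psi$ is single-valued, and non-uniqueness concerns its fixed points. Continuity of $\Psi$ would also need uniform ergodicity rather than finite-time stability, since the entrance measure depends on the whole past.

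\textbf{The Picard fallback also fails.} Step~3 reuses the unproven compactness and invariance of $\mathcal{K}$. It also invokes a contraction of $\Psi$ that the hypotheses do not supply, and in general there is none: Theorem~\ref{Theorem nonuniqueness} and Example~\ref{Example:2} produce several entrance measures under these assumptions. From $\mu^{m_k}\to\mu$ you only get, along a further subsequence, $\mu^{m_k+1}\to\Psi(\mu)$, not $\Psi(\mu)=\mu$.

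\textbf{The missing idea} is to take compactness from regularity in time rather than in $\vec{\mathbf{s}}$, and to give up continuity of the representation in the limit. The paper works in $C(\mathbb{R};\mathcal{P}_2(\mathbb{R}^d))$ with the locally uniform metric $d_2$.
\begin{enumerate}
\item For $\mu\in\mathcal{Q}_2\cap\mathcal{M}_{2,\vartheta}$, Theorem~\ref{Thm 0501} gives a genuinely quasi-periodic entrance measure $\Theta(\mu)$ of the frozen SDE, with continuous representation.
\item The set $\widetilde{\mathcal{Q}}$ of these is $d_2$-precompact by the path-tightness argument of Lemma~\ref{Lemma of compact set} (uniform $p$-moments for some $p>2$, and equicontinuity in $t$ on compact intervals).
\item Its closed convex hull consists of locally uniform limits of quasi-periodic functions, because convex combinations with common periods stay quasi-periodic. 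Hence every element is asymptotic quasi-periodic.
\item The single-valued entrance-measure map has closed graph on this hull. This follows from Skorokhod representation and passage to the limit in the frozen SDE, as in the proof of Theorem~\ref{Theorem existence of entrance measure}.
\item Kakutani's theorem then gives the fixed point.
\end{enumerate}
Neither continuity of $\Psi$ nor equicontinuity in $\vec{\mathbf{s}}$ is ever needed. The price is that the fixed point is only asymptotic quasi-periodic, which is exactly what the statement asserts.

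\textbf{The $\mathcal{M}_p$ claim.} Uniform $p$-moments for all $p\geq2$ need uniform dissipation on windows, \eqref{0523-1}. This comes from unique ergodicity of the rotation $T_t$, combined with Remark~\ref{rem:bounded-of-entrance-measure}. The averaged condition \eqref{eq:average weakly dissipation} plus ellipticity does not bound $\vartheta$ uniformly on $\mathbb{R}$.
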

    
In Theorem \ref{Theorem existence of entrance measure} and \ref{Thm existence quasi-periodic measure}, we showed the existence of an entrance/asymptotic measure. A natural question to ask is whether they are unique. The following theorem suggests that there may be multiple entrance and asymptotic measures.

Set $f^a(t,x,\nu):=f(t,x+a,\nu^{-a}), \ f=b,\sigma$ and $\nu^{-a}(A):=\nu(A-a)$ for all $A\in\mathcal{B}(\mathbb{R}^d)$.
 
\begin{theorem}\label{Theorem nonuniqueness}
		Suppose that there exists $a\in \mathbb{R}^d$ such that $b^a,\sigma^a$ satisfy Assumptions \ref{A3}, \ref{A4} with parameters $\alpha_t^a, \beta_t^a,\gamma_t^a,c^a_{\sigma}$ and $\vartheta_t^a$ defined by \eqref{eq m}. Assume that there exists a measurable function $g_a: [0,\infty)^2\to \mathbb{R}$ such that for any $\nu\in \mathcal{P}_1(\mathbb{R}^d)$,
		\begin{equation}\label{0815-2}
			2\langle x,b^a(t,x,\nu)\rangle+\|\sigma^a(t,x,\nu)\|_2^2\leq -g_a(|x|,\|\nu\|_1), \ \text{ for all } t\in \mathbb{R}, x\in \mathbb{R}^d.
		\end{equation}
		If there exists $\theta_a>0$ such that $\inf_{0\leq w\leq \theta_a}g_a(z,w)=g_a(z,\theta_a)$ for all $z\geq 0$ and $g_a(\cdot,\theta_a)$ is continuous, convex, and
		\begin{equation}\label{0814-2}
			g_a(z,\theta_a)>0, \ \text{ for all } z>\theta_a,
		\end{equation}
		then \eqref{McKean-Vlasov SDE 2} has an entrance measure $\mu_{\cdot}$ in $\mathcal{M}^a_{2,\vartheta^a}\cap\mathcal{M}^a_{1,\theta_a}$, where
		\begin{equation*}
			\mathcal{M}^a_{2,\vartheta^a}:=\big\{\mu:\mathbb{R}\to \mathcal{P}_2(\mathbb{R}^d)\big| \|\mu_t^a\|_2^2\leq \vartheta^a_t, \text{ for all } t\in \mathbb{R}\big\}.
		\end{equation*}
		Consequently, if there exist $a_1,a_2\in \mathbb{R}^d$ such that $\theta_{a_1}+\theta_{a_2}<|a_1-a_2|$ and the above assumptions hold for both $a_1,a_2$ respectively, then equation \eqref{McKean-Vlasov SDE 2} has at least two distinct entrance measures $\mu^i\in \mathcal{M}^{a_i}_{2,\vartheta^{a_i}}\cap\mathcal{M}^{a_i}_{1,\theta_{a_i}}$, $i=1,2$. Moreover, if $b,\sigma$ are periodic/quasi-periodic, then equation \eqref{McKean-Vlasov SDE 2} has at least two distinct periodic/asymptotic quasi-periodic measures.
	\end{theorem}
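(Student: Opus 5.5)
The plan is to reduce to the case $a=0$ by translation, and then rerun the fixed-point construction behind Theorem~\ref{Theorem existence of entrance measure} on a smaller invariant set.

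\textbf{Step 1: translation.} If $X_t$ solves \eqref{McKean-Vlasov SDE 2}, then $Y_t=X_t-a$ solves the McKean-Vlasov SDE with coefficients $b^a,\sigma^a$. Indeed, $\mathscr{L}(X_t)=\mathscr{L}(Y_t)^{-a}$, so
\[
b(t,X_t,\mathscr{L}(X_t))=b(t,Y_t+a,\mathscr{L}(Y_t)^{-a})=b^a(t,Y_t,\mathscr{L}(Y_t)),
\]
and similarly for $\sigma$. Hence $\mu_\cdot$ is an entrance measure of \eqref{McKean-Vlasov SDE 2} if and only if $\mu^a_\cdot:=\mu_\cdot(\cdot+a)$ is an entrance measure of the $a$-shifted equation. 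Periodicity and quasi-periodicity are preserved by this correspondence, and so are the $\mathcal{W}_2$-limits of quasi-periodic sequences. It therefore suffices to treat $a=0$ and to show the shifted equation has an entrance measure in $\mathcal{M}_{2,\vartheta}\cap\mathcal{M}_{1,\theta}$, where $\mathcal{M}_{1,\theta}:=\{\mu:\|\mu_t\|_1\le\theta \ \forall t\}$.

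\textbf{Step 2: invariance of $\mathcal{M}_{1,\theta}$.} Theorem~\ref{Theorem existence of entrance measure} is proved by a fixed-point (Schauder/Kakutani type) argument on a convex, suitably compact subset of $C(\mathbb{R};\mathcal{P}_2)\cap\mathcal{M}_{2,\vartheta}$. I would show the map $\Psi$ (freeze $\mu_\cdot$, take the entrance measure of the resulting linear SDE as the pull-back limit from $-\infty$) also preserves $\mathcal{M}_{1,\theta}$. The set $\mathcal{M}_{1,\theta}$ is convex and closed under $\mathcal{W}_2$ (even weak) limits, so intersecting with it keeps the hypotheses of the fixed-point theorem.

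For the invariance, fix $\mu_\cdot\in\mathcal{M}_{1,\theta}$ and let $X^{s}_t$ be the frozen-measure solution started at time $s$ from $0$. Itô's formula with \eqref{0815-2} gives
\[
\frac{d}{dt}\mathbf{E}|X_t|^2\le -\mathbf{E}\, g(|X_t|,\|\mu_t\|_1)\le -\mathbf{E}\, g(|X_t|,\theta),
\]
using $\|\mu_t\|_1\le\theta$ and $\inf_{w\le\theta}g(z,w)=g(z,\theta)$. By convexity and Jensen, $\mathbf{E}\,g(|X_t|,\theta)\ge g(\mathbf{E}|X_t|,\theta)$.

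\textbf{Step 3: the main obstacle.} The difficulty is turning this into a bound on $\mathbf{E}|X_t|$, since the inequality controls the second moment while the sign condition \eqref{0814-2} is on the first moment. Jensen gives $m_t:=\mathbf{E}|X_t|\le\sqrt{\mathbf{E}|X_t|^2}$. The first attempt is an ODE comparison for $y_t=\mathbf{E}|X_t|^2$, starting with $y_s=0$: argue that the set $\{\sqrt{y}\le\theta\}$ is forward invariant, because when $\sqrt{y_t}$ reaches $\theta$... this does not directly force $m_t>\theta$.

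So instead I would work with the Lyapunov function $|x|$ itself, smoothed near $0$, to get a differential inequality on $m_t$ driven by $-g(m_t,\theta)$. This may need a modified reading of \eqref{0815-2} adapted to $|x|$. Alternatively, argue by contradiction: if $m_t>\theta$ on an interval, then $g(m_t,\theta)>0$, which forces the second moment down. Combining this with the two-sided estimate and continuity in $t$ would show $m_t\le\theta$ for all $t\ge s$ whenever $m_s\le\theta$. Passing to the pull-back limit $s\to-\infty$ then gives $\|\Psi(\mu)_t\|_1\le\theta$.

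\textbf{Step 4: conclusions.} If $\theta_{a_1}+\theta_{a_2}<|a_1-a_2|$, the two entrance measures differ. Indeed, the means of the unshifted measures satisfy $|\int x\,\mu^i_t(dx)-a_i|\le\theta_{a_i}$, so the means lie in disjoint balls. For the periodic/quasi-periodic case, I would run the arguments of Theorem~\ref{Thm existence quasi-periodic measure} and the periodic analogue inside the same restricted set $\mathcal{M}_{1,\theta}$. The Kakutani argument on $\overline{\mathcal{Q}_2}$ is compatible with this convex closed constraint, so the approximating quasi-periodic measures and their limit stay in it. This yields two distinct measures by the same separation of means.
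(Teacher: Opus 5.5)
\textbf{Verdict: genuine gap in Step 3.}

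Your Steps 1, 2 and 4 are fine. In Step 4, separating the two measures through their means, $|\int x\,\mu^i_t(\mathrm{d}x)-a_i|\le\theta_{a_i}$, is a valid substitute for the paper's triangle inequality. The gap is Step 3, which is the heart of the theorem: you need $\|\Psi(\mu)^a_t\|_1\le\theta_a$, and none of your suggestions gives it.

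\emph{The contradiction argument.} A decreasing $\mathbf{E}|Y_t|^2$ says nothing about $m_t=\mathbf{E}|Y_t|$. The only link between them is $m_t^2\le\mathbf{E}|Y_t|^2$, so there is no "two-sided estimate" to combine with.

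\emph{The Lyapunov function $|x|$.} It turns \eqref{0815-2} into the drift bound $-g_a(|x|,\theta_a)/(2|x|)$, minus a nonnegative term. Setting $x=0$ in \eqref{0815-2} gives $g_a(0,\theta_a)\le -dc'_{\sigma}<0$. Hence $g_a(z,\theta_a)/z$ behaves like $g_a(0,\theta_a)/z\to-\infty$ near $0$, and Jensen cannot be applied.

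\emph{The target itself.} The pointwise forward invariance of $\{m\le\theta_a\}$ is not a consequence of the hypotheses once the coefficients depend on time. Because $g_a(0,\theta_a)<0$, \eqref{0815-2} permits an outward radial drift near the origin. Consider an ordinary time-periodic SDE with $g_a$ independent of $w$ that alternates two phases. A long phase with large noise and strong inward drift away from $0$ produces a law concentrated near $0$ with a long tail and first moment close to $\theta_a$. A short phase with small noise then pushes the central mass out to radius close to $\theta_a$, while the tail is pulled in only slowly. This makes $\mathbf{E}|Y_t|$ overshoot $\theta_a$ even though $\mathbf{E}|Y_t|^2$ decreases.

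\textbf{What the paper does instead, and its limitation.} The paper gives up tracking $m_t$ and integrates from the starting time with zero initial datum. Itô's formula for $|Y^{\mu^a,t_n,0}_t|^2$, after taking expectations and using $\mathbf{E}|Y_t|^2\ge 0$, gives $\int_{t_n}^t\mathbf{E}\,g_a(|Y_s|,\theta_a)\,\mathrm{d}s\le 0$. Jensen's inequality for the probability measure $(t-t_n)^{-1}\mathrm{d}s\otimes\mathbf{P}$ then yields $g_a(\bar m_n,\theta_a)\le 0$ for $\bar m_n=(t-t_n)^{-1}\int_{t_n}^t\mathbf{E}|Y_s|\,\mathrm{d}s$. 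Hence $\bar m_n\le\theta_a$ by \eqref{0814-2}. This is the idea missing from your proposal.

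Note, however, that this controls a Ces\`aro mean, not $\mathbf{E}|Y_t|$. The paper next writes $g_a(\|\Psi(\mu)^a_t\|_1,\theta_a)=\lim_n g_a(\bar m_n,\theta_a)$, citing \eqref{0814-1}. But \eqref{0814-1} concerns only the terminal time $t$. The identification is legitimate when the frozen equation is time-homogeneous, e.g. $b,\sigma$ independent of $t$ and $\mu$ constant in time, since then $\mathbf{E}|Y^{t_n,0}_s|$ depends only on $s-t_n$. In general it only bounds long-time averages of $\|\Psi(\mu)^a_s\|_1$. The overshoot mechanism above indicates that the pointwise bound, and with it the invariance of $\mathcal{M}^a_{1,\theta_a}$, can fail.

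So the obstacle you flagged in Step 3 is real. Closing it requires either the averaging argument in a time-homogeneous setting, or an extra structural assumption that gives pointwise control of the first moment.
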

	\begin{remark}
	It is now obvious that, under the assumptions of Theorem \ref{Theorem nonuniqueness}, equation \eqref{McKean-Vlasov SDE 2} has at least two distinct invariant measures in the sense of the double lift system.
	\end{remark}

\section{Existence of entrance measures: Proof of Theorem \ref{Theorem existence of entrance measure}}\label{section2}

In this section, we will give a proof of Theorem \ref{Theorem existence of entrance measure}. To show the existence of entrance measures of the McKean-Vlasov equation \eqref{McKean-Vlasov SDE 2}, the key idea is to consider the following SDE with a fixing $\mu\in \mathcal{M}_{2,loc}(\mathbb{R})$:
\begin{equation}\label{Classical SDE}
		\begin{cases}
			\mathrm{d}X_t=b(t,X_t,\mu_t)\mathrm{d}t+\sigma(t,X_t,\mu_t)\mathrm{d}W_t, \ t\geq s,\\
		  X_s=\xi.
		\end{cases}
	\end{equation}
Under Assumption \ref{A3}, it is well-known that for any $\xi\in L^2(\mathcal{F}_s)$, \eqref{Classical SDE} has a unique strong solution $X_t^{\mu,s,\xi}$ in $\mathcal{S}_{2,loc}([s,\infty))$.  
	
Recall that 
\begin{equation}\label{eq m}
		\vartheta_t=\int_{-\infty}^{t}e^{2\int_{u}^{t}(\alpha_r+\beta_r)\mathrm{d}r}(2\gamma_u+dc_{\sigma})\mathrm{d}u.
	\end{equation}
Under Assumption \ref{A4}, it is easy to check that $\mathcal{M}_{2,\vartheta}\subset \mathcal{M}_{2,loc}(\mathbb{R})$. Moreover, the function $\vartheta_{\cdot}$ satisfies the following identity.

	\begin{lemma}\label{Lemma 0320}
		Under Assumption \ref{A4}, we have
		\begin{equation*}
			\int_{-\infty}^{t}e^{2\int_{u}^{t}\alpha_r\mathrm{d}r}(2\beta_u\vartheta_u+2\gamma_u+dc_{\sigma})\mathrm{d}u=\vartheta_t, \ \text{ for all } \ t\in \mathbb{R}.
		\end{equation*}
	\end{lemma}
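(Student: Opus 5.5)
The identity says that $\vartheta$ solves the linear ODE $\dot{\vartheta}_t = 2(\alpha_t+\beta_t)\vartheta_t + 2\gamma_t + dc_\sigma$, written in two different integrating-factor forms. The plan is to pass between these forms by an integration by parts. A direct Fubini computation would also work, but it hides this structure.

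First, I would check that $\vartheta_t$ is finite for every $t$ and locally absolutely continuous. Finiteness at some sufficiently negative $t_0$ is given by the second part of Assumption \ref{A4} {\bf (ii)}. For $t\geq t_0$ I would write
\[
\vartheta_t = e^{2\int_{t_0}^t(\alpha_r+\beta_r)\mathrm{d}r}\vartheta_{t_0}+\int_{t_0}^t e^{2\int_u^t(\alpha_r+\beta_r)\mathrm{d}r}(2\gamma_u+dc_\sigma)\mathrm{d}u ,
\]
which is finite by the boundedness of $\alpha,\beta,\gamma$. For $t<t_0$ one uses $\vartheta_t\le e^{2\int_t^{t_0}|\alpha_r+\beta_r|\mathrm{d}r}\vartheta_{t_0}$, since the integrand is nonnegative: $\gamma\ge 0$ by Remark \ref{remark 0522}. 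The same representation, differentiated in $t$, shows that $\vartheta$ is absolutely continuous with
\[
\dot{\vartheta}_t = 2(\alpha_t+\beta_t)\vartheta_t + 2\gamma_t+dc_\sigma \quad\text{a.e.}
\]

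Next, fix $t$ and set $F(u)=e^{2\int_u^t\alpha_r\mathrm{d}r}\vartheta_u$ for $u\le t$. It is absolutely continuous, and
\[
F'(u)= e^{2\int_u^t\alpha_r\mathrm{d}r}\bigl(\dot\vartheta_u-2\alpha_u\vartheta_u\bigr)= e^{2\int_u^t\alpha_r\mathrm{d}r}\bigl(2\beta_u\vartheta_u+2\gamma_u+dc_\sigma\bigr).
\]
Integrating over $[s,t]$ gives
\[
\vartheta_t - e^{2\int_s^t\alpha_r\mathrm{d}r}\vartheta_s=\int_s^t e^{2\int_u^t\alpha_r\mathrm{d}r}(2\beta_u\vartheta_u+2\gamma_u+dc_\sigma)\mathrm{d}u .
\]
The integrand on the right is nonnegative because $\beta,\gamma\ge0$ and $\vartheta\ge0$. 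So by monotone convergence it suffices to show that the boundary term $e^{2\int_s^t\alpha_r\mathrm{d}r}\vartheta_s$ tends to $0$ as $s\to-\infty$.

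This limit is the main obstacle. Assumption \ref{A4} controls $\alpha+\beta$ on average, not $\alpha$ alone, and $\vartheta_s$ may grow as $s\to-\infty$. The limsup condition only gives boundedness of $\vartheta_s$ as $s\to-\infty$, i.e. $\sup_{s\le t_0}\vartheta_s<\infty$. Since $\beta\ge0$, we have $\int_s^t\alpha_r\mathrm{d}r\le\int_s^t(\alpha_r+\beta_r)\mathrm{d}r$, and the latter tends to $-\infty$ as $s\to-\infty$ by \eqref{eq:average weakly dissipation}. The time-average being negative forces $\int_{-T}^0(\alpha_r+\beta_r)\mathrm{d}r\le -cT$ for large $T$, and the piece over $[0,t]$ is bounded. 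Hence $e^{2\int_s^t\alpha_r\mathrm{d}r}\le e^{2\int_s^t(\alpha_r+\beta_r)\mathrm{d}r}\to0$. Combined with the boundedness of $\vartheta_s$, the boundary term vanishes, and letting $s\to-\infty$ yields the stated identity.

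If the finiteness or boundedness of $\vartheta$ near $-\infty$ turned out to be subtler, I would fall back on the Fubini route. That means expanding the left side as $\int\!\!\int$ over $v<u<t$ of
\[
e^{2\int_u^t\alpha_r\mathrm{d}r}\,2\beta_u\, e^{2\int_v^u(\alpha_r+\beta_r)\mathrm{d}r}\,(2\gamma_v+dc_\sigma),
\]
swapping the order of integration (legitimate by Tonelli, since everything is nonnegative), and using the exact integral $\int_v^t 2\beta_u e^{2\int_v^u\beta_r\mathrm{d}r}\mathrm{d}u = e^{2\int_v^t\beta_r\mathrm{d}r}-1$. This recovers $\vartheta_t$ with no boundary terms at all.
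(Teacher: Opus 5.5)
Your argument is correct, and your fallback is exactly the paper's proof. The paper substitutes the definition of $\vartheta_u$, swaps the order of integration over $\{v<u<t\}$, and uses $\int_v^t 2\beta_u e^{2\int_v^u\beta_r\mathrm{d}r}\mathrm{d}u=e^{2\int_v^t\beta_r\mathrm{d}r}-1$. Your primary route is genuinely different. You view $\vartheta$ as the pullback solution of $\dot\vartheta_t=2(\alpha_t+\beta_t)\vartheta_t+2\gamma_t+dc_\sigma$ and re-solve this ODE by variation of constants. You use only $e^{2\int\alpha}$ as integrating factor and treat $2\beta\vartheta$ as forcing. All the steps hold. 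Absolute continuity follows from the representation based at any point, since $\vartheta$ is finite everywhere. Your formula for $F'$ is right. Monotone convergence is justified by $\beta,\gamma,\vartheta\ge0$. The boundary term obeys $e^{2\int_s^t\alpha_r\mathrm{d}r}\vartheta_s\le\bigl(\sup_{s'\le t_0}\vartheta_{s'}\bigr)e^{2\int_s^t(\alpha_r+\beta_r)\mathrm{d}r}\to0$. The trade-off is as follows. The Fubini computation is an identity between integrals of nonnegative functions in $[0,\infty]$ and needs nothing beyond $\beta,\gamma\ge0$ from Remark \ref{remark 0522}. It uses no finiteness of $\vartheta$, no regularity, and neither condition of Assumption \ref{A4} {\bf (ii)}. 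Your route instead needs both conditions of {\bf (ii)}: one to bound $\vartheta_s$ near $-\infty$, the other to make $e^{2\int_s^t\alpha_r\mathrm{d}r}$ decay. It also needs the absolute-continuity step. In exchange, it explains why the identity holds. It also mirrors how the lemma is used in \eqref{0320-1}, where the bound $\|\mu_t\|_2^2\le\vartheta_t$ enters through the $\beta$ term while only $e^{-2\int_s^t\alpha_r\mathrm{d}r}$ serves as integrating factor.
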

	\begin{proof}
		By the definition of $\vartheta$ (see \eqref{eq m}), we know that for all $t\in \mathbb{R}$,
		\begin{equation*}
			\begin{split}
				&\int_{-\infty}^{t}e^{2\int_{u}^{t}\alpha_r\mathrm{d}r}(2\beta_u\vartheta_u+2\gamma_u+dc_{\sigma})\mathrm{d}u\\
				=&\int_{-\infty}^{t}e^{2\int_{u}^{t}\alpha_r\mathrm{d}r}\bigg[2\beta_u\int_{-\infty}^{u}e^{2\int_{v}^{u}(\alpha_r+\beta_r)\mathrm{d}r}(2\gamma_v+dc_{\sigma})\mathrm{d}v+2\gamma_u+dc_{\sigma}\bigg]\mathrm{d}u\\
				=&\int_{-\infty}^{t}\int_{-\infty}^{u}2\beta_ue^{2\int_{v}^{t}\alpha_r\mathrm{d}r}e^{2\int_{v}^{u}\beta_r\mathrm{d}r}(2\gamma_v+dc_{\sigma})\mathrm{d}v\mathrm{d}u+\int_{-\infty}^{t}e^{2\int_{u}^{t}\alpha_r\mathrm{d}r}(2\gamma_u+dc_{\sigma})\mathrm{d}u\\
				=&\int_{-\infty}^{t}e^{2\int_{v}^{t}\alpha_r\mathrm{d}r}(2\gamma_v+dc_{\sigma})\biggl(\int_{v}^{t}2\beta_ue^{2\int_{v}^{u}\beta_r\mathrm{d}r}\mathrm{d}u\biggr)\mathrm{d}v+\int_{-\infty}^{t}e^{2\int_{u}^{t}\alpha_r\mathrm{d}r}(2\gamma_u+dc_{\sigma})\mathrm{d}u\\
				=&\int_{-\infty}^{t}e^{2\int_{v}^{t}\alpha_r\mathrm{d}r}(2\gamma_v+dc_{\sigma})\bigl(e^{2\int_{v}^{t}\beta_r\mathrm{d}r}-1\bigr)\mathrm{d}v+\int_{-\infty}^{t}e^{2\int_{u}^{t}\alpha_r\mathrm{d}r}(2\gamma_u+dc_{\sigma})\mathrm{d}u\\
				=&\int_{-\infty}^{t}e^{2\int_{v}^{t}(\alpha_r+\beta_r)\mathrm{d}r}(2\gamma_v+dc_{\sigma})\mathrm{d}v\\
				=&\vartheta_t.
			\end{split}
		\end{equation*}
	\end{proof}

    Now, fix an arbitrary $\mu\in \mathcal{M}_{2,\vartheta}$ in \eqref{Classical SDE}. Applying It{\^o}'s formula to the solution $|X_t^{\mu,s,0}|^2$ of \eqref{Classical SDE}, we have
    \begin{equation}\label{0320-1}
			\begin{split}
				\mathrm{d}|X_t^{\mu,s,0}|^2&=\bigl(2\langle X_t^{\mu,s,0}, b(t,X_t^{\mu,s,0},\mu_t)\rangle+\|\sigma(t,X_t^{\mu,s,0},\mu_t)\|^2\bigr)dt+2\langle X_t^{\mu,s,0}, \sigma(t,X_t^{\mu,s,0},\mu_t)\mathrm{d}W_t\rangle\\
				&\leq \bigl(2\alpha_t|X_t^{\mu,s,0}|^2+2\beta_t\|\mu_t\|_2^2+2\gamma_t+dc_{\sigma}\bigr)\mathrm{d}t+2\langle X_t^{\mu,s,0}, \sigma(t,X_t^{\mu,s,0},\mu_t)\mathrm{d}W_t\rangle\\
                &\leq \bigl(2\alpha_t|X_t^{\mu,s,0}|^2+2\beta_t\vartheta_t+2\gamma_t+dc_{\sigma}\bigr)\mathrm{d}t+2\langle X_t^{\mu,s,0}, \sigma(t,X_t^{\mu,s,0},\mu_t)\mathrm{d}W_t\rangle.
			\end{split}
	\end{equation}
    Adding factor $e^{-2\int_s^t\alpha_r{\rm d}r}$, we conclude that
    \begin{equation*}
				\mathrm{d}\big(e^{-2\int_s^t\alpha_r{\rm d}r}|X_t^{\mu,s,0}|^2\big)
				\leq e^{-2\int_s^t\alpha_r{\rm d}r}\bigl(2\beta_t\vartheta_t+2\gamma_t+dc_{\sigma}\bigr)\mathrm{d}t+2e^{-2\int_s^t\alpha_r{\rm d}r}\langle X_t^{\mu,s,0}, \sigma(t,X_t^{\mu,s,0},\mu_t)\mathrm{d}W_t\rangle.
	\end{equation*}
    Taking expectations and recalling Lemma \ref{Lemma 0320}, we know that
		\begin{equation*}
			\mathbf{E}\bigl[|X_t^{\mu,s,0}|^2\bigr]
			  \leq \int_{s}^{t}e^{2\int_{u}^{t}\alpha_r\mathrm{d}r}\bigl(2\beta_u\vartheta_u+2\gamma_u+dc_{\sigma}\bigr)\mathrm{d}u\leq \int_{-\infty}^{t}e^{2\int_{u}^{t}\alpha_r\mathrm{d}r}\bigl(2\beta_u\vartheta_u+2\gamma_u+dc_{\sigma}\bigr)\mathrm{d}u= \vartheta_t.
		\end{equation*}
        By \cite[(4.8) in Theorems 4.2]{Feng-Qu-Zhao2023}, we know that \eqref{Classical SDE} has a unique entrance measure $\rho$ such that
        \begin{equation*}
            \|\rho_t\|_2^2=\liminf_{s\to -\infty}\mathbf{E}\bigl[|X_t^{\mu,s,0}|^2\bigr]\leq \vartheta_t.
        \end{equation*}
        Hence, the unique entrance measure $\rho\in \mathcal{M}_{2,\vartheta}$.

      Now we consider the Markov transition function $P^{\mu}(t,s,x,\cdot)$ of equation \eqref{Classical SDE} on $(\mathbb{R}^d,\mathcal{B}(\mathbb{R}^d))$, i.e., $P^{\mu}(t,s,x,\cdot)=\mathscr{L}(X_t^{\mu,s,x})$, where $X_t^{\mu,s,x}$ is the unique solution of \eqref{Classical SDE} with initial condition $(s,x)$. Then the corresponding semigroups $P^{\mu}(t,s): \mathcal{B}_b(\mathbb{R}^d)\to \mathcal{B}_b(\mathbb{R}^d)$, $P^{\mu,*}(t,s): \mathcal{P}(\mathbb{R}^d)\to \mathcal{P}(\mathbb{R}^d)$ are given by
	\begin{equation*}
		P^{\mu}(t,s)f(x):=\int_{\mathbb{R}^d}f(y)P^{\mu}(t,s,x,\mathrm{d}y), \ t\geq s, f\in \mathcal{B}_b(\mathbb{R}^d),
	\end{equation*}
	\begin{equation*}
		P^{\mu,*}(t,s)\nu(A):=\int_{\mathbb{R}^d}P^{\mu}(t,s,x,A)\nu(\mathrm{d}x), \ t\geq s, \nu\in \mathcal{P}(\mathbb{R}^d), A\in \mathcal{B}(\mathbb{R}^d).
	\end{equation*}

   Summarizing all the above with \cite[Theorems 4.2]{Feng-Qu-Zhao2023}, we obtain the following theorem.
	\begin{theorem}\label{Theorem geometric converge to periodic measure}
		Assume Assumptions \ref{A3}, \ref{A4} hold. Then for any $\mu\in \mathcal{M}_{2,\vartheta}$, \eqref{Classical SDE} has a unique entrance measure $\rho$ in $\mathcal{M}_{2,\vartheta}$ and there exists a decreasing sequence $\{t_n, n\geq 0\}$ with $\lim_{n\to \infty}t_n=-\infty$ such that for all $x\in \mathbb{R}^d, t\in \mathbb{R}$
		\begin{equation*}
			\lim_{n\to \infty}\|P^{\mu}(t,t_n,x,\cdot)-\rho_t\|_{TV}= 0,
		\end{equation*}
		where $\|\cdot\|_{TV}$ denotes the total variation of a signed measure.
	\end{theorem}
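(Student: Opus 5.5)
The plan is to treat the fixed-flow equation \eqref{Classical SDE} as an ordinary time-inhomogeneous SDE. For such SDEs, \cite[Theorem 4.2]{Feng-Qu-Zhao2023} provides existence and uniqueness of an entrance measure together with convergence in total variation along a sequence of starting times $t_n\to-\infty$. The work therefore splits into two parts. First, check that the hypotheses of that theorem hold uniformly for $\mu\in\mathcal{M}_{2,\vartheta}$. Second, check that the entrance measure it produces lies in $\mathcal{M}_{2,\vartheta}$.

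For the first part, freeze $\mu$ and set $\bar b(t,x)=b(t,x,\mu_t)$ and $\bar\sigma(t,x)=\sigma(t,x,\mu_t)$. By Assumption \ref{A3}, $\bar\sigma$ is Lipschitz in $x$ and uniformly nondegenerate, with constants $c'_\sigma,c_\sigma$. Also $\bar b$ is one-sided Lipschitz in $x$ with constant $L$ and has polynomial growth. Since $\|\mu_t\|_2^2\le\vartheta_t$, and $\vartheta$ is locally bounded (in fact bounded on $(-\infty,t_0]$ by Assumption \ref{A4}(ii)), the growth bound is locally uniform in $t$. The coercivity estimate becomes
\[
\langle x,\bar b(t,x)\rangle\le\alpha_t|x|^2+\beta_t\vartheta_t+\gamma_t.
\]
This is exactly the weak-dissipativity structure required in \cite{Feng-Qu-Zhao2023}: an exponent $\alpha_t$ that may be positive on many intervals, together with a bounded forcing term. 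The averaged dissipativity needed for $\alpha$ alone follows from \eqref{eq:average weakly dissipation}, because $\beta\ge0$ by Remark \ref{remark 0522}(i). The finiteness of the forcing integral is precisely Lemma \ref{Lemma 0320}. Under these conditions the local Harris small-set (minorization) argument of \cite{Feng-Qu-Zhao2023} applies on suitable time blocks. That argument uses nondegeneracy plus continuity of the coefficients to get a Doeblin bound on balls, and a Lyapunov function $|x|^2$ to control return to those balls. It yields a unique entrance measure $\rho$ and a decreasing sequence $t_n\to-\infty$ with $\|P^\mu(t,t_n,x,\cdot)-\rho_t\|_{TV}\to0$ for every $x$ and $t$.

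For the second part, run the It\^o computation \eqref{0320-1} with the integrating factor $e^{-2\int_s^t\alpha_r\mathrm{d}r}$. Combined with Lemma \ref{Lemma 0320}, this gives $\mathbf{E}|X^{\mu,s,0}_t|^2\le\vartheta_t$ uniformly in $s$. Total-variation convergence implies weak convergence, and lower semicontinuity of the second moment under weak convergence then gives $\|\rho_t\|_2^2\le\liminf_n\mathbf{E}|X^{\mu,t_n,0}_t|^2\le\vartheta_t$, so $\rho\in\mathcal{M}_{2,\vartheta}$. Uniqueness within $\mathcal{M}_{2,\vartheta}$ is inherited from uniqueness among all entrance measures with the moment control that \cite{Feng-Qu-Zhao2023} uses.

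The main obstacle is the first part, namely verifying carefully that the frozen coefficients meet the hypotheses of \cite[Theorem 4.2]{Feng-Qu-Zhao2023}. Two points need attention. The drift $\bar b$ is only measurable-in-$t$ through $\mu_t$ when $\mu$ is not continuous, so the time regularity assumed there must be checked, or $\mu$ must be restricted as needed. In addition, the forcing constant $\beta_t\vartheta_t+\gamma_t$ must be bounded so that the theorem's uniform-in-time constants apply. The first thing to try is to show that the small-set argument uses the coefficients only through pathwise bounds that are uniform on compact time intervals, so that measurability in $t$ suffices.
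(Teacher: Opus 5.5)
Your proposal is correct and is essentially the paper's argument. You freeze $\mu$ and invoke \cite[Theorem 4.2]{Feng-Qu-Zhao2023} for the frozen SDE \eqref{Classical SDE}, which gives a unique entrance measure with total-variation convergence along some $t_n\to-\infty$, and you combine the It\^o estimate \eqref{0320-1} with Lemma \ref{Lemma 0320} to get $\mathbf{E}\bigl[|X_t^{\mu,s,0}|^2\bigr]\le\vartheta_t$ uniformly in $s$, hence $\rho\in\mathcal{M}_{2,\vartheta}$.

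The only difference is how the moment bound reaches $\rho_t$. The paper uses the identity $\|\rho_t\|_2^2=\liminf_{s\to-\infty}\mathbf{E}\bigl[|X_t^{\mu,s,0}|^2\bigr]$ from (4.8) of that reference. You use lower semicontinuity of the second moment under the total-variation convergence, as the paper itself does for higher moments in Lemma \ref{Lemma entrance measure in M_p}.

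The measurability-in-$t$ issue you raise for arbitrary $\mu\in\mathcal{M}_{2,\vartheta}$ is passed over in the paper. It is harmless if you restrict to continuous $\mu$, which is all the later fixed-point argument needs.
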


	We also have the following lemma.
	\begin{lemma}\label{Lemma entrance measure in M_p}
		Assume Assumptions \ref{A3}, \ref{A4} hold. For any $\mu\in \mathcal{M}_{2,\vartheta}$, the unique entrance measure $\rho\in \mathcal{M}_{2,\vartheta}$ of \eqref{Classical SDE} satisfies: for any $2\leq p\leq 4$, we have $\rho_t\in \mathcal{P}_p(\mathbb{R}^d)$ and
		\begin{equation}\label{M_p estimate of entrance measure}
			\|\rho_t\|_p^p\leq \int_{-\infty}^{t}pe^{p\int_{u}^{t}\alpha_r\mathrm{d}r}\Big(\beta_u\vartheta_u+\gamma_u+\frac{p-1}{2}dc_{\sigma}\Big)\vartheta_u^{\frac{p}{2}-1}\mathrm{d}u=: a_p(t)<\infty.
		\end{equation}
	\end{lemma}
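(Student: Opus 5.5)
The plan is to follow the same pullback argument used above for the second moment, now applied to $|x|^p$. Fix $\mu\in\mathcal{M}_{2,\vartheta}$ and $2\le p\le 4$. I will first estimate $\mathbf{E}|X_t^{\mu,s,0}|^p$ uniformly in $s\le t$. Then I will pass to the limit along the sequence $t_n\to-\infty$ of Theorem \ref{Theorem geometric converge to periodic measure}. Since $P^\mu(t,t_n,0,\cdot)\to\rho_t$ in total variation, it converges weakly, and Fatou's lemma (lower semicontinuity of $\nu\mapsto\int|x|^p\,\mathrm{d}\nu$ under weak convergence) gives
\[
\|\rho_t\|_p^p\le\liminf_{n\to\infty}\mathbf{E}|X_t^{\mu,t_n,0}|^p .
\]
So everything reduces to the moment bound $\mathbf{E}|X_t^{\mu,s,0}|^p\le a_p(t)$ for all $s\le t$.

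For that bound, apply It\^o's formula to $|X_t|^p=(|X_t|^2)^{p/2}$, with $X_t=X_t^{\mu,s,0}$. The drift of $|X|^p$ is
\[
\tfrac p2|X|^{p-2}\bigl(2\langle X,b\rangle+\|\sigma\|^2\bigr)+\tfrac{p(p-2)}{2}|X|^{p-4}|\sigma^\top X|^2 .
\]
Using $|\sigma^\top X|^2\le c_\sigma|X|^2$ and $\|\sigma\|^2\le dc_\sigma$, this is at most
\[
p|X|^{p-2}\bigl(\langle X,b\rangle+\tfrac{p-1}{2}dc_\sigma\bigr),
\]
since $(p-2)c_\sigma\le(p-2)dc_\sigma$. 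By \eqref{Ineq weakly coercivity} and $\|\mu_u\|_2^2\le\vartheta_u$, the drift is at most
\[
p\alpha_u|X|^p+p\bigl(\beta_u\vartheta_u+\gamma_u+\tfrac{p-1}{2}dc_\sigma\bigr)|X|^{p-2}.
\]
Multiply by the integrating factor $e^{-p\int_s^t\alpha_r\mathrm{d}r}$. To kill the martingale term, localize with stopping times $\tau_R=\inf\{t:|X_t|\ge R\}$ and then let $R\to\infty$ using $X\in\mathcal{S}_{2,loc}$ and Fatou. Taking expectations gives
\[
\mathbf{E}|X_t|^p\le\int_s^t pe^{p\int_u^t\alpha_r\mathrm{d}r}\Big(\beta_u\vartheta_u+\gamma_u+\tfrac{p-1}{2}dc_\sigma\Big)\mathbf{E}|X_u|^{p-2}\mathrm{d}u .
\]

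The key point is the restriction $p\le4$. It gives $0\le p-2\le2$, so Jensen yields $\mathbf{E}|X_u|^{p-2}\le(\mathbf{E}|X_u|^2)^{\frac{p-2}{2}}\le\vartheta_u^{\frac p2-1}$, by the second-moment bound $\mathbf{E}|X_u^{\mu,s,0}|^2\le\vartheta_u$ already established before Theorem \ref{Theorem geometric converge to periodic measure}. Extending the integral to $(-\infty,t]$ (the integrand is nonnegative since $\beta,\gamma\ge0$ by Remark \ref{remark 0522}) gives $\mathbf{E}|X_t^{\mu,s,0}|^p\le a_p(t)$, which closes the argument.

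The main obstacle is showing $a_p(t)<\infty$. Assumption \ref{A4}(ii) controls exponentials of $\alpha+\beta$, whereas $a_p$ carries $e^{p\int\alpha}$ together with extra factors $\vartheta_u^{p/2}$. Here is how I would handle it.

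First, $\vartheta$ is bounded on $(-\infty,t]$. It is continuous, its $\limsup$ at $-\infty$ is finite by assumption, and it solves $\dot\vartheta=2(\alpha+\beta)\vartheta+2\gamma+dc_\sigma$ with bounded coefficients, so it is locally bounded.

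Second, write $\sup_{u\le t}\vartheta_u=:V$. Then $a_p(t)\le C(V)\int_{-\infty}^t e^{p\int_u^t\alpha_r\mathrm{d}r}\mathrm{d}u$. Since $\beta\ge0$, $\alpha\le\alpha+\beta$, and by \eqref{eq:average weakly dissipation} there are $\delta>0$ and $C$ with $\int_u^t(\alpha_r+\beta_r)\mathrm{d}r\le-\delta(t-u)+C$ for all $u\le t$. Boundedness of $\alpha,\beta$ upgrades the averaged statement at $-\infty$ into this uniform linear bound for fixed $t$. Hence the last integral is finite.

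If bounding $\vartheta$ on the whole half-line proves delicate, I will instead use the same identity trick as in Lemma \ref{Lemma 0320}. That is, I will compare the integrand directly with $\vartheta_u^{p/2}$ through $\tfrac{\mathrm{d}}{\mathrm{d}u}\vartheta_u^{p/2}$.
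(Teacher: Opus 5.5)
Your proposal is correct and follows essentially the same route as the paper. The steps are: It\^o's formula for $|X_t^{\mu,s,0}|^p$ with the integrating factor $e^{-p\int_s^t\alpha_r\mathrm{d}r}$; Jensen's inequality (where $p\le 4$ enters) to bound $\mathbf{E}|X_u^{\mu,s,0}|^{p-2}$ by $\vartheta_u^{\frac{p}{2}-1}$; passage to the limit along the sequence $t_n$ of Theorem~\ref{Theorem geometric converge to periodic measure}; and finiteness of $a_p(t)$ from the boundedness of $\vartheta$ on $(-\infty,t]$, $\alpha\le\alpha+\beta$, and the averaged dissipativity \eqref{eq:average weakly dissipation}. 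Your localization by $\tau_R$ and your explicit bound $\int_u^t(\alpha_r+\beta_r)\mathrm{d}r\le-\delta(t-u)+C_t$ make rigorous two steps that the paper states only tersely.
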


	\begin{proof}
		For any $p\geq 2$, applying It{\^o}'s formula to $|X_t^{\mu,s,0}|^p$ gives that
		\begin{equation}\label{0320-2}
			\begin{split}
			\mathrm{d}|X_t^{\mu,s,0}|^p
			  &\leq p\alpha_t |X_t^{\mu,s,0}|^p \mathrm{d}t+p\Big(\beta_t\vartheta_t+\gamma_t+\frac{p-1}{2}dc_{\sigma}\Big)|X_t^{\mu,s,0}|^{p-2}\mathrm{d}t\\
			  &\ \ \ +p|X_t^{\mu,s,0}|^{p-2}\langle X_t^{\mu,s,0}, \sigma(t,X_t^{\mu,s,0},\mu_t)\mathrm{d}W_t\rangle.
			\end{split}
		\end{equation}
		Adding factor $e^{-p\int_{s}^{t}\alpha_r\mathrm{d}r}$ in \eqref{0320-2} and taking expectations, we conclude that for all $2\leq p\leq 4$,
		\begin{equation*}
			\begin{split}
			  \mathbf{E}\bigl[|X_t^{\mu,s,0}|^p\bigr]
			  &\leq \int_{s}^{t}pe^{p\int_{u}^{t}\alpha_r\mathrm{d}r}\Big(\beta_u\vartheta_u+\gamma_u+\frac{p-1}{2}dc_{\sigma}\Big)\mathbf{E}\bigl[|X_u^{\mu,s,0}|^{p-2}\bigr]\mathrm{d}u\\
			  &\leq \int_{s}^{t}pe^{p\int_{u}^{t}\alpha_r\mathrm{d}r}\Big(\beta_u\vartheta_u+\gamma_u+\frac{p-1}{2}dc_{\sigma}\Big)\bigl(\mathbf{E}\bigl[|X_u^{\mu,s,0}|^2\bigr]\bigr)^{\frac{p}{2}-1}\mathrm{d}u\\
			  &\leq \int_{s}^{t}pe^{p\int_{u}^{t}\alpha_r\mathrm{d}r}\Big(\beta_u\vartheta_u+\gamma_u+\frac{p-1}{2}dc_{\sigma}\Big)\vartheta_u^{\frac{p}{2}-1}\mathrm{d}u\leq a_p(t).
			\end{split}
		\end{equation*}
		In particular, we know that $a_2(t)=\vartheta_t$.
		By Theorem \ref{Theorem geometric converge to periodic measure}, we have
		\begin{equation*}
			\begin{split}
				\|\rho_t\|_p^p&=\int_{\mathbb{R}^d}|x|^p\rho_t(\mathrm{d}x)=\lim_{N\to \infty}\int_{\mathbb{R}^d}(|x|^p\wedge N)\rho_t(\mathrm{d}x)\\
				&=\lim_{N\to \infty}\lim_{n\to \infty}\int_{\mathbb{R}^d}(|x|^p\wedge N)P^{\mu}(t,t_n,0,\mathrm{d}x)\\
				&\leq \limsup_{n\to \infty}\mathbf{E}\bigl[|X_t^{\mu,t_n,0}|^p\bigr]
				\leq a_p(t).
			\end{split}
		\end{equation*}
		Finally, we show that $a_p(t)<\infty$ for all $t\in \mathbb{R}$. Since $\limsup_{t\to-\infty}\vartheta_t<\infty$ and $\beta,\gamma$ are bounded, there exists $C_{p}(t)>0$ such that 
    \begin{equation*}
				a_p(t)=\int_{-\infty}^{t}pe^{p\int_{u}^{t}\alpha_r\mathrm{d}r}\Big(\beta_u\vartheta_u+\gamma_u+\frac{p-1}{2}dc_{\sigma}\Big)\vartheta_u^{\frac{p}{2}-1}\mathrm{d}u\leq C_{p}(t)e^{p\int_{u}^{t}\alpha_r\mathrm{d}r}\mathrm{d}u.
		\end{equation*}
    Note that the function $\beta_{\cdot}$ is nonnegative, we have
    \begin{equation*}
		\limsup_{T\to \infty}\frac{1}{T+t}\int_{-T}^{t}\alpha_r\mathrm{d}r\leq \limsup_{T\to \infty}\frac{1}{T+t}\int_{-T}^{t}(\alpha_r+\beta_r)\mathrm{d}r=\limsup_{T\to \infty}\frac{1}{T}\int_{-T}^{0}(\alpha_r+\beta_r)\mathrm{d}r<0,
	\end{equation*}
	so there exist $a>0, T_t>0$ such that $\int_s^t\alpha_r\mathrm{d}r\leq -a(t-s)$ for all $s\leq -T_t$. Hence, for any $t\in\mathbb{R}$,
\begin{equation*}
		\begin{split}
			a_p(t)&\leq C_p(t)\int_{-T_t}^{t}e^{p\int_{u}^{t}\alpha_r\mathrm{d}r}\mathrm{d}u+C_p(t)\int_{-\infty}^{-T_t}e^{-pa(t-u)}\mathrm{d}u\\
			&\leq C_p(t)\int_{-T_t}^{t}e^{p\int_{u}^{t}\alpha_r\mathrm{d}r}\mathrm{d}u+\frac{C_p(t)}{pa}e^{-pa(T_t+t)}<\infty.
		\end{split}
	\end{equation*}
The proof is complete.
	\end{proof}

	Now for any $p\geq 1$, let us consider the space $C(\mathbb{R};\mathcal{P}_p(\mathbb{R}^d))$ equipped with the topology generated by the following distance:
	\begin{equation}\label{eq:distance-d_2}
		d_p(\mu^1,\mu^2):=\sum_{n=1}^{\infty}\frac{1}{2^n}\frac{d_{p,n}(\mu^1,\mu^2)}{1+d_{p,n}(\mu^1,\mu^2)}, \ \  d_{p,n}(\mu^1,\mu^2):=\sup_{|t|\leq n}\mathcal{W}_p(\mu^1_t,\mu^2_t).
	\end{equation}
	It is easy to check that for any $p\geq 1$, $(C(\mathbb{R};\mathcal{P}_p(\mathbb{R}^d)), d_p)$ is a complete metric space. Moreover, a sequence $\{\mu^m\}_{m\geq 1}\subset C(\mathbb{R};\mathcal{P}_p(\mathbb{R}^d))$ converges to some $\mu \in C(\mathbb{R};\mathcal{P}_p(\mathbb{R}^d))$ in $d_p$ if and only if it converges to $\mu$ in $d_{p,n}$ for all $n\geq 1$.

	By Theorem \ref{Theorem geometric converge to periodic measure}, for any $\mu\in \mathcal{M}_{2,\vartheta}$, there is a unique entrance measure $\rho^{\mu}\in \mathcal{M}_{2,\vartheta}$ of SDE \eqref{Classical SDE}. Now we define a map
	$\Psi: \mathcal{M}_{2,\vartheta}\to \mathcal{M}_{2,\vartheta}$ by $\Psi (\mu):=\rho^{\mu}$. Set
	\begin{equation*}
		\widetilde{\mathcal{M}}:=\{\Psi(\mu): \mu\in \mathcal{M}_{2,\vartheta}\}.
	\end{equation*}
	We have the following lemma.
	\begin{lemma}\label{Lemma of invariant set}
		Under the same assumptions as those of Theorem \ref{Theorem existence of entrance measure}, $\widetilde{\mathcal{M}}\subset C(\mathbb{R};\mathcal{P}_2(\mathbb{R}^d))\cap \mathcal{M}_{2,\vartheta}$ and $\Psi(\widetilde{\mathcal{M}})\subset \widetilde{\mathcal{M}}$.
	\end{lemma}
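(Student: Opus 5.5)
The second claim is immediate once the first holds. Since $\widetilde{\mathcal{M}}\subset \mathcal{M}_{2,\vartheta}$, for $\rho=\Psi(\mu)\in\widetilde{\mathcal{M}}$ the image $\Psi(\rho)$ is defined, and it lies in $\widetilde{\mathcal{M}}$ by the very definition of $\widetilde{\mathcal{M}}$. Theorem \ref{Theorem geometric converge to periodic measure} already gives $\Psi(\mu)\in\mathcal{M}_{2,\vartheta}$. So the real content is continuity: for every $\mu\in\mathcal{M}_{2,\vartheta}$, the map $t\mapsto\rho^{\mu}_t$ is continuous into $(\mathcal{P}_2(\mathbb{R}^d),\mathcal{W}_2)$.

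To prove continuity at a fixed $t_0$, I would use the entrance property $\rho_t=P^{\mu,*}(t,s)\rho_s$ for $s<t_0-1$ fixed and $t\in[t_0-1/2,t_0+1/2]$. Take $\xi\in L^2(\mathcal{F}_s)$ with law $\rho_s$. Then $\rho_t=\mathscr{L}(X^{\mu,s,\xi}_t)$, and this is genuinely a linear semigroup because $\mu$ is frozen, so the entrance property of $\rho$ for the SDE \eqref{Classical SDE} applies. The coupling bound gives
\[
\mathcal{W}_2(\rho_t,\rho_{t'})^2\le \mathbf{E}\bigl[|X^{\mu,s,\xi}_t-X^{\mu,s,\xi}_{t'}|^2\bigr],
\]
so it suffices to show $t\mapsto X^{\mu,s,\xi}_t$ is continuous in $L^2(\Omega)$. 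Pathwise continuity holds since $X^{\mu,s,\xi}\in\mathcal{S}_{2,loc}([s,\infty))$. For the $L^2$ limit I need uniform integrability of $|X_t-X_{t'}|^2$, and I would obtain it from a higher moment. Lemma \ref{Lemma entrance measure in M_p} gives $\|\rho_s\|_4^4\le a_4(s)<\infty$. By Itô's formula for $|X_t|^4$ with the coercivity \eqref{Ineq weakly coercivity}, as in the proof of that lemma with $\|\mu_t\|_2^2\le\vartheta_t$, I then get $\sup_{t\in[s,t_0+1]}\mathbf{E}|X^{\mu,s,\xi}_t|^4<\infty$. Alternatively, I would split $X_t-X_{t'}$ into its drift and martingale parts. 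The stochastic integral is controlled by the BDG inequality together with $\|\sigma\|^2\le dc_\sigma$, giving a bound $dc_\sigma|t-t'|$. The drift increment is $\int_{t'}^t b\,\mathrm{d}r$, and by \eqref{Ineq polynomial growth} its square is bounded by $|t-t'|\int_{t'}^{t}L^2(1+|X_r|^{\kappa}+\vartheta_r^{\kappa/2})^2\mathrm{d}r$.

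The main obstacle is this drift bound, since $\kappa$ may be large and so moments of order $2\kappa$ would be needed. I would avoid it with the uniform integrability route. The fourth-moment bound makes $\{|X_t-X_{t'}|^2\}$ uniformly integrable, and pathwise continuity gives convergence in probability as $t'\to t$, so Vitali's theorem yields $L^2$ convergence. If a direct argument is preferred, one can instead localize with stopping times $\tau_R=\inf\{r\ge s:|X_r|\ge R\}$. On $\{\tau_R>t_0+1\}$ the drift is bounded, giving increments of order $|t-t'|$. The complement has probability at most $\mathbf{P}(\sup_{r\le t_0+1}|X_r|\ge R)\to0$ as $R\to\infty$, and its contribution is controlled by Cauchy–Schwarz and the fourth-moment bound.

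This gives $\mathcal{W}_2(\rho_t,\rho_{t'})\to0$, hence $\rho^{\mu}\in C(\mathbb{R};\mathcal{P}_2(\mathbb{R}^d))$. Together with $\rho^\mu\in\mathcal{M}_{2,\vartheta}$, this proves the inclusion $\widetilde{\mathcal{M}}\subset C(\mathbb{R};\mathcal{P}_2(\mathbb{R}^d))\cap \mathcal{M}_{2,\vartheta}$, and the invariance $\Psi(\widetilde{\mathcal{M}})\subset\widetilde{\mathcal{M}}$ follows as explained at the start.
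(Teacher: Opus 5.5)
Your proof is correct, and its skeleton is the paper's. You freeze $\mu$, take $s=t_0-1$ and $\xi_s\in L^2(\mathcal{F}_s)$ with law $\rho_s$, and identify $\rho_t=\mathscr{L}(X^{\mu,s,\xi_s}_t)$ via the entrance property of the frozen (linear) semigroup. You then bound $\mathcal{W}_2^2(\rho_t,\rho_{t_0})\le\mathbf{E}\bigl[|X_t-X_{t_0}|^2\bigr]$ and pass to the limit using pathwise continuity.

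The only difference is how you justify exchanging the limit with the expectation. The paper derives $\mathbf{E}\bigl[\sup_{s\le r\le t}|X_r|^2\bigr]<\infty$ from It\^o's formula, the BDG inequality and Gronwall, using only $\|\mu_r\|_2^2\le\vartheta_r$ and second moments. It then applies dominated convergence with dominating function $4\sup_{s\le r\le t_0+1}|X_r|^2$. You instead import the fourth moment $\|\rho_s\|_4^4\le a_4(s)$ from Lemma \ref{Lemma entrance measure in M_p}, propagate it by It\^o's formula, and conclude by uniform integrability and Vitali's theorem. Your route avoids any supremum estimate but relies on the higher-moment lemma.

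Note, though, that the fact $X^{\mu,s,\xi}\in\mathcal{S}_{2,loc}([s,\infty))$, which you cite only for pathwise continuity, already gives $\mathbf{E}\bigl[\sup_{s\le r\le t_0+1}|X_r|^2\bigr]<\infty$. So dominated convergence applies at once, and neither your fourth-moment bound nor your stopping-time alternative is actually needed.
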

	\begin{proof}
		First, $\widetilde{\mathcal{M}}\subset \mathcal{M}_{2,\vartheta}$ and $\Psi(\widetilde{\mathcal{M}})\subset \widetilde{\mathcal{M}}$ can be obtained immediately from Theorem \ref{Theorem geometric converge to periodic measure}. It remains to show that $\widetilde{\mathcal{M}}\subset C(\mathbb{R};\mathcal{P}_2(\mathbb{R}^d))$.

		For any $\mu\in \mathcal{M}_{2,\vartheta}$, we know that $\Psi(\mu)\in \mathcal{M}_{2,\vartheta}$. For any fixed $t_0\in \mathbb{R}$ and $s=t_0-1$, let $\xi_{s}\in L^2(\mathcal{F}_{s})$ such that $\mathscr{L}(\xi_s)=\Psi(\mu)_s$. It is easy to see that $\mathscr{L}(X_t^{\mu,s,\xi_s})=\Psi(\mu)_t$ for all $t\geq s$. Similar to \eqref{0320-1}, we know that for any $t\geq s$,
		\begin{equation*}
			\begin{split}
				\sup_{s\leq r\leq t}|X_r^{\mu,s,\xi_s}|^2&\leq |\xi_s|^2+\int_s^t\bigl(2\alpha_u^+|X_u^{\mu,s,\xi_s}|^2+2\beta_u\|\mu_u\|_2^2+2\gamma_u+dc_{\sigma}\bigr)\mathrm{d}u\\
				&\ \ \ \ +2\sup_{s\leq r\leq t}\bigg|\int_s^r\langle X_u^{\mu,s,\xi_s}, \sigma(u,X_u^{\mu,s,\xi_s},\mu_u)\mathrm{d}W_u\rangle\bigg|,
			\end{split}
		\end{equation*}
		then \eqref{Ineq of non-degenerate diffusion} and the Burkholder-Davis-Gundy inequality yield
		\begin{equation*}
			\begin{split}
				\mathbf{E}\Bigl[\sup_{s\leq r\leq t}|X_r^{\mu,s,\xi_s}|^2\Bigr]&\leq \mathbf{E}[|\xi_s|^2]+\int_s^t(2\beta_u\vartheta_u+2\gamma_u+dc_{\sigma})\mathrm{d}u+2C_{BDG}^2c_{\sigma}(t-s)\\
				&\ \ \ \ +\frac{1}{2}\mathbf{E}\Bigl[\sup_{s\leq r\leq t}|X_r^{\mu,s,\xi_s}|^2\Bigr]+\int_s^t2\alpha_u^+\mathbf{E}\Bigl[\sup_{s\leq r\leq u}|X_r^{\mu,s,\xi_s}|^2\Bigr]\mathrm{d}u.
			\end{split}
		\end{equation*}
		Then the Gronwall inequality gives
		\begin{equation}\label{0613-1}
			\mathbf{E}\Bigl[\sup_{s\leq r\leq t}|X_r^{\mu,s,\xi_s}|^2\Bigr]\leq 2\biggl(\mathbf{E}[|\xi_s|^2]+\int_s^t(2\beta_u\vartheta_u+2\gamma_u+dc_{\sigma})\mathrm{d}u+2C_{BDG}^2c_{\sigma}(t-s)\biggr)e^{2\int_s^t\alpha_r^+\mathrm{d}r}.
		\end{equation}
        Note that 
        \begin{equation*}
            \lim_{t\to t_0}X_t^{\mu,s,\xi_s}=X_{t_0}^{\mu,s,\xi_s}, \ \text{$\mathbf{P}$-a.s.}
        \end{equation*}
		It follows from the dominated convergence theorem that
		\begin{equation*}
			\lim_{t\to s}\mathcal{W}_2(\Psi(\mu)_t,\Psi(\mu)_s)\leq \lim_{t\to s}\mathbf{E}\bigl[|X_t^{\mu,s,\xi_s}-\xi_s|^2\bigr]=0.
		\end{equation*}
	\end{proof}

	\begin{lemma}\label{Lemma of compact set}
		The closure of $\widetilde{\mathcal{M}}$ in $C(\mathbb{R};\mathcal{P}_2(\mathbb{R}^d))$ is compact.
	\end{lemma}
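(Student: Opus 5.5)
Since $d_2$ metrizes uniform convergence on compacts, the plan is an Arzel\`a--Ascoli argument in $C(\mathbb{R};\mathcal{P}_2(\mathbb{R}^d))$, applied on each interval $[-n,n]$ followed by a diagonal extraction. It suffices to prove two facts.
\begin{itemize}
\item[(a)] For each $t$, the set $\{\rho_t:\rho\in\widetilde{\mathcal{M}}\}$ is relatively compact in $(\mathcal{P}_2(\mathbb{R}^d),\mathcal{W}_2)$.
\item[(b)] The family $\widetilde{\mathcal{M}}$ is equicontinuous in $\mathcal{W}_2$ on every compact interval $[-n,n]$.
\end{itemize}
Given these, every sequence in $\widetilde{\mathcal{M}}$ has a subsequence converging in $d_{2,n}$ for each $n$, hence in $d_2$, and the limit lies in $C(\mathbb{R};\mathcal{P}_2(\mathbb{R}^d))$ because it is a locally uniform limit of continuous maps. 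This gives relative compactness of $\widetilde{\mathcal{M}}$, i.e.\ compactness of its closure (the space being complete).

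For (a) I would use Lemma \ref{Lemma entrance measure in M_p} with $p=3$ (any $2<p\le4$ works). Every $\rho=\Psi(\mu)$ with $\mu\in\mathcal{M}_{2,\vartheta}$ satisfies $\|\rho_t\|_3^3\le a_3(t)<\infty$, and this bound is independent of $\mu$. A uniform bound on a moment of order strictly greater than $2$ yields tightness, via Markov's inequality, together with uniform integrability of $|x|^2$. This is exactly relative compactness in $\mathcal{W}_2$. I would also note that $a_3(\cdot)$, and $\vartheta_\cdot$, are locally bounded in $t$, since their integrands are bounded on compacts. This gives bounds uniform over $t\in[-n,n]$, which are needed in (b).

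For (b), fix $n$ and $-n\le s<t\le n$. Take $\xi_s\in L^2(\mathcal{F}_s)$ with law $\rho_s$, and let $X_t=X_t^{\mu,s,\xi_s}$, so that $\mathscr{L}(X_t)=\rho_t$. Then
\[
\mathcal{W}_2(\rho_t,\rho_s)^2\le\mathbf{E}|X_t-\xi_s|^2\le 2\mathbf{E}\Big|\int_s^t b(u,X_u,\mu_u)\mathrm{d}u\Big|^2+2\,dc_\sigma(t-s).
\]
The diffusion term is already uniform, by the It\^o isometry. For the drift term I would use the growth bound \eqref{Ineq polynomial growth}, which gives
\[
\Big|\int_s^t b\,\mathrm{d}u\Big|^2\le (t-s)\int_s^t L^2\bigl(1+|X_u|^{\kappa}+\vartheta_u^{\kappa/2}\bigr)^2\mathrm{d}u,
\]
and this requires uniform bounds on $\mathbf{E}|X_u|^{2\kappa}$.

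This moment bound is the main obstacle, because Lemma \ref{Lemma entrance measure in M_p} only controls moments up to order $4$, while $\kappa$ can be large. I would try first to avoid high moments by truncation. Split on the event $A_R=\{\sup_{s\le r\le t}|X_r|\le R\}$. On $A_R$ the drift is bounded by $L(1+R^\kappa+\vartheta^{\kappa/2})$, which gives a contribution $O((t-s)^2)$ uniformly in $\mu$. On the complement, I would write $X_t-\xi_s$ using the local martingale and compare $|X_t|^2$ and $|\xi_s|^2$. The term $\mathbf{E}[|X_t-\xi_s|^2\mathbf{1}_{A_R^c}]$ is controlled by $\mathbf{E}[(|X_t|^2+|\xi_s|^2)\mathbf{1}_{A_R^c}]$. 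To bound it, I would combine the uniform $L^3$ bounds at fixed times (via H\"older) with the estimate \eqref{0613-1}, which gives $\mathbf{P}(A_R^c)\le C_n/R^2$ uniformly in $\mu$ and in $s,t\in[-n,n]$. However, $\mathbf{E}[|X_t|^2\mathbf{1}_{A_R^c}]$ needs the $L^3$ bound on $X_t$ together with a small probability, and that is available: H\"older gives $\le a_3(t)^{2/3}\mathbf{P}(A_R^c)^{1/3}$. Choosing $R$ large and then $t-s$ small gives the required equicontinuity, uniformly over $\widetilde{\mathcal{M}}$.

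If this truncation argument turns out to be delicate, my fallback is to redo the It\^o estimate of Lemma \ref{Lemma entrance measure in M_p} for $p=2\kappa$. The same computation with $\mathbf{E}|X|^{p-2}\le(\mathbf{E}|X|^{p})^{(p-2)/p}$ gives a differential inequality with a uniform bound on $[-n,n]$, since the initial law $\rho_s$ itself has bounded $p$-th moment by the limit argument along $t_n\to-\infty$. I would then apply Cauchy--Schwarz to the drift term directly.
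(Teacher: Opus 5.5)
Your argument is correct, but it follows a different route from the paper's.

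\textbf{The paper's route.} The paper works on path space. It constructs $X^{\mu^m,-n,\xi^{m,n}}$ on $[-n,n]$ and proves the uniform sup-moment bound \eqref{0713-3} for some $p>2$. It controls the modulus of continuity of the drift integral by a stopping-time truncation, and that of the stochastic integral via Theorem \ref{Thm I-4.3}. It then uses the representation of Theorem \ref{Thm I-4.2} together with Vitali's theorem to extract a subsequence that is Cauchy in every $d_{2,n}$.

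\textbf{Your route.} You apply Arzel\`a--Ascoli in $C([-n,n];\mathcal{P}_2(\mathbb{R}^d))$, using the single-time coupling $(\xi_s,X_t)$. Your equicontinuity estimate does close: splitting $\mathbf{E}|X_t-\xi_s|^2$ over $A_R$ and $A_R^c$ gives
\[
\mathcal{W}_2^2(\rho_t,\rho_s)\le 2(t-s)^2L^2\Bigl(1+R^{\kappa}+\sup_{[-n,n]}\vartheta^{\kappa/2}\Bigr)^2+2dc_\sigma(t-s)+2\bigl(a_3(t)^{2/3}+a_3(s)^{2/3}\bigr)\Bigl(\frac{C_n}{R^2}\Bigr)^{1/3}.
\]
This holds uniformly over $\mu\in\mathcal{M}_{2,\vartheta}$ and $s,t\in[-n,n]$.

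Three small points:
\begin{itemize}
\item On $A_R^c$ you only use the crude bound $|X_t-\xi_s|^2\le 2|X_t|^2+2|\xi_s|^2$, so the local-martingale comparison is superfluous.
\item The $p=2\kappa$ fallback is not needed.
\item Local boundedness of $\vartheta$ and $a_3$ does not come from the integrands being bounded on compacts, since the integrals run over $(-\infty,t]$. It comes from the identity $a_p(t)=e^{p\int_s^t\alpha_r\mathrm{d}r}a_p(s)+\int_s^t(\cdots)\mathrm{d}u$, and its analogue for $\vartheta$, combined with finiteness at each point.
\end{itemize}

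\textbf{Comparison.} Your route is more economical. It needs only fixed-time moments of order $p>2$ and the sup-$L^2$ bound \eqref{0613-1}; it avoids sup-$L^p$ bounds, the Kolmogorov-type criterion and a change of probability space. The paper's path-space machinery pays off elsewhere. It is reused almost verbatim, with the extra components $B,\Sigma,W$, in the closed-graph step of the proof of Theorem \ref{Theorem existence of entrance measure}, where the limit process must be identified as a solution of the frozen SDE.
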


	\begin{proof}
		It is enough to show that $\widetilde{\mathcal{M}}$ is pre-compact in $C(\mathbb{R};\mathcal{P}_2(\mathbb{R}^d))$. This is to prove that for any sequence $\{\rho^m\}_{m\geq 1}=\{\Psi(\mu^m)\}_{m\geq 1}\subset \widetilde{\mathcal{M}}$, there exists a subsequence $\{\rho^{m_k}\}_{k\geq 1}$ such that it converges in $C(\mathbb{R};\mathcal{P}_2(\mathbb{R}^d))$.

		By Lemma \ref{Lemma entrance measure in M_p}, we know that for any $2<p\leq 4$,
		\begin{equation*}
			\sup_{m\geq 1}\|\rho^m_t\|_p^p=\sup_{m\geq 1}\|\Psi(\mu^m)_t\|_p^p\leq a_p(t)<\infty.
		\end{equation*}
		Note that for any $t\in\mathbb{R}$, $\mathcal{F}_t:=\sigma\{B_u-B_v: u,v\leq t\}$ is the $\sigma$-algebra generated by the standard two-sided Brownian motion before $t$, so $(\Omega,\mathcal{F}_t,\mathbf{P})$ is a standard probability space. Hence, for an arbitrary fixed $n\geq 1$,  there exists a sequence of $\mathcal{F}_{-n}$-measurable random variables $\{\xi^{m,n}\}$ such that
		\begin{equation}\label{0713-1}
			\mathscr{L}(\xi^{m,n})=\rho^m_{-n}, \ \text{ and } \ \sup_{m\geq 1}\mathbf{E}\bigl[|\xi^{m,n}|^p\bigr]=\sup_{m\geq 1}\|\rho^m_{-n}\|_p^p\leq a_p(-n)<\infty, \ \text{ for all } \ 2<p\leq 4.
		\end{equation}
		Let $\{X_t^{\mu^m,-n,\xi^{m,n}}\}_{t\geq -n}$ be the unique solution to \eqref{Classical SDE} with initial condition $(-n,\xi^{m,n})$, i.e.,
		\begin{equation*}
			X_t^{\mu^m,-n,\xi^{m,n}}=\xi^{m,n}+\int_{-n}^{t}b\big(r,X_r^{\mu^m,-n,\xi^{m,n}},\mu_r^m\big)\mathrm{d}r+\int_{-n}^{t}\sigma\big(r,X_r^{\mu^m,-n,\xi^{m,n}},\mu_r^m\big)\mathrm{d}W_r.
		\end{equation*}
		It can be seen that 
		\begin{equation*}
			\mathscr{L}(X_t^{\mu^m,-n,\xi^{m,n}})=\rho^m_t, \ \text{ for all } \ t\geq -n.
		\end{equation*}
		By \eqref{0320-2} and similar to the proof of \eqref{0613-1}, we know that for any $2<p\leq 4$, there exists a constant $C=C(n,p,d,\alpha,\beta,\gamma,\vartheta,c_{\sigma})>0$ such that
		\begin{equation}\label{0713-3}
			\sup_{m\geq 1}\mathbf{E}\Bigl[\sup_{-n\leq t\leq n}\big|X_t^{\mu^m,-n,\xi^{m,n}}\big|^p\Bigr]\leq C\Big(1+\sup_{m\geq 1}\mathbf{E}\bigl[|\xi^{m,n}|^p\bigr]\Big)\leq C(1+a_p(-n)).
		\end{equation}
		On the other hand, the Burkholder-Davis-Gundy inequality and the boundedness of $\sigma$ imply
		\begin{equation}\label{my1128}
			\begin{split}
				\mathbf{E}\biggl[\sup_{-n\leq t\leq n}\bigg|\int_{-n}^{t}\sigma\big(r,X_r^{\mu^m,-n,\xi^{m,n}},\mu_r^m\big)\mathrm{d}W_r\bigg|^p\biggr]
				&\leq C(p,n,d,c_{\sigma}),
			\end{split}
		\end{equation}
		then we conclude
		\begin{equation}\label{0613-3}
		  \mathbf{E}\biggl[\sup_{-n\leq t\leq n}\bigg|\int_{-n}^{t}b\big(r,X_r^{\mu^m,-n,\xi^{m,n}},\mu_r^m\big)\mathrm{d}r\bigg|^p\biggr]\leq C(n,p,d,\alpha,\beta,\gamma,\vartheta,c_{\sigma})\bigl(1+a_p(-n)\bigr)=:\ell_p,
		\end{equation}
		  from \eqref{my1128} and the fact that
		\begin{equation*}
		  \begin{split}
			  &\ \ \ \sup_{-n\leq t\leq n}\bigg|\int_{-n}^{t}b\big(r,X_r^{\mu^m,-n,\xi^{m,n}},\mu_r^m\big)\mathrm{d}r\bigg|^p\\
			  &\leq C(p)\biggl(|\xi^{m,n}|^p+\sup_{-n\leq t\leq n}|X_t^{\mu^m,-n,\xi^{m,n}}|^p+\sup_{-n\leq t\leq n}\bigg|\int_{-n}^{t}\sigma\big(r,X_r^{\mu^m,-n,\xi^{m,n}},\mu_r^m\big)\mathrm{d}W_r\bigg|^p\biggr).
		  \end{split}
		\end{equation*}
		Let
        \begin{equation*}
			\tau_n^{m,R}:=\inf\{-n\leq t\leq n: |X_t^{\mu^m,-n,\xi^{m,n}}\big|\geq R\}.
		\end{equation*}
		Then \eqref{0713-3} and the Chebyshev's inequality yield that for any $2<p\leq 4$,
		\begin{equation}\label{0613-4}
			\mathbf{P}\{\tau_n^{m,R}<n\}=\mathbf{P}\Big\{\sup_{-n\leq t\leq n}|X_t^{\mu^m,-n,\xi^{m,n}}\big|\geq R\Big\}\leq \frac{\ell_p}{R^p}.
		\end{equation}
		Since $b$ satisfies \eqref{Ineq polynomial growth}, then for any $-n\leq s\leq t\leq n$ and $R>\sqrt{\sup_{t\in[-n,n]}\vartheta_t}=:R_0$, we have
        \begin{equation}\label{0613-5}
			\begin{split}
				\bigg|\int_s^tb\big(r,X_r^{\mu^m,-n,\xi^{m,n}},\mu_r^m\big)\mathrm{d}r\bigg|&=\bigg|\int_s^tb\big(r,X_r^{\mu^m,-n,\xi^{m,n}},\mu_r^m\big)\mathrm{d}r\bigg| 1_{\{\tau_n^{m,R}=n\}}\\
				&\ \ \ \ +\bigg|\int_s^tb\big(r,X_r^{\mu^m,-n,\xi^{m,n}},\mu_r^m\big)\mathrm{d}r\bigg|1_{\{\tau_n^{m,R}<n\}}\\
				&\leq 2L(1+R^{\kappa})|t-s|\\
				&\ \ \ \ +2\sup_{-n\leq t\leq n}\bigg|\int_{-n}^tb\big(r,X_r^{\mu^m,-n,\xi^{m,n}},\mu_r^m\big)\mathrm{d}r\bigg|1_{\{\tau_n^{m,R}<n\}}.
			\end{split}
		\end{equation}
		Then it follows from \eqref{0613-3}, \eqref{0613-4}, \eqref{0613-5} and H\"older inequality that for any $R>R_0$, $2<p\leq 4$,
		\begin{equation*}
			\begin{split}
			  \mathbf{E}\biggl[\sup_{-n\leq s\leq t\leq n, |t-s|\leq h}\bigg|\int_{s}^{t}b\big(r,X_r^{\mu^m,-n,\xi^{m,n}},\mu_r^m\big)\mathrm{d}r\bigg|\biggr]\leq 2L(1+R^{\kappa})h+\frac{2\ell_p}{R^{p-1}}.
			\end{split}
		\end{equation*}
		Hence, for any $\epsilon>0$ and $2<p\leq 4$,
		\begin{equation*}
			\lim_{h\downarrow 0}\sup_{m\geq 1}\mathbf{P}\bigg\{\sup_{-n\leq s\leq t\leq n, |t-s|\leq h}\bigg|\int_{s}^{t}b\big(r,X_r^{\mu^m,-n,\xi^{m,n}},\mu_r^m\big)\mathrm{d}r\bigg|>\epsilon\bigg\}\leq \frac{2\ell_p}{R^{p-1}\epsilon},
		\end{equation*}
		for all $R>R_0$ and therefore equal to $0$ by letting $R\to \infty$. Note also that for any $p>2$,
		\begin{equation*}
			\mathbf{E}\biggl[\bigg|\int_{s}^{t}\sigma\big(r,X_r^{\mu^m,-n,\xi^{m,n}},\mu_r^m\big)\mathrm{d}W_r\bigg|^p\biggr]\leq C(p)(dc_{\sigma})^{\frac{p}{2}}|t-s|^{\frac{p}{2}}.
		\end{equation*}
		Then Theorem \ref{Thm I-4.3} shows that
		\begin{equation}
			\lim_{h\downarrow 0}\sup_{m\geq 1}\mathbf{P}\bigg\{\sup_{-n\leq s\leq t\leq n, |t-s|\leq h}\bigg|\int_{s}^{t}\sigma\big(r,X_r^{\mu^m,-n,\xi^{m,n}},\mu_r^m\big)\mathrm{d}W_r \bigg|>\epsilon\bigg\}=0.
		\end{equation}
		Thus
		\begin{equation}\label{0320-3}
			\lim_{h\downarrow 0}\sup_{m\geq 1}\mathbf{P}\bigg\{\sup_{-n\leq s\leq t\leq n, |t-s|\leq h}\big|X_t^{\mu^m,-n,\xi^{m,n}}-X_s^{\mu^m,-n,\xi^{m,n}}\big|>\epsilon\bigg\}=0.
		\end{equation}
		Then it follows from \eqref{0713-1}, \eqref{0320-3} and Theorem \ref{Thm I-4.2} that there exist a subsequence $\{m_k\}_{k\geq 1}$, a probability space $(\tilde{\Omega},\tilde{\mathcal{F}},\tilde{\mathbf{P}})$ and $d$-dimensional continuous processes $\{\tilde{X}_{m_k}\}_{k\geq 1}$ and $\tilde{X}$ on it such that
		\begin{itemize}
			\item [\emph{(i)}] $\mathscr{L}\big(\tilde{X}_{m_k}(t)\big)=\mathscr{L}\big(X^{\mu^{m_k},-n,\xi^{m_k,n}}_t\big)=\rho^{m_k}_t, \ k\geq 1$, for all $t\in [-n,n]$;
			\item [\emph{(ii)}] the sequence $\{\tilde{X}_{m_k}, k\geq 1\}$ uniformly converges to $\tilde{X}$ on $[-n,n]$, $\tilde{\mathbf{P}}$-almost surely.
		\end{itemize}
		Moreover, \eqref{0713-3} indicates that $\{\tilde{X}_{m_k}(t)\}_{k\geq 1}$ is $L^2(\Omega;C([-n,n];\mathbb{R}^d))$-uniformly integrable, then the Vitali convergence theorem yields that
		\begin{equation*}
			\lim_{k,l\to \infty}d_{2,n}(\rho^{m_k},\rho^{m_l})\leq \lim_{k,l\to \infty}\tilde{\mathbf{E}}\Bigl[\sup_{-n\leq t\leq n}|\tilde{X}_{m_k}(t)-\tilde{X}_{m_l}(t)|^2\Bigr]=0.
		\end{equation*}
		Hence $\{\rho^{m_k}\}$ is a Cauchy sequence in $C(\mathbb{R};\mathcal{P}_2(\mathbb{R}^d))$ with respect to the semi-metric $d_{2,n}$. Now we can use the diagonal method to choose a subsequence, denote by $\{\rho^{m_k}\}_{k\geq 1}$, which is a Cauchy sequence with respect to all semi-metrics $\{d_{2,n}, n\geq 1\}$. Hence it is a Cauchy sequence in $C(\mathbb{R};\mathcal{P}_2(\mathbb{R}^d))$ with respect to the metric $d_2$ by the definition of $d_2$.
	\end{proof}

   Now, we give the proof of Theorem \ref{Theorem existence of entrance measure}.

	\begin{proof}[Proof of Theorem \ref{Theorem existence of entrance measure}]
		If the map $\Psi$ has a fixed point $\mu$ in $\mathcal{M}_{2,\vartheta}$, i.e., $\mu=\Psi(\mu)$, it is easy to check that $\mu$ is an entrance measure of McKean-Vlasov SDE \eqref{McKean-Vlasov SDE 2}. Thus it remains to prove the existence of a fixed point of $\Psi$.

		Let
		\begin{equation*}
			\mathcal{MS}_2:=\bigg\{\mu: \mu \text{ is a finite signed measure on } \mathbb{R}^d \text{ such that } \int_{\mathbb{R}^d}|x|^2|\mu|(\mathrm{d}x)<\infty\bigg\}.
		\end{equation*}
		According to \cite{rachev1984}, $\mathcal{MS}_2$ is a Hausdorff locally convex space under the following Kantorovich-Rubinstein metric
		\begin{equation*}
			\mathbb{W}_2(\mu,\nu):=\biggl(\sup_{(f,g)\in Lip_2}\biggl(\int_{\mathbb{R}^d}f(x)\mu(\mathrm{d}x)+\int_{\mathbb{R}^d}g(x)\nu(\mathrm{d}x)\biggr)\biggr)^{\frac{1}{2}},
		\end{equation*}
		where 
		\begin{equation*}
			Lip_2:=\{(f,g)| f,g: \mathbb{R}^d \to \mathbb{R} \text{ are bounded Lipschitz continuous such that } f(x)+g(y)\leq |x-y|^2\}.
		\end{equation*}
		By Theorem 1 in \cite{rachev1984}, we know that for any $\mu,\nu\in \mathcal{P}_2(\mathbb{R}^d)$,
		\begin{equation*}
			\mathbb{W}_2(\mu,\nu)=\mathcal{W}_2(\mu,\nu).
		\end{equation*}
		It can be seen that $\mathcal{P}_2(\mathbb{R}^d)$ is a closed convex subset of $\mathcal{MS}_2$. Hence $C(\mathbb{R};\mathcal{P}_2(\mathbb{R}^d))$ is a closed convex subset of $C(\mathbb{R};\mathcal{MS}_2)$. It is also easy to check that $C(\mathbb{R};\mathcal{MS}_2)$ is a Hausdorff locally convex space.

		Denote by $\overline{{\rm co}}(\widetilde{\mathcal{M}})$ the closed convex hull of $\widetilde{\mathcal{M}}$ in $C(\mathbb{R};\mathcal{P}_2(\mathbb{R}^d))$, hence $\overline{{\rm co}}(\widetilde{\mathcal{M}})$ is convex in $C(\mathbb{R};\mathcal{MS}_2)$. Note that $C(\mathbb{R};\mathcal{P}_2(\mathbb{R}^d))$ is a complete metric space and $\widetilde{\mathcal{M}}$ is pre-compact in $C(\mathbb{R};\mathcal{P}_2(\mathbb{R}^d))$ (see Lemma \ref{Lemma of compact set}), then by Theorem 5.35 in \cite{Aliprantis-Border2006}, we know that $\overline{{\rm co}}(\widetilde{\mathcal{M}})$ is compact in $C(\mathbb{R};\mathcal{P}_2(\mathbb{R}^d))$ and therefore compact in $C(\mathbb{R};\mathcal{MS}_2)$.

		It is easy to show that $\overline{{\rm co}}(\widetilde{\mathcal{M}})\subset \mathcal{M}_{2,\vartheta}$ is nonempty and $\Psi(\overline{{\rm co}}(\widetilde{\mathcal{M}}))\subset \overline{{\rm co}}(\widetilde{\mathcal{M}})$. Since $\Psi$ is a single-valued map, it is enough to show that the graph $\{(\mu,\rho)\in \overline{{\rm co}}(\widetilde{\mathcal{M}})\times \overline{{\rm co}}(\widetilde{\mathcal{M}}): \rho=\Psi(\mu)\}$ is closed in $\overline{{\rm co}}(\widetilde{\mathcal{M}})\times \overline{{\rm co}}(\widetilde{\mathcal{M}})$, i.e., for any sequence $\{(\mu^m,\rho^m): \rho^m=\Psi(\mu^m)\}_{m\geq 1}\subset \overline{{\rm co}}(\widetilde{\mathcal{M}})\times \overline{{\rm co}}(\widetilde{\mathcal{M}})$, if $d_2(\mu^m,\mu)\to 0$ and $d_2(\rho^m,\rho)\to 0$ as $m\to \infty$, then $\mu,\rho\in \overline{{\rm co}}(\widetilde{\mathcal{M}})$ and $\rho=\Psi(\mu)$. Note that $\overline{{\rm co}}(\widetilde{\mathcal{M}})$ is closed, so the first part of this claim is obvious. It remains to show that $\rho=\Psi(\mu)$.

		Note that for any $s\in \mathbb{R}$, there exists a sequence of $\mathcal{F}_s$-measurable random variables $\{\xi^m_s\}$ such that $\mathscr{L}(\xi^m_s)=\rho^m_s$. Let $\{X_t^{\mu^m,s,\xi^m_s}, t\geq s\}$ be the unique solution to \eqref{Classical SDE} with initial condition $(s,\xi^m_s)$. Then it follows that $\mathscr{L}(X_t^{\mu^m,s,\xi^m_s})=\rho^m_t$ for any $t\geq s$.

		For any $T>0$, set
		\begin{equation*}
			\eta^m_t=\biggl(X_t^{\mu^m,s,\xi^m_s}, \int_{s}^{t}b\Bigl(r,X_r^{\mu^m,s,\xi^m_s},\mu^m_r\Bigr)\mathrm{d}r, \int_{s}^{t}\sigma\Bigl(r,X_r^{\mu^m,s,\xi^m_s},\mu^m_r\Bigr)\mathrm{d}W_r, W_t\biggr), \ s\leq t\leq s+T.
		\end{equation*}
		Similar to \eqref{0713-1}-\eqref{0320-3} in the proof of Lemma \ref{Lemma of compact set}, we know that for any $2<p\leq 4$,
		\begin{equation}\label{0713-4}
			\sup_{m\geq 1}\mathbf{E}\Bigl[\sup_{s\leq t\leq s+T}|\eta^m_t|^p\Bigr]\leq \ell_p<\infty,
		\end{equation}
		and for any $\epsilon>0$,
		\begin{equation*}
			\lim_{h\downarrow 0}\sup_{m\geq 1}\mathbf{P}\bigg\{\sup_{s\leq t_1,t_2\leq s+T, |t_1-t_2|\leq h}|\eta^m_{t_1}-\eta^m_{t_2}|>\epsilon\bigg\}=0.
		\end{equation*}
		Then Theorem \ref{Thm I-4.2} yields that there exist a subsequence $\{m_k\}_{k\geq 1}$, a probability space $(\tilde{\Omega},\tilde{\mathcal{F}},\tilde{\mathbf{P}})$ and continuous processes 
		\begin{equation*}
			\tilde{\eta}^{m_k}_t=\Bigl(\tilde{X}^{m_k}_t, \tilde{B}^{m_k}_t, \tilde{\Sigma}^{m_k}_t, \tilde{W}^{m_k}_t\Bigr), k=1,2,\cdots, \ \tilde{\eta}_t=\bigl(\tilde{X}_t, \tilde{B}_t, \tilde{\Sigma}_t, \tilde{W}_t\bigr), \ s\leq t\leq s+T,
		\end{equation*}
		defined on $(\tilde{\Omega},\tilde{\mathcal{F}},\tilde{\mathbf{P}})$ such that
	    \begin{itemize}
		\item [(i)] $\mathscr{L}(\tilde{\eta}^{m_k})=\mathscr{L}(\eta^{m_k}), \ k\geq 1$;
		\item [(ii)] the sequence $\{\tilde{\eta}^{m_k}, k\geq 1\}$ converges to $\tilde{\eta}$ uniformly on $[s,s+T]$, $\tilde{\mathbf{P}}$-almost surely.
	    \end{itemize}
		Let $\tilde{\mathcal{F}}^{m_k}_t$, $\tilde{\mathcal{F}}_t$ be the filtration generated by $\{\tilde{\eta}^{m_k}_r, s\leq r\leq t\}$, $\{\tilde{\eta}_r, s\leq r\leq t\}$, respectively, completed by all null sets. It is easy to see that $\{\tilde{W}^{m_k}_t-\tilde{W}^{m_k}_s\}_{s\leq t\leq s+T}$ and $\{\tilde{W}_t-\tilde{W}_s\}_{s\leq t\leq s+T}$ are standard $\tilde{\mathcal{F}}^{m_k}_t$- and $\tilde{\mathcal{F}}_t$-Brownian motion. Note that $\eta^{m_k}=(X^{m_k},B^{m_k},\Sigma^{m_k},W)$ where for all $s\leq t\leq s+T$,
	\begin{equation}\label{0614-2}
		X^{m_k}_t=X^{\mu^{m_k},s,\xi_s^{m_k}}_t, \ \ B^{m_k}_t=\int_{s}^{t}b\bigl(r,X_r^{m_k},\mu^{m_k}_r\bigr)\mathrm{d}r, \ \ \Sigma^{m_k}_t=\int_{s}^{t}\sigma\bigl(r,X_r^{m_k},\mu^{m_k}_r\bigr)\mathrm{d}W_r.
	\end{equation}
	Then $\mathscr{L}(\tilde{\eta}^{m_k})=\mathscr{L}(\eta^{m_k})$ implies that 
	\begin{equation*}
		\begin{split}
			\mathscr{L}\biggl(\tilde{B}^{m_k}_{\cdot}-\int_{s}^{\cdot}b\bigl(r,\tilde{X}_r^{m_k},\mu^{m_k}_r\bigr)\mathrm{d}r\biggr)
			&=\mathscr{L}\biggl(B^{m_k}_{\cdot}-\int_{s}^{\cdot}b\bigl(r,X_r^{m_k},\mu^{m_k}_r\bigr)\mathrm{d}r\biggr)=\delta_0.
		\end{split}
	\end{equation*}
	Hence
	\begin{equation*}
		\mathbf{E}\biggl[\sup_{t\in [s,s+T]}\bigg|\tilde{B}^{m_k}_{t}-\int_{s}^{t}b\bigl(r,\tilde{X}_r^{m_k},\mu^{m_k}_r\bigr)\mathrm{d}r\bigg|\biggr]=0,
	\end{equation*}
	which yields that $\tilde{\mathbf{P}}$-a.s.
	\begin{equation*}
		\tilde{B}^{m_k}_{t}=\int_{s}^{t}b\bigl(r,\tilde{X}_r^{m_k},\mu^{m_k}_r\bigr)\mathrm{d}r, \ \text{ for all } s\leq t\leq s+T.
	\end{equation*}
	Similarly, we also have $\tilde{\mathbf{P}}$-a.s.
	\begin{equation*}
		\tilde{\Sigma}^{m_k}_t=\int_{s}^{t}\sigma\bigl(r,\tilde{X}_r^{m_k},\mu^{m_k}_r\bigr)\mathrm{d}\tilde{W}^{m_k}_r, \ \text{ for all } s\leq t\leq s+T.
	\end{equation*}
	Then again $\mathscr{L}(\tilde{\eta}^{m_k})=\mathscr{L}(\eta^{m_k})$ shows that $\tilde{\mathbf{P}}$-a.s. 
	\begin{equation}\label{0614-1}
		\tilde{X}_t^{m_k}=\tilde{X}_s^{m_k}+\int_{s}^{t}b\bigl(r,\tilde{X}_r^{m_k},\mu^{m_k}_r\bigr)\mathrm{d}r+\int_{s}^{t}\sigma\bigl(r,\tilde{X}_r^{m_k},\mu^{m_k}_r\bigr)\mathrm{d}\tilde{W}^{m_k}_r,  \ s\leq t\leq s+T.
	\end{equation}

	In the following, we will show that every term in \eqref{0614-1} has a $L^2$-limit. Note that $\tilde{X}^{m_k}$ converge to $\tilde{X}$ uniformly on $[s,s+T]$, $\tilde{\mathbf{P}}$-a.s., so \eqref{0713-4} and the Vitali convergence theorem imply
	\begin{equation}
		\lim_{k\to \infty}\tilde{\mathbf{E}}\bigl[|\tilde{X}^{m_k}_t-\tilde{X}_t|^2\bigr]=0.
	\end{equation}
	Moreover, since $\lim_{m\to \infty}\sup_{s\leq t\leq s+T}\mathcal{W}_2(\mu^m_t,\mu_t)=0$, the continuity of $b,\sigma$ yields $\tilde{\mathbf{P}}$-a.s.
	\begin{equation*}
		f\bigl(r,\tilde{X}_r^{m_k},\mu^{m_k}_r\bigr)\xrightarrow{k\to \infty} f\bigl(r,\tilde{X}_r,\mu_r\bigr),  \text{ uniformly on }  [s,s+T], \ f=b,\sigma.
	\end{equation*}
	Hence $\tilde{\mathbf{P}}$-a.s., for all $s\leq t\leq s+T$
	\begin{equation*}
		\int_{s}^{t}b\bigl(r,\tilde{X}_r^{m_k},\mu^{m_k}_r\bigr)\mathrm{d}r \xrightarrow{k\to \infty} \int_{s}^{t}b\bigl(r,\tilde{X}_r,\mu_r\bigr)\mathrm{d}r,
	\end{equation*}
	and again \eqref{0713-4} together with Vitali convergence theorem gives
	\begin{equation}
\int_{s}^{t}b\bigl(r,\tilde{X}_r^{m_k},\mu^{m_k}_r\bigr)\mathrm{d}r \xrightarrow{k\to \infty} \int_{s}^{t}b\bigl(r,\tilde{X}_r,\mu_r\bigr)\mathrm{d}r \ \text{ in } \ L^2(\tilde{\Omega}).
	\end{equation}
	For the integral of $\sigma$ term on the right-hand side of \eqref{0614-1}, \eqref{0713-4} and Vitali convergence theorem also lead that
	\begin{equation*}
		\lim_{k\to \infty}\tilde{\mathbf{E}}\Bigl[\big\|\sigma\bigl(r,\tilde{X}_r^{m_k},\mu^{m_k}_r\bigr)-\sigma\bigl(r,\tilde{X}_r,\mu_r\bigr)\big\|^2\Bigr]=0, \ \text{ for all } \ s\leq r\leq s+T,
	\end{equation*}
	and the dominated convergence theorem yields that for all $s\leq r\leq s+T$,
	\begin{equation*}
		\lim_{k\to \infty}\int_s^t\tilde{\mathbf{E}}\Bigl[\big\|\sigma\bigl(r,\tilde{X}_r^{m_k},\mu^{m_k}_r\bigr)-\sigma\bigl(r,\tilde{X}_r,\mu_r\bigr)\big\|^2\Bigr]\mathrm{d}r=0.
	\end{equation*}
	Then \eqref{0713-4} and Theorem \ref{Theorem of Skorohod} show that
	\begin{equation}
		\int_{s}^{t}\sigma\bigl(r,\tilde{X}_r^{m_k},\mu^{m_k}_r\bigr)\mathrm{d}\tilde{W}^{m_k}_r \xrightarrow{k\to \infty} \int_{s}^{t}\sigma\bigl(r,\tilde{X}_r,\mu_r\bigr)\mathrm{d}\tilde{W}_r \ \text{ in } \ L^2(\tilde{\Omega}).
	\end{equation}
	Now taking the $L^2$-limits to both sides of \eqref{0614-1} for all $t\in \mathbb{Q}\cap [s,s+T]$, where $\mathbb{Q}$ stands for the set of rational numbers in $\mathbb{R}$, we have $\tilde{\mathbf{P}}-a.s.,$
		\begin{equation}\label{0713-5}
			\tilde{X}_t=\tilde{X}_s+\int_{s}^{t}b\big(r,\tilde{X}_r,\mu_r\big)\mathrm{d}r+\int_{s}^{t}\sigma\big(r,\tilde{X}_r,\mu_r\big)\mathrm{d}\tilde{W}_r, \ t\in \mathbb{Q}\cap [s,s+T].
		\end{equation}
		Since $\tilde{\eta}$ is a continuous process, we have \eqref{0713-5} holds for all $t\in [s,s+T]$, $\tilde{\mathbf{P}}-a.s.$.
		Note that $\mathscr{L}(\tilde{X}^{m_k}_t)=\mathscr{L}(X_t^{\mu^{m_k},s,\xi^{m_k}_s})=\rho^{m_k}_t$ for all $s\leq t$, so \eqref{0713-4} and the Vitali convergence theorem imply,
		\begin{equation*}
			\lim_{k\to \infty}\mathcal{W}_2\big(\rho^{m_k}_t,\mathscr{L}(\tilde{X}_t)\big)\leq \lim_{k\to \infty}\tilde{\mathbf{E}}\bigl[|\tilde{X}^{m_k}_t-\tilde{X}_t|^2\bigr]=0, \ s\leq t\leq s+T.
		\end{equation*}
		Thus $\mathscr{L}(\tilde{X}_t)=\rho_t$ for all $s\leq t\leq s+T$ since $d_2(\rho^m,\rho)\to 0$ as $m\to \infty$. Therefore, \eqref{0713-5} shows that $P^{\mu,*}(t,s)\rho_s=\rho_t$ for all $s\leq t\leq s+T$. Hence $\rho=\Psi(\mu)$ follows from the arbitrary choices of $s\in \mathbb{R}, T>0$.

		Finally, it follows from Theorem \ref{Lemma Kakutani's fixed point theorem} and the fact that $\Psi$ is single-valued that there exists $\mu\in\overline{{\rm co}}(\widetilde{\mathcal{M}})$ such that $\mu=\Psi(\mu)$.
	\end{proof}

\begin{remark}\label{rem:bounded-of-entrance-measure}
	If Assumption \ref{A4} (ii) is replaced by the  condition:
\begin{equation}\label{eq:new-replace-A2(ii)}
	\text{there exist $l>0, a>0$ such that $\frac{1}{l}\int_{t}^{t+l}(\alpha_r+\beta_r)\mathrm{d}r\leq -a$ for all $t\in \mathbb{R}$,}
\end{equation}
then, as noted in Remark \ref{remark 0522} (ii), Assumption \ref{A4} (ii) is fulfilled, and $\vartheta$ is bounded.

Let $\mu = \Psi(\mu)\in \mathcal{M}_{2,\vartheta}$ be an entrance measure of equation \eqref{McKean-Vlasov SDE 2}. Since $\vartheta$ is bounded, $\mu$ is in $\mathcal{M}_2(\mathbb{R})$. Furthermore, according to Lemma \ref{Lemma entrance measure in M_p} and the process used to demonstrate the boundedness of $\vartheta$ in Remark \ref{remark 0522} (ii), we conclude that for any $0 \leq p \leq 4$, there exists $C_p > 0$ such that $\|\mu_t\|_p^p\leq a_p(t)\leq C_p$ for all $t\in\mathbb{R}$.

 For any $k\geq 2$ and $2k< p\leq 2k+2$, if we define $a_p(t)$ recursively by
\begin{equation*}
	a_p(t)=\int_{-\infty}^{t}pe^{p\int_{u}^{t}\alpha_r\mathrm{d}r}\Big(\beta_u\vartheta_u+\gamma_u+\frac{p-1}{2}dc_{\sigma}\Big)a_{p-2}(u)\mathrm{d}u,
\end{equation*}
we can demonstrate that $a_p(t)$ is bounded by some $C_p > 0$ for all $p \geq 2$. Then, following the proof of Lemma \ref{Lemma entrance measure in M_p}, we conclude that for all $p\geq 2$ and all $t\in\mathbb{R}$, $\|\mu_t\|_p^p\leq a_p(t)\leq C_p$ for some $C_p>0$.
Thus, we show that any entrance measure $\mu\in \mathcal{M}_{2,\vartheta}$ is in $\mathcal{M}_p(\mathbb{R})$ for all $p\geq 2$.
\end{remark}

\begin{remark}
    If Assumption \ref{A4} is replaced by the following
    \[
    2\big\langle x-y, b(t,x,\mu)-b(t,y,\mu)\big\rangle+\|\sigma(t,x,\mu)-\sigma(t,y,\nu)\|_2^2\leq C_1(t)|x-y|^2+C_2(t)\mathcal{W}_2(\mu,\nu),
    \]
    where $C_1(t), C_2(t)$ are bounded functions such that
    \[
    \sup_{t\in\mathbb{R}}\frac{1}{l}\int_t^{t+l}(C_1(r)+C_2(r)){\rm d}r\leq -2\lambda, \ \text{ for some } \ l,\lambda>0.
    \]
    According to \cite[Theorem 3.1]{wang2018}, there exists $C>0$ such that
    \begin{equation}\label{eq:exponential-contraction}
        \mathcal{W}_2(P^*(t,s)\mu,P^*(t,s)\nu)\leq Ce^{-\lambda |t-s|}\mathcal{W}_2(\mu,\nu), \ \text{ for all } \ t\geq s, \ \mu,\nu\in\mathcal{P}_2(\mathbb{R}^d).
    \end{equation}
    In this case, it is easy to check that for any fixed $t\in\mathbb{R}, \ \mu\in \mathcal{P}_2(\mathbb{R}^d)$, $\{P^*(t,s)\mu\}_{s\leq t}$ is a Cauchy process as $s\to -\infty$. Let $\mu_t:=\lim_{s\to\infty}P^*(t,s)\delta_0$. We can conclude that $\{\mu_t\}_{t\in\mathbb{R}}$ is the unique entrance measure, and it holds that 
    \begin{equation*}
        \mathcal{W}_2(P^*(t,s)\mu,\mu_t)\leq Ce^{-\lambda |t-s|}\mathcal{W}_2(\mu,\mu_s), \ \text{ for all } \ t\geq s, \ \mu\in\mathcal{P}_2(\mathbb{R}^d).
    \end{equation*}
\end{remark}

In the rest of this section, we explore the joint continuity of the map $P^*(t,s): \mathcal{P}_2(\mathbb{R}^d)\to \mathcal{P}_2(\mathbb{R}^d)$ as defined by \eqref{eq:def-P^*}. To be precise, let $\Delta:=\{(t,s)\in \mathbb{R}^2: t\geq s\}\subset \mathbb{R}^2$. We obtain the following result.
	\begin{proposition}\label{Joint cts of P^*}
		Assume Assumption \ref{A3} holds. Then $P^*:\Delta\times\mathcal{P}_2(\mathbb{R}^d)\to \mathcal{P}_2(\mathbb{R}^d)$ is jointly continuous.
	\end{proposition}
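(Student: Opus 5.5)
We fix $(t_0,s_0)\in\Delta$ and $\mu\in\mathcal{P}_2(\mathbb{R}^d)$, and take a sequence $(t_m,s_m,\mu^m)\to(t_0,s_0,\mu)$ with $t_m\geq s_m$. The goal is
\[
\mathcal{W}_2\big(P^*(t_m,s_m)\mu^m,P^*(t_0,s_0)\mu\big)\to 0 .
\]
We split this by the triangle inequality into three pieces:
\[
\mathcal{W}_2\big(P^*(t_m,s_m)\mu^m,P^*(t_m,s_m)\mu\big)+\mathcal{W}_2\big(P^*(t_m,s_m)\mu,P^*(t_m,s_0)\mu\big)+\mathcal{W}_2\big(P^*(t_m,s_0)\mu,P^*(t_0,s_0)\mu\big),
\]
understood with the obvious modification when $t_m<s_0$, as explained below. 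All times are confined to a compact interval $[-T,T]$. On this interval we use the a priori bound
\[
\mathbf{E}\sup_{s\le r\le T}|X_r^{s,\xi}|^2\le C_T\big(1+\mathbf{E}|\xi|^2\big),
\]
proved exactly as \eqref{0613-1}. It follows from \eqref{Ineq weakly coercivity}-type growth, which is a consequence of \eqref{eq:one-side-Lipschitz} with $y=0,\nu=\delta_0$, together with $\|\mathscr{L}(X_r)\|_2^2=\mathbf{E}|X_r|^2$, BDG, and Gronwall.

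\textbf{Step 1: Lipschitz dependence on the initial law, uniformly in time.} Choose $\xi,\eta\in L^2(\mathcal{F}_{s})$ forming an optimal coupling of $\mu^m$ and $\mu$. Apply Itô's formula to $|X_t^{s,\xi}-X_t^{s,\eta}|^2$ and use \eqref{eq:one-side-Lipschitz} and \eqref{my1127}. Bounding $\mathcal{W}_2(\mathscr{L}(X_r^{s,\xi}),\mathscr{L}(X_r^{s,\eta}))^2\le \mathbf{E}|X_r^{s,\xi}-X_r^{s,\eta}|^2$ and applying Gronwall gives
\[
\mathbf{E}|X_t^{s,\xi}-X_t^{s,\eta}|^2\le e^{C(t-s)}\,\mathbf{E}|\xi-\eta|^2 .
\]
Hence the first term is at most $e^{C\cdot 2T}\mathcal{W}_2(\mu^m,\mu)$, uniformly for $(t,s)$ in the compact set.

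\textbf{Step 2: continuity in $t$.} Let $X_t=X_t^{s_0,\xi}$ with $\mathscr{L}(\xi)=\mu$. Then $\mathcal{W}_2(P^*(t,s_0)\mu,P^*(t_0,s_0)\mu)^2\le \mathbf{E}|X_t-X_{t_0}|^2$. This tends to $0$ by pathwise continuity and dominated convergence, with domination from the sup-moment bound, just as at the end of Lemma \ref{Lemma of invariant set}.

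\textbf{Step 3: continuity in $s$, the main obstacle.} The difficulty is comparing solutions started at different times. We use the flow property $P^*(t,s)=P^*(t,s')P^*(s',s)$ for $s\le s'\le t$, which follows from uniqueness of weak solutions. Suppose first $s_m\le s_0$. Then
\[
P^*(t_m,s_m)\mu=P^*(t_m,s_0)\big(P^*(s_0,s_m)\mu\big),
\]
so by Step 1 the middle term is at most $e^{2CT}\mathcal{W}_2(P^*(s_0,s_m)\mu,\mu)$. By the $L^2$-continuity argument of Step 2 applied at the starting time,
\[
\mathcal{W}_2(P^*(s_0,s_m)\mu,\mu)^2\le \mathbf{E}|X^{s_m,\xi}_{s_0}-\xi|^2 .
\]
To control this uniformly in $m$, write the difference via the equation. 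The $\sigma$-part is bounded by $dc_\sigma|s_0-s_m|$. For the drift part we use the polynomial growth \eqref{Ineq polynomial growth} together with a localization: a stopping-time/Chebyshev argument as in \eqref{0613-5}. Because $\kappa$ may exceed $1$, this needs higher moments. We therefore first approximate $\mu$ in $\mathcal{W}_2$ by a compactly supported $\mu'$, using Step 1 to absorb the error. For such $\mu'$ all $p$-moments of the solution are bounded on $[-T,T]$, so the drift integral is $o(1)$ in $L^2$ as $|s_0-s_m|\to0$.

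Now suppose $s_m>s_0$. If $t_0>s_0$ we symmetrically write $P^*(t_m,s_0)\mu=P^*(t_m,s_m)P^*(s_m,s_0)\mu$ and use Step 1 the same way. In the diagonal case $t_0=s_0$ we instead compare both quantities directly with $\mu$ via the same short-time estimate $\mathcal{W}_2(P^*(t,s)\mu,\mu)\to0$ as $t-s\to0$, uniformly over $s$ in a compact set. Combining the three steps gives the joint continuity.
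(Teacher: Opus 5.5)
Your proof is correct, and its skeleton is the same as the paper's. You use the same three-term triangle inequality. You use the same Lipschitz bound in the initial law, obtained from an optimal coupling, It\^o's formula and Gronwall (the paper's Claim 1). You use the same continuity in $t$, via pathwise continuity and dominated convergence (Claim 2). And you use the flow property in the same way, reducing the $s$-dependence to the short-time estimate $\mathcal{W}_2(P^*(t,s)\mu,\mu)\to 0$ as $t-s\to 0$, uniformly in $s$ (Claim 3).

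The genuine difference is in how you prove that short-time estimate. The paper works only with the second moment of $\mu$, in three steps:
\begin{enumerate}
\item It localizes with stopping times to get $\mathbf{E}|X^{s_n,\xi}_{t_n}-\xi|^p\to 0$ for $p<2$.
\item It shows $\mathbf{E}|X^{s_n,\xi}_{t_n}|^2\to\mathbf{E}|\xi|^2$. The upper bound comes from It\^o's formula; the lower bound comes from letting $p\uparrow 2$.
\item It concludes $\mathcal{W}_2$-convergence from Theorem 6.9 in \cite{Villani2009}.
\end{enumerate}
You instead use the Lipschitz estimate of Step 1 a second time, to replace $\mu$ by a compactly supported $\mu'$ at cost $(C_T+1)\mathcal{W}_2(\mu,\mu')$. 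For $\mu'$ you propagate a $2\kappa$-th moment, via It\^o on $|x|^{2\kappa}$ and the already available bound on $\|\mathscr{L}(X_r)\|_2$. This gives $\mathbf{E}\big|\int_s^t b\,\mathrm{d}r\big|^2\le C(t-s)^2$ directly, uniformly in the starting time. Once you have that moment, the stopping-time localization you mention is superfluous, and no moment-convergence upgrade is needed.

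Your route is shorter and gives a quantitative rate for nice initial data. The price is an extra higher-moment propagation estimate, which is routine under Assumption~\ref{A3}. The paper's route stays entirely at the level of second moments.

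Your explicit treatment of the diagonal case $t_0=s_0$, where $P^*(t_m,s_0)$ may be undefined because $t_m<s_0$, is also a small improvement. The paper's decomposition \eqref{eq: 0430-3} tacitly requires $t_n\ge s_n\vee s$. Its Claim 3 combined with Claim 1 does cover that case, in exactly the way you describe, but the paper never says so.
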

	\begin{proof}
		We only need to show that if $(t_n,s_n)\to (t,s)$ in $\Delta$ and $\mu_n\xrightarrow{\mathcal{W}_2}\mu$, then
		\begin{equation}\label{eq: 0430}
			\lim_{n\to \infty}\mathcal{W}_2(P^*(t_n,s_n)\mu_n,P^*(t,s)\mu)=0.
		\end{equation}

		We begin with the following three claims, which will be proven later.

		\noindent\textbf{Claim 1.} For any $\mu_1,\mu_2\in \mathcal{P}_2(\mathbb{R}^d)$ and $0\leq t-s\leq T$, there exists $C_T>0$ such that $\mathcal{W}_2(P^*(t,s)\mu_1,P^*(t,s)\mu_2)\leq C_T \mathcal{W}_2(\mu_1,\mu_2)$.

        \noindent\textbf{Claim 2.} For any $\mu\in \mathcal{P}_2(\mathbb{R}^d)$ and $s\in \mathbb{R}$, $P^*(t,s)\mu$ is continuous in $t$ on $[s,\infty)$, i.e., for any $t_n\to t$, we have $\mathcal{W}_2(P^*(t_n,s)\mu,P^*(t,s)\mu)\to 0$.

		\noindent\textbf{Claim 3.} For any $\mu\in \mathcal{P}_2(\mathbb{R}^d)$ and any sequence $(t_n,s_n)\in \Delta$ with $\lim_{n\to \infty}|t_n-s_n|=0$, we have $\mathcal{W}_2(P^*(t_n,s_n)\mu,\mu)\to 0$.
        
        \noindent By Claim 1, we get
		\begin{equation}\label{eq: 0430-3}
			\begin{split}
				\mathcal{W}_2(P^*(t_n,s_n)\mu_n,P^*(t,s)\mu)&\leq \mathcal{W}_2(P^*(t_n,s_n)\mu_n,P^*(t_n,s_n)\mu)+\mathcal{W}_2(P^*(t_n,s_n)\mu,P^*(t_n,s)\mu)\\
				&\ \ \ \ +\mathcal{W}_2(P^*(t_n,s)\mu,P^*(t,s)\mu)\\
				&\leq C_T\mathcal{W}_2(\mu_n,\mu)+C_T\mathcal{W}_2(P^*(s_n\vee s, s_n\wedge s)\mu,\mu)\\
				&\ \ \ \ +\mathcal{W}_2(P^*(t_n,s)\mu,P^*(t,s)\mu),
			\end{split}
		\end{equation}
		where the second inequality holds by the following equality
		\begin{equation*}
			\begin{split}
				\mathcal{W}_2(P^*(t_n,s_n)\mu,P^*(t_n,s)\mu)=\mathcal{W}_2(P^*(t_n,s_n\vee s)P^*(s_n\vee s,s_n)\mu,P^*(t_n,s_n\vee s)P^*(s_n\vee s,s)\mu).
			\end{split}
		\end{equation*}
		Since $t_n\to t$ and $s_n\to s$ as $n\to \infty$, \eqref{eq: 0430-3} together with Claim 2 and Claim 3, implies \eqref{eq: 0430}.  

        Now, we present the proofs of Claim 1-Claim 3.

		\noindent\textbf{Proof of Claim 1:}  According to \cite[Theorem 4.1]{Villani2009}, we can choose $\xi_1,\xi_2\in L^2(\mathcal{F}_s)$ such that $\mathscr{L}(\xi_1)=\mu_1, \mathscr{L}(\xi_2)=\mu_2$ and $\mathcal{W}_2^2(\mu_1,\mu_2)=\mathbf{E}[|\xi_1-\xi_2|^2]$. Let $X_t^{s,\xi_1}, X_t^{s,\xi_2}$ be the solutions of \eqref{McKean-Vlasov SDE 2} with initial conditions $\xi_1, \xi_2$ respectively.
		Set $\hat{X}_t:=X_t^{s,\xi_1}-X_t^{s,\xi_2}$ and
        \begin{equation*}
        \hat{f}_t:=f\big(t,X_t^{s,\xi_1}, \mathscr{L}(X_t^{s,\xi_1})\big)-f\big(t,X_t^{s,\xi_2}, \mathscr{L}(X_t^{s,\xi_2})\big), \ f=b,\sigma.
        \end{equation*}
        Applying It\^o's formula to $|\hat{X}_t|^2$ yields
		\begin{equation*}
		\begin{split}
				\mathbf{E}[|\hat{X}_t|^2]&=\mathbf{E}[|\hat{X}_s|^2]+\mathbf{E}\biggl[\int_s^t\bigl(2\langle \hat{X}_r, \hat{b}_r\rangle+\|\hat{\sigma}_r\|^2\bigr)\mathrm{d}r\biggr]\\
				&\leq \mathbf{E}[|\xi_1-\xi_2|^2]+\mathbf{E}\biggl[\int_s^t\Bigl(2L\bigl(|\hat{X}_r|^2+|\hat{X}_r|(\mathbf{E}[|\hat{X}_r|^2])^{\frac{1}{2}}\bigr)+L^2\bigl(|\hat{X}_r|+(\mathbf{E}[|\hat{X}_r|^2])^{\frac{1}{2}}\bigr)^2\Bigr)\mathrm{d}r\biggr]\\
				&\leq \mathcal{W}_2^2(\mu_1,\mu_2)+(4L^2+4L)\int_s^t\mathbf{E}[|\hat{X}_r|^2]\mathrm{d}r.
›		\end{split}
		\end{equation*}
		It follows from the Gronwall inequality that for any $0\leq t-s\leq T$
		\begin{equation*}
			\mathcal{W}_2^2(P^*(t,s)\mu_1,P^*(t,s)\mu_2)\leq \mathbf{E}[|\hat{X}_t|^2]\leq e^{(4L^2+4L)T}\mathcal{W}_2^2(\mu_1,\mu_2).
		\end{equation*}
		Claim 1 is proved by letting $C_T=e^{(2L^2+2L)T}$.

		\noindent\textbf{Proof of Claim 2:} For any $\xi\in L^2(\mathcal{F}_s)$ with $\mathscr{L}(\xi)=\mu$. It\^o's formula, together with the Burkholder-Davis-Gundy inequality and Gronwall's inequality gives that, for any $T>0$, there exists $C_T>0$ such that for any $s\in \mathbb{R}$,
		\begin{equation}\label{eq:mvsde-estimate}
			\mathbf{E}\Big[\sup_{s\leq t\leq s+T}|X^{s,\xi}_t|^2\Big]\leq C_T\left(1+\mathbf{E}[|\xi|^2]\right)=C_T\left(1+\|\mu\|_2^2\right).
		\end{equation}
		Note that the solution $X^{s,\xi}_t$ is continuous a.s. in $t$, then $\lim_{n\to \infty}|X^{s,\xi}_{t_n}-X^{s,\xi}_t|^2=0$, a.s. With \eqref{eq:mvsde-estimate}, the dominated convergence theorem gives
		\begin{equation*}
			\lim_{n\to \infty}\mathcal{W}_2(P^*(t_n,s)\mu,P^*(t,s)\mu)\leq \lim_{n\to \infty}\bigl(\mathbf{E}\bigl[|X^{s,\xi}_{t_n}-X^{s,\xi}_t|^2\bigr]\bigr)^{\frac{1}{2}}=0.
		\end{equation*}

		\noindent\textbf{Proof of Claim 3:} We first show that
		\begin{equation}\label{eq: W_p converge}
			\lim_{n\to \infty}\mathcal{W}_p(P^*(t_n,s_n)\mu,\mu)=0, \ \text{ for any } \ 1\leq p<2.
		\end{equation} 
		Let $\xi\in L^2(\mathcal{F}_{\underline{s}})$ with $\mathscr{L}(\xi)=\mu$ where $\underline{s}=\min\{s_n: n\geq 1\}$. Assume $\{X_t^{s_n,\xi}\}_{t\geq s_n}$ is the solution of the following equation
		\begin{equation*}
			X_t^{s_n,\xi}=\xi+\int_{s_n}^tb\big(r,X_r^{s_n,\xi}, \mathscr{L}(X_r^{s_n,\xi})\big)\mathrm{d} r+\int_{s_n}^t\sigma\big(r,X_r^{s_n,\xi}, \mathscr{L}(X_r^{s_n,\xi})\big)\mathrm{d} W_r.
		\end{equation*}
		Let $T=\max\{|t_n-s_n|: n\geq 1\}$. Then by \eqref{eq:mvsde-estimate}, we know that
		\begin{equation}\label{eq: 0417-1}
			\mathbf{E}\Big[\sup_{s_n\leq r\leq t_n}|X^{s_n,\xi}_r|^2\Big]\leq C_T\left(1+\mathbf{E}[|\xi|^2]\right)=C_T\left(1+\|\mu\|_2^2\right).
		\end{equation}
		Set 
		\begin{equation*}
			\tau^n_N:=\inf\{t\geq s_n: |X_t^{s_n,\xi}|\geq N\},
		\end{equation*}
		where $\inf\emptyset:=\infty$. Then \eqref{eq: 0417-1} and Chebyshev's inequality give that
		\begin{equation*}
			\mathbf{P}\{\tau^n_N\leq t_n\}=\mathbf{P}\Big\{\sup_{s_n\leq r\leq t_n}|X^{s_n,\xi}_r|\geq N\Big\}\leq \frac{C_T(1+\|\mu\|_2^2)}{N^2}, \ \text{ for all } \ n,N\geq 1.
		\end{equation*}
		Note that
		\begin{equation*}
			\begin{split}
				\mathbf{E}\biggl[\bigg|\int_{s_n}^{t_n}b\big(r,X_r^{s_n,\xi}, \mathscr{L}(X_r^{s_n,\xi})\big)\mathrm{d} r\bigg|^2\biggr]&\leq 2\mathbf{E}\bigl[|X_{t_n}^{s_n,\xi}-\xi|^2\bigr]+2\mathbf{E}\biggl[\bigg|\int_{s_n}^{t_n}\sigma\big(r,X_r^{s_n,\xi}, \mathscr{L}(X_r^{s_n,\xi})\big)\mathrm{d} W_r\bigg|^2\biggr]\\
				&=2\mathbf{E}\bigl[|X_{t_n}^{s_n,\xi}-\xi|^2\bigr]+2\int_{s_n}^{t_n}\mathbf{E}\bigl[\big\|\sigma\big(r,X_r^{s_n,\xi}, \mathscr{L}(X_r^{s_n,\xi})\big)\big\|^2\bigr]\mathrm{d} r\\
				&\leq C_T\left(1+\|\mu\|_2^2\right).
			\end{split}
		\end{equation*}
		Then for any $1\leq p<2$,
		\begin{equation*}
			\begin{split}
				&\ \ \ \ \mathbf{E}\biggl[\bigg|\int_{s_n}^{t_n}b\big(r,X_r^{s_n,\xi}, \mathscr{L}(X_r^{s_n,\xi})\big)\mathrm{d} r\bigg|^p\biggr]\\
				&\leq 2^{p-1}\mathbf{E}\biggl[\bigg|\int_{s_n}^{t_n}b\big(r,X_r^{s_n,\xi}, \mathscr{L}(X_r^{s_n,\xi})\big)\mathrm{d} r1_{\{\tau^n_N\leq t_n\}}\bigg|^p\biggr]\\
				&\ \ \ \ +2^{p-1}\mathbf{E}\biggl[\bigg|\int_{s_n}^{t_n}b\big(r,X_r^{s_n,\xi}, \mathscr{L}(X_r^{s_n,\xi})\big)\mathrm{d} r1_{\{\tau^n_N> t_n\}}\bigg|^p\biggr]\\
				&\leq 2^{p-1}\biggl(\mathbf{E}\biggl[\bigg|\int_{s_n}^{t_n}b\big(r,X_r^{s_n,\xi}, \mathscr{L}(X_r^{s_n,\xi})\big)\mathrm{d} r\bigg|^2\biggr]\biggr)^{\frac{p}{2}}\bigl(\mathbf{P}\{\tau^n_N\leq t_n\}\bigr)^{1-\frac{p}{2}}\\
				&\ \ \ \ +2^{p-1}C_T(N^{\kappa}+\|\mu\|_2^{\kappa})(t_n-s_n)\\
				&\leq \frac{2^{p-1}C_T\left(1+\|\mu\|_2^2\right)}{N^{2-p}}+2^{p-1}C_T\big(N^{\kappa}+\|\mu\|_2^{\kappa}\big)(t_n-s_n).
			\end{split}
		\end{equation*}
		By the Burkholder-Davis-Gundy inequality, we know that there exists $C_p>0$ such that 
		\begin{equation*}
			\begin{split}
				\mathbf{E}\biggl[\bigg|\int_{s_n}^{t_n}\sigma\big(r,X_r^{s_n,\xi}, \mathscr{L}(X_r^{s_n,\xi})\big)\mathrm{d} W_r\bigg|^p\biggr]&\leq C_p\mathbf{E}\biggl[\bigg(\int_{s_n}^{t_n}\big\|\sigma\big(r,X_r^{s_n,\xi}, \mathscr{L}(X_r^{s_n,\xi})\big)\big\|^2\mathrm{d} r\bigg)^{\frac{p}{2}}\biggr]\\
				&\leq C_p\bigl(c_{\sigma}d(t_n-s_n)\bigr)^{\frac{p}{2}}.
			\end{split}
		\end{equation*}
		Hence, for all $N\geq 1$,
		\begin{equation*}
			\begin{split}
				\mathbf{E}\bigl[|X_{t_n}^{s_n,\xi}-\xi|^p\bigr]&\leq 2^{p-1}\mathbf{E}\biggl[\bigg|\int_{s_n}^{t_n}b\big(r,X_r^{s_n,\xi}, \mathscr{L}(X_r^{s_n,\xi})\big)\mathrm{d} r\bigg|^p\biggr]\\
				&\ \ \ \ +2^{p-1}\mathbf{E}\biggl[\bigg|\int_{s_n}^{t_n}\sigma\big(r,X_r^{s_n,\xi}, \mathscr{L}(X_r^{s_n,\xi})\big)\mathrm{d} W_r\bigg|^p\biggr]\\
				&\leq \frac{2^{p-1}C_T\left(1+\|\mu\|_2^2\right)}{N^{2-p}}+2^{p-1}C_T\big(N^{\kappa}+\|\mu\|_2^{\kappa}\big)(t_n-s_n)+C_p\bigl(c_{\sigma}d(t_n-s_n)\bigr)^{\frac{p}{2}}.
			\end{split}
		\end{equation*}
		Taking the limit of $n\to \infty$ and then the limit of $N\to \infty$, we have
		\begin{equation*}
				\lim_{n\to \infty}\mathbf{E}\bigl[|X_{t_n}^{s_n,\xi}-\xi|^p\bigr]=0.
		\end{equation*}
		We proved \eqref{eq: W_p converge} since $\mathcal{W}_p^p(P^*(t_n,s_n)\mu,\mu)\leq \mathbf{E}\bigl[|X_{t_n}^{s_n,\xi}-\xi|^p\bigr]$.

		Now we are in the position to show that $\mathcal{W}_2(P^*(t_n,s_n)\mu,\mu)\to 0$ as $n\to \infty$. According to Theorem 6.9 in Villani \cite{Villani2009}, this is equivalent to show that $P^*(t_n,s_n)\mu$ weakly converges to $\mu$ and 
		\begin{equation}\label{eq: 0430-1}
			\lim_{n\to \infty}\int_{\mathbb{R}^d}|x|^2\mathrm{d} P^*(t_n,s_n)\mu(x)=\int_{\mathbb{R}^d}|x|^2\mathrm{d} \mu(x).
		\end{equation}
		Note $\mathcal{W}_p(P^*(t_n,s_n)\mu,\mu)\to 0$ as $n\to \infty$ for all $1\leq p<2$ implies the weak convergence and to prove \eqref{eq: 0430-1} is equivalent to prove
		\begin{equation}\label{eq: 0430-2}
			\lim_{n\to \infty}\mathbf{E}[|X_{t_n}^{s_n,\xi}|^2]=\mathbf{E}[|\xi|^2].
		\end{equation}
		For this, first applying It\^{o}'s formula to $|X_t^{s_n,\xi}|^2$ on $[s_n,t_n]$, we have
		\begin{equation*}
			\begin{split}
				\mathbf{E}[|X_{t_n}^{s_n,\xi}|^2]&=\mathbf{E}[|\xi|^2]+\mathbf{E}\biggl[\int_{s_n}^{t_n}\big(2\big\langle X_{r}^{s_n,\xi}, b\big(r,X_r^{s_n,\xi}, \mathscr{L}(X_r^{s_n,\xi})\big)\big\rangle+\big\|\sigma\big(r,X_r^{s_n,\xi}, \mathscr{L}(X_r^{s_n,\xi})\big)\big\|^2\big) \mathrm{d} r\biggr]\\
				&\leq \mathbf{E}[|\xi|^2]+\int_{s_n}^{t_n}\big(5L\mathbf{E}[|X_{r}^{s_n,\xi}|^2]+L+c_{\sigma}d\big)\mathrm{d} r\\
				&\leq \mathbf{E}[|\xi|^2]+C_T(1+\|\mu\|_2^2)(t_n-s_n),
			\end{split}
		\end{equation*}
        where we have used the following inequality by applying \eqref{Ineq of non-degenerate diffusion}, \eqref{eq:one-side-Lipschitz} and \eqref{Ineq polynomial growth} in Assumption \ref{A3},
        \begin{equation*}
            \begin{split}
                2\langle x, b(t,x,\mu)\rangle+\|\sigma(t,x,\mu)\|^2&=2\langle x, b(t,x,\mu)-b(t,0,\delta_0)\rangle+2\langle x, b(t,0,\delta_0)\rangle+\|\sigma(t,x,\mu)\|^2\\
                &\leq 2L\big(|x|^2+|x|\mathcal{W}_2(\mu,\delta_0)\big)+2L|x|+c_{\sigma}d\\
                &\leq 4L|x|^2+L\|\mu\|_2^2+L+c_{\sigma}d.
            \end{split}
        \end{equation*}
		Hence,
		\begin{equation*}
			\limsup_{n\to \infty}\mathbf{E}[|X_{t_n}^{s_n,\xi}|^2]\leq \mathbf{E}[|\xi|^2].
		\end{equation*}
		On the other hand, Theorem 6.9 in \cite{Villani2009} and the fact that $\lim_{n\to \infty}\mathcal{W}_p(P^*(t_n,s_n)\mu,\mu)=0$ for all $1\leq p<2$ imply
		\begin{equation*}
			\lim_{n\to \infty}\mathbf{E}[|X_{t_n}^{s_n,\xi}|^p]=\mathbf{E}[|\xi|^p], \ \text{ for all } \ 1\leq p<2.
		\end{equation*}
		Then it follows that
		\begin{equation*}
			\liminf_{n\to \infty}\mathbf{E}[|X_{t_n}^{s_n,\xi}|^2]\geq \liminf_{n\to \infty}\big(\mathbf{E}[|X_{t_n}^{s_n,\xi}|^p]\big)^{\frac{2}{p}}=\big(\mathbf{E}[|\xi|^p]\big)^{\frac{2}{p}}, \ \text{ for all } \ 1\leq p<2.
		\end{equation*}
		Note that $\mathbf{E}[|\xi|^p]\to \mathbf{E}[|\xi|^2]$ as $p\to 2$, we have
		\begin{equation*}
			\liminf_{n\to \infty}\mathbf{E}[|X_{t_n}^{s_n,\xi}|^2]\geq \mathbf{E}[|\xi|^2].
		\end{equation*}
		Therefore, we obtain \eqref{eq: 0430-2} and Claim 3 holds.	
	\end{proof}

	\section{The existence of invariant measures on the lifted space $\hat{\mathbb{X}}$: Proof of Theorem \ref{thm:existence-of-invariant-measure}}
 In this section, we aim to show the existence of invariant measures of $\hat{P}^*_t$ given as in \eqref{eq:new1202-1} (i.e., Theorem \ref{thm:existence-of-invariant-measure}). The key idea is to show that any weak limit of $\frac{1}{2T}\int_{-T}^T\big(\delta_{t|\vec{\mathbf{\tau}}}\times \mu_t\big){\rm d}t$ as $T\to\infty$, where $t|\vec{\mathbf{\tau}}:=(t\mod \tau_1,\cdots,t\mod \tau_n)\in\mathbb{T}^n$ and $\mu$ is an entrance measure of equation \eqref{McKean-Vlasov SDE 2}, is an invariant measure of $\hat{P}^*$. 

	 In the distribution dependent case, under Assumptions \ref{A3} and \ref{Quasi-periodic condition}, the McKean-Vlasov equations \eqref{McKean-Vlasov SDE 2} and \eqref{New Equation K_r_1,r_2} have unique strong solutions in $L^{2}(\mathcal{F}_t)$ starting from $s\in \mathbb{R}$ with initial condition $\xi\in L^2(\mathcal{F}_s)$, which are still denoted by $X_t^{s,\xi}, K^{\vec{\mathbf{s}}}(t,s,\xi)$.
 We have for all $s\leq u\leq t$, $r\in \mathbb{R}, \vec{\mathbf{s}}\in \mathbb{T}^n$ and $\xi\in L^{2}(\mathcal{F}_s)$,
	\begin{equation}\label{0508-2}
		\begin{cases}
			X_t^{s,\xi}(\omega)=\bigl[X_t^{u,X_u^{s,\xi}}\bigr](\omega), &\mathbf{P}-a.s. \ \omega\in \Omega,\\
			K^{\vec{\mathbf{s}}}\bigl(t,u,K^{\vec{\mathbf{s}}}(u,s,\xi)\bigr)(\omega)=K^{\vec{\mathbf{s}}}(t,s,\xi)(\omega), &\mathbf{P}-a.s. \ \omega\in \Omega,\\
			K^{T_r(\vec{\mathbf{s}})}(t,s,\xi)(\theta_r\omega)=K^{\vec{\mathbf{s}}}(t+r,s+r,\xi\circ \theta_r)(\omega), &\mathbf{P}-a.s. \ \omega\in \Omega.
		\end{cases}
	\end{equation}
 The first two identities are obvious, and we only need to prove the third equality in \eqref{0508-2}. In fact,
    \begin{equation*}
        \begin{split}
            K^{T_r(\vec{\mathbf{s}})}(t,s,\xi)\circ \theta_r
            &=\xi\circ \theta_r+\bigg(\int_s^t\tilde{b}^{T_r(\vec{\mathbf{s}})}\Big(u,K^{T_r(\vec{\mathbf{s}})}(u,s,\xi),\mathscr{L}\big(K^{T_r(\vec{\mathbf{s}})}(u,s,\xi)\big)\Big){\rm d}u\bigg)\circ \theta_r\\
            &\ \ \ \ +\bigg(\int_s^t\tilde{\sigma}^{T_r(\vec{\mathbf{s}})}\Big(u,K^{T_r(\vec{\mathbf{s}})}(u,s,\xi),\mathscr{L}\big(K^{T_r(\vec{\mathbf{s}})}(u,s,\xi)\big)\Big){\rm d}W_u\bigg)\circ \theta_r\\
            &=\xi\circ \theta_r+\bigg(\int_s^t\tilde{b}^{\vec{\mathbf{s}}}\Big(u+r,K^{T_r(\vec{\mathbf{s}})}(u,s,\xi),\mathscr{L}\big(K^{T_r(\vec{\mathbf{s}})}(u,s,\xi)\big)\Big){\rm d}u\bigg)\circ \theta_r\\
            &\ \ \ \ +\bigg(\int_s^t\tilde{\sigma}^{\vec{\mathbf{s}}}\Big(u+r,K^{T_r(\vec{\mathbf{s}})}(u,s,\xi),\mathscr{L}\big(K^{T_r(\vec{\mathbf{s}})}(u,s,\xi)\big)\Big){\rm d}W_u\bigg)\circ \theta_r\\
            &=\xi\circ \theta_r+\int_{s+r}^{t+r}\tilde{b}^{\vec{\mathbf{s}}}\Big(v,K^{T_r(\vec{\mathbf{s}})}(v-r,s,\xi)\circ\theta_r,\mathscr{L}\big(K^{T_r(\vec{\mathbf{s}})}(v-r,s,\xi)\circ\theta_r\big)\Big){\rm d}v\\
            &\ \ \ \ +\int_{s+r}^{t+r}\tilde{\sigma}^{\vec{\mathbf{s}}}\Big(v,K^{T_r(\vec{\mathbf{s}})}(v-r,s,\xi)\circ\theta_r,\mathscr{L}\big(K^{T_r(\vec{\mathbf{s}})}(v-r,s,\xi)\circ\theta_r\big)\Big){\rm d}W_v,
        \end{split}
    \end{equation*}
    where we have used the fact that $\mathscr{L}\big(K^{T_r(\vec{\mathbf{s}})}(v-r,s,\xi)\big)=\mathscr{L}\big(K^{T_r(\vec{\mathbf{s}})}(v-r,s,\xi)\circ\theta_r\big)$ and
    \begin{equation*}
        \Big(\int_s^tf(u){\rm d}W_u\Big)\circ\theta_r=\int_{s+r}^{t+r}f(v-r)\circ\theta_r {\rm d}W_v, \ \text{ for all integrable adapted process $f$.}
    \end{equation*}
    Then, the third equality in \eqref{0508-2} follows from the well-posedness of equation \eqref{New Equation K_r_1,r_2}.

Recall the first lift $\Phi_t$ defined on $\mathbb{T}^n\times L^{2}(\mathcal{F}_0)$ by
\begin{equation*}
	\Phi_t(\vec{\mathbf{s}};\xi)=\big(T_t(\vec{\mathbf{s}}); K^{\vec{\mathbf{s}}}(t,0,\xi)\circ \theta_{-t}\big), \ \ \vec{\mathbf{s}}\in \mathbb{T}^n, \ \xi\in L^{2}(\mathcal{F}_0),
\end{equation*}
where $(K^{\vec{\mathbf{s}}}(t,0,\xi)\circ \theta_{-t})(\omega):=K^{\vec{\mathbf{s}}}(t,0,\xi)(\theta_{-t}\omega)$ is the $\theta_{-t}$ Wiener shift of the solution $K^{\vec{\mathbf{s}}}(t,0,\xi)$. 
	Since $\theta_{-t}\mathcal{F}_t=\mathcal{F}_0$ for all $t\geq 0$, then for any $\xi \in L^{2}(\mathcal{F}_0)$, we know that
	\begin{equation}\label{neweq:add}
		K^{\vec{\mathbf{s}}}(t,0,\xi)\in L^{2}(\mathcal{F}_t) \ \text{ and } \ K^{\vec{\mathbf{s}}}(t,0,\xi)\circ \theta_{-t} \in L^{2}(\mathcal{F}_0).
	\end{equation}

Then we obtain the following result.

	\begin{lemma}\label{lemma 0530}
		Assume Assumptions \ref{A3}, \ref{Quasi-periodic condition} and \ref{One-sided Lip of tilde b} hold. Then $(\mathbb{R}^+, \mathbb{T}^n\times L^{2}(\mathcal{F}_0), \Phi)$ is a continuous dynamical system.
	\end{lemma}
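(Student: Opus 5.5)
We need to verify three properties: $\Phi_t$ maps $\mathbb{T}^n\times L^2(\mathcal{F}_0)$ into itself, $\Phi_0=\mathrm{id}$, the semigroup (cocycle) identity, and joint continuity of $(t,\vec{\mathbf{s}},\xi)\mapsto\Phi_t(\vec{\mathbf{s}};\xi)$. The first two are immediate from \eqref{neweq:add} and $K^{\vec{\mathbf{s}}}(0,0,\xi)=\xi$, $T_0=\mathrm{id}$.

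For the semigroup property, fix $t,r\geq 0$ and set $\eta:=K^{\vec{\mathbf{s}}}(r,0,\xi)\circ\theta_{-r}\in L^2(\mathcal{F}_0)$. Then $\Phi_t(\Phi_r(\vec{\mathbf{s}};\xi))=\big(T_{t+r}(\vec{\mathbf{s}});K^{T_r(\vec{\mathbf{s}})}(t,0,\eta)\circ\theta_{-t}\big)$. Using the third identity of \eqref{0508-2} with shift $r$, we have $K^{T_r(\vec{\mathbf{s}})}(t,0,\eta)\circ\theta_r=K^{\vec{\mathbf{s}}}(t+r,r,\eta\circ\theta_r)$ and $\eta\circ\theta_r=K^{\vec{\mathbf{s}}}(r,0,\xi)$. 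The cocycle (second) identity of \eqref{0508-2} then gives $K^{\vec{\mathbf{s}}}(t+r,r,K^{\vec{\mathbf{s}}}(r,0,\xi))=K^{\vec{\mathbf{s}}}(t+r,0,\xi)$. Composing with $\theta_{-(t+r)}$ gives $K^{T_r(\vec{\mathbf{s}})}(t,0,\eta)\circ\theta_{-t}=K^{\vec{\mathbf{s}}}(t+r,0,\xi)\circ\theta_{-(t+r)}$, which is the claim. One subtlety here is that the law argument $\mathscr{L}(\cdot)$ in the McKean–Vlasov equation is shift invariant because $\theta$ preserves $\mathbf{P}$, so \eqref{0508-2} applies to initial data that are not deterministic.

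For continuity, suppose $(t_k,\vec{\mathbf{s}}_k,\xi_k)\to(t,\vec{\mathbf{s}},\xi)$. The torus component converges trivially. Since $\theta_{-t}$ preserves $\mathbf{P}$, we have
\[
\|K^{\vec{\mathbf{s}}_k}(t_k,0,\xi_k)\circ\theta_{-t_k}-K^{\vec{\mathbf{s}}}(t,0,\xi)\circ\theta_{-t}\|_{L^2}\le \|K^{\vec{\mathbf{s}}_k}(t_k,0,\xi_k)-K^{\vec{\mathbf{s}}}(t_k,0,\xi)\|_{L^2}+\|K^{\vec{\mathbf{s}}}(t_k,0,\xi)\circ\theta_{-t_k}-K^{\vec{\mathbf{s}}}(t,0,\xi)\circ\theta_{-t}\|_{L^2}.
\]
The first term is a stability estimate: Itô's formula applied to the difference of the two solutions, together with \eqref{eq:one-side-Lipschitz}, \eqref{my1127}, the coupling bound $\mathcal{W}_2(\mathscr{L}(X),\mathscr{L}(Y))\le\|X-Y\|_{L^2}$, and Assumption~\ref{One-sided Lip of tilde b}, bounds the cross terms $\tilde b^{\vec{\mathbf{s}}_k}-\tilde b^{\vec{\mathbf{s}}}$ by $h(T_u\vec{\mathbf{s}}_k,T_u\vec{\mathbf{s}})$. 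Gronwall then gives
\[
\sup_{u\le T}\mathbf{E}|\cdot|^2\le C_T\Big(\mathbf{E}|\xi_k-\xi|^2+\sup_{u\le T}h(T_u\vec{\mathbf{s}}_k,T_u\vec{\mathbf{s}})\Big)\to0,
\]
where uniform continuity of $h$ on the compact $\mathbb{T}^n\times\mathbb{T}^n$ and $h(\vec{\mathbf{t}},\vec{\mathbf{t}})=0$ handle the second summand. For the cross term $\langle x-y,\tilde b^{\vec{\mathbf{s}}_k}(x)-\tilde b^{\vec{\mathbf{s}}}(x)\rangle$, Young's inequality suffices.

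The second term is the main obstacle, because it involves the shift $\theta_{-t_k}$ with varying $t_k$, so there is no fixed coupling. The plan is to write $Y_u:=K^{\vec{\mathbf{s}}}(u,0,\xi)\circ\theta_{-u}$ and represent it, via the third identity of \eqref{0508-2}, as $K^{T_{-u}\vec{\mathbf{s}}}(0,-u,\xi\circ\theta_{-u})$, i.e.\ a solution on $[-u,0]$ driven by the same Brownian motion. Continuity of $u\mapsto Y_u$ in $L^2$ is then reduced to three ingredients: continuity of $u\mapsto\xi\circ\theta_{-u}$ in $L^2$ (strong continuity of the measure-preserving shift group on $L^2(\Omega)$, proved by approximating $\xi$ with bounded cylinder functions of finitely many Brownian increments); continuity in the starting time, handled as in Claim 3 of Proposition~\ref{Joint cts of P^*}, i.e.\ $\mathbf{E}|K(0,-u_k,\cdot)-K(0,-u,\cdot)|^2\to0$ via the flow property and short-time $L^2$ continuity; and continuity in the parameter $T_{-u}\vec{\mathbf{s}}$, again from the stability estimate above with $h$. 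Combining these with the triangle inequality and Gronwall yields $\|Y_{t_k}-Y_t\|_{L^2}\to0$, completing the proof.
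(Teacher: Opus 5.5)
Your proof is correct. For the cocycle identity and for the stability term $\|K^{\vec{\mathbf{s}}_k}(t_k,0,\xi_k)-K^{\vec{\mathbf{s}}}(t_k,0,\xi)\|_{L^2}$ it coincides with the paper's argument: the same use of \eqref{0508-2}, and the same It\^o--Gronwall bound \eqref{0528-1} with $h(T_r\vec{\mathbf{t}},T_r\vec{\mathbf{s}})\le w(d_0(\vec{\mathbf{t}},\vec{\mathbf{s}}))$.

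You diverge on the shift term, and there the obstacle you identify ("no fixed coupling") is not real. Composing with the measure-preserving map $\theta_{t_k}$ gives
\[
\|K^{\vec{\mathbf{s}}}(t_k,0,\xi)\circ\theta_{-t_k}-K^{\vec{\mathbf{s}}}(t,0,\xi)\circ\theta_{-t}\|_{L^2}\le \|K^{\vec{\mathbf{s}}}(t_k,0,\xi)-K^{\vec{\mathbf{s}}}(t,0,\xi)\|_{L^2}+\|K^{\vec{\mathbf{s}}}(t,0,\xi)\circ\theta_{t_k-t}-K^{\vec{\mathbf{s}}}(t,0,\xi)\|_{L^2}.
\]
The varying shift now acts on the single fixed random variable $K^{\vec{\mathbf{s}}}(t,0,\xi)\in L^2(\Omega)$.
\begin{itemize}
\item The first term vanishes by pathwise continuity plus dominated convergence using \eqref{n0530-1}.
\item The second term vanishes by strong continuity of the shift group on $L^2(\Omega)$. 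This is exactly the fact you planned to prove for $\xi$, but it holds for every element of $L^2(\Omega)$.
\end{itemize}
This is what the paper does, citing \cite{Da-Prato-Zabczyk1996} for the shift continuity.

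Your pullback representation also works. However, it forces you through continuity in the starting time, with initial data lying in different $\sigma$-algebras $\mathcal{F}_{-u_k}$ and hence requiring a flow-property splitting. It also requires continuity in the parameter. None of this buys anything extra for this lemma.

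There is also a sign slip. The third identity in \eqref{0508-2} gives $K^{\vec{\mathbf{s}}}(u,0,\xi)\circ\theta_{-u}=K^{T_u(\vec{\mathbf{s}})}(0,-u,\xi\circ\theta_{-u})$, not $K^{T_{-u}(\vec{\mathbf{s}})}(0,-u,\xi\circ\theta_{-u})$. This is harmless for your continuity argument, since $u\mapsto T_u(\vec{\mathbf{s}})$ is continuous either way.
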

	\begin{proof}
		As we have discussed above, for any $t\geq 0$, $\Phi_t$ maps $\mathbb{T}^n\times L^{2}(\mathcal{F}_0)$ into itself. Now for any $(\vec{\mathbf{s}};\xi)\in \mathbb{T}^n\times L^{2}(\mathcal{F}_0)$ and $t,s\in \mathbb{R}^+$, it follows from \eqref{0508-2} that 
		\begin{equation*}
			\begin{split}
				\Phi_t\circ \Phi_s (\vec{\mathbf{s}};\xi)&=\Phi_t\big(T_s(\vec{\mathbf{s}}); K^{\vec{\mathbf{s}}}(s,0,\xi)\circ \theta_{-s}\big)\\
				&=\Big(T_{t+s}(\vec{\mathbf{s}}); K^{T_s(\vec{\mathbf{s}})}\big(t,0,K^{\vec{\mathbf{s}}}(s,0,\xi)\circ \theta_{-s}\big)\circ \theta_s\circ\theta_{-(t+s)}\Big)\\
				&=\Big(T_{t+s}(\vec{\mathbf{s}}); K^{\vec{\mathbf{s}}}\big(t+s,s,K^{\vec{\mathbf{s}}}(s,0,\xi)\big)\circ \theta_{-(t+s)}\Big)\\
				&=\big(T_{t+s}(\vec{\mathbf{s}}); K^{\vec{\mathbf{s}}}(t+s,0,\xi)\circ \theta_{-(t+s)}\big)\\
				&=\Phi_{t+s}(\vec{\mathbf{s}};\xi).
			\end{split}
		\end{equation*}
		Obviously, $\Phi_0(\vec{\mathbf{s}};\xi)=(\vec{\mathbf{s}};\xi)$, for all $(\vec{\mathbf{s}};\xi)\in \mathbb{T}^n\times L^{2}(\mathcal{F}_0)$.
		
		It remains to show that $\Phi: \mathbb{R}^+\times\mathbb{T}^n\times L^{2}(\mathcal{F}_0)\to \mathbb{T}^n\times L^{2}(\mathcal{F}_0)$ is continuous, i.e., for any sequences $\{t_m, t\}_{m\geq 1}\subset \mathbb{R}^+$ and $\{(\vec{\mathbf{s}}_m;\xi_m), (\vec{\mathbf{s}};\xi)\}_{m\geq 1}\subset \mathbb{T}^n\times L^{2}(\mathcal{F}_0)$ such that
		\begin{equation*}
			\lim_{m\to \infty}\Bigl(|t_m-t|+d_0(\vec{\mathbf{s}}_m, \vec{\mathbf{s}})+\mathbf{E}\big[|\xi_m-\xi|^2\big]\Bigr)=0,
		\end{equation*}
		we have 
		\begin{equation*}
			\lim_{m\to \infty}\Bigl(d_0\bigl(T_{t_m}(\vec{\mathbf{s}}_m),T_{t}(\vec{\mathbf{s}})\bigr)+\mathbf{E}\big[\big|K^{\vec{\mathbf{s}}_m}(t_m,0,\xi_m)\circ\theta_{-t_m}-K^{\vec{\mathbf{s}}}(t,0,\xi)\circ\theta_{-t}\big|^2\big]\Bigr)=0.
		\end{equation*}
		Note that
		\begin{equation*}
			d_0\bigl(T_{t_m}(\vec{\mathbf{s}}_m),T_{t}(\vec{\mathbf{s}})\bigr)\leq d_0\bigl(T_{t_m}(\vec{\mathbf{s}}_m),T_{t_m}(\vec{\mathbf{s}})\bigr)+d_0\bigl(T_{t_m}(\vec{\mathbf{s}}),T_{t}(\vec{\mathbf{s}})\bigr)\leq n|t_m-t|+d_0(\vec{\mathbf{s}}_m,\vec{\mathbf{s}}),
		\end{equation*}
		it is adequate to show that 
		\begin{equation}\label{n0529-1}
			\lim_{m\to \infty}\mathbf{E}\bigl[\big|K^{\vec{\mathbf{s}}_m}(t_m,0,\xi_m)\circ \theta_{-t_m}-K^{\vec{\mathbf{s}}}(t,0,\xi)\circ \theta_{-t}\big|^2\bigr]=0.
		\end{equation}
        Since $\theta_t$ preserves $\mathbf{P}$, we have
        \begin{equation*}
            \begin{split}
                \mathbf{E}\bigl[\big|K^{\vec{\mathbf{s}}_m}(t_m,0,\xi_m)\circ \theta_{-t_m}-K^{\vec{\mathbf{s}}}(t,0,\xi)\circ \theta_{-t}\big|^2\bigr]&\leq 2\mathbf{E}\bigl[\big|K^{\vec{\mathbf{s}}_m}(t_m,0,\xi_m)-K^{\vec{\mathbf{s}}}(t,0,\xi)\big|^2\bigr]\\
                &\ \ \ \ +2\mathbf{E}\bigl[\big|K^{\vec{\mathbf{s}}}(t,0,\xi)\circ \theta_{t-t_m}-K^{\vec{\mathbf{s}}}(t,0,\xi)\big|^2\bigr].
            \end{split}
        \end{equation*}
        By \cite[Proposition 2.2.3]{Da-Prato-Zabczyk1996}, we know that
        \begin{equation*}
            \lim_{m\to\infty}\mathbf{E}\bigl[\big|K^{\vec{\mathbf{s}}}(t,0,\xi)\circ \theta_{t-t_m}-K^{\vec{\mathbf{s}}}(t,0,\xi)\big|^2\bigr]=0.
        \end{equation*}
		Then it suffices to prove that
		\begin{equation}\label{0527}
			\lim_{m\to \infty}\mathbf{E}\bigl[\big|K^{\vec{\mathbf{s}}_m}(t_m,0,\xi_m)-K^{\vec{\mathbf{s}}}(t,0,\xi)\big|^2\bigr]=0.
		\end{equation}
		Now for any fixed $t\geq s$, $\vec{\mathbf{t}},\vec{\mathbf{s}}\in \mathbb{T}^n$ and $\xi,\eta\in L^2(\mathcal{F}_s)$, applying It{\^ o}'s formula to $|K^{\vec{\mathbf{t}}}(t,s,\xi)-K^{\vec{\mathbf{s}}}(t,s,\eta)|^2$ together with Assumptions \ref{A3}, \ref{Quasi-periodic condition} and \ref{One-sided Lip of tilde b} gives
		\begin{equation*}
			\begin{split}
				\mathbf{E}\bigl[\big|K^{\vec{\mathbf{t}}}(t,s,\xi)-K^{\vec{\mathbf{s}}}(t,s,\eta)\big|^2\bigr]&\leq \mathbf{E}\bigl[|\xi-\eta|^2\bigr]+2\int_s^th\big(T_r(\vec{\mathbf{t}}),T_r(\vec{\mathbf{s}})\big)\mathrm{d}r\\
				&\ \ \ \ +(8L^2+4L+1)\int_s^t\mathbf{E}\bigl[\big|K^{\vec{\mathbf{t}}}(r,s,\xi)-K^{\vec{\mathbf{s}}}(r,s,\eta)\big|^2\bigr]\mathrm{d}r.
			\end{split}
		\end{equation*}
		Then the Gronwall inequality implies
		\begin{equation}\label{n0527-1}
			\mathbf{E}\bigl[\big|K^{\vec{\mathbf{t}}}(t,s,\xi)-K^{\vec{\mathbf{s}}}(t,s,\eta)\big|^2\bigr]\leq \Big(\mathbf{E}\bigl[|\xi-\eta|^2\bigr]+2\int_s^th\big(T_r(\vec{\mathbf{t}}),T_r(\vec{\mathbf{s}})\big)\mathrm{d}r\Big)e^{(8L^2+4L+1)(t-s)}.
		\end{equation}
		Since $h: \mathbb{T}^n\times \mathbb{T}^n\to \mathbb{R}^+$ is continuous and $\mathbb{T}^n\times \mathbb{T}^n$ is compact, then $h$ is uniformly continuous, i.e., there exists an increasing function $w: [0,\infty)\to [0,\infty)$ satisfying $w(0)=0$ and continuous at $0$ such that for all $\vec{\mathbf{t}}_1,\vec{\mathbf{t}}_2,\vec{\mathbf{s}}_1,\vec{\mathbf{s}}_2\in \mathbb{T}^n$,
		  \begin{equation}\label{eq:new1204}
			|h(\vec{\mathbf{t}}_1,\vec{\mathbf{t}}_2)-h(\vec{\mathbf{s}}_1,\vec{\mathbf{s}}_2)|\leq w\bigl(d_0(\vec{\mathbf{t}}_1,\vec{\mathbf{s}}_1)+d_0(\vec{\mathbf{t}}_2,\vec{\mathbf{s}}_2)\bigr).
		  \end{equation}
		  Hence,
		  \begin{equation}\label{0527-3}
				\Big|\int_s^th\big(T_r(\vec{\mathbf{t}}),T_r(\vec{\mathbf{s}})\big)\mathrm{d}r\Big|\leq \int_s^t|h\big(T_r(\vec{\mathbf{t}}),T_r(\vec{\mathbf{s}})\big)-h\big(T_r(\vec{\mathbf{s}}),T_r(\vec{\mathbf{s}})\big)|\mathrm{d}r\leq (t-s)w\bigl(d_0(\vec{\mathbf{t}},\vec{\mathbf{s}})\bigr).
		  \end{equation}
		  Then it follows from \eqref{n0527-1} and \eqref{0527-3} that for any $t\geq s$, $\vec{\mathbf{t}},\vec{\mathbf{s}}\in \mathbb{T}^n$ and $\xi,\eta\in L^2(\mathcal{F}_s)$, 
		  \begin{equation}\label{0528-1}
			\mathbf{E}\bigl[\big|K^{\vec{\mathbf{t}}}(t,s,\xi)-K^{\vec{\mathbf{s}}}(t,s,\eta)\big|^2\bigr]\leq \big(\mathbf{E}\bigl[|\xi-\eta|^2\bigr]+2(t-s)w\bigl(d_0(\vec{\mathbf{t}},\vec{\mathbf{s}})\bigr)\big)e^{(8L^2+4L+1)(t-s)}.
		  \end{equation}
		  Let $T=\sup_{m\geq 1}t_m$. Then \eqref{0528-1} directly gives
		  \begin{equation}\label{n0527-2}
			\mathbf{E}\bigl[\big|K^{\vec{\mathbf{s}}_m}(t_m,0,\xi_m)-K^{\vec{\mathbf{s}}}(t_m,0,\xi)\big|^2\bigr]\leq \big(\mathbf{E}\bigl[|\xi_m-\xi|^2\bigr]+2Tw\bigl(d_0(\vec{\mathbf{s}}_m,\vec{\mathbf{s}})\bigr)\big)e^{(8L^2+4L+1)T}.
		  \end{equation}
		  Similar to \eqref{eq:mvsde-estimate}, there exists $C_T>0$ such that for all $\vec{\mathbf{s}}\in \mathbb{T}^n$ and $\xi\in L^2(\mathcal{F}_0)$,
		  \begin{equation}\label{n0530-1}
			  \mathbf{E}\Bigl[\sup_{0\leq t\leq T}\big|K^{\vec{\mathbf{s}}}(t,0,\xi)\big|^2\Bigr]\leq C_T\bigl(1+\mathbf{E}[|\xi|^2]\bigr).
		  \end{equation}
		  Note also that the solution $K^{\vec{\mathbf{s}}}(t,0,\xi)$ is $\mathbf{P}$-a.s. continuous in $t$, the dominated convergence theorem leads to that
		  \begin{equation}\label{0528-2}
			\lim_{m\to \infty}\mathbf{E}\bigl[\big|K^{\vec{\mathbf{s}}}(t_m,0,\xi)-K^{\vec{\mathbf{s}}}(t,0,\xi)\big|^2\bigr]=0.
		  \end{equation}
		  Then \eqref{0527} follows from \eqref{n0527-2} and \eqref{0528-2}.
	\end{proof}

    \begin{remark}
        For any $\vec{\mathbf{s}}\in \mathbb{T}^n$, we define a non-autonomous dynamical system on the fibre, $\{\vec{\mathbf{s}}\}\times L^2(\mathcal{F}_0)$, by
        \begin{equation*}
            \phi_{s,t; \vec{\mathbf{s}}}(\xi):=K^{\vec{\mathbf{s}}}(t,s,\xi\circ \theta_s)\circ \theta_{-t}, \ \text{ for all } \ t\geq s.
        \end{equation*}
        Then the first lifted dynamical system $\Phi$ is actually the skew product flow of the minimal rotation on the $n$-torus $(\mathbb{T}^n, \{T_t\}_{t\geq 0})$ with the dynamical system $\phi$:
        \[
        \Phi_t(\vec{\mathbf{s}},\xi)=(T_t(\vec{\mathbf{s}}),\phi_{0,t;\vec{\mathbf{s}}}(\xi)), \ \text{ for all } \ \vec{\mathbf{s}}\in\mathbb{T}^n, \ \xi\in L^2(\mathcal{F}_0).
        \]
    \end{remark}

    Recall that $\Delta={(t,s)\in\mathbb{R}^2: t\geq s}$ and 
    \begin{equation*}
        P^{\vec{\mathbf{s}},*}(t,s)\mu=\mathscr{L}\big(K^{\vec{\mathbf{s}}}(t,s,\xi)\big) \text{ with } \mathscr{L}(\xi)=\mu.
    \end{equation*}
    Then we obtain the following lemma.

	\begin{lemma}\label{Lemma 0517-1}
		Assume Assumptions \ref{A3}, \ref{Quasi-periodic condition} and \ref{One-sided Lip of tilde b} hold. Then the map $(\vec{\mathbf{s}},(t,s),\mu)\mapsto P^{\vec{\mathbf{s}},*}(t,s)\mu$ from $\mathbb{T}^n\times\Delta\times\mathcal{P}_2(\mathbb{R}^d)$ to $\mathcal{P}_2(\mathbb{R}^d)$ is continuous.
	\end{lemma}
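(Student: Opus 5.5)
Given $(\vec{\mathbf{s}}_m,(t_m,s_m),\mu_m)\to(\vec{\mathbf{s}},(t,s),\mu)$ in $\mathbb{T}^n\times\Delta\times\mathcal{P}_2(\mathbb{R}^d)$, I will show that $\mathcal{W}_2(P^{\vec{\mathbf{s}}_m,*}(t_m,s_m)\mu_m,P^{\vec{\mathbf{s}},*}(t,s)\mu)\to 0$. The plan is to copy the three-term splitting \eqref{eq: 0430-3} from the proof of Proposition \ref{Joint cts of P^*}, after first peeling off the dependence on the parameter $\vec{\mathbf{s}}$ with the estimate \eqref{0528-1}. I write
\begin{equation*}
\mathcal{W}_2\big(P^{\vec{\mathbf{s}}_m,*}(t_m,s_m)\mu_m,P^{\vec{\mathbf{s}},*}(t,s)\mu\big)\leq \mathcal{W}_2\big(P^{\vec{\mathbf{s}}_m,*}(t_m,s_m)\mu_m,P^{\vec{\mathbf{s}},*}(t_m,s_m)\mu\big)+\mathcal{W}_2\big(P^{\vec{\mathbf{s}},*}(t_m,s_m)\mu,P^{\vec{\mathbf{s}},*}(t,s)\mu\big).
\end{equation*}

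For the first term, choose $\xi_m,\xi\in L^2(\mathcal{F}_{s_m})$ as an optimal coupling of $\mu_m$ and $\mu$ (possible by \cite[Theorem 4.1]{Villani2009}, since $(\Omega,\mathcal{F}_{s_m},\mathbf{P})$ is atomless). Then \eqref{0528-1}, applied with starting time $s_m$ and $T=\sup_m(t_m-s_m)<\infty$, gives
\begin{equation*}
\mathcal{W}_2^2\big(P^{\vec{\mathbf{s}}_m,*}(t_m,s_m)\mu_m,P^{\vec{\mathbf{s}},*}(t_m,s_m)\mu\big)\leq \big(\mathcal{W}_2^2(\mu_m,\mu)+2Tw(d_0(\vec{\mathbf{s}}_m,\vec{\mathbf{s}}))\big)e^{(8L^2+4L+1)T}\to 0.
\end{equation*}
This bound is uniform in the starting time because the constants in \eqref{0528-1} do not depend on $s$.

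For the second term the parameter $\vec{\mathbf{s}}$ is fixed. Equation \eqref{New Equation K_r_1,r_2} is then a time-inhomogeneous McKean-Vlasov SDE whose coefficients $\tilde b^{\vec{\mathbf{s}}},\tilde\sigma^{\vec{\mathbf{s}}}$ satisfy Assumption \ref{A3} with the same constants, by Remark \ref{lemma 0427}. So Proposition \ref{Joint cts of P^*} applies directly to $P^{\vec{\mathbf{s}},*}$, and joint continuity in $(t,s)$ at fixed $\mu$ gives that the second term tends to $0$. Alternatively, one can note that the flow property $P^{\vec{\mathbf{s}},*}(t,s)=P^{\vec{\mathbf{s}},*}(t,u)P^{\vec{\mathbf{s}},*}(u,s)$ (the second identity in \eqref{0508-2}) and Claims 1--3 carry over verbatim.

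The only point needing care is that Proposition \ref{Joint cts of P^*} was stated for the original coefficients $b,\sigma$. I therefore need to check that its proof used only Assumption \ref{A3}, namely the bounds in Claims 1--3 and the moment estimate \eqref{eq:mvsde-estimate}. Those bounds depend only on $L$, $c_\sigma$, $\kappa$ and $d$, so they hold uniformly in $\vec{\mathbf{s}}$. Once this is confirmed, the two estimates above combine to give the lemma, and no step looks like a real obstacle.
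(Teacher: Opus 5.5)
Your proposal is correct and follows essentially the same route as the paper: a triangle inequality, the estimate \eqref{0528-1} to remove the dependence on $\vec{\mathbf{s}}$, and Proposition \ref{Joint cts of P^*} applied to the fixed-parameter semigroup $P^{\vec{\mathbf{s}},*}$. The only difference is the intermediate point. The paper uses $P^{\vec{\mathbf{s}},*}(t_m,s_m)\mu_m$, so it takes the same initial variable $\xi_m$ in \eqref{0528-1} and lets the Proposition handle $\mu_m\to\mu$. You use $P^{\vec{\mathbf{s}},*}(t_m,s_m)\mu$ and absorb $\mathcal{W}_2(\mu_m,\mu)$ into \eqref{0528-1} via an optimal coupling, which is equally valid.
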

	
	\begin{proof}
		For any sequences $\{(t_m,s_m), (t,s)\}_{m\geq 1}\subset 
		\Delta$, $\{\vec{\mathbf{s}}_m,\vec{\mathbf{s}}\}_{m\geq 1}\subset 
		\mathbb{T}^n$ and $\{\mu_m,\mu\}\subset \mathcal{P}_2(\mathbb{R}^d)$ such that
		\begin{equation*}
			\lim_{m\to\infty}\bigl(|t_m-t|+|s_m-s|+d_0(\vec{\mathbf{s}}_m,\vec{\mathbf{s}})+\mathcal{W}_2(\mu_m,\mu)\bigr)=0,
		\end{equation*}
		we need to show that
		\begin{equation}\label{n0528-1}
			\lim_{m\to\infty}\mathcal{W}_2\big(P^{\vec{\mathbf{s}}_m,*}(t_m,s_m)\mu_m,P^{\vec{\mathbf{s}},*}(t,s)\mu\big)=0.
		\end{equation}
        Note that
        \begin{equation*}
            \begin{split}
                \mathcal{W}_2\big(P^{\vec{\mathbf{s}}_m,*}(t_m,s_m)\mu_m,P^{\vec{\mathbf{s}},*}(t,s)\mu\big)&\leq \mathcal{W}_2\big(P^{\vec{\mathbf{s}}_m,*}(t_m,s_m)\mu_m,P^{\vec{\mathbf{s}},*}(t_m,s_m)\mu_m\big)\\
                &\ \ \ \ 
                +\mathcal{W}_2\big(P^{\vec{\mathbf{s}},*}(t_m,s_m)\mu_m,P^{\vec{\mathbf{s}},*}(t,s)\mu\big)
            \end{split}
        \end{equation*}
        By Proposition \ref{Joint cts of P^*}, for any fixed $\vec{\mathbf{s}}\in\mathbb{T}^n$, we know that
        \begin{equation}\label{eq:1128-1}
            \lim_{m\to\infty}\mathcal{W}_2\big(P^{\vec{\mathbf{s}},*}(t_m,s_m)\mu_m,P^{\vec{\mathbf{s}},*}(t,s)\mu\big)=0.
        \end{equation}
        On the other hand, for any $m\geq 1$, since $(\Omega,\mathcal{F}_{s_m},\mathbf{P})$ is a standard probability space, there exists $\xi_m\in L^2(\mathcal{F}_{s_m})$ such that $\mathscr{L}(\xi_m)=\mu_m$. Then by \eqref{0528-1}, there exists $C_{L,T}>0$ depending only on $(L,T)$ such that
        \begin{equation}\label{eq:1128-2}
            \begin{split}
                \mathcal{W}_2^2\big(P^{\vec{\mathbf{s}}_m,*}(t_m,s_m)\mu_m,P^{\vec{\mathbf{s}},*}(t_m,s_m)\mu_m\big)&\leq \mathbf{E}\big[\big|K^{\vec{\mathbf{s}}_m}(t_m,s_m,\xi_m)-K^{\vec{\mathbf{s}}}(t_m,s_m,\xi_m)\big|^2\big]\\
                &\leq C_{L,T}w\big(d_0(\vec{\mathbf{s}}_m,\vec{\mathbf{s}})\big),
            \end{split}
        \end{equation}
        where $T=\sup_{m\geq 1}|t_m-s_m|<\infty$, and $w: [0,\infty)\to [0,\infty)$ is an increasing function and is continuous at $0$ with $w(0)=0$. Then \eqref{n0528-1} follows from \eqref{eq:1128-1} and \eqref{eq:1128-2}.
	\end{proof}

Let us recall the second lift $\hat{\Phi}$ defined on $\mathcal{G}_{2}$ by
\begin{equation*}
		(\hat{\Phi}_t\hat{\xi})(\vec{\mathbf{s}},\omega)=\Phi_t(\vec{\mathbf{s}},\tilde{\xi}_{\vec{\mathbf{s}}})(\omega), \ \mathbf{P}-a.s., \ \ t\geq 0, \ \hat{\xi}={\rm graph}(\tilde{\xi})\in \mathcal{G}_{2}.
\end{equation*}
The following lemma shows that $\{\hat{\Phi}_t\}_{t\geq 0}$ forms a continuous dynamical system on $\mathcal{G}_{2}$.

	\begin{lemma}\label{coro 0530}
		Assume Assumptions \ref{A3}, \ref{Quasi-periodic condition} and \ref{One-sided Lip of tilde b} hold. Then $\hat{\Phi}_t$ maps $\mathcal{G}_{2}$ into itself for all $t\geq 0$ and $(\mathbb{R}^+,\mathcal{G}_{2},\hat{\Phi})$ is a continuous dynamical system.
	\end{lemma}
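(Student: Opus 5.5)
The plan is to reduce everything to the fibrewise statements already established for the first lift $\Phi$, in Lemma \ref{lemma 0530} and estimate \eqref{0528-1}. Fix $\hat{\xi}={\rm graph}(\tilde{\xi})\in\mathcal{G}_2$ and $t\geq 0$.

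First, I would show that $\hat{\Phi}_t\hat{\xi}\in\mathcal{G}_2$. By \eqref{0529-2}, $\hat{\Phi}_t\hat{\xi}$ is the set $\{(T_t(\vec{\mathbf{s}}),\eta_{\vec{\mathbf{s}}})\}_{\vec{\mathbf{s}}\in\mathbb{T}^n}$, where $\eta_{\vec{\mathbf{s}}}:=K^{\vec{\mathbf{s}}}(t,0,\tilde{\xi}_{\vec{\mathbf{s}}})\circ\theta_{-t}$. Since $T_t$ is a bijection of $\mathbb{T}^n$, this set is the graph of $\tilde{\zeta}_{\vec{\mathbf{u}}}:=\eta_{T_{-t}(\vec{\mathbf{u}})}$. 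By \eqref{neweq:add}, $\tilde{\zeta}_{\vec{\mathbf{u}}}\in L^2(\mathcal{F}_0)$. For continuity of $\vec{\mathbf{u}}\mapsto\tilde{\zeta}_{\vec{\mathbf{u}}}$ into $L^2(\mathcal{F}_0)$, it suffices to check continuity of $\vec{\mathbf{s}}\mapsto\eta_{\vec{\mathbf{s}}}$, because $T_{-t}$ is an isometry of $(\mathbb{T}^n,d_0)$. Since $\theta_{-t}$ preserves $\mathbf{P}$, estimate \eqref{0528-1} gives
\[
\mathbf{E}\big[|\eta_{\vec{\mathbf{s}}}-\eta_{\vec{\mathbf{s}}'}|^2\big]\leq\big(\mathbf{E}[|\tilde{\xi}_{\vec{\mathbf{s}}}-\tilde{\xi}_{\vec{\mathbf{s}}'}|^2]+2t\,w(d_0(\vec{\mathbf{s}},\vec{\mathbf{s}}'))\big)e^{(8L^2+4L+1)t}.
\]
This tends to $0$ as $\vec{\mathbf{s}}'\to\vec{\mathbf{s}}$, by continuity of $\tilde{\xi}$ and the property $w(0+)=0$. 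Measurability on $\hat{\Omega}$ then follows as for any element of $C(\mathbb{T}^n;L^2(\mathcal{F}_0))$.

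Second, the semigroup property. The identity $\hat{\Phi}_0=\mathrm{id}$ is immediate. For $\hat{\Phi}_t\hat{\Phi}_s=\hat{\Phi}_{t+s}$, I evaluate at $(\vec{\mathbf{s}},\omega)$: the $\vec{\mathbf{s}}$-fibre of $\hat{\Phi}_s\hat{\xi}$ is carried to $T_s(\vec{\mathbf{s}})$ with value $\Phi_s(\vec{\mathbf{s}},\tilde{\xi}_{\vec{\mathbf{s}}})$. Applying $\hat{\Phi}_t$ at base point $T_s(\vec{\mathbf{s}})$ gives $\Phi_t\circ\Phi_s(\vec{\mathbf{s}},\tilde{\xi}_{\vec{\mathbf{s}}})=\Phi_{t+s}(\vec{\mathbf{s}},\tilde{\xi}_{\vec{\mathbf{s}}})$ by Lemma \ref{lemma 0530}. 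Here one must keep careful track of the reindexing $\vec{\mathbf{u}}=T_s(\vec{\mathbf{s}})$.

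Third, joint continuity of $(t,\hat{\xi})\mapsto\hat{\Phi}_t\hat{\xi}$ in the $\mathcal{G}_2$ norm; this is the main obstacle. The difficulty is that the norm is a supremum over $\vec{\mathbf{s}}$, while Lemma \ref{lemma 0530} is pointwise. I would reuse its proof with uniform constants. Take $t_m\to t$ and $\hat{\xi}_m\to\hat{\xi}$ in $\mathcal{G}_2$, and compare the graph representatives at the same point $\vec{\mathbf{u}}$. With $\vec{\mathbf{s}}_m=T_{-t_m}(\vec{\mathbf{u}})$ and $\vec{\mathbf{s}}=T_{-t}(\vec{\mathbf{u}})$, we have $d_0(\vec{\mathbf{s}}_m,\vec{\mathbf{s}})\leq n|t_m-t|$ uniformly in $\vec{\mathbf{u}}$. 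I would then split into three terms.
\begin{itemize}
\item The first term is $\mathbf{E}|K^{\vec{\mathbf{s}}_m}(t_m,0,\tilde{\xi}^m_{\vec{\mathbf{s}}_m})-K^{\vec{\mathbf{s}}}(t_m,0,\tilde{\xi}_{\vec{\mathbf{s}}})|^2$. By \eqref{0528-1} it is bounded by $\|\hat{\xi}_m-\hat{\xi}\|_{\mathcal{G}_2}^2$ plus the modulus of uniform continuity of $\tilde{\xi}$ on the compact $\mathbb{T}^n$ plus $w(n|t_m-t|)$, uniformly in $\vec{\mathbf{u}}$.
\item The second term is $\sup_{\vec{\mathbf{s}}}\mathbf{E}|K^{\vec{\mathbf{s}}}(t_m,0,\tilde{\xi}_{\vec{\mathbf{s}}})-K^{\vec{\mathbf{s}}}(t,0,\tilde{\xi}_{\vec{\mathbf{s}}})|^2$. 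The time-continuity \eqref{0528-2} is only pointwise, so I would upgrade it to uniform convergence by compactness. The map $\vec{\mathbf{s}}\mapsto K^{\vec{\mathbf{s}}}(\cdot,0,\tilde{\xi}_{\vec{\mathbf{s}}})$ is continuous into $L^2$ uniformly on $[0,T]$, by \eqref{0528-1} applied to sup-in-time estimates (or simply via a finite $\epsilon$-net of $\mathbb{T}^n$ combined with \eqref{0528-1}). Alternatively, use an Itô/BDG estimate $\mathbf{E}|K(t_m)-K(t)|^2\to0$ with the bound \eqref{n0530-1} and uniform integrability, since $\{\tilde{\xi}_{\vec{\mathbf{s}}}\}$ is compact in $L^2$.
\item The third term is the shift error $\mathbf{E}|Y_{\vec{\mathbf{s}}}\circ\theta_{t-t_m}-Y_{\vec{\mathbf{s}}}|^2$, where $Y_{\vec{\mathbf{s}}}=K^{\vec{\mathbf{s}}}(t,0,\tilde{\xi}_{\vec{\mathbf{s}}})$. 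The family $\{Y_{\vec{\mathbf{s}}}\}$ is a continuous image of $\mathbb{T}^n$, hence compact in $L^2$. Strong continuity of the isometry group $\theta$ on $L^2$ (\cite[Proposition 2.2.3]{Da-Prato-Zabczyk1996}) combined with an $\epsilon$-net argument makes this term vanish uniformly in $\vec{\mathbf{s}}$.
\end{itemize}
Combining the three terms yields $\|\hat{\Phi}_{t_m}\hat{\xi}_m-\hat{\Phi}_t\hat{\xi}\|_{\mathcal{G}_2}\to0$.
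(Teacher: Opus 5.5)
Your proposal is correct and follows essentially the same route as the paper. You get membership in $\mathcal{G}_2$ from the fibrewise estimate \eqref{0528-1} and the isometry $T_{-t}$, the semigroup law from the cocycle property behind Lemma \ref{lemma 0530}, and joint continuity by splitting the difference and upgrading pointwise convergence to uniform convergence over the compact torus. The only difference is cosmetic: the paper keeps your second and third terms together as a single function $F_m(\vec{\mathbf{s}})$, shows $F_m\to 0$ pointwise by \eqref{n0529-1} and that $\{F_m\}$ is equicontinuous by \eqref{0528-1}, which is the same compactness mechanism as your $\epsilon$-net arguments.
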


	\begin{proof}
		Notice that $T_t$ can be defined as in \eqref{0529-1} for all $t\in \mathbb{R}$ and $T_t$ is invertible with $T_t^{-1}=T_{-t}$. Then for any $\hat{\xi}={\rm graph}(\tilde{\xi})\in \mathcal{G}_{2}$, it follows from \eqref{0529-2} that 
		\begin{equation}\label{eq:graph-of-hat-Phi}
			\hat{\Phi}_t\hat{\xi}={\rm graph}\bigl(K^{T_{-t}(\cdot)}\big(t,0,\tilde{\xi}_{T_{-t}(\cdot)}\big)\circ \theta_{-t}\bigr).
		\end{equation}
		According to the proof of \eqref{n0529-1}, we know that 
		\begin{equation*}
			K^{T_{-t}(\cdot)}\big(t,0,\tilde{\xi}_{T_{-t}(\cdot)}\big)\circ \theta_{-t}\in C(\mathbb{T}^n; L^2(\mathcal{F}_0)),
		\end{equation*}
		and therefore, $\hat{\Phi}_t\hat{\xi}\in \mathcal{G}_{2}$. 
        Moreover, according to \eqref{0508-2} and \eqref{eq:graph-of-hat-Phi}, for any $\vec{\mathbf{s}}\in\mathbb{T}^n$,
        \begin{equation*}
            \begin{split}
                (\hat{\Phi}_t\circ\hat{\Phi}_s\hat{\xi})(\vec{\mathbf{s}})&=K^{T_{-t}(\vec{\mathbf{s}})}\big(t,0,(\hat{\Phi}_s\hat{\xi})(T_{-t}\vec{\mathbf{s}})\big)\circ \theta_{-t}\\
                &=K^{T_s(T_{-(t+s)}\vec{\mathbf{s}})}\big(t,0,K^{T_{-s}(T_{-t}\vec{\mathbf{s}})}(s,0,\tilde{\xi}_{T_{-s}(T_{-t}\vec{\mathbf{s}})})\circ\theta_{-s}\big)\circ \theta_s\circ\theta_{-(t+s)}\\
                &=K^{T_{-(t+s)}(\vec{\mathbf{s}})}\big(t+s,s,K^{T_{-(t+s)}(\vec{\mathbf{s}})}(s,0,\tilde{\xi}_{T_{-(t+s)}(\vec{\mathbf{s}})})\big)\circ\theta_{-(t+s)}\\
                &=K^{T_{-(t+s)}(\vec{\mathbf{s}})}\big(t+s,0,\tilde{\xi}_{T_{-(t+s)}(\vec{\mathbf{s}})}\big)\circ\theta_{-(t+s)}\\
                &=\hat{\Phi}_{t+s}\hat{\xi}(\vec{\mathbf{s}}).
            \end{split}
        \end{equation*}
        Hence, we have
		\begin{equation*}
			\hat{\Phi}_0\hat{\xi}=\hat{\xi}, \ \ \hat{\Phi}_{t+s}\hat{\xi}=\hat{\Phi}_t\circ\hat{\Phi}_s\hat{\xi}, \ \text{ for all } \ t,s\geq 0, \ \hat{\xi}\in \mathcal{G}_{2}.
		\end{equation*}
		It remains to show that for any sequences 
		\begin{equation*}
			\{t_m,t\}_{m\geq 1}\subset \mathbb{R}^+, \ \text{ and } \ \{\hat{\xi}^m={\rm graph}(\tilde{\xi}^m), \hat{\xi}={\rm graph}(\tilde{\xi})\}_{m\geq 1}\in \mathcal{G}_{2}
		\end{equation*}
		with
		\begin{equation*}
			\lim_{m\to \infty}\Big(|t_m-t|+\sup_{\vec{\mathbf{s}}\in \mathbb{T}^n}\mathbf{E}\bigl[|\tilde{\xi}^m_{\vec{\mathbf{s}}}-\tilde{\xi}_{\vec{\mathbf{s}}}|^2\bigr]\Big)=0,
		\end{equation*}
		we have
		\begin{equation}\label{m0530-1}
			\lim_{m\to \infty}\sup_{\vec{\mathbf{s}}\in \mathbb{T}^n}\mathbf{E}\bigl[\big|K^{T_{-t_m}(\vec{\mathbf{s}})}\big(t_m,0,\tilde{\xi}^m_{T_{-t_m}(\vec{\mathbf{s}})}\big)\circ \theta_{-t_m}-K^{T_{-t}(\vec{\mathbf{s}})}\big(t,0,\tilde{\xi}_{T_{-t}(\vec{\mathbf{s}})}\big)\circ \theta_{-t}\big|^2\bigr]=0.
		\end{equation}
		Note that
		\begin{equation}\label{m0530-2}
			\begin{split}
				&\ \ \ \ \mathbf{E}\bigl[\big|K^{T_{-t_m}(\vec{\mathbf{s}})}\big(t_m,0,\tilde{\xi}^m_{T_{-t_m}(\vec{\mathbf{s}})}\big)\circ \theta_{-t_m}-K^{T_{-t}(\vec{\mathbf{s}})}\big(t,0,\tilde{\xi}_{T_{-t}(\vec{\mathbf{s}})}\big)\circ \theta_{-t}\big|^2\bigr]\\
				&\leq 2\mathbf{E}\bigl[\big|K^{T_{-t_m}(\vec{\mathbf{s}})}\big(t_m,0,\tilde{\xi}^m_{T_{-t_m}(\vec{\mathbf{s}})}\big)\circ \theta_{-t_m}-K^{T_{-t}(\vec{\mathbf{s}})}\big(t_m,0,\tilde{\xi}_{T_{-t}(\vec{\mathbf{s}})}\big)\circ \theta_{-t_m}\big|^2\bigr]\\
				&\ \ \ \ +2\mathbf{E}\bigl[\big|K^{T_{-t}(\vec{\mathbf{s}})}\big(t_m,0,\tilde{\xi}_{T_{-t}(\vec{\mathbf{s}})}\big)\circ \theta_{-t_m}-K^{T_{-t}(\vec{\mathbf{s}})}\big(t,0,\tilde{\xi}_{T_{-t}(\vec{\mathbf{s}})}\big)\circ \theta_{-t}\big|^2\bigr].
			\end{split}
		\end{equation}
		Let $T=\sup_{m\geq 1}t_m$. According to \eqref{0528-1}, we know that
		\begin{equation}\label{m0530-3}
			\begin{split}
			&\ \ \ \ \mathbf{E}\bigl[\big|K^{T_{-t_m}(\vec{\mathbf{s}})}\big(t_m,0,\tilde{\xi}^m_{T_{-t_m}(\vec{\mathbf{s}})}\big)\circ \theta_{-t_m}-K^{T_{-t}(\vec{\mathbf{s}})}\big(t_m,0,\tilde{\xi}_{T_{-t}(\vec{\mathbf{s}})}\big)\circ \theta_{-t_m}\big|^2\bigr]\\
			&\leq e^{(8L^2+4L+1)T}\Bigl(\mathbf{E}\bigl[|\tilde{\xi}^m_{T_{-t_m}(\vec{\mathbf{s}})}-\tilde{\xi}_{T_{-t}(\vec{\mathbf{s}})}|^2\bigr]+2Tw\big(d_0(T_{-t_m}(\vec{\mathbf{s}}),T_{-t_m}(\vec{\mathbf{s}}))\big)\Bigr)\\
			&\leq e^{(8L^2+4L+1)T}\bigl(2\|\hat{\xi}^m-\hat{\xi}\|_{\mathcal{G}_2}^2+2\mathbf{E}\bigl[|\tilde{\xi}_{T_{-t_m}(\vec{\mathbf{s}})}-\tilde{\xi}_{T_{-t}(\vec{\mathbf{s}})}|^2\bigr]+2Tw(n|t_m-t|)\bigr).
			\end{split}
		\end{equation}
		Set
		\begin{equation*}
			f(\vec{\mathbf{t}},\vec{\mathbf{s}}):=\mathbf{E}\bigl[|\tilde{\xi}_{\vec{\mathbf{t}}}-\tilde{\xi}_{\vec{\mathbf{s}}}|^2\bigr], \ \text{ for all } \ \vec{\mathbf{t}},\vec{\mathbf{s}}\in \mathbb{T}^n.
		\end{equation*}
		Since $\tilde{\xi}\in C(\mathbb{T}^n;L^2(\mathcal{F}_0))$, it is easy to check that $f: \mathbb{T}^n\times \mathbb{T}^n\to \mathbb{R}^+$ is continuous and $f(\vec{\mathbf{t}},\vec{\mathbf{t}})=0$ for all $\vec{\mathbf{t}}\in \mathbb{T}^n$. Similar to that of function $h$, there exists function $w_{\tilde{\xi}}: [0,\infty)\to [0,\infty)$ satisfying $w_{\tilde{\xi}}(0)=0$ and continuous at $0$ such that for all $\vec{\mathbf{t}}_1,\vec{\mathbf{t}}_2,\vec{\mathbf{s}}_1,\vec{\mathbf{s}}_2\in \mathbb{T}^n$,
		\begin{equation*}
		  |f(\vec{\mathbf{t}}_1,\vec{\mathbf{t}}_2)-f(\vec{\mathbf{s}}_1,\vec{\mathbf{s}}_2)|\leq w_{\tilde{\xi}}\bigl(d_0(\vec{\mathbf{t}}_1,\vec{\mathbf{s}}_1)+d_0(\vec{\mathbf{t}}_2,\vec{\mathbf{s}}_2)\bigr).
		\end{equation*}
		Hence, $f(\vec{\mathbf{t}},\vec{\mathbf{s}})=|f(\vec{\mathbf{t}},\vec{\mathbf{s}})-f(\vec{\mathbf{s}},\vec{\mathbf{s}})|\leq w_{\tilde{\xi}}(d_0(\vec{\mathbf{t}},\vec{\mathbf{s}}))$ for all $\vec{\mathbf{t}},\vec{\mathbf{s}}\in \mathbb{T}^n$. Then \eqref{m0530-3} gives
		\begin{equation*}
			\begin{split}
				&\ \ \ \ \mathbf{E}\bigl[\big|K^{T_{-t_m}(\vec{\mathbf{s}})}\big(t_m,0,\tilde{\xi}^m_{T_{-t_m}(\vec{\mathbf{s}})}\big)\circ \theta_{-t_m}-K^{T_{-t}(\vec{\mathbf{s}})}\big(t_m,0,\tilde{\xi}_{T_{-t}(\vec{\mathbf{s}})}\big)\circ \theta_{-t_m}\big|^2\bigr]\\
				&\leq e^{(8L^2+4L+1)T}\bigl(2\|\hat{\xi}^m-\hat{\xi}\|_{\mathcal{G}_2}^2+2w_{\tilde{\xi}}(n|t_m-t|)+2Tw(n|t_m-t|)\bigr).
			\end{split}
		\end{equation*}
		Thus,
		\begin{equation}\label{m0530-4}
			\lim_{m\to \infty}\sup_{\vec{\mathbf{s}}\in \mathbb{T}^n}\mathbf{E}\bigl[\big|K^{T_{-t_m}(\vec{\mathbf{s}})}\big(t_m,0,\tilde{\xi}^m_{T_{-t_m}(\vec{\mathbf{s}})}\big)\circ \theta_{-t_m}-K^{T_{-t}(\vec{\mathbf{s}})}\big(t_m,0,\tilde{\xi}_{T_{-t}(\vec{\mathbf{s}})}\big)\circ \theta_{-t_m}\big|^2\bigr]=0.
		\end{equation}
		Now for any $m\geq 1$, set
		\begin{equation*}
			F_m(\vec{\mathbf{s}}):=\mathbf{E}\bigl[\big|K^{T_{-t}(\vec{\mathbf{s}})}\big(t_m,0,\tilde{\xi}_{T_{-t}(\vec{\mathbf{s}})}\big)\circ \theta_{-t_m}-K^{T_{-t}(\vec{\mathbf{s}})}\big(t,0,\tilde{\xi}_{T_{-t}(\vec{\mathbf{s}})}\big)\circ \theta_{-t}\big|^2\bigr], \ \text{ for all } \ \vec{\mathbf{s}}\in \mathbb{T}^n.
		\end{equation*}
		By \eqref{n0529-1}, we know that 
		\begin{equation*}
			\lim_{m\to \infty}F_m(\vec{\mathbf{s}})=0, \ \text{ for all } \ \vec{\mathbf{s}}\in \mathbb{T}^n.
		\end{equation*}
		On the other hand, according to \eqref{0528-1}, \eqref{n0530-1} and H{$\ddot{\rm o}$}lder inequality, we know that for any $\vec{\mathbf{t}},\vec{\mathbf{s}}\in \mathbb{T}^n$
		\begin{equation*}
			\begin{split}
				&\ \ \ \ |F_m(\vec{\mathbf{t}})-F_m(\vec{\mathbf{s}})|\\
				&\leq 4\sqrt{2C_T(1+\|\tilde{\xi}\|_{C(\mathbb{T}^n;L^2)}^2)}\Big(\mathbf{E}\bigl[\big|K^{T_{-t}(\vec{\mathbf{t}})}\big(t_m,0,\tilde{\xi}_{T_{-t}(\vec{\mathbf{t}})}\big)-K^{T_{-t}(\vec{\mathbf{s}})}\big(t_m,0,\tilde{\xi}_{T_{-t}(\vec{\mathbf{s}})}\big)\big|^2\bigr]\\
				&\qquad\qquad\qquad\qquad\qquad\qquad \ \ +\mathbf{E}\bigl[\big|K^{T_{-t}(\vec{\mathbf{t}})}\big(t,0,\tilde{\xi}_{T_{-t}(\vec{\mathbf{t}})}\big)-K^{T_{-t}(\vec{\mathbf{s}})}\big(t,0,\tilde{\xi}_{T_{-t}(\vec{\mathbf{s}})}\big)\big|^2\bigr]\Big)^{\frac{1}{2}}\\
				&\leq 8e^{(4L^2+2L+1)T}\sqrt{C_T(1+\|\tilde{\xi}\|_{C(\mathbb{T}^n;L^2)}^2)\bigl(\mathbf{E}\bigl[|\tilde{\xi}_{T_{-t}(\vec{\mathbf{t}})}-\tilde{\xi}_{T_{-t}(\vec{\mathbf{s}})}|^2\bigr]+2Tw\big(d_0(\vec{\mathbf{t}},\vec{\mathbf{s}})\big)\bigr)}\\
				&\leq 8e^{(4L^2+2L+1)T}\sqrt{C_T(1+\|\tilde{\xi}\|_{C(\mathbb{T}^n;L^2)}^2)\bigl(w_{\tilde{\xi}}\big(d_0(\vec{\mathbf{t}},\vec{\mathbf{s}})\big)+2Tw\big(d_0(\vec{\mathbf{t}},\vec{\mathbf{s}})\big)\bigr)},
			\end{split}
		\end{equation*}
		Hence, $\{F_m\}_{m\geq 1}\subset C(\mathbb{T}^n;\mathbb{R}^+)$ is equicontinuous. Since $\mathbb{T}^n$ is compact and $F_m$ converges to $0$ pointwise, it is equivalent that $F_m$ converges uniformly, i.e.,
		\begin{equation}\label{m0530-5}
			\lim_{m\to \infty}\sup_{\vec{\mathbf{s}}\in \mathbb{T}^n}\mathbf{E}\bigl[\big|K^{T_{-t}(\vec{\mathbf{s}})}\big(t_m,0,\tilde{\xi}_{T_{-t}(\vec{\mathbf{s}})}\big)\circ \theta_{-t_m}-K^{T_{-t}(\vec{\mathbf{s}})}\big(t,0,\tilde{\xi}_{T_{-t}(\vec{\mathbf{s}})}\big)\circ \theta_{-t}\big|^2\bigr]=0.
		\end{equation}
		Thus, we conclude \eqref{m0530-1} from \eqref{m0530-4} and \eqref{m0530-5}.
	\end{proof}

    \begin{remark}
        For any fixed $t\in\mathbb{R}$, consider the transformation $\hat{\theta}_t=(T_t, \theta_t): \hat{\Omega}\to \hat{\Omega}$, where $\hat{\Omega}=\mathbb{T}^n\times\Omega$. For any $\tilde{\xi}\in C(\mathbb{T}^n;\mathcal{F}_0)$, define $\hat{\tilde{\xi}}(\vec{\mathbf{s}},\omega)=\tilde{\xi}_{\vec{\mathbf{s}}}$. It is easy to verify that  $\hat{\tilde{\xi}}\in L^{\infty}(\hat{\Omega}; L^2(\mathcal{F}_0))$. Define a mapping $f: \hat{\Omega}\times L^2(\mathcal{F}_0)\to L^2(\mathcal{F}_0)$ by
        \begin{equation*}
            f_{(\vec{\mathbf{s}},\omega)}\xi=K^{\vec{\mathbf{s}}}(t,0,\xi)\circ \theta_{-t}
        \end{equation*}
        Then for any $\hat{\xi}={\rm graph}(\tilde{\xi})$, equation \eqref{eq:graph-of-hat-Phi} yields that
        \begin{equation*}
            \hat{\Phi}_t\hat{\xi}(\vec{\mathbf{s}})=K^{T_{-t}(\vec{\mathbf{s}})}\big(t,0,\tilde{\xi}_{T_{-t}(\vec{\mathbf{s}})}\big)\circ \theta_{-t}=f_{\hat{\theta}_t^{-1}(\vec{\mathbf{s}},\omega)}\hat{\tilde{\xi}}(\hat{\theta}_t^{-1}\big(\vec{\mathbf{s}},\omega)\big),
        \end{equation*}
        which is the same as that in \cite[(1.1) on Page 90]{huang2016ergodic}.
    \end{remark}
	
	Set
	\begin{equation*}
		\mathcal{P}_{2,cont}(\hat{\mathbb{X}})=\Big\{\int_{\mathbb{T}^n}\big(\delta_{\vec{\mathbf{s}}}\times \tilde{\mu}_{\vec{\mathbf{s}}}\big) \nu(\mathrm{d}\vec{\mathbf{s}}): \tilde{\mu}\in C(\mathbb{T}^n;\mathcal{P}_{2}(\mathbb{R}^d)), \ \nu\in \mathcal{P}(\mathbb{T}^n)\Big\}\subset \mathcal{P}_2(\hat{\mathbb{X}}).
	\end{equation*}
	For $\hat{\xi}={\rm graph}(\tilde{\xi})\in \mathcal{G}_{2}$, denote $\tilde{\mu}_{\vec{\mathbf{s}}}=\mathscr{L}(\tilde{\xi}_{\vec{\mathbf{s}}})$. The following result shows that the equation \eqref{eq:new1202-1} (i.e., \eqref{0517} below) holds on $\mathcal{P}_{2,cont}(\hat{\mathbb{X}})$.

    \begin{lemma}\label{lemma 0531}
		Assume Assumptions \ref{A3}, \ref{Quasi-periodic condition} and \ref{One-sided Lip of tilde b} hold. For any $\nu\in \mathcal{P}(\mathbb{T}^n)$ and $\hat{\xi}={\rm graph}(\tilde{\xi})\in \mathcal{G}_{2}$, we have that $\vec{\mathbf{s}}\mapsto \tilde{\mu}_{\vec{\mathbf{s}}}\in C(\mathbb{T}^n; \mathcal{P}_2(\mathbb{R}^d))$. Moreover,
		\begin{equation}\label{0531-2}
			\mathscr{L}_{\nu}(\hat{\xi})=\int_{\mathbb{T}^n}\big(\delta_{\vec{\mathbf{s}}}\times \tilde{\mu}_{\vec{\mathbf{s}}}\big) \nu(\mathrm{d}\vec{\mathbf{s}})\in \mathcal{P}_{2,cont}(\hat{\mathbb{X}}),
		\end{equation}
		and
		\begin{equation}\label{0531-3}
			\mathscr{L}_{\nu}(\hat{\Phi}_t\hat{\xi})=\int_{\mathbb{T}^n}\big(\delta_{T_t(\vec{\mathbf{s}})}\times P^{\vec{\mathbf{s}},*}(t,0)\tilde{\mu}_{\vec{\mathbf{s}}}\big) \nu(\mathrm{d}\vec{\mathbf{s}})\in \mathcal{P}_{2,cont}(\hat{\mathbb{X}}).
		\end{equation}
		In particular,
		\begin{equation}\label{0517}
			\hat{P}_t^* \int_{\mathbb{T}^n}\big(\delta_{\vec{\mathbf{s}}}\times \tilde{\mu}_{\vec{\mathbf{s}}}\big) \nu(\mathrm{d}\vec{\mathbf{s}})=\int_{\mathbb{T}^n}\big(\delta_{T_t(\vec{\mathbf{s}})}\times P^{\vec{\mathbf{s}},*}(t,0)\tilde{\mu}_{\vec{\mathbf{s}}}\big) \nu(\mathrm{d}\vec{\mathbf{s}}).
		\end{equation}	
	\end{lemma}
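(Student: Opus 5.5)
The proof has three parts: continuity of the fibre laws, a Fubini computation for $\mathscr{L}_{\nu}(\hat{\xi})$, and the same computation for $\hat{\Phi}_t\hat{\xi}$.

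\textbf{Continuity of $\tilde{\mu}$.} Since $\tilde{\xi}\in C(\mathbb{T}^n;L^2(\mathcal{F}_0))$, we have
$$\mathcal{W}_2(\tilde{\mu}_{\vec{\mathbf{t}}},\tilde{\mu}_{\vec{\mathbf{s}}})\leq \big(\mathbf{E}\big[|\tilde{\xi}_{\vec{\mathbf{t}}}-\tilde{\xi}_{\vec{\mathbf{s}}}|^2\big]\big)^{1/2}\to 0 \quad\text{as } d_0(\vec{\mathbf{t}},\vec{\mathbf{s}})\to 0.$$
Hence $\vec{\mathbf{s}}\mapsto\tilde{\mu}_{\vec{\mathbf{s}}}$ lies in $C(\mathbb{T}^n;\mathcal{P}_2(\mathbb{R}^d))$.

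\textbf{Identity \eqref{0531-2}.} I compute $\mathscr{L}_{\nu}(\hat{\xi})$ on product sets $A\times B$ with $A\in\mathcal{B}(\mathbb{T}^n)$ and $B\in\mathcal{B}(\mathbb{R}^d)$. The map $(\vec{\mathbf{s}},\omega)\mapsto\tilde{\xi}_{\vec{\mathbf{s}}}(\omega)$ is jointly measurable; the paper already asserts this, and it can be justified by approximating $\tilde{\xi}$ uniformly in $L^2$ by piecewise-constant maps on a finite partition of the torus. Fubini under $\nu\times\mathbf{P}$ then gives
$$(\nu\times\mathbf{P})\{(\vec{\mathbf{s}},\omega):\vec{\mathbf{s}}\in A,\ \tilde{\xi}_{\vec{\mathbf{s}}}(\omega)\in B\}=\int_A\tilde{\mu}_{\vec{\mathbf{s}}}(B)\,\nu(\mathrm{d}\vec{\mathbf{s}}).$$
Measurability of $\vec{\mathbf{s}}\mapsto\tilde{\mu}_{\vec{\mathbf{s}}}(B)$ follows from the weak continuity of $\tilde{\mu}$ and a monotone class argument. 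Products form a $\pi$-system generating $\mathcal{B}(\hat{\mathbb{X}})$, so \eqref{0531-2} holds. Membership in $\mathcal{P}_2(\hat{\mathbb{X}})$ follows because $\mathbb{T}^n$ is bounded and $\sup_{\vec{\mathbf{s}}}\|\tilde{\mu}_{\vec{\mathbf{s}}}\|_2<\infty$ by compactness of $\mathbb{T}^n$.

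\textbf{Identity \eqref{0531-3}.} By Lemma \ref{coro 0530}, $\hat{\Phi}_t\hat{\xi}=\mathrm{graph}(\tilde{\eta})$ with
$$\tilde{\eta}_{\vec{\mathbf{u}}}=K^{T_{-t}(\vec{\mathbf{u}})}\big(t,0,\tilde{\xi}_{T_{-t}(\vec{\mathbf{u}})}\big)\circ\theta_{-t}\in C(\mathbb{T}^n;L^2(\mathcal{F}_0)).$$
I must use the evaluation \eqref{0529-2}, namely $(\hat{\Phi}_t\hat{\xi})(\vec{\mathbf{s}},\omega)=\big(T_t(\vec{\mathbf{s}}),K^{\vec{\mathbf{s}}}(t,0,\tilde{\xi}_{\vec{\mathbf{s}}})(\theta_{-t}\omega)\big)$, which pushes forward $\nu\times\mathbf{P}$ from the base point $\vec{\mathbf{s}}$. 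Repeating the Fubini computation with this random variable gives, for $A\times B$,
$$\int\mathbf{1}_A(T_t\vec{\mathbf{s}})\,\mathbf{P}\big(K^{\vec{\mathbf{s}}}(t,0,\tilde{\xi}_{\vec{\mathbf{s}}})\circ\theta_{-t}\in B\big)\,\nu(\mathrm{d}\vec{\mathbf{s}}).$$
Because $\theta_{-t}$ preserves $\mathbf{P}$, and because $K^{\vec{\mathbf{s}}}(t,0,\tilde{\xi}_{\vec{\mathbf{s}}})$ is the strong solution with initial datum of law $\tilde{\mu}_{\vec{\mathbf{s}}}$, the probability in the integrand equals $\big(P^{\vec{\mathbf{s}},*}(t,0)\tilde{\mu}_{\vec{\mathbf{s}}}\big)(B)$; this identification uses weak uniqueness. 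That yields \eqref{0531-3}.

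To see that the right-hand side is in $\mathcal{P}_{2,cont}$, I rewrite it by the change of variables $\vec{\mathbf{u}}=T_t(\vec{\mathbf{s}})$ as
$$\int\big(\delta_{\vec{\mathbf{u}}}\times P^{T_{-t}\vec{\mathbf{u}},*}(t,0)\tilde{\mu}_{T_{-t}\vec{\mathbf{u}}}\big)\,(\nu\circ T_t^{-1})(\mathrm{d}\vec{\mathbf{u}}).$$
The fibre map $\vec{\mathbf{u}}\mapsto P^{T_{-t}\vec{\mathbf{u}},*}(t,0)\tilde{\mu}_{T_{-t}\vec{\mathbf{u}}}$ is continuous into $\mathcal{P}_2(\mathbb{R}^d)$ by Lemma \ref{Lemma 0517-1} composed with the continuous maps $T_{-t}$ and $\tilde{\mu}$.

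\textbf{Conclusion \eqref{0517}.} This follows by combining \eqref{0531-2} and \eqref{0531-3} with the definition \eqref{0530-3}. There is one caveat. The definition \eqref{0530-3} must not depend on the choice of representative $\hat{\xi}$ with a given law, and \eqref{0531-3} shows exactly that: the right-hand side depends only on $\nu$ and $\tilde{\mu}$, so the definition is consistent.

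The main obstacle is the joint measurability needed for Fubini. I would handle it through the piecewise-constant $L^2$ approximation of $\tilde{\xi}$, passing to an a.s.-convergent subsequence.
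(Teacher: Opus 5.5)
Your proposal is correct and follows the paper's route: both identities come from a Fubini computation on $(\hat{\Omega},\nu\times\mathbf{P})$, together with the fact that $\theta_{-t}$ preserves $\mathbf{P}$; the paper tests against $f\in C_b(\hat{\mathbb{X}})$, whereas you test on product sets. Your additional checks fill in points the paper leaves implicit: the joint measurability, the change of variables $\vec{\mathbf{u}}=T_t(\vec{\mathbf{s}})$ combined with Lemma \ref{Lemma 0517-1} to get membership in $\mathcal{P}_{2,cont}(\hat{\mathbb{X}})$, and the well-definedness of \eqref{0530-3}.
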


	\begin{proof}
		Since $\hat{\xi}={\rm graph}(\tilde{\xi})\in \mathcal{G}_{2}$, we have $\tilde{\xi}\in C(\mathbb{T}^n; L^{2}(\mathcal{F}_0))$. Then $\tilde{\mu}_{\cdot}\in C(\mathbb{T}^n;\mathcal{P}_{2}(\mathbb{R}^d))$ follows by the following inequality
		\begin{equation*}
			\mathcal{W}_{2}(\tilde{\mu}_{\vec{\mathbf{s}}},\tilde{\mu}_{\vec{\mathbf{t}}})=\mathcal{W}_{2}\big(\mathscr{L}(\tilde{\xi}_{\vec{\mathbf{s}}}),\mathscr{L}(\tilde{\xi}_{\vec{\mathbf{t}}})\big)\leq \|\tilde{\xi}_{\vec{\mathbf{s}}}-\tilde{\xi}_{\vec{\mathbf{t}}}\|_{L^2(\mathcal{F}_0)}.
		\end{equation*}
		Moreover, for any $f\in C_b(\hat{\mathbb{X}})$, it follows that
		\begin{equation}
			\begin{split}
				\int_{\hat{\mathbb{X}}}f\mathrm{d}\int_{\mathbb{T}^n}\big(\delta_{\vec{\mathbf{s}}}\times \tilde{\mu}_{\vec{\mathbf{s}}}\big) \nu(\mathrm{d}\vec{\mathbf{s}})
				&=\int_{\mathbb{T}^n}\int_{\mathbb{R}^d}f(\vec{\mathbf{s}};x)\mathscr{L}(\tilde{\xi}_{\vec{\mathbf{s}}})(\mathrm{d}x) \nu(\mathrm{d}\vec{\mathbf{s}})\\
				&=\int_{\mathbb{T}^n}\int_{\Omega}f(\vec{\mathbf{s}};\tilde{\xi}_{\vec{\mathbf{s}}}(\omega))\mathbf{P}(\mathrm{d}\omega) \nu(\mathrm{d}\vec{\mathbf{s}})\\
				&=\int_{\hat{\Omega}}f\big(\hat{\xi}(\vec{\mathbf{s}},\omega)\big)(\nu\times\mathbf{P})(\mathrm{d}\vec{\mathbf{s}}\times\mathrm{d}\omega)\\
				&=\int_{\hat{\mathbb{X}}}f\mathrm{d}\mathscr{L}_{\nu}(\hat{\xi}),
			\end{split}
		\end{equation}
		and
		\begin{equation}
			\begin{split}
				\int_{\hat{\mathbb{X}}}f\mathrm{d}\int_{\mathbb{T}^n}\big(\delta_{T_t(\vec{\mathbf{s}})}\times P^{\vec{\mathbf{s}},*}(t,0)\tilde{\mu}_{\vec{\mathbf{s}}}\big) \nu(\mathrm{d}\vec{\mathbf{s}})
				&=\int_{\mathbb{T}^n}\int_{\mathbb{R}^d}f\big(T_t(\vec{\mathbf{s}});x\big)\mathscr{L}\big(K^{\vec{\mathbf{s}}}(t,0,\tilde{\xi}_{\vec{\mathbf{s}}})\big)(\mathrm{d}x) \nu(\mathrm{d}\vec{\mathbf{s}})\\
				&=\int_{\mathbb{T}^n}\int_{\Omega}f\big(T_t(\vec{\mathbf{s}});K^{\vec{\mathbf{s}}}(t,0,\tilde{\xi}_{\vec{\mathbf{s}}})(\omega)\big)\mathbf{P}(\mathrm{d}\omega) \nu(\mathrm{d}\vec{\mathbf{s}})\\
				&=\int_{\mathbb{T}^n}\int_{\Omega}f\big(T_t(\vec{\mathbf{s}});K^{\vec{\mathbf{s}}}(t,0,\tilde{\xi}_{\vec{\mathbf{s}}})(\theta_{-t}\omega)\big)\mathbf{P}(\mathrm{d}\omega) \nu(\mathrm{d}\vec{\mathbf{s}})\\
				&=\int_{\hat{\Omega}}f\big(\hat{\Phi}_t\hat{\xi}(\vec{\mathbf{s}},\omega)\big)(\nu\times\mathbf{P})(\mathrm{d}\vec{\mathbf{s}}\times\mathrm{d}\omega)\\
				&=\int_{\hat{\mathbb{X}}}f\mathrm{d}\mathscr{L}_{\nu}(\hat{\Phi}_t\hat{\xi}).
			\end{split}
		\end{equation}
		Thus, \eqref{0531-2} and \eqref{0531-3} hold true. Finally, \eqref{0517} follows from \eqref{0530-3}, \eqref{0531-2} and \eqref{0531-3} trivially.
	\end{proof}

For any $p\geq 1$, recall the $p$-Wasserstein space
\begin{equation*}
        \mathcal{P}_p(\hat{\mathbb{X}}):=\Big\{\hat{\mu}\in \mathcal{P}(\hat{\mathbb{X}}): \int_{\hat{\mathbb{X}}}\hat{d}^p(\hat{x},\hat{0})\hat{\mu}(\mathrm{d}\hat{x})<\infty \Big\}.
  \end{equation*}
Since $\mathbb{T}^n$ is a compact set, it is easy to check that 
	\begin{equation*}
		\mathcal{P}_p(\hat{\mathbb{X}})=\Big\{\hat{\mu}\in \mathcal{P}(\hat{\mathbb{X}}): \int_{\hat{\mathbb{X}}}|x|^p\hat{\mu}(\mathrm{d}\vec{\mathbf{s}}, \mathrm{d}x)<\infty\Big\}.
	\end{equation*}
For any $\hat{\mu}\in\mathcal{P}_2(\hat{\mathbb{X}})\subset\mathcal{P}(\hat{\mathbb{X}})$, it can be written as the following form:
	\begin{equation*}
		\hat{\mu}=\int_{\mathbb{T}^n}\big(\delta_{\vec{\mathbf{s}}}\times \tilde{\mu}_{\vec{\mathbf{s}}}\big) \nu(\mathrm{d}\vec{\mathbf{s}}),
	\end{equation*}
	where $\nu$ is the marginal measure on $\mathbb{T}^n$ and $\{\tilde{\mu}_{\vec{\mathbf{s}}}\}_{\vec{\mathbf{s}}}$ is the unique disintegration of $\hat{\mu}$. The following lemma shows that $\hat{P}^*_t$ maps $\mathcal{P}_2(\hat{\mathbb{X}})$ into itself.

	\begin{lemma}\label{lem:hat-P*}
		Assume Assumptions \ref{A3}, \ref{Quasi-periodic condition} and \ref{One-sided Lip of tilde b} hold. For any $t\geq 0$, $\hat{P}_t^*\hat{\mu}$ defined by \eqref{eq:new1202-1} for all $\hat{\mu}\in\mathcal{P}_2(\hat{\mathbb{X}})$ satisfies $\hat{P}_t^*\hat{\mu}\in\mathcal{P}_2(\hat{\mathbb{X}})$.
	\end{lemma}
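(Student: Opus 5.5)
The plan is to read \eqref{eq:new1202-1} as the definition of $\hat{P}_t^*\hat{\mu}$ and verify two things: that its right-hand side is a genuine probability measure on $\hat{\mathbb{X}}$, and that this measure has finite second moment. Write $\hat{\mu}=\int_{\mathbb{T}^n}(\delta_{\vec{\mathbf{s}}}\times\tilde{\mu}_{\vec{\mathbf{s}}})\nu(\mathrm{d}\vec{\mathbf{s}})$. Here $\nu$ is the marginal on $\mathbb{T}^n$ and $\{\tilde{\mu}_{\vec{\mathbf{s}}}\}$ is the disintegration. Since $\int_{\mathbb{T}^n}\|\tilde{\mu}_{\vec{\mathbf{s}}}\|_2^2\,\nu(\mathrm{d}\vec{\mathbf{s}})=\int_{\hat{\mathbb{X}}}|x|^2\hat{\mu}(\mathrm{d}\vec{\mathbf{s}},\mathrm{d}x)<\infty$, we get $\tilde{\mu}_{\vec{\mathbf{s}}}\in\mathcal{P}_2(\mathbb{R}^d)$ for $\nu$-a.e. $\vec{\mathbf{s}}$. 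On the $\nu$-null exceptional set we replace $\tilde{\mu}_{\vec{\mathbf{s}}}$ by $\delta_0$; this changes nothing. Consequently $P^{\vec{\mathbf{s}},*}(t,0)\tilde{\mu}_{\vec{\mathbf{s}}}$ is well defined for every $\vec{\mathbf{s}}$.

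\textbf{Step 1 (measurability).} We need $\vec{\mathbf{s}}\mapsto\delta_{T_t(\vec{\mathbf{s}})}\times P^{\vec{\mathbf{s}},*}(t,0)\tilde{\mu}_{\vec{\mathbf{s}}}$ to be a measurable kernel, so that the integral in \eqref{eq:new1202-1} defines a probability measure. The map $\vec{\mathbf{s}}\mapsto\tilde{\mu}_{\vec{\mathbf{s}}}$ is Borel into $\mathcal{P}(\mathbb{R}^d)$ by the disintegration theorem. Since $\mathcal{B}(\mathcal{P}_2)$ is generated by the maps $\mu\mapsto\int f\,\mathrm{d}\mu$ (with $\|\cdot\|_2$ measurable as a monotone limit of bounded truncations), it is also Borel into $\mathcal{P}_2(\mathbb{R}^d)$ after the modification above. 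By Lemma \ref{Lemma 0517-1}, $(\vec{\mathbf{s}},\mu)\mapsto P^{\vec{\mathbf{s}},*}(t,0)\mu$ is continuous on $\mathbb{T}^n\times\mathcal{P}_2(\mathbb{R}^d)$. Hence $\vec{\mathbf{s}}\mapsto(T_t(\vec{\mathbf{s}}),P^{\vec{\mathbf{s}},*}(t,0)\tilde{\mu}_{\vec{\mathbf{s}}})$ is Borel, being a composition of a continuous map with a Borel map. Consequently $\vec{\mathbf{s}}\mapsto\int f(T_t(\vec{\mathbf{s}}),x)\,P^{\vec{\mathbf{s}},*}(t,0)\tilde{\mu}_{\vec{\mathbf{s}}}(\mathrm{d}x)$ is measurable for every $f\in C_b(\hat{\mathbb{X}})$, and the right-hand side of \eqref{eq:new1202-1} is a well-defined element of $\mathcal{P}(\hat{\mathbb{X}})$. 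Independence of the choice of disintegration version follows from $\nu$-a.s. uniqueness.

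\textbf{Step 2 (second moment).} I will use a uniform-in-$\vec{\mathbf{s}}$ moment bound for the reparameterized equation \eqref{New Equation K_r_1,r_2}. By Remark \ref{lemma 0427}, $\tilde{b},\tilde{\sigma}$ satisfy Assumption \ref{A3} with the same constants $L,c_\sigma$ for every $\vec{\mathbf{s}}$. So the argument behind \eqref{eq:mvsde-estimate}/\eqref{n0530-1} (It\^o's formula with the bound $2\langle x,b\rangle+\|\sigma\|^2\le 4L|x|^2+L\|\mu\|_2^2+L+c_\sigma d$ from the proof of Claim 3, followed by Gronwall) gives a constant $C_t$, independent of $\vec{\mathbf{s}}$, with $\|P^{\vec{\mathbf{s}},*}(t,0)\mu\|_2^2\le C_t(1+\|\mu\|_2^2)$. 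Integrating against $\nu$ yields
\begin{equation*}
\int_{\hat{\mathbb{X}}}|x|^2\,\hat{P}_t^*\hat{\mu}(\mathrm{d}\vec{\mathbf{s}},\mathrm{d}x)=\int_{\mathbb{T}^n}\|P^{\vec{\mathbf{s}},*}(t,0)\tilde{\mu}_{\vec{\mathbf{s}}}\|_2^2\,\nu(\mathrm{d}\vec{\mathbf{s}})\le C_t\Big(1+\int_{\hat{\mathbb{X}}}|x|^2\hat{\mu}(\mathrm{d}\vec{\mathbf{s}},\mathrm{d}x)\Big)<\infty.
\end{equation*}
Because $\mathbb{T}^n$ is compact, this is exactly the condition $\hat{P}_t^*\hat{\mu}\in\mathcal{P}_2(\hat{\mathbb{X}})$.

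The main obstacle is Step 1, specifically the measurable-selection issue: the disintegration gives only a Borel (not continuous) family, so one must check measurability into $\mathcal{P}_2$ under the $\mathcal{W}_2$ topology rather than merely weak measurability. The joint continuity from Lemma \ref{Lemma 0517-1} then handles the rest. Step 2 is routine.
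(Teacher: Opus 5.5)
Your proof is correct and has the same two-part structure as the paper's. First you show that the right-hand side of \eqref{eq:new1202-1} is a genuine probability measure, using the joint continuity in Lemma \ref{Lemma 0517-1}. Then you integrate the $\vec{\mathbf{s}}$-uniform bound \eqref{n0530-1} against $\nu$; your Step 2 is exactly the paper's second-moment computation.

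The difference is in how measurability is obtained. The paper proves a Lusin-type claim: for each $\epsilon>0$ there is an open set $O^{\epsilon}_t$ with $\nu(T_t^{-1}(O^{\epsilon}_t))<\epsilon$ off which $\tilde{\mu}_{T_{-t}\cdot}$ is $\mathcal{W}_2$-continuous. This comes from Lusin's theorem applied simultaneously to the countably many functionals $\vec{\mathbf{s}}\mapsto\int f_n\,\mathrm{d}\tilde{\mu}_{T_{-t}(\vec{\mathbf{s}})}$, with $\{f_n\}$ dense in $C_0(\mathbb{R}^d)$ and $f_0=|x|^2$, followed by vague continuity plus tightness from a second-moment cutoff, and then Theorem 6.9 of Villani. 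Combined with Lemma \ref{Lemma 0517-1}, this makes the fibre integrands continuous off small sets, and the paper then produces the measure via the Riesz representation theorem.

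You instead use the fact that the Borel $\sigma$-algebra of $(\mathcal{P}_2(\mathbb{R}^d),\mathcal{W}_2)$ is the trace of the weak Borel $\sigma$-algebra. So the disintegration kernel is already Borel into $\mathcal{P}_2(\mathbb{R}^d)$, and the composed kernel is Borel outright. Your route is shorter and gives Borel measurability directly. It also defines the measure setwise, with countable additivity coming from monotone convergence, so you never need to check that a normalized positive functional on $C_b$ of the noncompact space $\hat{\mathbb{X}}$ is represented by a countably additive measure. The paper's claim yields a stronger quasi-continuity property, but this lemma does not need it.

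The one fact you should justify in a line is the $\sigma$-algebra identification. For example, $\mathcal{W}_2(\cdot,\mu_0)$ is weakly lower semicontinuous, so $\mathcal{W}_2$-balls are weakly Borel, and by separability every $\mathcal{W}_2$-open set is a countable union of such balls. Alternatively, apply the Lusin--Souslin theorem to the continuous injection of $(\mathcal{P}_2(\mathbb{R}^d),\mathcal{W}_2)$ into $\mathcal{P}(\mathbb{R}^d)$ with the weak topology.
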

	\begin{proof}
		For any $\hat{\mu}\in\mathcal{P}_2(\hat{\mathbb{X}})$, let $\nu$ and $\{\tilde{\mu}_{\vec{\mathbf{s}}}\}_{\vec{\mathbf{s}}}$ be the marginal measure on $\mathbb{T}^n$ and the disintegration of $\hat{\mu}$. We need to show that for any $t\geq 0$,
        \begin{equation*}
            \hat{P}_t^*\hat{\mu}=\int_{\mathbb{T}^n}\big(\delta_{T_t(\vec{\mathbf{s}})}\times P^{\vec{\mathbf{s}},*}(t,0)\tilde{\mu}_{\vec{\mathbf{s}}}\big) \nu(\mathrm{d}\vec{\mathbf{s}})=\int_{\mathbb{T}^n}\big(\delta_{\vec{\mathbf{s}}}\times P^{T_{-t}(\vec{\mathbf{s}}),*}(t,0)\tilde{\mu}_{T_{-t}(\vec{\mathbf{s}})}\big) \nu\circ T_t^{-1}(\mathrm{d}\vec{\mathbf{s}})\in \mathcal{P}_2(\hat{\mathbb{X}}).
        \end{equation*}
        We first give the following claim, whose proof will be given later.

		\noindent\textbf{Claim}. For any $t\geq 0$ and $\epsilon>0$, there exists an open set $O^{\epsilon}_t\subset \mathbb{T}^n$ with $\nu(T_t^{-1}(O^{\epsilon}_t))<\epsilon$ such that $\tilde{\mu}_{T_{-t}\cdot}$ is $\mathcal{W}_2$-continuous on $\mathbb{T}^n\setminus O^{\epsilon}_t$.

        Now, by Lemma \ref{Lemma 0517-1} and the Claim, $P^{T_{-t}(\vec{\mathbf{s}}),*}(t,0)\tilde{\mu}_{T_{-t}(\vec{\mathbf{s}})}$ is $\mathcal{W}_2$-continuous for $\vec{\mathbf{s}}\in \mathbb{T}^n\setminus O^{\epsilon}_t$. Hence, for any  $f\in C_b(\hat{\mathbb{X}})$, 
        \begin{equation*}
            \int_{\mathbb{R}^d}f(\vec{\mathbf{s}},\cdot){\rm d}P^{T_{-t}(\vec{\mathbf{s}}),*}(t,0)\tilde{\mu}_{T_{-t}(\vec{\mathbf{s}})} 
        \end{equation*}
        is $\nu\circ T_t^{-1}$-integrable. Hence, it is easy to check that $\hat{P}_t^*\hat{\mu}: C_b(\hat{\mathbb{X}})\to \mathbb{R}$, defined by
        \begin{equation*}
            \hat{P}_t^*\hat{\mu} (f):=\int_{\mathbb{T}^n}\bigg(\int_{\mathbb{R}^d}f(\vec{\mathbf{s}},\cdot){\rm d}P^{T_{-t}(\vec{\mathbf{s}}),*}(t,0)\tilde{\mu}_{T_{-t}(\vec{\mathbf{s}})}\bigg) \nu\circ T_t^{-1}({\rm d}\vec{\mathbf{s}}),
        \end{equation*}
        is a positive bounded linear functional defined on $C_b(\hat{\mathbb{X}})$, and maps the 1-function to itself. By the Riesz representation theorem, we know that $\hat{P}_t^*\hat{\mu}\in \mathcal{P}(\hat{\mathbb{X}})$. 

        On the other hand, let $\{\tilde{\xi}_{\vec{\mathbf{s}}}\}_{\vec{\mathbf{s}}\in \mathbb{T}^n}\subset L^2(\mathcal{F}_0)$ with $\mathscr{L}(\tilde{\xi}_{\vec{\mathbf{s}}})=\tilde{\mu}_{\vec{\mathbf{s}}}$ for all $\vec{\mathbf{s}}\in \mathbb{T}^n$. It follows from \eqref{n0530-1} that
		\begin{equation*}
			\begin{split}
				\int_{\mathbb{T}^n}\big(\delta_{T_t(\vec{\mathbf{s}})}\times P^{\vec{\mathbf{s}},*}(t,0)\tilde{\mu}_{\vec{\mathbf{s}}}\big) \nu(\mathrm{d}\vec{\mathbf{s}}) (|\cdot|^2)&=\int_{\mathbb{T}^n} \mathbf{E}\bigl[\big|K^{T_{-t}(\vec{\mathbf{s}})}\big(t,0,\tilde{\xi}_{T_{-t}(\vec{\mathbf{s}})}\big)\big|^2\bigr]\nu\circ T_t^{-1}(\mathrm{d}\vec{\mathbf{s}})\\
				&\leq \int_{\mathbb{T}^n} C_t\big(1+\big\|\tilde{\mu}_{T_{-t}(\vec{\mathbf{s}})}\big\|_2^2\big)\nu\circ T_t^{-1}(\mathrm{d}\vec{\mathbf{s}})\\
				&=C_t\Bigl(1+\int_{\mathbb{T}^n}\big\|\tilde{\mu}_{\vec{\mathbf{s}}}\big\|_2^2\nu(\mathrm{d}\vec{\mathbf{s}})\Bigr)\\
				&=C_t\Bigl(1+\int_{\hat{\mathbb{X}}}|x|^2\hat{\mu}(\mathrm{d}\hat{x})\Bigr)<\infty.
			\end{split}
		\end{equation*}
		Hence, 
		\begin{equation*}
			\hat{P}_t^*\hat{\mu}=\int_{\mathbb{T}^n}\delta_{T_t(\vec{\mathbf{s}})}\times P^{\vec{\mathbf{s}},*}(t,0)\tilde{\mu}_{\vec{\mathbf{s}}} \nu(\mathrm{d}\vec{\mathbf{s}})\in \mathcal{P}_2(\hat{\mathbb{X}}).
		\end{equation*}

        Finally, we present the proof of Claim.

		\noindent\textbf{Proof of Claim}: Note that
		\begin{equation*}
			\int_{\mathbb{T}^n}\int_{\mathbb{R}^d}|x|^2\tilde{\mu}_{T_{-t}(\vec{\mathbf{s}})}(\mathrm{d}x)\nu\circ T_t^{-1}(\mathrm{d}\vec{\mathbf{s}})=\int_{\mathbb{T}^n}\int_{\mathbb{R}^d}|x|^2\tilde{\mu}_{\vec{\mathbf{s}}}(\mathrm{d}x)\nu(\mathrm{d}\vec{\mathbf{s}})=\int_{\hat{\mathbb{X}}}|x|^2\hat{\mu}(\mathrm{d}\hat{x})<\infty.
		\end{equation*}
		Hence, $\tilde{\mu}_{\vec{\mathbf{s}}}\in \mathcal{P}_2(\mathbb{R}^d)$, $\nu$-a.s. Without losing any generality, we assume $\tilde{\mu}_{\vec{\mathbf{s}}}\in \mathcal{P}_2(\mathbb{R}^d)$ for all $\vec{\mathbf{s}}\in \mathbb{T}^n$. For any $N\geq 1$, set
		\begin{equation*}
			A_N:=\{\vec{\mathbf{s}}\in \mathbb{T}^n: \|\tilde{\mu}_{T_{-t}(\vec{\mathbf{s}})}\|_2>N\}.
		\end{equation*}
		Then the Chebyshev inequality gives
		\begin{equation*}
			\nu(T_t^{-1}(A_N))\leq \frac{1}{N^2}\int_{\hat{\mathbb{X}}}|x|^2\hat{\mu}(\mathrm{d}\hat{x})\to 0 \ \text{ as } \ N\to \infty.
		\end{equation*}
		Now we choose $N_0$ large enough such that $\nu(T_t^{-1}(A_{N_0}))<\frac{\epsilon}{3}$. Denote by $C_0(\mathbb{R}^d;\mathbb{R})$ the collection of continuous functions that vanish at infinity, with the supremum norm. It is well-known that $C_0(\mathbb{R}^d;\mathbb{R})$ is separable. Let us choose the sequence $\{f_n\}_{n\geq 1}$ being dense in $C_0(\mathbb{R}^d;\mathbb{R})$ and let $f_0(x)=|x|^2$. Note that 
		\begin{equation*}
			F_n(\vec{\mathbf{s}})=\int_{\mathbb{R}^d}f_n(x)\tilde{\mu}_{T_{-t}(\vec{\mathbf{s}})}(\mathrm{d}x)
		\end{equation*}
		is measurable for all $n\geq 0$. Since $(\mathbb{T}^n,\mathcal{B}(\mathbb{T}^n),\nu\circ T_t^{-1})$ is a Radon space, Lusin's theorem indicates that there exists a measurable set $\Gamma$ of $\mathbb{T}^n$ such that 
		\begin{equation*}
			\nu(T_t^{-1}(\Gamma))<\frac{\epsilon}{3}, \ \text{ and } \ F_n \text{ is continuous on } \mathbb{T}^n\setminus\Gamma \ \text{ for all } \ n\geq 0.
		\end{equation*}
		Since $\nu\circ T_t^{-1}$ is an outer regular probability measure, there is an open set $O^{\epsilon}_t\supset A_{N_0}\cup \Gamma$ such that $\nu(T_t^{-1}(O^{\epsilon}_t))<\epsilon$. Then we will show that $\tilde{\mu}_{T_{-t}\cdot}$ is $\mathcal{W}_2$-continuous on $\mathbb{T}^n\setminus O^{\epsilon}_t$. Since $\{f_n\}_{n\geq 1}$ is dense in $C_0(\mathbb{R}^d;\mathbb{R})$, we know that the function $F$ given by
		\begin{equation*}
			F(\vec{\mathbf{s}})=\int_{\mathbb{R}^d}f(x)\tilde{\mu}_{T_{-t}(\vec{\mathbf{s}})}(\mathrm{d}x)
		\end{equation*}
		is continuous on $\mathbb{T}^n\setminus O^{\epsilon}_t$ for all $f\in C_0(\mathbb{R}^d;\mathbb{R})$. Hence, $\tilde{\mu}_{T_{-t}\cdot}$ is continuous on $\mathbb{T}^n\setminus O^{\epsilon}_t$ with respect to the vague topology. Note also that $O^{\epsilon}_t\supset A_{N_0}$, by the construction of $A_{N_0}$, we conclude that $\{\tilde{\mu}_{T_{-t}(\vec{\mathbf{s}})}\}_{\vec{\mathbf{s}}\in \mathbb{T}^n\setminus O^{\epsilon}_t}$ is tight. Thus, $\tilde{\mu}_{T_{-t}\cdot}$ is continuous on $\mathbb{T}^n\setminus O^{\epsilon}_t$ with respect to the weak topology. Moreover, since $F_0$ is continuous, Theorem 6.9 in \cite{Villani2009} implies that $\tilde{\mu}_{T_{-t}\cdot}$ is $\mathcal{W}_2$-continuous on $\mathbb{T}^n\setminus O^{\epsilon}_t$. The claim is proved.
	\end{proof}
    
The following result indicates that $\{\hat{P}^*_t\}_{t\geq 0}$ is not only a family of maps from $\mathcal{P}_2(\hat{\mathbb{X}})$ to itself, but also constitutes a continuous dynamical system on $\mathcal{P}_2(\hat{\mathbb{X}})$.
    
    \begin{proposition}\label{prop:hat-P*-CDS}
        Assume Assumptions \ref{A3}, \ref{Quasi-periodic condition} and \ref{One-sided Lip of tilde b} hold. Then $\hat{P}^*:\mathbb{R}^+\times \mathcal{P}_2(\hat{\mathbb{X}})\to \mathcal{P}_2(\hat{\mathbb{X}})$ is a continuous dynamical system.
    \end{proposition}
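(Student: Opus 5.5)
We have to check three properties: $\hat{P}^*_0=\mathrm{id}$, the semigroup property $\hat{P}^*_{t+s}=\hat{P}^*_t\hat{P}^*_s$ on $\mathcal{P}_2(\hat{\mathbb{X}})$, and joint continuity of $(t,\hat{\mu})\mapsto \hat{P}^*_t\hat{\mu}$ in $\mathbb{R}^+\times(\mathcal{P}_2(\hat{\mathbb{X}}),\mathcal{D}_2)$. Throughout we use the disintegration formula \eqref{eq:new1202-1}, which by Lemma \ref{lem:hat-P*} defines $\hat{P}^*_t$ on $\mathcal{P}_2(\hat{\mathbb{X}})$.

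\emph{Identity and semigroup property.} $\hat{P}^*_0=\mathrm{id}$ is immediate since $T_0=\mathrm{id}$ and $P^{\vec{\mathbf{s}},*}(0,0)=\mathrm{id}$. For the semigroup property, write $\hat{P}^*_s\hat{\mu}=\int(\delta_{T_s\vec{\mathbf{s}}}\times P^{\vec{\mathbf{s}},*}(s,0)\tilde{\mu}_{\vec{\mathbf{s}}})\nu(\mathrm{d}\vec{\mathbf{s}})$. The marginal of this measure is $\nu\circ T_s^{-1}$. Since $T_s$ is a bijection, its disintegration is $\vec{\mathbf{u}}\mapsto P^{T_{-s}\vec{\mathbf{u}},*}(s,0)\tilde{\mu}_{T_{-s}\vec{\mathbf{u}}}$, and this is unique $\nu\circ T_s^{-1}$-a.s. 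Applying \eqref{eq:new1202-1} again and changing variables $\vec{\mathbf{u}}=T_s\vec{\mathbf{s}}$ gives
\[
\hat{P}^*_t\hat{P}^*_s\hat{\mu}=\int_{\mathbb{T}^n}\big(\delta_{T_{t+s}\vec{\mathbf{s}}}\times P^{T_s\vec{\mathbf{s}},*}(t,0)P^{\vec{\mathbf{s}},*}(s,0)\tilde{\mu}_{\vec{\mathbf{s}}}\big)\nu(\mathrm{d}\vec{\mathbf{s}}).
\]
By the shift identity $P^{T_s\vec{\mathbf{s}},*}(t,0)=P^{\vec{\mathbf{s}},*}(t+s,s)$ (from the third line of \eqref{0508-2}) and the flow property of $P^{\vec{\mathbf{s}},*}$ (second line of \eqref{0508-2}), the integrand equals $\delta_{T_{t+s}\vec{\mathbf{s}}}\times P^{\vec{\mathbf{s}},*}(t+s,0)\tilde{\mu}_{\vec{\mathbf{s}}}$. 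This is exactly $\hat{P}^*_{t+s}\hat{\mu}$.

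\emph{Continuity.} Take $t_m\to t$ and $\mathcal{D}_2(\hat{\mu}_m,\hat{\mu})\to0$. We will construct couplings on a common probability space. Choose an optimal coupling $\hat\pi_m$ of $(\hat{\mu}_m,\hat{\mu})$. Realise it on an auxiliary space independent of $\mathcal{F}$ as $(\vec{\mathbf{s}}_m,X_m;\vec{\mathbf{s}},X)$. Then take $\xi_m,\xi$ to be $\mathcal{F}_0$-measurable, with conditional laws given the auxiliary base variables equal to the disintegrations. The standard-space argument of Lemma \ref{Lemma of compact set} lets us build these jointly measurably, so that $\mathbf{E}[d_0(\vec{\mathbf{s}}_m,\vec{\mathbf{s}})^2+|\xi_m-\xi|^2]=\mathcal{D}_2^2(\hat{\mu}_m,\hat{\mu})$. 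Because the dynamics act fibrewise, conditionally on the base points each fibre evolves by its own McKean-Vlasov law $P^{\vec{\mathbf{s}},*}$. Pushing the coupling forward fibre by fibre, i.e.\ gluing through \eqref{eq:new1202-1}, we aim for the bound
\[
\mathcal{D}_2^2(\hat{P}^*_{t_m}\hat{\mu}_m,\hat{P}^*_{t_m}\hat{\mu})\le \int \Big(n^2d_0(\vec{\mathbf{s}}',\vec{\mathbf{s}})^2+\mathcal{W}_2^2\big(P^{\vec{\mathbf{s}}',*}(t_m,0)\tilde{\mu}^m_{\vec{\mathbf{s}}'},P^{\vec{\mathbf{s}},*}(t_m,0)\tilde{\mu}_{\vec{\mathbf{s}}}\big)\Big)\hat\pi_m\text{-type measure}.
\]

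\emph{Main obstacle.} The fibre measures are nonlinear functions of the whole conditional law, so a pointwise optimal coupling does not transfer directly. Here is what we would try first. Estimate \eqref{0528-1} together with Claim 1 of Proposition \ref{Joint cts of P^*} gives
\[
\mathcal{W}_2^2\big(P^{\vec{\mathbf{s}}',*}(t,0)\mu',P^{\vec{\mathbf{s}},*}(t,0)\mu\big)\le C_T\big(\mathcal{W}_2^2(\mu',\mu)+w(d_0(\vec{\mathbf{s}}',\vec{\mathbf{s}}))\big).
\]
However, $\mathcal{W}_2(\tilde\mu^m_{\vec{\mathbf{s}}'},\tilde\mu_{\vec{\mathbf{s}}})$ need not be small even when $\hat{\mu}_m\to\hat{\mu}$, since disintegrations are unstable. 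To handle this, we would instead use the Claim in Lemma \ref{lem:hat-P*}, a Lusin approximation. On a compact set $K$ of $\nu$-measure $\ge1-\epsilon$, replace $\tilde\mu$ by a $\mathcal{W}_2$-continuous family and extend it continuously (Tietze/Dugundji in the convex space $\mathcal{P}_2$). This gives $\hat{\mu}^\epsilon\in\mathcal{P}_{2,cont}(\hat{\mathbb{X}})$ close to $\hat{\mu}$. On $\mathcal{P}_{2,cont}(\hat{\mathbb{X}})$, continuity follows from representing measures as $\mathscr{L}_\nu(\hat\xi)$ and using the continuity of $\hat\Phi$ from Lemma \ref{coro 0530} together with the continuity in $\nu$ of $\mathscr{L}_\nu(\hat{\xi})$ for fixed continuous $\hat{\xi}$. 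The remaining step, and the genuinely delicate one, is a uniform-in-$t\le T$ stability bound of the form $\mathcal{D}_2(\hat P^*_t\hat\mu,\hat P^*_t\hat\mu^\epsilon)\lesssim\epsilon$-small. This requires controlling the uniformly integrable second moments on the exceptional set via \eqref{n0530-1}. Finally, time continuity at fixed $\hat{\mu}$ follows from Lemma \ref{Lemma 0517-1} and dominated convergence.
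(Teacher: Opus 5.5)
Your identity and semigroup argument is correct and is the same computation as in the paper. The continuity part has a genuine gap, in the step you treat as routine. Lemma~\ref{coro 0530} together with continuity in $\nu$ gives continuity of $(\nu,\hat\xi)\mapsto\mathscr{L}_\nu(\hat\Phi_t\hat\xi)$ for the product topology on $\mathcal{P}(\mathbb{T}^n)\times\mathcal{G}_2$. But $\mathcal{D}_2(\mathscr{L}_{\nu_m}(\hat\xi^m),\mathscr{L}_\nu(\hat\xi))\to0$ does not let you choose representatives with $\hat\xi^m\to\hat\xi$ in $\mathcal{G}_2$, nor fibre families converging in $C(\mathbb{T}^n;\mathcal{P}_2(\mathbb{R}^d))$. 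Your $\epsilon$-approximation scheme needs exactly such convergent approximants of the $\hat\mu_m$, so the ``remaining stability step'' is the original question in disguise.

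The obstacle you diagnosed is in fact fatal: $\hat\mu\mapsto\hat P^*_t\hat\mu$ is not $\mathcal{D}_2$-continuous. Take time-independent coefficients $b(x,\mu)=-x+\int y\,\mu(\mathrm{d}y)$ and $\sigma\equiv I_d$. These satisfy Assumptions~\ref{A3}, \ref{Quasi-periodic condition} and \ref{One-sided Lip of tilde b} (with $h\equiv0$), and $P^{\vec{\mathbf{s}},*}(t,0)=P^*(t,0)$ for every $\vec{\mathbf{s}}$. For a unit vector $v$ and points $\vec{\mathbf{s}}_m\neq\vec{\mathbf{s}}'_m$ both converging to $\vec{\mathbf{s}}$, set
\[
\hat\mu_m=\tfrac12\delta_{(\vec{\mathbf{s}}_m,0)}+\tfrac12\delta_{(\vec{\mathbf{s}}'_m,v)},\qquad \hat\mu=\delta_{\vec{\mathbf{s}}}\times\tfrac12(\delta_0+\delta_v).
\]
Then $\mathcal{D}_2(\hat\mu_m,\hat\mu)\to0$, and all these measures even lie in $\mathcal{P}_{2,cont}(\hat{\mathbb{X}})$. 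But by \eqref{eq:new1202-1}
\[
\hat P^*_t\hat\mu_m\to\delta_{T_t(\vec{\mathbf{s}})}\times\tfrac12\big(P^*(t,0)\delta_0+P^*(t,0)\delta_v\big)\neq\delta_{T_t(\vec{\mathbf{s}})}\times P^*(t,0)\big(\tfrac12\delta_0+\tfrac12\delta_v\big)=\hat P^*_t\hat\mu .
\]
The inequality holds because the mean is conserved: started from the mixture, the two Gaussian components are centred at $\frac{1\mp e^{-t}}{2}v$, not at $0$ and $v$. Merging base points switches $\hat P^*_t$ from mixing separate fibre evolutions to evolving the mixture nonlinearly, which is the phenomenon behind \eqref{eq:nonlinear}.

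The paper's proof does not get around this. Its Claim~1 disintegrates an optimal coupling $\hat\pi^*$ over pairs $(\vec{\mathbf{s}}_1,\vec{\mathbf{s}}_2)$ and asserts that $\tilde{\hat\pi}^*_{\vec{\mathbf{s}}_1,\vec{\mathbf{s}}_2}$ has marginals $\tilde\mu^1_{\vec{\mathbf{s}}_1}$ and $\tilde\mu^2_{\vec{\mathbf{s}}_2}$. In fact these marginals are the laws of the $\mathbb{R}^d$-components conditioned on both base points. They agree with $\tilde\mu^i_{\vec{\mathbf{s}}_i}$ only under a conditional independence that optimal couplings need not have. In the example above, with $\hat\mu_1=\hat\mu_m$ and $\hat\mu_2=\hat\mu$, the conditional law at $(\vec{\mathbf{s}}_m,\vec{\mathbf{s}})$ is $\delta_{(0,0)}$, whose second marginal is $\delta_0\neq\frac12(\delta_0+\delta_v)$. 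So running $K^{\eta_2}$ from it does not produce $\hat P^*_t\hat\mu$, and \eqref{eq:1127} fails once $\epsilon$ is small. For a linear (SDE) fibre semigroup this step would be harmless, which is why it looks plausible.

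What can be proved along your lines is the following:
\begin{itemize}
\item the semigroup property;
\item continuity of $t\mapsto\hat P^*_t\hat\mu$ for each fixed $\hat\mu$: couple $(T_{t_m}(\eta),K^{\eta}(t_m,0,\tilde\xi_\eta)\circ\theta_{-t_m})$ with $(T_t(\eta),K^{\eta}(t,0,\tilde\xi_\eta)\circ\theta_{-t})$ and use \eqref{n0529-1}, \eqref{n0530-1} and dominated convergence, as in the paper's Claim~2;
\item joint continuity under the finer convergence $(t_m,\nu_m,\tilde\mu^m)\to(t,\nu,\tilde\mu)$ in $\mathbb{R}^+\times\mathcal{P}(\mathbb{T}^n)\times C(\mathbb{T}^n;\mathcal{P}_2(\mathbb{R}^d))$, via Lemma~\ref{Lemma 0517-1}.
\end{itemize}
Joint continuity on $(\mathcal{P}_2(\hat{\mathbb{X}}),\mathcal{D}_2)$, as stated, is false. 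Neither your scheme nor the paper's Claim~1 can establish it.
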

		
    \begin{proof}
        For any $t,s\geq 0$ and $\hat{\mu}\in \mathcal{P}_2(\hat{\mathbb{X}})$ with the following disintegration form
        \begin{equation*}
            \hat{\mu}=\int_{\mathbb{T}^n}\big(\delta_{\vec{\mathbf{s}}}\times \tilde{\mu}_{\vec{\mathbf{s}}}\big) \nu(\mathrm{d}\vec{\mathbf{s}}),
        \end{equation*}
        it is easy to verify that
        \begin{equation*}
            \begin{split}
                \hat{P}^*_t\circ\hat{P}^*_s\hat{\mu}&=\hat{P}^*_t\int_{\mathbb{T}^n}\big(\delta_{T_s(\vec{\mathbf{s}})}\times P^{\vec{\mathbf{s}},*}(s,0)\tilde{\mu}_{\vec{\mathbf{s}}}\big) \nu(\mathrm{d}\vec{\mathbf{s}})\\
                &=\int_{\mathbb{T}^n}\big(\delta_{T_{t+s}(\vec{\mathbf{s}})}\times P^{T_s(\vec{\mathbf{s}}),*}(t,0)P^{\vec{\mathbf{s}},*}(s,0)\tilde{\mu}_{\vec{\mathbf{s}}}\big) \nu(\mathrm{d}\vec{\mathbf{s}})\\
                &=\int_{\mathbb{T}^n}\big(\delta_{T_{t+s}(\vec{\mathbf{s}})}\times P^{\vec{\mathbf{s}},*}(t+s,s)P^{\vec{\mathbf{s}},*}(s,0)\tilde{\mu}_{\vec{\mathbf{s}}}\big) \nu(\mathrm{d}\vec{\mathbf{s}})\\
                &=\int_{\mathbb{T}^n}\big(\delta_{T_{t+s}(\vec{\mathbf{s}})}\times P^{\vec{\mathbf{s}},*}(t+s,0)\tilde{\mu}_{\vec{\mathbf{s}}} \big) \nu(\mathrm{d}\vec{\mathbf{s}})\\
                &=\hat{P}^*_{t+s}\hat{\mu}.
            \end{split}
        \end{equation*}
        Then it remains to show that $\hat{P}^*:\mathbb{R}^+\times \mathcal{P}_2(\hat{\mathbb{X}})$ is continuous. We first state the following two claims, whose proofs will be given later.

        \noindent \textbf{Claim 1:} For any $\epsilon>0$ and $T>0$, there exists $C_{T,\epsilon}>0$ such that 
        \begin{equation}\label{eq:1127}
            \mathcal{D}_2(\hat{P}^*_t\hat{\mu}_1,\hat{P}^*_t\hat{\mu}_2)\leq \epsilon+C_{T,\epsilon}\mathcal{D}_2(\hat{\mu}_1,\hat{\mu}_2), \ \text{ for all } \ \hat{\mu}_1,\hat{\mu}_2\in \mathcal{P}_2(\hat{\mathbb{X}}), \ 0\leq t\leq T.
        \end{equation}

        \noindent \textbf{Claim 2:} For any $\hat{\mu}\in \mathcal{P}_2(\hat{\mathbb{X}})$, $\lim_{t\to 0}\mathcal{D}_2(\hat{P}^*_t\hat{\mu},\hat{\mu})=0$.

        \noindent To show the continuity of $\hat{P}^*$, we need to prove that 
        \begin{equation}\label{eq:n1128}
            \lim_{m\to\infty}\mathcal{D}_2(\hat{P}^*_{t_m}\hat{\mu}_m,\hat{P}^*_t\hat{\mu})=0,
        \end{equation}
        for any sequences $\{t_m,t\}_{m\geq 1}\subset \mathbb{R}^+$ and $\{\hat{\mu}_m,\hat{\mu}\}_{m\geq 1}\subset \mathcal{P}_2(\hat{\mathbb{X}})$  with
        \begin{equation*}
            \lim_{m\to \infty}\big(|t_m-t|+\mathcal{D}_2(\hat{\mu}_m,\hat{\mu})\big)=0.
        \end{equation*}
        Let $T=\sup_{m\geq 1}t_m$. By Claim 1, for any $\epsilon>0$, there exists $C_{T,\epsilon}>0$ such that for all $m\geq 1$,
        \begin{equation}\label{eq:n1128-1}
            \mathcal{D}_2(\hat{P}^*_{t_m}\hat{\mu}_m,\hat{P}^*_{t_m}\hat{\mu})\leq \epsilon+C_{T,\epsilon}\mathcal{D}_2(\hat{\mu}_m,\hat{\mu}),
        \end{equation}
        and 
        \begin{equation}\label{eq:n1128-2}
            \mathcal{D}_2(\hat{P}^*_{t_m}\hat{\mu},\hat{P}^*_{t}\hat{\mu})=\mathcal{D}_2(\hat{P}^*_{t_m\wedge t}\hat{P}^*_{(t_m-t)\vee 0}\hat{\mu},\hat{P}^*_{t_m\wedge t}\hat{P}^*_{(t_m-t)\wedge 0}\hat{\mu})\leq \epsilon+C_{T,\epsilon}\mathcal{D}_2(\hat{P}^*_{|t_m-t|}\hat{\mu},\hat{\mu}).
        \end{equation}
        Then we conclude from \eqref{eq:n1128-1} and \eqref{eq:n1128-2} that
        \begin{equation*}
            \limsup_{m\to\infty}\mathcal{D}_2(\hat{P}^*_{t_m}\hat{\mu}_m,\hat{P}^*_t\hat{\mu})\leq 2\epsilon+C_{T,\epsilon}\lim_{m\to\infty}\big(\mathcal{D}_2(\hat{\mu}_m,\hat{\mu})+\mathcal{D}_2(\hat{P}^*_{|t_m-t|}\hat{\mu},\hat{\mu})\big)=2\epsilon.
        \end{equation*}
        Since $\epsilon>0$ can be arbitrary, we obtain \eqref{eq:n1128}.

        Finally, we give the proofs of Claim 1 and Claim 2.

        \noindent \textbf{Proof of Claim 1:} By \cite[Theorem 4.1]{Villani2009}, for any $\hat{\mu}_1,\hat{\mu}_2\in \mathcal{P}_2(\hat{\mathbb{X}})$, there exists an optimal coupling $\hat{\pi}^*$ of $\hat{\mu}_1$ and $\hat{\mu}_2$, i.e.,
        \begin{equation*}
            \mathcal{D}_2^2(\hat{\mu}_1,\hat{\mu}_2)=\int_{\hat{\mathbb{X}}\times \hat{\mathbb{X}}}\hat{d}^2(\hat{x},\hat{y})\hat{\pi}^*(\mathrm{d}\hat{x},\mathrm{d}\hat{y}).
        \end{equation*}
        Let $\nu_i$ and $\{\tilde{\mu}^i_{\vec{\mathbf{s}}}\}_{\vec{\mathbf{s}}}$ be the marginal measure on $\mathbb{T}^n$ and the disintegration of $\hat{\mu}_i$, $i=1,2$, respectively. Then $\{\hat{\mu}_i\}_{i=1,2}$ have the following forms
        \begin{equation*}
            \hat{\mu}_i=\int_{\mathbb{T}^n}\big(\delta_{\vec{\mathbf{s}}}\times \tilde{\mu}^i_{\vec{\mathbf{s}}} \big)\nu_i(\mathrm{d}\vec{\mathbf{s}}), \ i=1,2.
        \end{equation*}
        Let $\gamma$ be the marginal of $\hat{\pi}^*$ on $\mathbb{T}^n\times \mathbb{T}^n$ and $\{\tilde{\hat{\pi}}^*_{\vec{\mathbf{s}}_1,\vec{\mathbf{s}}_2}\}_{(\vec{\mathbf{s}}_1,\vec{\mathbf{s}}_2)\in\mathbb{T}^n\times\mathbb{T}^n}$ be the corresponding disintegration of $\hat{\pi}^*$, i.e., 
        \begin{equation*}
            \gamma(A\times B):=\hat{\pi}^*\big((A\times \mathbb{R}^d)\times (B\times \mathbb{R}^d)\big), \ \text{ for any } \ A,B\in \mathcal{B}(\mathbb{T}^n),
        \end{equation*}
        and
        \begin{equation*}
            \hat{\pi}^*=\int_{\mathbb{T}^n\times\mathbb{T}^n}\big(\delta_{(\vec{\mathbf{s}}_1,\vec{\mathbf{s}}_2)}\times \tilde{\hat{\pi}}^*_{\vec{\mathbf{s}}_1,\vec{\mathbf{s}}_2}\big) \gamma({\rm d}\vec{\mathbf{s}}_1\times {\rm d}\vec{\mathbf{s}}_2).
        \end{equation*}
        Then $\gamma$ is a coupling of $\nu_1, \nu_2$ and $\tilde{\hat{\pi}}^*_{\vec{\mathbf{s}}_1,\vec{\mathbf{s}}_2}$ is a coupling of $\tilde{\mu}^1_{\vec{\mathbf{s}}_1}, \tilde{\mu}^2_{\vec{\mathbf{s}}_2}$, $\gamma$-a.s. 
        
        Let $(\eta_1,\eta_2):\Omega\to \mathbb{T}^n\times \mathbb{T}^n$ be a random vector with the law $\gamma$ which is independent of $\mathcal{F}_0$ and $(\tilde{\xi}^1_{\vec{\mathbf{s}}_1,\vec{\mathbf{s}}_2}, \tilde{\xi}^2_{\vec{\mathbf{s}}_1,\vec{\mathbf{s}}_2})\in L^2(\Omega, \mathcal{F}_0; \mathbb{R}^{d}\times\mathbb{R}^d)$ has the law $\tilde{\hat{\pi}}^*_{\vec{\mathbf{s}}_1,\vec{\mathbf{s}}_2}$ for all $(\vec{\mathbf{s}}_1,\vec{\mathbf{s}}_2)\in \mathbb{T}^n\times \mathbb{T}^n$. Then it is easy to verify that
        \begin{equation*}
            \mathscr{L}\big(\big(\eta_1,\tilde{\xi}^1_{\eta_1,\eta_2}, \eta_2,\tilde{\xi}^2_{\eta_1,\eta_2}\big)\big)=\hat{\pi}^*, \ \ \mathscr{L}\big(\big(\eta_1,\tilde{\xi}^1_{\eta_1,\eta_2}\big)\big)=\hat{\mu}_1, \ \ \mathscr{L}\big(\big(\eta_2,\tilde{\xi}^2_{\eta_1,\eta_2}\big)\big)=\hat{\mu}_2.
        \end{equation*}
        Then
        \begin{equation}\label{new1201}
            \mathcal{D}_2^2(\hat{\mu}_1,\hat{\mu}_2)=\mathbf{E}\big[\hat{d}^2\big((\eta_1,\tilde{\xi}^1_{\eta_1,\eta_2}), (\eta_2,\tilde{\xi}^2_{\eta_1,\eta_2})\big)\big]=\mathbf{E}[d_0^2(\eta_1,\eta_2)]+\mathbf{E}\big[\big|\tilde{\xi}^1_{\eta_1,\eta_2}-\tilde{\xi}^2_{\eta_1,\eta_2}\big|^2\big].
        \end{equation}
        On the other hand, by the definition of $\hat{P}^*_t$, it can be seen that for any $t\geq 0$,
        \begin{equation*}
            \mathscr{L}\Big(\big(T_t(\eta_1),K^{\eta_1}\big(t,0,\tilde{\xi}^1_{\eta_1,\eta_2}\big)\circ\theta_{-t}\big)\Big)=\hat{P}^*_t\hat{\mu}_1, \ \ \mathscr{L}\Big(\big(T_t(\eta_2),K^{\eta_2}\big(t,0,\tilde{\xi}^2_{\eta_1,\eta_2}\big)\circ\theta_{-t}\big)\Big)=\hat{P}^*_t\hat{\mu}_2.
        \end{equation*}
        Hence, for any $0\leq t\leq T$,
        \begin{equation}\label{eq:1127-1}
            \begin{split}
                \mathcal{D}_2^2(\hat{P}^*_t\hat{\mu}_1,\hat{P}^*_t\hat{\mu}_2)&\leq \mathbf{E}\Big[\hat{d}^2\Big(\big(T_t(\eta_1),K^{\eta_1}\big(t,0,\tilde{\xi}^1_{\eta_1,\eta_2}\big)\circ\theta_{-t}\big), \big(T_t(\eta_2),K^{\eta_2}\big(t,0,\tilde{\xi}^2_{\eta_1,\eta_2}\big)\circ\theta_{-t}\big)\Big)\Big]\\
                &=\mathbf{E}[d_0^2(\eta_1,\eta_2)]+\mathbf{E}\Big[\Big|K^{\eta_1}\big(t,0,\tilde{\xi}^1_{\eta_1,\eta_2}\big)\circ\theta_{-t}-K^{\eta_2}\big(t,0,\tilde{\xi}^2_{\eta_1,\eta_2}\big)\circ\theta_{-t}\Big|^2\Big].
            \end{split}
        \end{equation}
        Since $(\eta_1,\eta_2)$ is independent of $\mathcal{F}_0$, according to \eqref{0528-1}, we know that there exist a constant $C_T>0$ and an increasing function $w: [0,\infty)\to [0,\infty)$ satisfying $w(0)=0$ and continuous at $0$ such that for all $t\leq T$,
        \begin{equation}\label{eq:1127-2}
            \begin{split}
                &\ \ \ \ \mathbf{E}\Big[\Big|K^{\eta_1}\big(t,0,\tilde{\xi}^1_{\eta_1,\eta_2}\big)\circ\theta_{-t}-K^{\eta_2}\big(t,0,\tilde{\xi}^2_{\eta_1,\eta_2}\big)\circ\theta_{-t}\Big|^2\Big]\\
                &=\mathbf{E}\bigg[\mathbf{E}\Big[\Big|K^{\vec{\mathbf{s}}_1}\big(t,0,\tilde{\xi}^1_{\vec{\mathbf{s}}_1,\vec{\mathbf{s}}_2}\big)\circ\theta_{-t}-K^{\vec{\mathbf{s}}_2}\big(t,0,\tilde{\xi}^2_{\vec{\mathbf{s}}_1,\vec{\mathbf{s}}_2}\big)\circ\theta_{-t}\Big|^2\Big]\bigg|_{(\vec{\mathbf{s}}_1,\vec{\mathbf{s}}_2)=(\eta_1,\eta_2)}\bigg]\\
                &=\mathbf{E}\bigg[\mathbf{E}\Big[\Big|K^{\vec{\mathbf{s}}_1}\big(t,0,\tilde{\xi}^1_{\vec{\mathbf{s}}_1,\vec{\mathbf{s}}_2}\big)-K^{\vec{\mathbf{s}}_2}\big(t,0,\tilde{\xi}^2_{\vec{\mathbf{s}}_1,\vec{\mathbf{s}}_2}\big)\Big|^2\Big]\bigg|_{(\vec{\mathbf{s}}_1,\vec{\mathbf{s}}_2)=(\eta_1,\eta_2)}\bigg]\\
                &\leq C_T\big(\mathbf{E}[w(d_0(\eta_1,\eta_2))]+\mathbf{E}\big[\big|\tilde{\xi}^1_{\eta_1,\eta_2}-\tilde{\xi}^2_{\eta_1,\eta_2}\big|^2\big]\big)
            \end{split}
        \end{equation}
        Since $w$ is continuous at $0$ and $w(0)=0$, for any $\epsilon>0$, there exists $r_{\epsilon}>0$ such that $w(r)<\epsilon/C_T$ for all $r\leq r_{\epsilon}$. Hence,
        \begin{equation}\label{eq:1127-3}
\begin{split}
	 \mathbf{E}[w(d_0(\eta_1,\eta_2))]&\leq \frac{\epsilon}{C_T}+w({\rm diam}(\mathbb{T}^n))\mathbf{P}\{d_0(\eta_1,\eta_2)\geq r_{\epsilon}\}\\
&\leq \frac{\epsilon}{C_T}+\frac{w(\tau_1+\cdots+\tau_n)}{r_{\epsilon}^2}\mathbf{E}[d_0^2(\eta_1,\eta_2)].
\end{split}
        \end{equation}
        Then \eqref{eq:1127} follows from \eqref{new1201}-\eqref{eq:1127-3}.

        \noindent \textbf{Proof of Claim 2:} For any $\hat{\mu}\in \mathcal{P}_2(\hat{\mathbb{X}})$, similar to the proof of Claim 1, let $(\eta,\tilde{\xi}_{\eta}): \Omega\to \hat{\mathbb{X}}$ be a random vector such that $\eta$ is independent of $\mathcal{F}_0$,  $\tilde{\xi}_{\vec{\mathbf{s}}}$ is $\mathcal{F}_0$-measurable for all $\vec{\mathbf{s}}\in\mathbb{T}^n$ and
        \begin{equation*}
            \mathscr{L}\big((\eta,\tilde{\xi}_{\eta})\big)=\hat{\mu}, \ \ \mathscr{L}\Big(\big(T_t(\eta),K^{\eta}\big(t,0,\tilde{\xi}_{\eta}\big)\circ\theta_{-t}\big)\Big)=\hat{P}^*_t\hat{\mu}, \ \text{ for all } \ t\geq 0.
        \end{equation*}
        Then, it follows that
        \begin{equation*}
            \begin{split}
                \mathcal{D}_2^2(\hat{P}^*_t\hat{\mu},\hat{\mu})&\leq\mathbf{E}[d_0^2(T_t(\eta),\eta)]+\mathbf{E}\big[\big|K^{\eta}\big(t,0,\tilde{\xi}_{\eta}\big)\circ \theta_{-t}-\tilde{\xi}_{\eta}\big|^2\big]\\
                &\leq t+\mathbf{E}\Big[\mathbf{E}\big[\big|K^{\vec{\mathbf{s}}}\big(t,0,\tilde{\xi}_{\vec{\mathbf{s}}}\big)\circ \theta_{-t}-\tilde{\xi}_{\vec{\mathbf{s}}}\big|^2\big]\Big|_{\vec{\mathbf{s}}=\eta}\Big].
            \end{split}
        \end{equation*}
        By \eqref{n0529-1}, we know that
        \begin{equation*}
            \lim_{t\to 0}\mathbf{E}\big[\big|K^{\vec{\mathbf{s}}}\big(t,0,\tilde{\xi}_{\vec{\mathbf{s}}}\big)\circ \theta_{-t}-\tilde{\xi}_{\vec{\mathbf{s}}}\big|^2\big]=0, \ \text{ for all } \ \vec{\mathbf{s}}\in\mathbb{T}^n.
        \end{equation*}
        According to \eqref{n0530-1} and the dominated convergence theorem, we conclude that
        \begin{equation*}
            \lim_{t\to 0}\mathbf{E}\Big[\mathbf{E}\big[\big|K^{\vec{\mathbf{s}}}\big(t,0,\tilde{\xi}_{\vec{\mathbf{s}}}\big)\circ \theta_{-t}-\tilde{\xi}_{\vec{\mathbf{s}}}\big|^2\big]\Big|_{\vec{\mathbf{s}}=\eta}\Big]=0.
        \end{equation*}
        Thus, claim 2 is proved.
    \end{proof}

Now, we give the proof of the second main theorem. 
	
    \begin{proof}[Proof of Theorem \ref{thm:existence-of-invariant-measure}]
        We only need to show that $\hat{P}^*$ has an invariant measure $\hat{\mu}$ in $\mathcal{P}_{2}(\hat{\mathbb{X}})$.
        
        According to Theorem \ref{Theorem existence of entrance measure}, equation \eqref{McKean-Vlasov SDE 2} has an entrance measure $\mu\in \mathcal{M}_{2,\vartheta}$. 

Next, we show that $\mu\in \mathcal{M}_p(\mathbb{R})$ for all $p\geq 2$. In fact, since $T_t: \mathbb{T}^n\to\mathbb{T}^n$ is a minimal rotation, and the unique ergodic measure on $\mathbb{T}^n$ with respect to $T_t$ is $\frac{1}{\tau_1\cdots\tau_n}{\rm d}\vec{\mathbf{s}}$, where ${\rm d}\vec{\mathbf{s}}$ is the Lebesgue measure on $\mathbb{T}^n$. Hence, 
\begin{equation*}
	\frac{1}{\tau_1\cdots\tau_n}\int_{\mathbb{T}^n}\big(\tilde{\alpha}_{\vec{\mathbf{s}}}+\tilde{\beta}_{\vec{\mathbf{s}}}\big){\rm d}\vec{\mathbf{s}}=\lim_{T\to\infty}\int_{-T}^0(\alpha_r+\beta_r){\rm d}r<0.
\end{equation*}
According to the same method as in \cite[Lemma 6.3]{Feng-Qu-Zhao2023}, we conclude that there exists $l>0$ such that 
				\begin{equation}\label{0523-1}
					\sup_{t,t_1,\cdots,t_n\in \mathbb{R}}\int_{t}^{t+l}\bigl(\tilde{\alpha}_{r+t_1,\cdots,r+t_n}+\tilde{\beta}_{r+t_1,\cdots,r+t_n}\bigr)\mathrm{d}r<0. 
				\end{equation} 
Then it follows from Remark \ref{rem:bounded-of-entrance-measure} that the entrance measure $\mu\in \mathcal{M}_p(\mathbb{R})$ for all $p\geq 2$. Thus, for all $p\geq 2$, 
        \begin{equation*}
            \sup_{t\in\mathbb{R}}\int_{\mathbb{R}^d}|x|^p\mu_t({\rm d}x)\leq C_p<\infty, \ \text{ for some $C_p>0$ and } \ \ P^*(t,s)\mu_s=\mu_t, \ \text{ for all } \ t\geq s.
        \end{equation*}
        For any $T>0$, set
        \begin{equation*}
            \hat{\mu}_T:=\frac{1}{2T}\int_{-T}^T \big(\delta_{r | \vec{\mathbf{\tau}}}\times \mu_r\big) {\rm d}r,
        \end{equation*}
        where $r|\vec{\mathbf{\tau}}:=(r \mod \tau_1, \cdots, r\mod \tau_n)$. It is easy to check that $\hat{\mu}_T\in \mathcal{P}_{p}(\hat{\mathbb{X}})$ for all $p\geq 2$.
        
        Now we can verify that $\hat{\mu}_T, \ T>0$ is tight. To see this, for any $\epsilon>0$, choose $B_{\epsilon}:=\{x\in\mathbb{R}^d: |x|\leq \sqrt{C_2/\epsilon}\}$. By Chebyshev's inequality, we conclude that $\mu_t(B_{\epsilon}^c)\leq \epsilon$ for all $t\in\mathbb{R}$. Consider the compact set $K_{\epsilon}:=\mathbb{T}^n\times B_{\epsilon}$ of $\hat{\mathbb{X}}$, we have
        \begin{equation*}
            \hat{\mu}_T(K_{\epsilon}^c)=\frac{1}{2T}\int_{-T}^T\mu_r(B_{\epsilon}^c) {\rm d}r\leq \epsilon.
        \end{equation*}
        Hence, $\{\hat{\mu}_T\}_{T>0}\subset \mathcal{P}_{2}(\hat{\mathbb{X}})$ is weakly compact. Thus, there exists a sequence $T_k\uparrow\infty$ and a measure $\hat{\mu}\in \mathcal{P}(\hat{\mathbb{X}})$ such that the sequence of measures $\hat{\mu}_{T_k}$ weakly converges to $\hat{\mu}$ as $k\to \infty$. So, for any $N>0$,
        \begin{equation*}
            \int_{\hat{\mathbb{X}}}(|x|^p\wedge N) \hat{\mu} ({\rm d}\hat{x})=\lim_{k\to\infty}\int_{\hat{\mathbb{X}}}(|x|^p\wedge N) \hat{\mu}_{T_k} ({\rm d}\hat{x})\leq \sup_{k\geq 1}\frac{1}{2T_k}\int_{-T_k}^{T_k}\int_{\mathbb{R}^d}|x|^p\mu_t({\rm d}x){\rm d}t\leq C_p.
        \end{equation*}
        Letting $N\to \infty$, we conclude that $\hat{\mu}\in \mathcal{P}_p(\hat{\mathbb{X}})$ and there exists $C_p>0$ such that
        \begin{equation*}
            \sup_{k\geq 1}\int_{\hat{\mathbb{X}}}\hat{d}^p(\hat{x},\hat{0})\hat{\mu}_{T_{k}}({\rm d}\hat{x}) \vee \int_{\hat{\mathbb{X}}}\hat{d}^p(\hat{x},\hat{0})\hat{\mu}({\rm d}\hat{x}) \leq C_p, \ \text{ for all } \ p\geq 2.
        \end{equation*}
        So, for all $N>0$,
        \begin{equation*}
					\int_{\{\hat{d}^2(\hat{x},\hat{0})\geq N\}}\hat{d}^2(\hat{x},\hat{0})\hat{\mu}_{T_{k}}({\rm d}\hat{x})\leq \frac{1}{N}\int_{\{\hat{d}^2(\hat{x},\hat{0})\geq N\}}\hat{d}^4(\hat{x},\hat{0})\hat{\mu}_{T_{k}}({\rm d}\hat{x})\leq \frac{C_4}{N}, \ \text{ for all } \ k\geq 1,
				\end{equation*}
and
         \begin{equation*}
					\int_{\{\hat{d}^2(\hat{x},\hat{0})\geq N\}}\hat{d}^2(\hat{x},\hat{0})\hat{\mu}({\rm d}\hat{x})\leq \frac{1}{N}\int_{\{\hat{d}^2(\hat{x},\hat{0})\geq N\}}\hat{d}^4(\hat{x},\hat{0})\hat{\mu}({\rm d}\hat{x})\leq \frac{C_4}{N}.
				\end{equation*}   
Hence,
        \begin{equation*}
            \begin{split}
                &\ \ \ \ \bigg|\int_{\hat{\mathbb{X}}}\hat{d}^2(\hat{x},\hat{0})\hat{\mu}_{T_{k}}({\rm d}\hat{x})-\int_{\hat{\mathbb{X}}}\hat{d}^2(\hat{x},\hat{0})\hat{\mu}({\rm d}\hat{x})\bigg|\\
                &\leq \bigg|\int_{\hat{\mathbb{X}}}(\hat{d}^2(\hat{x},\hat{0})\wedge N)\hat{\mu}_{T_{k}}({\rm d}\hat{x})-\int_{\hat{\mathbb{X}}}(\hat{d}^2(\hat{x},\hat{0})\wedge N)\hat{\mu}({\rm d}\hat{x})\bigg|\\
                &\ \ \ \ +\bigg|\int_{\{\hat{d}^2(\hat{x},\hat{0})\geq N\}}\hat{d}^2(\hat{x},\hat{0})\hat{\mu}_{T_{k}}({\rm d}\hat{x})\bigg|+\bigg|\int_{\{\hat{d}^2(\hat{x},\hat{0})\geq N\}}\hat{d}^2(\hat{x},\hat{0})\hat{\mu}({\rm d}\hat{x})\bigg|\\
                &\leq \bigg|\int_{\hat{\mathbb{X}}}(\hat{d}^2(\hat{x},\hat{0})\wedge N)\hat{\mu}_{T_{k}}({\rm d}\hat{x})-\int_{\hat{\mathbb{X}}}(\hat{d}^2(\hat{x},\hat{0})\wedge N)\hat{\mu}({\rm d}\hat{x})\bigg|+\frac{2C_4}{N}.
            \end{split}
        \end{equation*}

        Since $\hat{\mu}_{T_k}$ weakly converges to $\hat{\mu}$, we conclude that
        \begin{equation*}
            \limsup_{k\to\infty}\bigg|\int_{\hat{\mathbb{X}}}\hat{d}^2(\hat{x},\hat{0})\hat{\mu}_{T_{k}}({\rm d}\hat{x})-\int_{\hat{\mathbb{X}}}\hat{d}^2(\hat{x},\hat{0})\hat{\mu}({\rm d}\hat{x})\bigg|\leq \frac{2C_4}{N}.
        \end{equation*}
        Letting $N\to\infty$, we have
        \begin{equation*}
            \lim_{k\to\infty}\int_{\hat{\mathbb{X}}}\hat{d}^2(\hat{x},\hat{0})\hat{\mu}_{T_{k}}({\rm d}\hat{x})=\int_{\hat{\mathbb{X}}}\hat{d}^2(\hat{x},\hat{0})\hat{\mu}({\rm d}\hat{x}).
        \end{equation*}
        By \cite[Theorem 6.9]{Villani2009}, we know that $\lim_{k\to\infty}\mathcal{D}_2(\hat{\mu}_{T_{k}},\hat{\mu})=0$. Then Proposition \ref{prop:hat-P*-CDS} yields 
        \begin{equation}\label{eq:n1127}
            \hat{P}^*_t\hat{\mu}_{T_{k}}\to \hat{P}^*_t\hat{\mu}, \ \text{ as } \ k\to\infty, \ \text{ in } \ \mathcal{P}_2(\hat{\mathbb{X}}), \ \text{ for all } \ t\geq 0.
        \end{equation}
        On the other hand, for any $t\geq 0$,
        \begin{equation*}
            \begin{split}
                \hat{P}^*_t\hat{\mu}_T&=\frac{1}{2T}\int_{-T}^T \big(\delta_{(t+r) | \vec{\mathbf{\tau}}}\times P^{r | \vec{\mathbf{\tau}}, *}(t,0)\mu_r\big) {\rm d}r\\
                &=\frac{1}{2T}\int_{-T}^T \big(\delta_{(t+r) | \vec{\mathbf{\tau}}}\times P^{*}(t+r,r)\mu_r \big){\rm d}r\\
                &=\frac{1}{2T}\int_{-T}^T \big(\delta_{(t+r)|\vec{\mathbf{\tau}}}\times \mu_{t+r}\big) {\rm d}r\\
                &=\hat{\mu}_T+\frac{1}{2T}\bigg(\int_{T}^{T+t}\big(\delta_{r | \vec{\mathbf{\tau}}}\times \mu_r\big) {\rm d}r-\int_{-T}^{-T+t}\big(\delta_{r|\vec{\mathbf{\tau}}}\times \mu_r\big) {\rm d}r\bigg).
            \end{split}
        \end{equation*}
        Hence, for any $f\in C_b(\hat{\mathbb{X}})$,
        \begin{equation*}
            \bigg|\int_{\hat{\mathbb{X}}}f{\rm d}\hat{P}^*_t\hat{\mu}_{T_k}-\int_{\hat{\mathbb{X}}}f{\rm d}\hat{\mu}\bigg|\leq \bigg|\int_{\hat{\mathbb{X}}}f{\rm d}\hat{\mu}_{T_k}-\int_{\hat{\mathbb{X}}}f{\rm d}\hat{\mu}\bigg|+\frac{t|f|_{C_b(\hat{\mathbb{X}})}}{T_k}\to 0, \ \text{ as } \ k\to\infty.
        \end{equation*}
        Thus, for all $t\geq 0$, $\hat{P}^*_t\hat{\mu}_{T_k}$ weakly converge to $\hat{\mu}$ as $k\to \infty$. This together with \eqref{eq:n1127} gives $\hat{P}^*_t\hat{\mu}=\hat{\mu}$ for all $t\geq 0$, which means $\hat{\mu}$ is an invariant measure. 
    \end{proof}

In the rest of this section, we show that under the quasi-periodic condition, equation \eqref{McKean-Vlasov SDE 2} has an asymptotic quasi-periodic  measure (Theorem \ref{Thm existence quasi-periodic measure}).
Before the proof of Theorem \ref{Thm existence quasi-periodic measure}, we first introduce the following notations. Denote by $\mathcal{T}_{\tau_1,\cdots,\tau_n}$ the collection of $(\mu,\tilde{\mu})$ where $\mu$ is a measure-valued quasi-periodic function with periods $\tau_1,\cdots,\tau_n$ and $\tilde{\mu}$ is its quasi-periodic representation. Set
	\begin{equation}\label{eq:def-Q_2}
		\mathcal{Q}_2:=\{\mu\in \mathcal{M}_2(\mathbb{R}): \text{ there exists } \tilde{\mu} \text{ such that } (\mu,\tilde{\mu})\in \mathcal{T}_{\tau_1,\cdots,\tau_n} \text{ and } \tilde{\mu}\in C(\mathbb{R}^n; \mathcal{P}_p(\mathbb{R}^d))\}.
	\end{equation}

	\begin{remark}\label{remark 0517}
		Note that $\tilde{\mu}\in C(\mathbb{R}^n; \mathcal{P}_p(\mathbb{R}^d))$ is the quasi-periodic representation of $\mu$, so $\tilde{\mu}$ restricts on $\mathbb{T}^n$, denoted by $\tilde{\mu}|_{\mathbb{T}^n}$, is in $C(\mathbb{T}^n; \mathcal{P}_p(\mathbb{R}^d))$. Since the torus $\mathbb{T}^n$ is compact, we know that $\tilde{\mu}$ is uniformly continuous on $\mathbb{T}^n$. Thus, there exists an increasing function $w: [0,\infty)\to [0,\infty)$ satisfying $w(0)=0$ and continuous at $0$ such that
		\begin{equation*}
			\mathcal{W}_2(\tilde{\mu}_{\vec{\mathbf{s}}_1}, \tilde{\mu}_{\vec{\mathbf{s}}_2})\leq w(d_0(\vec{\mathbf{s}}_1,\vec{\mathbf{s}}_2)), \ \text{ for all } \ \vec{\mathbf{s}}_1,\vec{\mathbf{s}}_2\in \mathbb{T}^n.
		\end{equation*}
	\end{remark}

	For any fixed $\mu\in \mathcal{Q}_2$, we rewrite \eqref{Classical SDE} as the following SDE:
	\begin{equation}\label{New SDE}
		\begin{cases}
			\mathrm{d}X_t=b^{\mu}(t,X_t)\mathrm{d}t+\sigma^{\mu}(t,X_t)\mathrm{d}W_t, \ t\geq s,\\
			X_s=\xi,
		\end{cases}
	\end{equation}
	where $f^{\mu}(t,x)=f(t,x,\mu_t), f=b,\sigma$. Under Assumption \ref{Quasi-periodic condition}, we consider the following reparameterized SDE to SDE \eqref{New SDE} which was first introduced in \cite{Feng-Qu-Zhao2021}:
	\begin{equation}\label{Equation K_r_1,r_2}
			K^{\tilde{\mu},\vec{\mathbf{s}}}(t,s,\xi)=\xi+\int_{s}^{t}\tilde{b}^{\tilde{\mu},\vec{\mathbf{s}}}(u,K^{\tilde{\mu},\vec{\mathbf{s}}}(u,s,\xi))\mathrm{d}u+\int_{s}^{t}\tilde{\sigma}^{\tilde{\mu},\vec{\mathbf{s}}}(u,K^{\tilde{\mu},\vec{\mathbf{s}}}(u,s,\xi))\mathrm{d}W_u,
	\end{equation}
	where $\vec{\mathbf{s}}=(s_1,\cdots,s_n)\in\mathbb{T}^n$, and $\tilde{f}^{\tilde{\mu},\vec{\mathbf{s}}}(t,x)=\tilde{f}(T_t(\vec{\mathbf{s}}),x,\tilde{\mu}_{T_t(\vec{\mathbf{s}})}), \  f=b,\sigma$. By Remark \ref{lemma 0427}, we know that if Assumptions \ref{A3} and \ref{Quasi-periodic condition} hold, then SDE \eqref{Equation K_r_1,r_2} has a unique solution $K^{\tilde{\mu},\vec{\mathbf{s}}}(t,s,\xi)$ with any starting time $s\in \mathbb{R}$ and initial condition $\xi\in L^2(\mathcal{F}_s)$. 
	
	Denote by $P^{\tilde{\mu},\vec{\mathbf{s}}}(t,s,x,\cdot)$ the transition function of $K^{\tilde{\mu},\vec{\mathbf{s}}}(t,s,x)$, $P^{\tilde{\mu},\vec{\mathbf{s}}}(t,s)$ the kernel acting as a linear operator on functions $f: \mathbb{R}^d\to \mathbb{R}$ and $P^{\tilde{\mu},\vec{\mathbf{s}},*}(t,s)$ the dual operator of $P^{\tilde{\mu},\vec{\mathbf{s}}}(t,s)$ acting on measures in $\mathcal{P}(\mathbb{R}^d)$.

	\begin{theorem}\label{Thm 0501}
		Assume Assumptions \ref{A3}, \ref{A4}, \ref{Quasi-periodic condition} and \ref{One-sided Lip of tilde b} hold. For any $\mu\in \mathcal{Q}_2$, SDE \eqref{New SDE} has a unique quasi-periodic measure $\rho\in \mathcal{Q}_{2}$, whose quasi-periodic representation is denoted by $\tilde{\rho}: \mathbb{R}^n\to \mathcal{P}(\mathbb{R}^d)$ satisfying
		\begin{equation}\label{0502}
			P^{\tilde{\mu},\vec{\mathbf{s}},*}(t,s)\tilde{\rho}_{T_s(\vec{\mathbf{s}})}=\tilde{\rho}_{T_t(\vec{\mathbf{s}})}, \ \text{ for all $t\geq s$ and $\vec{\mathbf{s}}\in \mathbb{T}^n$}. 
		\end{equation}
	\end{theorem}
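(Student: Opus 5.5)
Since $\mu$ is fixed, \eqref{New SDE} is a classical (linear-in-law) SDE with quasi-periodic coefficients. The plan is to reduce to the SDE theory of \cite{Feng-Qu-Zhao2023} (and the reparameterization of \cite{Feng-Qu-Zhao2021}) applied to the family of SDEs \eqref{Equation K_r_1,r_2} indexed by $\vec{\mathbf{s}}\in\mathbb{T}^n$.

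\textbf{Step 1 (structure of the coefficients).} By Remark \ref{lemma 0427} and $\mu\in\mathcal{Q}_2$, the frozen coefficients $\tilde f^{\tilde\mu,\vec{\mathbf{s}}}(t,x)=\tilde f(T_t(\vec{\mathbf{s}}),x,\tilde\mu_{T_t(\vec{\mathbf{s}})})$ satisfy Assumption \ref{A3} uniformly in $\vec{\mathbf{s}}$: they are nondegenerate, globally Lipschitz in $x$ for $\sigma$, one-sided Lipschitz for $b$, with polynomial growth. Since $\|\tilde\mu\|_2\le\|\mu\|_{\mathcal{M}_2}$ is bounded, \eqref{Ineq weakly coercivity} yields
\[
\langle x,\tilde b^{\tilde\mu,\vec{\mathbf{s}}}(t,x)\rangle\le\tilde\alpha_{T_t(\vec{\mathbf{s}})}|x|^2+\tilde\beta_{T_t(\vec{\mathbf{s}})}\|\mu\|^2_{\mathcal{M}_2}+\tilde\gamma_{T_t(\vec{\mathbf{s}})}.
\]
By \eqref{0523-1}, $\int_t^{t+l}\tilde\alpha_{T_r(\vec{\mathbf{s}})}\,\mathrm{d}r\le -a<0$ uniformly in $t$ and $\vec{\mathbf{s}}$; here we use $\tilde\beta\ge0$, so $\tilde\alpha\le\tilde\alpha+\tilde\beta$. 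Moreover, Assumption \ref{One-sided Lip of tilde b} combined with the uniform continuity of $\tilde\mu$ on $\mathbb{T}^n$ (Remark \ref{remark 0517}) and the Lipschitz dependence on the measure gives a modulus $h^{\mu}(\vec{\mathbf{t}},\vec{\mathbf{s}})$ controlling $|\tilde b^{\tilde\mu}(\vec{\mathbf{t}})-\tilde b^{\tilde\mu}(\vec{\mathbf{s}})|^2+\|\tilde\sigma^{\tilde\mu}(\vec{\mathbf{t}})-\tilde\sigma^{\tilde\mu}(\vec{\mathbf{s}})\|^2$. For $b$, which is only one-sided Lipschitz in the measure, I would use continuity of $\tilde b$ on compacts together with the moment bounds in place of a global modulus.

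\textbf{Step 2 (existence and uniqueness of $\tilde\rho$).} For each $\vec{\mathbf{s}}$, \cite[Theorem 4.2]{Feng-Qu-Zhao2023} (as in Theorem \ref{Theorem geometric converge to periodic measure}) gives a unique entrance measure $t\mapsto\rho^{\vec{\mathbf{s}}}_t$ of $P^{\tilde\mu,\vec{\mathbf{s}},*}$, with $\|P^{\tilde\mu,\vec{\mathbf{s}}}(t,s_k,x,\cdot)-\rho^{\vec{\mathbf{s}}}_t\|_{TV}\to0$ and uniformly bounded moments of all orders (Remark \ref{rem:bounded-of-entrance-measure}). The cocycle identity $P^{\tilde\mu,\vec{\mathbf{s}},*}(t+r,s+r)=P^{\tilde\mu,T_r(\vec{\mathbf{s}}),*}(t,s)$ together with uniqueness forces $\rho^{\vec{\mathbf{s}}}_{t+r}=\rho^{T_r(\vec{\mathbf{s}})}_t$. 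We therefore define $\tilde\rho_{\vec{\mathbf{s}}}:=\rho^{\vec{\mathbf{s}}}_0$, extended periodically to $\mathbb{R}^n$; then $\rho^{\vec{\mathbf{s}}}_t=\tilde\rho_{T_t(\vec{\mathbf{s}})}$, which is \eqref{0502}, and $\rho_t:=\tilde\rho_{t,\dots,t}$ is an entrance measure of \eqref{New SDE}, since $\vec{\mathbf{s}}=0$ recovers \eqref{New SDE}. Uniqueness of $\rho$ follows from uniqueness for the $\vec{\mathbf{s}}=0$ SDE, and $\rho\in\mathcal{M}_2(\mathbb{R})$ by the uniform moment bound.

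\textbf{Step 3 (continuity of $\tilde\rho$, the main obstacle).} It remains to show $\tilde\rho\in C(\mathbb{T}^n;\mathcal{P}_2(\mathbb{R}^d))$, hence $\rho\in\mathcal{Q}_2$. The plan is to write $\tilde\rho_{\vec{\mathbf{s}}}=P^{\tilde\mu,T_{-T}(\vec{\mathbf{s}}),*}(T,0)\,\tilde\rho_{T_{-T}(\vec{\mathbf{s}})}$ and to use that the convergence to the entrance measure is uniform in the starting measure over moment-bounded sets and uniform in $\vec{\mathbf{s}}$. This uniformity holds because the Harris small-set and Lyapunov constants in \cite{Feng-Qu-Zhao2023} depend only on the uniform bounds of Step 1, so that
\[
\sup_{\vec{\mathbf{s}}}\mathcal{W}_2\bigl(P^{\tilde\mu,T_{-T}(\vec{\mathbf{s}}),*}(T,0)\nu,\tilde\rho_{\vec{\mathbf{s}}}\bigr)\le\varepsilon(T)\to0
\]
for all $\nu$ with bounded fourth moment. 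Here TV convergence is upgraded to $\mathcal{W}_2$ via the uniform fourth-moment bounds. Then
\[
\mathcal{W}_2(\tilde\rho_{\vec{\mathbf{s}}},\tilde\rho_{\vec{\mathbf{t}}})\le2\varepsilon(T)+\mathcal{W}_2\bigl(P^{\tilde\mu,T_{-T}(\vec{\mathbf{s}}),*}(T,0)\delta_0,\;P^{\tilde\mu,T_{-T}(\vec{\mathbf{t}}),*}(T,0)\delta_0\bigr),
\]
and the last term tends to $0$ as $\vec{\mathbf{t}}\to\vec{\mathbf{s}}$ for fixed $T$, by the finite-horizon Gronwall estimate analogous to \eqref{0528-1}. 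The delicate point is checking that the constants of \cite{Feng-Qu-Zhao2023} are indeed uniform in $\vec{\mathbf{s}}$. If that fails, I would instead argue by subsequences: Lemma \ref{Lemma 0517-1}-type continuity, tightness from the moment bounds, and uniqueness of the entrance measure for each limiting parameter $\vec{\mathbf{s}}$.
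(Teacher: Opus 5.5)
Your proposal is correct and follows essentially the same route as the paper. Existence, uniqueness and \eqref{0502} come from the SDE theory of \cite{Feng-Qu-Zhao2023}; the paper cites its Theorem 6.4 directly, whereas you rebuild $\tilde\rho$ from Theorem 4.2 and the cocycle identity. Continuity of $\tilde\rho$ on $\mathbb{T}^n$ is obtained by exactly your triangle inequality: convergence from $\delta_0$ uniform in $\vec{\mathbf{s}}$ (the paper takes this from \cite[Theorem 4.8]{Feng-Qu-Zhao2023} in ${\rm dist}_2$ and converts via $\mathcal{W}_2^2\le 2\,{\rm dist}_2$ from \cite[Theorem 6.15]{Villani2009}) is combined with a finite-horizon It\^o--Gronwall estimate, and then one lets $\vec{\mathbf{t}}\to\vec{\mathbf{s}}$ and $T\to\infty$.

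Your worry about the measure dependence of $b$ is unnecessary, because \eqref{eq:one-side-Lipschitz} is one-sided jointly in $(x,\mu)$. Split $b(T_t\vec{\mathbf{t}},x,\tilde\mu_{T_t\vec{\mathbf{t}}})-b(T_t\vec{\mathbf{s}},y,\tilde\mu_{T_t\vec{\mathbf{s}}})$ at $b(T_t\vec{\mathbf{t}},y,\tilde\mu_{T_t\vec{\mathbf{s}}})$. Young's inequality, Assumption \ref{One-sided Lip of tilde b} and the uniform continuity of $\tilde\mu$ then give the required modulus directly, with no compactness argument.
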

	\begin{proof}
		For any fixed $\mu\in \mathcal{Q}_2$, it follows from Remark \ref{lemma 0427} that $(b^{\mu}, \sigma^{\mu})$ and $(\tilde{b}^{\tilde{\mu},\vec{\mathbf{s}}}, \tilde{\sigma}^{\tilde{\mu},\vec{\mathbf{s}}})$ satisfy all assumptions in Theorem 6.4 in \cite{Feng-Qu-Zhao2023} and hence SDE \eqref{New SDE} has a unique quasi-periodic measure $\rho\in \mathcal{M}_2(\mathbb{R})$ with periods $\tau_1,\cdots,\tau_n$ and $\{\tilde{\rho}^{\vec{\mathbf{s}}}_t:=\tilde{\rho}_{T_t(\vec{\mathbf{s}})}; t\in \mathbb{R}\}$ is the unique entrance measure of $P^{\mu,\vec{\mathbf{s}},*}$ (i.e., \eqref{0502} holds). It remains to show that for any $\tilde{\rho}|_{\mathbb{T}^n}\in C(\mathbb{T}^n; \mathcal{P}_p(\mathbb{R}^d))$.

		By Remark \ref{lemma 0427} and \cite[Theorem 4.8]{Feng-Qu-Zhao2023}, we know that there exist $C,\lambda>0$ such that
		\begin{equation*}
			{\rm dist}_2\big(\tilde{\rho}^{\vec{\mathbf{s}}}_t, \mathscr{L}(K^{\tilde{\mu},\vec{\mathbf{s}}}(t,s,0))\big)\leq Ce^{-\lambda(t-s)}, \ \text{ for all } \ \vec{\mathbf{s}}\in \mathbb{T}^n, \ t\geq s,
		\end{equation*}
    where
    \begin{equation*}
			{\rm dist}_2(\nu_1,\nu_2):=\sup_{|f(x)|\leq |x|^2}\bigg|\int_{\mathbb{R}^d}f{\rm d}\nu_1-\int_{\mathbb{R}^d}f{\rm d}\nu_2\bigg|=\int_{\mathbb{R}^d}|x|^2{\rm d}|\nu_1-\nu_2|(x).
		\end{equation*}
    By \cite[Theorem 6.15]{Villani2009}, we conclude that for all $\vec{\mathbf{s}}\in \mathbb{T}^n, \ t\geq s$,
    \begin{equation*}
			\mathcal{W}_2^2\big(\tilde{\rho}^{\vec{\mathbf{s}}}_t, \mathscr{L}(K^{\tilde{\mu},\vec{\mathbf{s}}}(t,s,0))\big)\leq 2{\rm dist}_2\big(\tilde{\rho}^{\vec{\mathbf{s}}}_t, \mathscr{L}(K^{\tilde{\mu},\vec{\mathbf{s}}}(t,s,0))\big)\leq 2Ce^{-\lambda(t-s)}.
		\end{equation*}
Hence, for any $\vec{\mathbf{t}},\vec{\mathbf{s}}\in\mathbb{T}^n$, we know that for any $T>0$,
\begin{equation}\label{eq:1204-1}
	\begin{split}
		&\ \ \ \ \mathcal{W}_2^2\big(\tilde{\rho}_{\vec{\mathbf{t}}}, \tilde{\rho}_{\vec{\mathbf{s}}}\big)=\mathcal{W}_2^2\big(\tilde{\rho}^{\vec{\mathbf{t}}}_0, \tilde{\rho}^{\vec{\mathbf{s}}}_0\big)\\
&\leq 3\mathcal{W}_2^2\big(\tilde{\rho}^{\vec{\mathbf{t}}}_0, \mathscr{L}(K^{\tilde{\mu},\vec{\mathbf{t}}}(0,-T,0))\big)+3\mathcal{W}_2^2\big(\tilde{\rho}^{\vec{\mathbf{s}}}_0, \mathscr{L}(K^{\tilde{\mu},\vec{\mathbf{s}}}(0,-T,0))\big)\\
&\ \ \ \ +3\mathcal{W}_2^2\big(\mathscr{L}(K^{\tilde{\mu},\vec{\mathbf{t}}}(0,-T,0)), \mathscr{L}(K^{\tilde{\mu},\vec{\mathbf{s}}}(0,-T,0))\big)\\
&\leq 12Ce^{-\lambda T}+3\mathbf{E}\big[\big|K^{\tilde{\mu},\vec{\mathbf{t}}}(0,-T,0)-K^{\tilde{\mu},\vec{\mathbf{s}}}(0,-T,0)\big|^2\big].
	\end{split}
\end{equation}
According to Remark \ref{remark 0517}, there exists an increasing function $w_1: \mathbb{R}^+\to\mathbb{R}^+$ satisfying $w_1(0)=0$ and continuous at $0$ such that $\mathcal{W}_2(\tilde{\mu}_{\vec{\mathbf{t}}}, \tilde{\mu}_{\vec{\mathbf{s}}})\leq w_1(d_0(\vec{\mathbf{t}},\vec{\mathbf{s}}))$ for all $\vec{\mathbf{t}},\vec{\mathbf{s}}\in \mathbb{T}^n$. Then by Assumptions \ref{A3}, \ref{Quasi-periodic condition} and \ref{One-sided Lip of tilde b}, we conclude that for all $x,y\in\mathbb{R}^d, \ \vec{\mathbf{t}},\vec{\mathbf{s}}\in\mathbb{T}^n, \ t\in\mathbb{R}$,
\begin{equation}\label{eq:1204-2}
	\begin{split}
		&\ \ \ \ 2\big\langle x-y,  b^{\tilde{\mu},\vec{\mathbf{t}}}(t,x)-b^{\tilde{\mu},\vec{\mathbf{s}}}(t,y)\big\rangle+\|\sigma^{\tilde{\mu},\vec{\mathbf{t}}}(t,x)-\sigma^{\tilde{\mu},\vec{\mathbf{s}}}(t,y)\|_2^2\\
    &\leq 2L|x-y|^2+2L|x-y|\mathcal{W}_2\big(\tilde{\mu}_{T_t(\vec{\mathbf{t}})}, \tilde{\mu}_{T_t(\vec{\mathbf{s}})}\big)+ 2|x-y|\big|b\big(T_t(\vec{\mathbf{t}}),y,\tilde{\mu}_{T_t(\vec{\mathbf{s}})}\big)-b\big(T_t(\vec{\mathbf{s}}),y,\tilde{\mu}_{T_t(\vec{\mathbf{s}})}\big)\big|\\
    &\ \ \ \ +2L^2\big(|x-y|+\mathcal{W}_2\big(\tilde{\mu}_{T_t(\vec{\mathbf{t}})}, \tilde{\mu}_{T_t(\vec{\mathbf{s}})}\big)\big)^2+2\big\|\sigma\big(T_t(\vec{\mathbf{t}}),y,\tilde{\mu}_{T_t(\vec{\mathbf{s}})}\big)-\sigma\big(T_t(\vec{\mathbf{s}}),y,\tilde{\mu}_{T_t(\vec{\mathbf{s}})}\big)\big\|_2^2\\
   &\leq (4L^2+3L+1)|x-y|^2+(4L^2+L)w_1^2\big(d_0(T_t(\vec{\mathbf{t}}),T_t(\vec{\mathbf{s}}))\big)+2h\big(T_t(\vec{\mathbf{t}}),T_t(\vec{\mathbf{s}})\big).
	\end{split}
\end{equation}
Note that by letting $(\vec{\mathbf{t}}_1,\vec{\mathbf{t}}_2)=(T_t(\vec{\mathbf{t}}),T_t(\vec{\mathbf{s}}))$ and $(\vec{\mathbf{s}}_1,\vec{\mathbf{s}}_2)=(T_t(\vec{\mathbf{s}}),T_t(\vec{\mathbf{s}}))$ in \eqref{eq:new1204}, we have
\begin{equation}\label{eq:1204-3}
	h\big(T_t(\vec{\mathbf{t}}),T_t(\vec{\mathbf{s}})\big)\leq w\big(d_0(T_t(\vec{\mathbf{t}}),T_t(\vec{\mathbf{s}}))\big),
\end{equation}
for another increasing function $w: \mathbb{R}^+\to\mathbb{R}^+$ satisfying $w(0)=0$ and continuous at $0$. 
Then applying It{\^ o}'s formula to $\big|K^{\tilde{\mu},\vec{\mathbf{t}}}(t,-T,0)-K^{\tilde{\mu},\vec{\mathbf{s}}}(t,-T,0)\big|^2$ on $[0,T]$ together with \eqref{eq:1204-1}-\eqref{eq:1204-3} and the fact that $d_0(T_t(\vec{\mathbf{t}}),T_t(\vec{\mathbf{s}}))=d_0(\vec{\mathbf{t}},\vec{\mathbf{s}})$ for all $t\in\mathbb{R}$, we conclude that for all $t\in [-T,0]$,
\begin{equation*}
	\begin{split}
		&\ \ \ \ \mathbf{E}\big[\big|K^{\tilde{\mu},\vec{\mathbf{t}}}(t,-T,0)-K^{\tilde{\mu},\vec{\mathbf{s}}}(t,-T,0)\big|^2\big]\\
    &=2\mathbf{E}\int_{-T}^t\big\langle K^{\tilde{\mu},\vec{\mathbf{t}}}(r,-T,0)-K^{\tilde{\mu},\vec{\mathbf{s}}}(r,-T,0), b^{\tilde{\mu},\vec{\mathbf{t}}}\big(r,K^{\tilde{\mu},\vec{\mathbf{t}}}(r,-T,0)-b^{\tilde{\mu},\vec{\mathbf{s}}}\big(r,K^{\tilde{\mu},\vec{\mathbf{s}}}(r,-T,0)\big)\big\rangle{\rm d}r\\
    &\ \ \ \ +\mathbf{E}\int_{-T}^t\big\|\sigma^{\tilde{\mu},\vec{\mathbf{t}}}\big(r,K^{\tilde{\mu},\vec{\mathbf{t}}}(r,-T,0)-\sigma^{\tilde{\mu},\vec{\mathbf{s}}}\big(r,K^{\tilde{\mu},\vec{\mathbf{s}}}(r,-T,0)\big)\big\|_2^2{\rm d}r\\
    &\leq Tw^*\big(d_0(\vec{\mathbf{t}},\vec{\mathbf{s}})\big)+(4L^2+3L+1)\int_{-T}^t\mathbf{E}\big[\big|K^{\tilde{\mu},\vec{\mathbf{t}}}(r,-T,0)-K^{\tilde{\mu},\vec{\mathbf{s}}}(r,-T,0)\big|^2\big]{\rm d}r,
	\end{split}
\end{equation*}
where $w^*=(4L^2+L)w_1^2+2w$ is also an increasing function that vanishes and is continuous at $0$. By Gronwall inequality, we have
\begin{equation}\label{eq:1204-4}
	\mathbf{E}\big[\big|K^{\tilde{\mu},\vec{\mathbf{t}}}(0,-T,0)-K^{\tilde{\mu},\vec{\mathbf{s}}}(0,-T,0)\big|^2\big]\leq Te^{(4L^2+3L+1)T}w^*\big(d_0(\vec{\mathbf{t}},\vec{\mathbf{s}})\big).
\end{equation}
Then it follows from \eqref{eq:1204-1} and \eqref{eq:1204-4} that for all $T>0$,
\begin{equation*}
	\mathcal{W}_2^2\big(\tilde{\rho}_{\vec{\mathbf{t}}}, \tilde{\rho}_{\vec{\mathbf{s}}}\big)\leq 12Ce^{-\lambda T}+3Te^{(4L^2+3L+1)T}w^*\big(d_0(\vec{\mathbf{t}},\vec{\mathbf{s}})\big)
\end{equation*}
Hence, 
\begin{equation*}
	\limsup_{\vec{\mathbf{t}}\to \vec{\mathbf{s}}}\mathcal{W}_2^2\big(\tilde{\rho}_{\vec{\mathbf{t}}}, \tilde{\rho}_{\vec{\mathbf{s}}}\big)\leq 12Ce^{-\lambda T}, \ \text{ for all } \ T>0.
\end{equation*}
By letting $T\to \infty$, we conclude that $\tilde{\rho}|_{\mathbb{T}^n}\in C(\mathbb{T}^n;\mathcal{P}_2(\mathbb{R}^d))$. The proof is complete.
	\end{proof}

	According to Theorem \ref{Thm 0501}, for any $\mu\in \mathcal{Q}_2$, there is a unique quasi-periodic measure $\rho^{\mu}\in \mathcal{Q}_{2}$ to SDE \eqref{New SDE}. Similarly, we define a map
	$\Theta: \mathcal{Q}_2\to \mathcal{Q}_2$ by $\Theta (\mu):=\rho^{\mu}$. Then we give a proof of Theorem \ref{Thm existence quasi-periodic measure} by applying the fixed point theorem.

	\begin{proof}[Proof of Theorem \ref{Thm existence quasi-periodic measure}]
		Note that $\mu$ is an asymptotic quasi-periodic measure of \eqref{McKean-Vlasov SDE 2} if and only if $\mu$ is asymptotic quasi-periodic and it is a fixed point of $\Theta$, i.e., $\Theta(\mu)=\mu$. 

		Set 
		\begin{equation}\label{eq:my1130}
			\widetilde{\mathcal{Q}}:=\{\Theta(\mu): \mu\in \mathcal{Q}_2\cap \mathcal{M}_{2,\vartheta}\}.
		\end{equation}
		According to Theorem \ref{Thm 0501}, following a similar procedure in the proofs of Lemma \ref{Lemma of invariant set} and Lemma \ref{Lemma of compact set}, we conclude that $\Theta(\widetilde{\mathcal{Q}})\subset \widetilde{\mathcal{Q}}\subset \mathcal{Q}_{\infty}$ and $\widetilde{\mathcal{Q}}$ is pre-compact under the metric $d_2$ given as in \eqref{eq:distance-d_2}. Let $\overline{{\rm co}}(\widetilde{\mathcal{Q}})$ be the closed convex hull of $\widetilde{\mathcal{Q}}$ under $d_2$. Then all elements in $\overline{{\rm co}}(\widetilde{\mathcal{Q}})$ are asymptotic quasi-periodic. Moreover, repeating the proof of Theorem \ref{Theorem existence of entrance measure}, we can prove that there exists a $\mu \in \overline{{\rm co}}(\widetilde{\mathcal{Q}})$ such that $\Theta(\mu)=\mu$. 
    In the proof of Theorem \ref{thm:existence-of-invariant-measure}, we have already proved that $\mu\in\mathcal{M}_p(\mathbb{R})$ for all $p\geq 2$. Thus, the proof is complete.
	\end{proof}

    \begin{remark}
        Assume that Assumptions \ref{A3}, \ref{A4}, and \ref{Quasi-periodic condition} hold. 
        \begin{itemize}
            \item [(i)] In the case that $n=1$, i.e., when the coefficients $b$ and $\sigma$ are time-periodic, it follows that $\overline{{\rm co}}(\widetilde{\mathcal{Q}}) \subset \mathcal{Q}_2$, where $\widetilde{\mathcal{Q}}$ is defined as in \eqref{eq:my1130}. Consequently, equation \eqref{McKean-Vlasov SDE 2} admits a periodic measure.

            \item [(ii)] In the case that $n\geq 2$, we have not been able to prove that $\overline{{\rm co}}(\widetilde{\mathcal{Q}}) \subset \mathcal{Q}_2$ as it is not clear that the limit of a sequence of continuous multi-parameter measure-valued functions is still a continuous multi-parameter measure-valued function. Thus, we can only conclude that the fixed point $\mu$ of $\Theta$ is an asymptotic quasi-periodic measure rather than a quasi-periodic measure. But this is adequate to obtain an invariant measure for the lifted system.
        \end{itemize}
    \end{remark}

	\begin{example}\label{Example 1}
		We consider the 1-dimensional generalized Curie-Weiss model with a time-inhomogeneous forcing term arising in the study of ferromagnetism as a propagation of chaos model for ferromagnetic interacting particles
		\begin{equation}\label{0628}
			\mathrm{d}X_t=\beta\biggl(X_t-X_t^3+f(t)+\int_{\mathbb{R}}\nabla_xW(X_t,y)\mathscr{L}(X_t)(\mathrm{d}y)\biggr)\mathrm{d}t+\sigma \mathrm{d}B_t,
		\end{equation}
		where $\beta>0$ is a constant, known as inverse temperature, $\sigma\neq 0$ is a constant and $\{B_t\}_{t\in \mathbb{R}}$ is a standard 1-dimensional two-sided Brownian motion. Assume $W: \mathbb{R}^2\to \mathbb{R}$ satisfying $|\nabla_xW(x,y)|\leq C(1+|x|^{\kappa}+|y|)$ for some $C>0$ and $0\leq \kappa<3$, $f: \mathbb{R}\to \mathbb{R}$ is continuous and bounded. The growth condition on $\nabla_x W$ controls particle interactions to guarantee well-posedness of the stochastic system (see e.g. Liu-Wu-Zhang \cite{Liu-Wu-Zhang2021}), and the boundedness of the forcing term $f(t)$ reflect realistic, time-varying external influences. Here, we consider the continuous forcing case only. By Theorem \ref{Theorem existence of entrance measure}, \eqref{0628} has an entrance measure. Moreover, according to the results obtained in this section, \eqref{0628} has
		\begin{itemize}
			\item an invariant measure if $f$ is a constant;
			\item a periodic measure if $f$ is a periodic continuous function;
			\item an asymptotic quasi-periodic measure if $f$ is a quasi-periodic function.
		\end{itemize}
		In the last two cases, \eqref{0628} also has an invariant measure upon lifting.
	\end{example}

	\begin{remark}
		Liu-Wu-Zhang \cite{Liu-Wu-Zhang2021}, Guillin-Liu-Wu-Zhang \cite{Guillin-Liu-Wu-Zhang2022} considered the case when $f\equiv 0, W(x,y)=-kxy$. Under the condition that $|k|\sqrt{\pi\beta}e^{\beta/4}\leq 1$ ($\sigma$ is normalized to be $\sqrt{2}$), they proved that \eqref{0628} has a unique invariant measure. Our results show that \eqref{0628} in this case has an invariant measure without requiring any condition on $\beta$ and $k$. They also considered the case where $f\equiv 0, W(x,y)=k(x-y)^2$. Under the following condition ($\sigma=\sqrt{2}$)
		\begin{equation}\label{eq:parameter-uniqueness}
			\begin{cases}
				2|k|\sqrt{\pi\beta}e^{(1-2k)^2\beta/4}\leq 1, & \text{ if } k\leq \frac{1}{2},\\
				2|k|\sqrt{\pi\beta}\leq 1, & \text{ if } k>\frac{1}{2},
			\end{cases}
		\end{equation}
		They showed that \eqref{0628} has a unique invariant measure. Our result also includes this case where no conditions on $\beta, k$ are needed and the existence of invariant measures has been obtained. Needless to say, uniqueness cannot be validated without imposing conditions on the amplitudes of $\beta$ and $\kappa$. It is also worth mentioning that the condition posed by Carrillo-McCann-Villani \cite{Carrillo-McCann-Villani2003} requires potential to be convex, such a condition has already been improved in literature e.g. Guillin-Liu-Wu-Zhang \cite{Guillin-Liu-Wu-Zhang2022}, and our result does not need such a kind of condition either.  Our contribution is mainly on the study of nonstationary Curie-Weiss SDE \eqref{0628}, we obtain the existence of entrance measure and its dynamics without imposing any condition on $\beta, \sigma$ and the growth order $\kappa$ of $\nabla_xW$ in $x$ (except $\beta>0, \sigma\neq 0, \kappa<3$).
	\end{remark}

	\section{Non-uniqueness of entrance measures: Proof of Theorem \ref{Theorem nonuniqueness}}

	In this section, we will mainly study non-uniqueness of entrance (periodic, asymptotic quasi-periodic, invariant) measures for some McKean-Vlasov equations satisfying all Assumptions in Theorem \ref{Theorem existence of entrance measure}. The nonuniqueness of invariant measures 
	for a class of stationary McKean-Vlasov equations with small noise was observed by Dawson \cite{Dawson1983}. Inspired by Zhang \cite{Zhang2023}, for any $a\in \mathbb{R}^d$ and $\nu\in \mathcal{P}(\mathbb{R}^d)$, denote by $\nu^a$ the shift probability of $\nu$ by $a$:
	\begin{equation}\label{0913-1}
		\int_{\mathbb{R}^d}f(x)\nu^a(\mathrm{d}x):=\int_{\mathbb{R}^d}f(x-a)\nu(\mathrm{d}x), \ \text{ for any bounded measurable function } f.  
	\end{equation}
	Then for any $a,b\in \mathbb{R}^d$, we can obtain that $(\nu^a)^b=\nu^{a+b}$. Fix $\mu,\nu\in \mathcal{P}(\mathbb{R}^d)$,
	for any $\gamma\in \mathscr{C}(\mu,\nu)\subset \mathcal{P}(\mathbb{R}^{2d})$, we define $S^{a,b}\gamma=\gamma^{a,b}$ as in \eqref{0913-1}. It is easy to show that $S^{a,b}\mathscr{C}(\mu,\nu)=\mathscr{C}(\mu^a,\nu^b)$ and $S^{-a,-b}\mathscr{C}(\mu^a,\nu^b)=\mathscr{C}(\mu,\nu)$. It is then easy to prove that $\mathcal{W}_p(\mu^a,\nu^b)=\mathcal{W}_p(\mu,\nu)$ for any $\mu,\nu\in \mathcal{P}_p(\mathbb{R}^d)$.

	Fix $a\in \mathbb{R}^d$ and $\theta_a>0$, set
	\begin{equation*}
		\mathcal{M}^a_{1,\theta_a}:=\bigg\{\mu:\mathbb{R}\to \mathcal{P}_1(\mathbb{R}^d)\big| \sup_{t\in \mathbb{R}}\|\mu_t^a\|_1\leq \theta_a\bigg\},
	\end{equation*}
	where $\|\mu_t^a\|_1=\int_{\mathbb{R}^d}|x|\mu_t^a(\mathrm{d}x)=\int_{\mathbb{R}^d}|x-a|\mu_t(\mathrm{d}x)=\mathcal{W}_1(\mu_t,\delta_a)$. In fact, $\mathcal{M}^a_{1,\theta_a}$ is a closed ball centered at the constant measure function $\nu_t\equiv\delta_a$ with radius $\theta_a$ in $C(\mathbb{R};\mathcal{P}_1(\mathbb{R}^d))$.
	
	Now set
	\begin{equation}\label{0815-1}
		f^a(t,x,\nu):=f(t,x+a,\nu^{-a}), \ f=b,\sigma, \ \text{ for all } t\in \mathbb{R}, a,x\in \mathbb{R}^d, \nu\in \mathcal{P}(\mathbb{R}^d).
	\end{equation}
	It is easy to see that for any random variable $X_t$,
	\begin{equation*}
		f^a\big(t,X_t-a,\mathscr{L}(X_t-a)\big)=f\big(t,X_t,(\mathscr{L}(X_t-a))^{-a}\big)=f\big(t,X_t,\mathscr{L}(X_t)\big).
	\end{equation*}
	Thus it follows that if $X_{\cdot}$ solves equation \eqref{McKean-Vlasov SDE 2}, then $Y_t:=X_t-a$ solves the following McKean-Vlasov equation:
	\begin{equation*}
		\mathrm{d}Y_t=b^a(t,Y_t,\mathscr{L}(Y_t))\mathrm{d}t+\sigma^a(t,Y_t,\mathscr{L}(Y_t))\mathrm{d}W_t, \ t\geq s.
	\end{equation*}
	
Now, we give a proof of Theorem \ref{Theorem nonuniqueness}.
	\begin{proof}[Proof of Theorem \ref{Theorem nonuniqueness}]
		For any $\mu\in \mathcal{M}^a_{2,\vartheta^a}, s\in \mathbb{R}$ and $y\in \mathbb{R}^d$, denote by $Y_t^{\mu^a,s,y}$ be the unique solution of the following SDE
		\begin{equation}\label{SDE 0913}
			\begin{cases}
				&\mathrm{d}Y_t^{\mu^a}=b^a(t,Y_t,\mu^a_t)\mathrm{d}t+\sigma^a(t,Y_t,\mu^a_t)\mathrm{d}W_t, \ t\geq s,\\
				& Y_s^{\mu^a}=y.
			\end{cases}
		\end{equation}
		Then by Theorem \ref{Theorem geometric converge to periodic measure}, SDE \eqref{SDE 0913} has a unique entrance measure $\rho$ in $\mathcal{M}_{2,\vartheta^a}$ and there exists a decreasing sequence $\{t_n\}_{n\geq 0}$ with $\lim_{n\to \infty}t_n=-\infty$ such that for all $y\in \mathbb{R}^d$, $t\in \mathbb{R}$,
		\begin{equation*}
			\lim_{n\to \infty}\|\mathscr{L}(Y_t^{\mu^a,t_n,x})-\rho_t\|_{TV}=0.
		\end{equation*}
		Then for any $\mu\in \mathcal{M}^a_{2,\vartheta^a}, s\in \mathbb{R}$ and $x\in \mathbb{R}^d$, $X_t^{\mu,s,x}:=Y_t^{\mu^a,s,x-a}$ solves SDE \eqref{Classical SDE} and hence equation \eqref{Classical SDE} has a unique entrance measure, denoted by $\Psi(\mu):=\rho^{-a}$, in $\mathcal{M}^a_{2,\vartheta^a}$ and for all $t\in \mathbb{R}$,
		\begin{equation}\label{0814-1}
			\lim_{n\to \infty}\mathbf{E}\big[|X_t^{\mu,t_n,a}-a|\big]=\|\Psi(\mu)_t^a\|_1.
		\end{equation}
		Now fix $\mu\in \mathcal{M}^a_{2,\vartheta^a}\cap\mathcal{M}^a_{1,\theta_a}$, we will show that $\Psi(\mu)\in \mathcal{M}^a_{2,\vartheta^a}\cap\mathcal{M}^a_{1,\theta_a}$. According to the discussions above, we only need to prove that $\Psi(\mu)\in \mathcal{M}^a_{1,\theta_a}$. To see this, for any $t\in \mathbb{R}$ and $t_n\leq t$, applying It{\^o}'s formula to $\big|Y_t^{\mu^{a},t_n,0}\big|^2=\big|X_t^{\mu,t_n,a}-a\big|^2$, we have 
		\begin{equation*}
			\begin{split}
				\big|Y_t^{\mu^{a},t_n,0}\big|^2&=\int_{t_n}^t\Bigl(2\big\langle Y_s^{\mu^{a},t_n,0},b^a(s,Y_s^{\mu^{a},t_n,0},\mu^a_s)\big\rangle+\|\sigma^a(s,Y_s^{\mu^{a},t_n,0},\mu^a_s)\|_2^2\Bigr)\mathrm{d}s\\
				&\ \ \ \ +2\int_{t_n}^t\big\langle Y_s^{\mu^{a},t_n,0},\sigma^a(s,Y_s^{\mu^{a},t_n,0},\mu^a_s)\mathrm{d}W_s\big\rangle\\
				&\leq -\int_{t_n}^tg_a\bigl(\big|Y_s^{\mu^{a},t_n,0}\big|, \theta_a\bigr)\mathrm{d}s+2\int_{t_n}^t\big\langle Y_s^{\mu^{a},t_n,0},\sigma^a(s,Y_s^{\mu^{a},t_n,0},\mu^a_s)\mathrm{d}W_s\big\rangle.
			\end{split}
		\end{equation*}
		Taking expectation on both sides and dividing both sides by $t-t_n$, the convexity of $g_a(\cdot,\theta_a)$ concludes that
		\begin{equation*}
			g_a\Bigl(\frac{1}{t-t_n}\int_{t_n}^t\mathbf{E}\bigl[\big|Y_s^{\mu^{a},t_n,0}\big|\bigr]\mathrm{d}s, \theta_a\Bigr)\leq \frac{1}{t-t_n}\int_{t_n}^t\mathbf{E}\bigl[g_a\bigl(\big|Y_s^{\mu^{a},t_n,0}\big|, \theta_a\bigr)\bigr]\mathrm{d}s\leq -\frac{\mathbf{E}\bigl[\big|Y_t^{\mu^{a},t_n,0}\big|^2\bigr]}{t-t_n}\leq 0.
		\end{equation*}
		It follows from \eqref{0814-1} and the continuity of $g_a(\cdot,\theta_a)$ that
		\begin{equation*}
			g_a(\|\Psi(\mu)^a_t\|_1, \theta_a)=\lim_{n\to \infty}g_a\Bigl(\frac{1}{t-t_n}\int_{t_n}^t\mathbf{E}\bigl[\big|Y_s^{\mu^{a},t_n,0}\big|\bigr]\mathrm{d}s, \theta_a\Bigr)\leq 0.
		\end{equation*}
		Hence, by \eqref{0814-2}, we know that $\|\Psi(\mu)_t^a\|_1\leq \theta_a$ for all $t\in \mathbb{R}$, i.e., $\Psi(\mu)\in \mathcal{M}^a_{1,\theta_a}$.

		Now consider the single-valued map $\Psi: \mathcal{M}^a_{2,\vartheta^a}\cap\mathcal{M}^a_{1,\theta_a}\to \mathcal{M}^a_{2,\vartheta^a}\cap\mathcal{M}^a_{1,\theta_a}$ and set
		\begin{equation*}
			\widetilde{\mathcal{M}}:=\{\Psi(\mu): \mu\in \mathcal{M}^a_{2,\vartheta^a}\cap\mathcal{M}^a_{1,\theta_a}\}.
		\end{equation*}
		Similar to the proof of Lemmas \ref{Lemma of invariant set}, \ref{Lemma of compact set} and Theorem \ref{Theorem existence of entrance measure}, we know that $\overline{{\rm co}}(\widetilde{\mathcal{M}})$ the closed convex hull of $\widetilde{\mathcal{M}}$ in $C(\mathbb{R};\mathcal{P}_2(\mathbb{R}^d))$ is nonempty and compact in $C(\mathbb{R}; \mathcal{MS}_2)$ and
		\begin{itemize}
			\item $\overline{{\rm co}}(\widetilde{\mathcal{M}})\subset \mathcal{M}^a_{2,\vartheta^a}\cap\mathcal{M}^a_{1,\theta_a}$;
			\item $\Psi\bigl(\overline{{\rm co}}(\widetilde{\mathcal{M}})\bigr)\subset \overline{{\rm co}}(\widetilde{\mathcal{M}})$;
			\item $\Big\{\big(\mu,\Psi(\mu)\big): \mu\in \overline{{\rm co}}(\widetilde{\mathcal{M}})\Big\}$ is closed in $\overline{{\rm co}}(\widetilde{\mathcal{M}})\times \overline{{\rm co}}(\widetilde{\mathcal{M}})$.
		\end{itemize}
		Then Kakutani's fixed point theorem (Theorem \ref{Lemma Kakutani's fixed point theorem}) yields $\Psi$ has a fixed point in $\mathcal{M}^a_{2,\vartheta^a}\cap\mathcal{M}^a_{1,\theta_a}$, which is an entrance measure of equation \eqref{McKean-Vlasov SDE 2}.

		If there exists two different $a_1,a_2\in \mathbb{R}^d$ and Assumptions \ref{A3}, \ref{A4} hold for both $(b^{a_1},\sigma^{a_1})$ and $(b^{a_2},\sigma^{a_2})$ respectively, then by what we have proved above, \eqref{McKean-Vlasov SDE 2} has an entrance measure in $\mathcal{M}^{a_1}_{2,\vartheta^{a_1}}\cap\mathcal{M}^{a_1}_{1,\theta_{a_1}}$ and an entrance measure in $\mathcal{M}^{a_2}_{2,\vartheta^{a_2}}\cap\mathcal{M}^{a_2}_{1,\theta_{a_2}}$. To prove \eqref{McKean-Vlasov SDE 2} has at least two distinct entrance measures, we only need to show that $\mathcal{M}^{a_1}_{1,\theta_{a_1}}\cap\mathcal{M}^{a_2}_{1,\theta_{a_2}}=\emptyset$ when $\theta_{a_1}+\theta_{a_2}<|a_1-a_2|$. Otherwise, there exists $\nu\in \mathcal{M}^{a_1}_{1,\theta_{a_1}}\cap\mathcal{M}^{a_2}_{1,\theta_{a_2}}$, then it is easy to see that
		\begin{equation*}
			\theta_{a_1}\geq \|\nu^{a_1}\|_1=\int_{\mathbb{R}^d}|x-a_1|\nu(\mathrm{d}x)\geq |a_1-a_2|-\int_{\mathbb{R}^d}|x-a_2|\nu(\mathrm{d}x)=|a_1-a_2|-\|\nu^{a_2}\|_1\geq |a_1-a_2|-\theta_{a_2},
		\end{equation*}
		which contradicts the assumption that $\theta_{a_1}+\theta_{a_2}<|a_1-a_2|$. The proof is then completed as there exists a periodic measure/asymptotic quasi-periodic in both $\overline{{\mathcal Q}_2}\cap  \mathcal{M}^{a_1}_{2,\theta_{a_1}}$ and $\overline{{\mathcal Q}_2}\cap  \mathcal{M}^{a_2}_{2,\theta_{a_2}}$ respectively using the same argument as that in the proof of Theorem \ref{Thm existence quasi-periodic measure}.
	\end{proof}

	We utilize Theorem \ref{Theorem nonuniqueness} to show that the following Curie-Weiss model possess at least two distinct entrance (invariant, periodic, asymptotic quasi-periodic) measures.

	\begin{example}\label{Example:2}
		We still consider the 1-dimensional Curie-Weiss model as in Example \ref{Example 1} with 
		\begin{equation}\label{Ex 2}
			W(x,y)=-k(x-y)^2 \text{ for some } k>0.
		\end{equation}
		In this case we know that
		\begin{equation*}
			b(t,x,\nu)=\beta\Big(x-x^3+f(t)-2k\int_{\mathbb{R}}(x-y)\nu(\mathrm{d}y)\Big).
		\end{equation*}
		Note when noise and time dependent term disappear in equation \eqref{0628}, then the dynamical system has three invariant point $1,0,-1$. Therefore if we would like to construct multiple entrance measures, we ought to think that these measures would be concentrated around $1,0,-1$. Note the definition of $\mathcal{M}^a_{1,\theta_a}$, thus we consider $\mathcal{M}^a_{1,\theta_a}$ for $a=1,0,-1$, respectively.

		By a simple calculation, for any $\beta>0,k>0,|\sigma|>0$ and $|f|_{\infty}:=\sup_{t\in\mathbb{R}}|f(t)|<\infty$,  we know that 
		\begin{equation*}
			\alpha^{a}_t\equiv -\beta(k+1), \  \beta^{a}_t\equiv \beta k, \ \gamma^{a}_t\equiv \beta(|f|_{\infty}^2+10), \ c^a_{\sigma}\equiv |\sigma|^2, \ \vartheta^{a}_t\equiv |f|_{\infty}^2+10+\frac{|\sigma|^2}{2\beta}
		\end{equation*}
		for all $a=1,0,-1$ satisfying assumptions in Theorem \ref{Theorem nonuniqueness}.

		(i) Case $a=1$. According to \eqref{0815-1} and by calculation, for any $t\in \mathbb{R}, y\in \mathbb{R}^d, \nu\in \mathcal{P}_1(\mathbb{R}^d)$, we have
		\begin{equation*}
			\begin{split}
				2yb^1(t,y,\nu)+|\sigma|^2&=2\beta\Bigl(y(y+1)-y(y+1)^3+yf(t)-2ky\int_{\mathbb{R}}(y-z)\nu(\mathrm{d}z)\Bigr)+|\sigma|^2\\
				&\leq -\Big[2\beta \bigl(|y|^4-3|y|^3+2(k+1)|y|^2-(|f|_{\infty}+2k\|\nu\|_1)|y|\bigr)-|\sigma|^2\Big].
			\end{split}
		\end{equation*}
		Thus we can choose
		\begin{equation}\label{0815-3}
			g_1(z,w)=2\beta\bigl(z^4-3z^3+2(k+1)z^2-(|f|_{\infty}+2kw)z\bigr)-|\sigma|^2
		\end{equation}
		such that \eqref{0815-2} holds true. Obviously, $g_1$ is continuous. Since $\beta>0,k>0$, then for any $z\geq 0$, $g_1(z,\cdot)$ is non-increasing. On the other hand,
		\begin{equation*}
			\frac{\partial^2 g_1(z,w)}{\partial z^2}=4\beta \big(6z^2-9z+2(k+1)\big).
		\end{equation*}
		Then for any $w\geq 0$, $g_1(\cdot,w)$ is convex if and only if $k\geq \frac{11}{16}$.

		Note that for $1\leq \theta_{1}\leq 2$, we have
		\begin{equation*}
			g_{1}(\theta_{1},\theta_{1})=2\beta\theta_{1}\bigl(\theta_{1}^3-3\theta_{1}^2+2\theta_{1}-|f|_{\infty}\bigr)-|\sigma|^2\leq -|\sigma|^2<0.
		\end{equation*}
		Then \eqref{0814-2} fails to hold. Now for any $0<\theta_1<1$ or $\theta_1>2$, since $g_1(z,\theta_1)$ is a convex nontrival polynomial, then \eqref{0814-2} holds if and only if
		\begin{equation*}
			g_1(\theta_1,\theta_1)\geq 0 \ \text{ and } \ \frac{\partial g_1}{\partial z}(\theta_1,\theta_1)\geq 0,
		\end{equation*}
		which gives
		\begin{equation*}
			|f|_{\infty}< \theta_1^3-3\theta_1^2+2\theta_1 \ \text{ and } \ \beta\geq \frac{|\sigma|^2}{2\theta_1(\theta_1^3-3\theta_1^2+2\theta_1-|f|_{\infty})}.
		\end{equation*}
		Hence, for any fixed $\theta_1\in (0,1)\cup (2,\infty)$ and $g_1$ defined as in \eqref{0815-3}, all assumptions in Theorem \ref{Theorem nonuniqueness} hold for this example in the case $a=1$ if and only if
		\begin{equation}\label{case 1 in Ex}
			k\geq \frac{11}{16}, \ \ |f|_{\infty}< \theta_1^3-3\theta_1^2+2\theta_1, \ \text{ and } \ \beta\geq \frac{|\sigma|^2}{2\theta_1(\theta_1^3-3\theta_1^2+2\theta_1-|f|_{\infty})}.
		\end{equation}
		Therefore, under \eqref{case 1 in Ex}, equation \eqref{0628} with \eqref{Ex 2} has an entrance measure in $\mathcal{M}^1_{1,\theta_1}$.

		(ii) Case $a=-1$. We can also choose $g_{-1}$ the same as $g_1$:
		\begin{equation}\label{0815-4}
			g_{-1}(z,w)=2\beta\bigl(z^4-3z^3+2(k+1)z^2-(|f|_{\infty}+2kw)z\bigr)-|\sigma|^2.
		\end{equation}
		Then arguing as the case (i), for any fixed $\theta_{-1}\in (0,1)\cup (2,\infty)$ and $g_{-1}$ defined as in \eqref{0815-4}, all assumptions in Theorem \ref{Theorem nonuniqueness} hold in the case $a=-1$ if and only if
		\begin{equation}\label{case 2 in Ex}
			k\geq \frac{11}{16}, \ \ |f|_{\infty}< \theta_{-1}^3-3\theta_{-1}^2+2\theta_{-1}, \ \text{ and } \ \beta\geq \frac{|\sigma|^2}{2\theta_{-1}(\theta_{-1}^3-3\theta_{-1}^2+2\theta_{-1}-|f|_{\infty})}.
		\end{equation}
		Therefore, under \eqref{case 2 in Ex}, equation \eqref{0628} with \eqref{Ex 2} has an entrance measure in $\mathcal{M}^{-1}_{1,\theta_{-1}}$.

		(iii) Case $a=0$. We have
		\begin{equation*}
			g_{0}(z,w)=2\beta\bigl(z^4+(2k-1)z^2-(|f|_{\infty}+2kw)z\bigr)-|\sigma|^2.
		\end{equation*}
		Similarly, \eqref{0814-2} fails to hold when $0<\theta_0\leq 1$. Moreover, for any $\theta_0>1$ and $g_0$ defined as above, all assumptions in Theorem \ref{Theorem nonuniqueness} hold in the case $a=0$ if and only if
		\begin{equation}\label{case 3 in Ex}
			k\geq \frac{1}{2}, \ \ |f|_{\infty}< \theta_{0}^3-\theta_{0}, \ \text{ and } \ \beta\geq \frac{|\sigma|^2}{2\theta_{0}(\theta_{0}^3-\theta_{0}-|f|_{\infty})}.
		\end{equation}
		Therefore, under \eqref{case 3 in Ex}, equation \eqref{0628} with \eqref{Ex 2} has an entrance measure in $\mathcal{M}^{0}_{1,\theta_{0}}$. But in this case, both $\theta_0+\theta_1<1$ and $\theta_0+\theta_{-1}<1$ are not satisfied, so we cannot claim  $\mathcal{M}^{0}_{1,\theta_{0}}\cap\mathcal{M}^{1}_{1,\theta_{1}}=\emptyset$ and $\mathcal{M}^{0}_{1,\theta_{0}}\cap\mathcal{M}^{-1}_{1,\theta_{-1}}=\emptyset$.

		However, note that
		\begin{equation*}
			\max_{0\leq \theta\leq 1}(\theta^3-3\theta^2+2\theta)=(\theta^3-3\theta^2+2\theta)|_{\theta=\frac{3-\sqrt{3}}{3}}=\frac{2\sqrt{3}}{9}.
		\end{equation*}
		Hence \eqref{case 1 in Ex} and \eqref{case 2 in Ex} hold true if we take 
		\begin{equation*}
			k\geq \frac{11}{16}, \ \theta_1=\theta_{-1}=\frac{3-\sqrt{3}}{3}, \ |f|_{\infty}\leq \frac{\sqrt{3}}{9}, \ \text{ and } \ \beta\geq \frac{27(\sqrt{3}+1)|\sigma|^2}{12}.
		\end{equation*}
		Since $\theta_1+\theta_{-1}<2$, equation \eqref{0628} with \eqref{Ex 2} has two distinct entrance measures in $\mathcal{M}^{1}_{1,\frac{3-\sqrt{3}}{3}}$ and $\mathcal{M}^{-1}_{1,\frac{3-\sqrt{3}}{3}}$, respectively. Moreover, equation \eqref{0628} with \eqref{Ex 2} has at least
		\begin{itemize}
			\item two distinct invariant measures if $f$ is a constant;
            \item two distinct periodic measures if $f$ is a periodic continuous function;
			\item two distinct asymptotic quasi-periodic measures if $f$ is a quasi-periodic function.
		\end{itemize}
	\end{example}

    \begin{remark}
    In Example \ref{Example:2} for the time-homogeneous case (e.g. $f$ is a constant), our result indicates that the Curie-Weiss model has at least two different invariant measures when $|f|<\frac{\sqrt{3}}{9}$ and the parameters satisfy
       \begin{equation*}
			k\geq \frac{11}{16} \ \text{ and } \ \beta\geq \frac{27(\sqrt{3}+1)|\sigma|^2}{12}.
		\end{equation*}
        Liu-Wu-Zhang \cite{Liu-Wu-Zhang2021} established the uniqueness of invariant measures when the parameters satisfy \eqref{eq:parameter-uniqueness}. Thus, there is still a gap between these two criteria.
        However, for the standard time-homogeneous Curie-Weiss model (i.e., $f \equiv 0$), for any $\beta,k > 0$, Dowson \cite{Dawson1983}, Tugaut \cite{Tugaut2014} showed that there exists a critical value $\sigma_c > 0$ such that
        for any fixed $\sigma^2 \geq \sigma_c$, there is a unique invariant measure, while for any $\sigma^2 < \sigma_c$, there exist three invariant measures. It is worth noting that their result relies on the explicit formula of the invariant measure. But such a formula has not been found in the time in-homogeneous case, thus, it has not been possible for us to close the gap in such a setting. 
    \end{remark}

    \begin{remark}
        Aside from the time-inhomogeneous setting, the conclusions of our work (Theorem \ref{Theorem nonuniqueness} and Example \ref{Example:2}) and that in \cite{Zhang2023} are similar. However, the methods used differ substantially. Specifically, our approach relies on the Kakutani fixed point theorem, whereas \cite{Zhang2023} employed the Schauder fixed point theorem. In \cite{Zhang2023}, the authors are able to show that the map $\mathcal{T}:\mathcal{P}_2(\mathbb{R}^d)\to \mathcal{P}_2(\mathbb{R}^d)$ defined in \cite[proof of Theorem 2.2, Page 6]{Zhang2023} is continuous, which is sufficient for applying Schauder's theorem. In contrast, in our setting, we are unable to establish the continuity of the map $\Phi: C(\mathbb{R};\mathcal{P}_2(\mathbb{R}^d)) \to C(\mathbb{R};\mathcal{P}_2(\mathbb{R}^d))$ with respect to the metric $d_2$ defined as in \eqref{eq:distance-d_2}. 

Moreover, while the compactness of the embedding $\mathcal{P}_2(\mathbb{R}^d) \to \mathcal{P}_1(\mathbb{R}^d)$ is relatively straightforward, proving the pre-compactness of the set $\widetilde{\mathcal{M}} \subset C(\mathbb{R};\mathcal{P}_2(\mathbb{R}^d))$ is substantially more challenging in our framework. We address these technical difficulties in detail in Lemma \ref{Lemma of compact set}.
    \end{remark}

\begin{appendices}
    \section{}
    Let $X_n=\{X_n(t), \ t\in [0,T]\}_{n\geq 1}$ be a sequence of $d$-dimensional continuous processes satisfying the following two conditions:
	\begin{equation}\label{Condition 1}
		\lim_{N\to \infty}\sup_{n\geq 1}\mathbf{P}\{|X_n(0)|>N\}=0;
	\end{equation}
	\begin{equation}\label{Condition 2}
	  \lim_{h\to 0}\sup_{n\geq 1}\mathbf{P}\Big\{\sup_{s,t\in [0,T], |t-s|\leq h}|X_n(t)-X_n(s)|>\epsilon\Big\}=0, \ \text{ for any } \epsilon>0.
	\end{equation}
	The following two theorems can be found as Theorem I-4.2 and Theorem I-4.3 in \cite{ikeda1989}.
	\begin{theorem}\label{Thm I-4.2}
	  Let $X_n=\{X_n(t), \ t\in [0,T]\}, n\geq 1$ be a sequence of $d$-dimensional continuous processes satisfying \eqref{Condition 1} and \eqref{Condition 2}. Then there exist a subsequence $\{n_k\}_{k\geq 1}$, a probability space $(\tilde{\Omega},\tilde{\mathcal{F}},\tilde{\mathbf{P}})$ and $d$-dimensional continuous processes $\{\tilde{X}_{n_k}\}_{k\geq 1}$ and $\tilde{X}$ on it such that
		\begin{itemize}
			\item [\emph{(i)}] $\mathscr{L}(\tilde{X}_{n_k})=\mathscr{L}(X_{n_k}), \ k\geq 1$;
			\item [\emph{(ii)}] the sequence $\{\tilde{X}_{n_k}, k\geq 1\}$ uniformly converges to $\tilde{X}$, $\tilde{\mathbf{P}}$-almost surely, i.e.,
			\begin{equation*}
			  \tilde{\mathbf{P}}\Big\{\lim_{k\to \infty}\sup_{t\in [0,T]}|\tilde{X}_{n_k}(t)-\tilde{X}(t)|=0\Big\}=1.
			\end{equation*}
		\end{itemize}
	\end{theorem}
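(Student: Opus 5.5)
This is the classical Skorokhod-type compactness result (Theorem I-4.2 in Ikeda--Watanabe). I would prove it in three steps: tightness in path space, then Prokhorov, then the Skorokhod representation.

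\emph{Step 1: tightness.} Regard each $X_n$ as a random element of the Polish space $C([0,T];\mathbb{R}^d)$ with the sup-norm, and let $P_n=\mathscr{L}(X_n)$. By the Arzel\`a--Ascoli theorem, a set $K\subset C([0,T];\mathbb{R}^d)$ is relatively compact if and only if $\sup_{x\in K}|x(0)|<\infty$ and $\lim_{h\to0}\sup_{x\in K}\omega_x(h)=0$. Here $\omega_x(h):=\sup_{|t-s|\le h}|x(t)-x(s)|$ is the modulus of continuity.

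Fix $\eta>0$. By \eqref{Condition 1}, choose $N$ with $\sup_n\mathbf{P}\{|X_n(0)|>N\}<\eta/2$. By \eqref{Condition 2} with $\epsilon=1/k$, choose $h_k\downarrow0$ with $\sup_n\mathbf{P}\{\omega_{X_n}(h_k)>1/k\}<\eta 2^{-k-1}$. Set
\[
K:=\{x:|x(0)|\le N,\ \omega_x(h_k)\le 1/k \text{ for all } k\ge1\}.
\]
This set is closed and, by Arzel\`a--Ascoli, compact. The choices above give $P_n(K^c)<\eta$ for every $n$, so $\{P_n\}$ is tight.

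\emph{Step 2: convergence in law.} Prokhorov's theorem gives a subsequence $n_k$ and a probability measure $P$ on $C([0,T];\mathbb{R}^d)$ such that $P_{n_k}\Rightarrow P$ weakly.

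\emph{Step 3: almost sure realisation.} Apply the Skorokhod representation theorem on the separable metric space $C([0,T];\mathbb{R}^d)$. It provides a probability space $(\tilde\Omega,\tilde{\mathcal F},\tilde{\mathbf P})$, which may be taken to be $[0,1]$ with Lebesgue measure, carrying random elements $\tilde X_{n_k}$ and $\tilde X$. These satisfy $\mathscr{L}(\tilde X_{n_k})=P_{n_k}$, which is (i), and $\mathscr{L}(\tilde X)=P$. Moreover $\tilde X_{n_k}\to\tilde X$ almost surely in the sup-norm, which is exactly the uniform convergence in (ii). The evaluation maps are continuous, so each $\tilde X_{n_k}$ and $\tilde X$ is a continuous $d$-dimensional process.

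The only step requiring any care is the construction of the compact set $K$ in Step 1. The essential point is to use a summable sequence of tolerances $\eta2^{-k-1}$, so that a single compact set controls the modulus of continuity uniformly at every scale. Everything else is an application of standard theorems.
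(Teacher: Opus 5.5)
Your argument is correct. It is the standard proof of Ikeda--Watanabe's Theorem I-4.2: tightness of the laws on $C([0,T];\mathbb{R}^d)$ via Arzel\`a--Ascoli with the summable tolerances $\eta 2^{-k-1}$, then Prokhorov, then the Skorokhod representation. The paper gives no proof of its own and simply cites that result, so your route is the one it relies on.
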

	\begin{theorem}\label{Thm I-4.3}
	  Let $X_n=\{X_n(t), \ t\in [0,T]\}_{n\geq 1}$ be a sequence of $d$-dimensional continuous processes satisfying the following two conditions:
	  \begin{itemize}
		  \item [(1)] there exists a positive constant $\gamma$ such that $\sup_{n\geq 1}\mathbf{E}[|X_n(0)|^{\gamma}]<\infty$;
		  \item [(2)] there exist positive constants $\alpha,\beta,M$ such that
		  \begin{equation*}
			  \mathbf{E}[|X_n(t)-X_n(s)|^{\alpha}]\leq M|t-s|^{1+\beta}, \ \text{ for all } n\geq 1, \ t,s\in [0,T].
		  \end{equation*}
	   \end{itemize}
	   Then $\{X_n\}_{n\geq 1}$ satisfies the conditions \eqref{Condition 1} and \eqref{Condition 2}.
	\end{theorem}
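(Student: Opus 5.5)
The plan is to verify \eqref{Condition 1} and \eqref{Condition 2} separately. \eqref{Condition 1} follows directly from hypothesis (1). \eqref{Condition 2} is a Kolmogorov–Chentsov chaining argument in which every constant depends only on $\alpha,\beta,M,T$ and never on $n$; this independence is exactly what gives the supremum over $n$. By a linear time change, $t\mapsto tT$, we may assume $T=1$. This only replaces $M$ by $MT^{1+\beta}$.

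\textbf{Condition \eqref{Condition 1}.} By Chebyshev's inequality,
$$\sup_{n\geq 1}\mathbf{P}\{|X_n(0)|>N\}\leq N^{-\gamma}\sup_{n\geq1}\mathbf{E}[|X_n(0)|^\gamma]\to 0 \quad\text{as } N\to\infty.$$

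\textbf{Condition \eqref{Condition 2}.} Fix $a\in(0,\beta/\alpha)$ and work on the dyadic grid $D_m=\{k2^{-m}:0\leq k\leq 2^m\}$.
\begin{enumerate}
\item \emph{Level-$m$ increments.} Let $A^n_m$ be the event that $\max_k|X_n((k+1)2^{-m})-X_n(k2^{-m})|>2^{-ma}$. By Chebyshev's inequality with exponent $\alpha$, hypothesis (2), and a union bound over the $2^m$ increments,
$$\mathbf{P}(A^n_m)\leq 2^m\cdot M2^{-m(1+\beta)}\,2^{ma\alpha}=M2^{-m(\beta-a\alpha)}.$$
\item \emph{Summing the levels.} Since $\beta-a\alpha>0$, the sum $\sum_{m>m_0}\mathbf{P}(A^n_m)\leq C_1 2^{-m_0(\beta-a\alpha)}$, with $C_1$ independent of $n$.
\item \emph{Chaining on the good event.} On $\Omega\setminus\bigcup_{m>m_0}A^n_m$, take dyadic $s<t$ with $t-s\leq 2^{-m_0}$. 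Write $t$ and $s$ through their binary expansions, approximating each from the nearest grid point in $D_{m_0}$. Telescoping gives
$$|X_n(t)-X_n(s)|\leq 2\sum_{m>m_0}2^{-ma}+2^{-m_0a}\leq C_2 2^{-m_0a}.$$
The last term comes from at most one adjacent $D_{m_0}$ step when $s$ and $t$ fall in neighbouring level-$m_0$ intervals. To bound that step as well, enlarge the union to include $A^n_{m_0}$.
\item \emph{Passing to all times.} Path continuity extends the bound from dyadic times to all $s,t\in[0,1]$ with $|t-s|\leq 2^{-m_0}$.
\item \emph{Choosing the scale.} Given $\epsilon>0$ and $\delta>0$, choose $m_0$ with $C_22^{-m_0a}<\epsilon$ and $C_12^{-m_0(\beta-a\alpha)}<\delta$. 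Then for all $h\leq 2^{-m_0}$,
$$\sup_{n\geq 1}\mathbf{P}\Big\{\sup_{|t-s|\leq h}|X_n(t)-X_n(s)|>\epsilon\Big\}<\delta,$$
which is \eqref{Condition 2}.
\end{enumerate}

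The only delicate step is the chaining bookkeeping in step 3. The difficulty is handling pairs $s,t$ that lie in adjacent level-$m_0$ intervals rather than the same one, which is why $A^n_{m_0}$ is included in the exceptional set. The standard proof of Kolmogorov's continuity theorem handles this, and the bookkeeping is unchanged here because no constant depends on $n$.
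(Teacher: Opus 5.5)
Your proof is correct and takes essentially the same route as the source: the paper does not prove this statement itself but cites Theorem I-4.3 of Ikeda--Watanabe, whose proof is exactly this Chebyshev bound at $t=0$ plus dyadic chaining on an exceptional set $\bigcup_{m\geq m_0}A^n_m$ with probability bounded uniformly in $n$. In step 3, use the truncations $\lfloor 2^m t\rfloor 2^{-m}$ (as your ``binary expansions'' suggests) rather than nearest grid points: with truncations the level-$m_0$ approximants of $s$ and $t$ are at most one grid step apart, whereas nearest-point rounding with ties can put them two steps apart.
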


     The following theorem is basic and can be found in Skorohod \cite{skorohod1965}.

	\begin{theorem}[\cite{skorohod1965}]\label{Theorem of Skorohod}
		Let $\{W_n(t), t\in [0,T]\}_{n\geq 1}$ be a sequence of $\mathcal{F}_t^n$-Brownian motions and $W(t), t\in [0,T]$ be a $\mathcal{F}_t$-Brownian motion such that $W_n$ converges to $W$ uniformly on $[0,T]$, $\mathbf{P}$-a.s.. Assume that $\mathcal{F}_t^n$-adapted measurable processes $F_n(t), t\in [0,T]$ and $\mathcal{F}_t$-adapted measurable process $F(t), t\in [0,T]$ are such that
		\begin{equation*}
			\sup_{n\geq 1}\int_{0}^{T}\mathbf{E}[|F_n(t)|^2]\mathrm{d}t<\infty, \ \text{ and } \ \lim_{n\to \infty}\int_{0}^{T}\mathbf{E}[|F_n(t)-F(t)|^2]\mathrm{d}t=0.
		\end{equation*}
		Then
		\begin{equation*}
			\lim_{n\to \infty}\mathbf{E}\biggl[\bigg|\int_{0}^{T}F_n(t)\mathrm{d}W_n(t)-\int_{0}^{T}F(t)\mathrm{d}W(t)\bigg|^2\biggr]=0.
		\end{equation*}
	\end{theorem}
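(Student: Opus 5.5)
The plan is to reduce to elementary integrands by an approximation that is \emph{linear, adapted and $L^2$-contractive}, so that it acts on each $F_n$ within its own filtration. The isometry for $\int\cdot\,\mathrm{d}W_n$ then holds with respect to $\mathcal{F}^n_t$. Convergence of the remaining finite sums will follow from the a.s.\ uniform convergence $W_n\to W$.

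\textbf{Step 1 (truncation).} Let $\pi_M(x):=x\,(1\wedge M/|x|)$, which is $1$-Lipschitz. Since $F_n\to F$ in $L^2(\mathrm{d}t\otimes\mathbf{P})$, the family $\{|F_n|^2\}$ is uniformly integrable, so
\begin{equation*}
\sup_{n}\int_0^T\mathbf{E}\big[|F_n-\pi_M(F_n)|^2\big]\mathrm{d}t\to 0 \quad\text{as } M\to\infty,
\end{equation*}
and likewise for $F$. Moreover $\pi_M(F_n)\to\pi_M(F)$ in $L^2$. By the Itô isometry (each integral taken in its own filtration), it suffices to prove the claim for bounded integrands, $|F_n|,|F|\le M$.

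\textbf{Step 2 (adapted smoothing).} For $\delta=T/N$, define the lagged average
\begin{equation*}
(A_\delta G)(t):=\frac1\delta\int_{(k-1)\delta}^{k\delta}G(s)\,\mathrm{d}s,\qquad t\in[k\delta,(k+1)\delta),
\end{equation*}
with $G:=0$ on negative times. This map is linear and preserves adaptedness to any filtration. It is a contraction in $L^2(\mathrm{d}t\otimes\mathbf{P})$, preserves the bound $M$, and satisfies $A_\delta G\to G$ in $L^2$ (standard: true for continuous $G$, then by density and contractivity). We then split
\begin{equation*}
\int F_n\,\mathrm{d}W_n-\int F\,\mathrm{d}W=\int (F_n-A_\delta F_n)\,\mathrm{d}W_n+\Big(\int A_\delta F_n\,\mathrm{d}W_n-\int A_\delta F\,\mathrm{d}W\Big)+\int (A_\delta F-F)\,\mathrm{d}W .
\end{equation*}
By the isometry, the first term has $L^2$ norm at most
\begin{equation*}
\|F_n-A_\delta F_n\|\le 2\|F_n-F\|+\|F-A_\delta F\|,
\end{equation*}
and the third is at most $\|F-A_\delta F\|$. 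Choosing $\delta$ small and then $n$ large makes both small.

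\textbf{Step 3 (the middle term).} This is the main obstacle: $A_\delta F$ is not $\mathcal{F}^n$-adapted, so no isometry is available for the difference. The middle term is a finite sum $\sum_k \big(a^n_k\Delta_kW_n-a_k\Delta_kW\big)$, where $a^n_k\to a_k$ in $L^2$, all $|a^n_k|,|a_k|\le M$, and $\Delta_kW_n\to\Delta_kW$ a.s. Hence each summand tends to $0$ in probability. Boundedness together with the Gaussian increments gives $\sup_n\mathbf{E}|a^n_k\Delta_kW_n|^4\le CM^4\delta^2$, so the summands are uniformly integrable in $L^2$. Therefore the middle term tends to $0$ in $L^2$ for each fixed $\delta$.

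Combining Steps 1–3 with the order of limits $n\to\infty$, then $\delta\to0$, then $M\to\infty$ yields the theorem. The points to check carefully are the convergence $A_\delta G\to G$ in $L^2$ (lag-one averaging) and the uniform-in-$n$ control in Step 1, which uses only the two displayed hypotheses.
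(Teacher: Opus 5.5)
Your proof is correct. The paper does not prove this statement; it only cites Skorohod's book, so the comparison is with the classical argument.

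\textbf{Classical route.} Skorohod's lemma in its original form has different hypotheses: pointwise-in-$t$ convergence in probability of the integrands, together with a uniform-in-$n$ modulus of continuity in probability. Its conclusion is only convergence in probability. The proof replaces each integrand by its values at grid points. The equicontinuity assumption is what makes that discretization error small uniformly in $n$.

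\textbf{Your route.} The lagged average $A_\delta$ removes the need for any regularity of $F_n$. It is linear, defined pathwise without reference to a filtration, and a contraction on $L^2(\mathrm{d}t\otimes\mathbf{P})$. So it maps each $\mathcal{F}^n$-adapted process to an $\mathcal{F}^n$-adapted step process, and the isometry for $W_n$ still applies. At the same time it transfers the discretization error from $F_n$ to the single limit $F$, at the cost $2\|F_n-F\|$. That is exactly what the $L^2$ hypothesis of the paper's formulation supplies, and it gives the $L^2$ conclusion directly.

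Two minor remarks.

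First, if $F$ is merely measurable and adapted, the $\mathcal{F}_{k\delta}$-measurability of $\int_{(k-1)\delta}^{k\delta}F(s)\,\mathrm{d}s$ needs a justification. You can get it by passing to a progressively measurable modification, with a completed filtration.

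Second, the truncation in Step 1 can be dropped. Without it, the middle term still tends to $0$ in probability. The two outer terms are controlled in $L^2$ exactly as you do. Hence $\int_0^TF_n\,\mathrm{d}W_n\to\int_0^TF\,\mathrm{d}W$ in probability. The isometry gives
$\mathbf{E}|\int_0^TF_n\,\mathrm{d}W_n|^2=\int_0^T\mathbf{E}|F_n|^2\mathrm{d}t\to\int_0^T\mathbf{E}|F|^2\mathrm{d}t=\mathbf{E}|\int_0^TF\,\mathrm{d}W|^2$.
Convergence in probability together with convergence of second moments then upgrades to convergence in $L^2$.
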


	The following Kakutani's fixed point theorem (see e.g. \cite[Theorem 8.6]{Granas-Dugundji2003}) provides a foundational framework for proving the existence of fixed points in complex systems.

	\begin{theorem}\label{Lemma Kakutani's fixed point theorem} 
		Let $S$ be a nonempty, compact and convex subset of a locally convex Hausdorff space. Let $F: S\to 2^S$ be a set-valued function on $S$ satisfying the following conditions:
		\begin{itemize}
			\item [(i)] $F(x)$ is nonempty and convex for all $x\in S$;
			\item [(ii)] The graph $\{(x,y)\in S\times S: y\in F(x)\}$ is a closed subset of $S\times S$.
		\end{itemize}
		Then $F$ has a fixed point $x^*\in S$, i.e., $x^*\in F(x^*)$.
	\end{theorem}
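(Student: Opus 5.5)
This is the Kakutani fixed point theorem (Theorem \ref{Lemma Kakutani's fixed point theorem}), a classical result quoted from \cite{Granas-Dugundji2003}. The plan is to deduce it from the Schauder--Tychonoff fixed point theorem via a partition-of-unity approximation, as in the Fan--Glicksberg generalization.

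\textbf{Step 1: finite-dimensional approximate selections.} Fix a convex open neighbourhood $V$ of $0$ in the locally convex Hausdorff space $E$. Since $S$ is compact, choose $x_1,\dots,x_k\in S$ with $S\subset\bigcup_i (x_i+V)$. Take a continuous partition of unity $\{\phi_i\}$ on $S$ subordinate to this cover, and pick $y_i\in F(x_i)$. Define
\[
f_V(x)=\sum_i \phi_i(x)\,y_i .
\]
This map is continuous. Its image lies in $\mathrm{co}\{y_1,\dots,y_k\}\subset S$ by convexity of $S$, and that set is a compact convex subset of a finite-dimensional space. Brouwer's theorem, in the form of Schauder for the simplex, therefore gives $x_V$ with $f_V(x_V)=x_V$.

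\textbf{Step 2: approximate fixed points lie near the graph.} Denote the graph by $G=\{(x,y)\in S\times S:y\in F(x)\}$. Write $x_V=\sum_i\phi_i(x_V)y_i$, where only indices with $x_V\in x_i+V$ contribute. Then each such $x_i$ is $V$-close to $x_V$, and $y_i\in F(x_i)$, so $(x_V,x_V)$ is a convex combination of points $(x_V,y_i)$ whose first coordinates are replaced by nearby $x_i$ with $(x_i,y_i)\in G$.

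\textbf{Step 3: limit argument.} Index by the directed set of convex neighbourhoods $V$. By compactness of $S$, a subnet satisfies $x_V\to x^*$. We must show $x^*\in F(x^*)$.

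Suppose not. Since $F(x^*)$ is compact (being closed in $S$ by closedness of $G$) and convex, a Hahn--Banach separation in the locally convex space gives a continuous linear functional $\ell$ and $c$ with $\ell(x^*)>c+3\delta$ and $\ell(y)<c$ for $y\in F(x^*)$.

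Closedness of $G$ and compactness then yield upper semicontinuity. Concretely, there is a neighbourhood $U$ of $x^*$ such that $F(x)\subset\{\ell<c+\delta\}$ for $x\in U$. Otherwise we could extract a net $(x_\alpha,y_\alpha)\in G$ with $x_\alpha\to x^*$ and $\ell(y_\alpha)\ge c+\delta$; a convergent subnet would then produce a limit point in $G$ above $F(x^*)$, which is a contradiction.

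For $V$ small with $x_V+V\subset U$, every contributing $x_i$ lies in $U$, so $\ell(y_i)<c+\delta$. Hence $\ell(x_V)<c+\delta$, while $\ell(x_V)\to\ell(x^*)>c+3\delta$, a contradiction.

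The main obstacle is Step 3: deriving upper semicontinuity from the closed-graph hypothesis and making the convexity of $F(x^*)$ enter through the separation argument. Note that in the paper's application $F=\Psi$ is single-valued, so convexity is automatic.
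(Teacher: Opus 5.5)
The paper does not prove this statement. It quotes it from \cite[Theorem 8.6]{Granas-Dugundji2003} and uses it as a black box, so your argument stands in for a citation rather than for an in-text proof. What you give is the standard Fan--Glicksberg argument, and it is correct. It reduces everything to Brouwer's theorem, Hahn--Banach separation and compactness, and it shows where each hypothesis enters:
\begin{itemize}
\item nonemptiness of $F(x_i)$ is used to choose the $y_i$;
\item the closed graph, together with compactness of $S$, gives closedness of $F(x^*)$ and the upper semicontinuity in Step~3;
\item convexity of $F(x^*)$ and local convexity give the open half-space $\{\ell<c+\delta\}$, which is exactly what the convex combination $x_V=\sum_i\phi_i(x_V)y_i$ preserves.
\end{itemize}
The citation buys brevity; your version buys transparency.

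A few details should be tightened.
\begin{itemize}
\item Index the net by convex \emph{symmetric} open neighbourhoods $V$ of $0$, which form a base in a locally convex space. Then $\phi_i(x_V)>0$ gives $x_i\in x_V-V=x_V+V$.
\item Apply Brouwer to the restriction of $f_V$ to $C={\rm co}\{y_1,\dots,y_k\}$. More cleanly, apply it to $\lambda\mapsto\bigl(\phi_i(\sum_j\lambda_jy_j)\bigr)_{i=1}^k$ on the standard simplex, which avoids discussing the topology of ${\rm span}\{y_i\}$.
\item In the last step, take a symmetric neighbourhood $W$ of $0$ with $x^*+W+W\subset U$. Along the subnet one eventually has both $V_\beta\subset W$ and $x_{V_\beta}\in x^*+W$. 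Only then do all contributing $x_i$ lie in $U$.
\item Step~2 is purely heuristic and can be dropped.
\end{itemize}

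Your closing remark can also be sharpened. A single-valued map whose graph is closed in $S\times S$, with $S$ compact Hausdorff, is automatically continuous. In the paper's application, $\Psi$ maps the compact convex set $\overline{{\rm co}}(\widetilde{\mathcal{M}})$ into itself with closed graph, so there the theorem reduces to the Schauder--Tychonoff theorem.
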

\end{appendices}

\appendix
\renewcommand\thesection{\normalsize Acknowledgements}
\section{}
We are grateful to anonymous referees for their useful and constructive comments which have helped significantly improve the quality of the article.
We acknowledge the financial supports from an EPSRC grant ref EP/S005293/2, and a Royal Society Newton Fund grant (ref. NIF\textbackslash R1\textbackslash 221003) and the National Natural Science Foundation of China under Grant 12501184.

\addtolength{\itemsep}{-1.5 em} 
\setlength{\itemsep}{-3pt}
\footnotesize

\phantomsection
	\addcontentsline{toc}{section}{References}
	\tolerance=500
\bibliographystyle{siam}
\bibliography{DDSDE}

\begin{thebibliography}{10}

\bibitem{Ahmed-Ding1993}
{\sc N.~U. Ahmed and X.~Ding}, {\em On invariant measures of nonlinear {M}arkov processes}, J. Appl. Math. Stochastic Anal., 6 (1993), pp.~385--406.

\bibitem{Aliprantis-Border2006}
{\sc C.~D. Aliprantis and K.~C. Border}, {\em Infinite dimensional analysis}, Springer, Berlin, third~ed., 2006.
\newblock A hitchhiker's guide.

\bibitem{Bao-Scheutzow-Yuan2022}
{\sc J.~Bao, M.~Scheutzow, and C.~Yuan}, {\em Existence of invariant probability measures for functional {M}c{K}ean-{V}lasov {SDE}s}, Electron. J. Probab., 27 (2022), pp.~1--14.

\bibitem{Butkovsky2014}
{\sc O.~A. Butkovsky}, {\em On ergodic properties of nonlinear {M}arkov chains and stochastic {M}c{K}ean-{V}lasov equations}, Theory Probab. Appl., 58 (2014), pp.~661--674.

\bibitem{Carrillo-Gvalani-Pavliotis-Schlichting2020}
{\sc J.~A. Carrillo, R.~S. Gvalani, G.~A. Pavliotis, and A.~Schlichting}, {\em Long-time behaviour and phase transitions for the {M}c{K}ean-{V}lasov equation on the torus}, Arch. Ration. Mech. Anal., 235 (2020), pp.~635--690.

\bibitem{Carrillo-McCann-Villani2003}
{\sc J.~A. Carrillo, R.~J. McCann, and C.~Villani}, {\em Kinetic equilibration rates for granular media and related equations: entropy dissipation and mass transportation estimates}, Rev. Mat. Iberoamericana, 19 (2003), pp.~971--1018.

\bibitem{Carrillo-McCann-Villani2006}
\leavevmode\vrule height 2pt depth -1.6pt width 23pt, {\em Contractions in the 2-{W}asserstein length space and thermalization of granular media}, Arch. Ration. Mech. Anal., 179 (2006), pp.~217--263.

\bibitem{Da-Prato-Zabczyk1996}
{\sc G.~Da~Prato and J.~Zabczyk}, {\em Ergodicity for infinite-dimensional systems}, vol.~229 of London Mathematical Society Lecture Note Series, Cambridge University Press, Cambridge, 1996.

\bibitem{Dawson1983}
{\sc D.~A. Dawson}, {\em Critical dynamics and fluctuations for a mean-field model of cooperative behavior}, J. Statist. Phys., 31 (1983), pp.~29--85.

\bibitem{Delgadino-Gvalani-Pavliotis-Smith2023}
{\sc M.~G. Delgadino, R.~S. Gvalani, G.~A. Pavliotis, and S.~A. Smith}, {\em Phase transitions, logarithmic {S}obolev inequalities, and uniform-in-time propagation of chaos for weakly interacting diffusions}, Comm. Math. Phys., 401 (2023), pp.~275--323.

\bibitem{Dudley2002}
{\sc R.~M. Dudley}, {\em Real analysis and probability}, vol.~74 of Cambridge Studies in Advanced Mathematics, Cambridge University Press, Cambridge, 2002.
\newblock Revised reprint of the 1989 original.

\bibitem{Feng-Qu-Zhao2021}
{\sc C.~Feng, B.~Qu, and H.~Zhao}, {\em Random quasi-periodic paths and quasi-periodic measures of stochastic differential equations}, J. Differential Equations, 286 (2021), pp.~119--163.

\bibitem{Feng-Qu-Zhao2023}
\leavevmode\vrule height 2pt depth -1.6pt width 23pt, {\em Entrance measures for semigroups of time-inhomogeneous sdes: possibly degenerate and expanding}, arXiv: 2307.07891,  (2023).

\bibitem{Feng-Zhao2020}
{\sc C.~Feng and H.~Zhao}, {\em Random periodic processes, periodic measures and ergodicity}, J. Differential Equations, 269 (2020), pp.~7382--7428.

\bibitem{Feng-Zhao-Zhong2023}
{\sc C.~Feng, H.~Zhao, and J.~Zhong}, {\em Existence of geometric ergodic periodic measures of stochastic differential equations}, J. Differential Equations, 359 (2023), pp.~67--106.

\bibitem{Granas-Dugundji2003}
{\sc A.~Granas and J.~Dugundji}, {\em Fixed point theory}, Springer Monographs in Mathematics, Springer-Verlag, New York, 2003.

\bibitem{Guillin-Liu-Wu-Zhang2021}
{\sc A.~Guillin, W.~Liu, L.~Wu, and C.~Zhang}, {\em The kinetic {F}okker-{P}lanck equation with mean field interaction}, J. Math. Pures Appl. (9), 150 (2021), pp.~1--23.

\bibitem{Guillin-Liu-Wu-Zhang2022}
\leavevmode\vrule height 2pt depth -1.6pt width 23pt, {\em Uniform {P}oincar\'{e} and logarithmic {S}obolev inequalities for mean field particle systems}, Ann. Appl. Probab., 32 (2022), pp.~1590--1614.

\bibitem{huang2016ergodic}
{\sc W.~Huang, Z.~Lian, and K.~Lu}, {\em Dynamical complexity of {A}nosov systems driven by a quasi-periodic force}, Sci. China Math., 68 (2025), pp.~89--136.

\bibitem{ikeda1989}
{\sc N.~Ikeda and S.~Watanabe}, {\em Stochastic Differential Equations and Diffusion Processes}, vol.~24 of North-{{Holland Mathematical Library}}, {North-Holland Publishing Co., Amsterdam; Kodansha, Ltd., Tokyo}, second~ed., 1989.

\bibitem{Kac1956}
{\sc M.~Kac}, {\em Foundations of kinetic theory}, in Proceedings of the {T}hird {B}erkeley {S}ymposium on {M}athematical {S}tatistics and {P}robability, 1954--1955, vol. {III}, University of California Press, Berkeley-Los Angeles, Calif., 1956, pp.~171--197.

\bibitem{Lanford1975}
{\sc O.~E. Lanford, III}, {\em Time evolution of large classical systems}, in Dynamical systems, theory and applications ({R}encontres, {B}attelle {R}es. {I}nst., {S}eattle, {W}ash., 1974), Lecture Notes in Phys., Vol. 38, Springer, Berlin, 1975, pp.~1--111.

\bibitem{Liu-Lu2025}
{\sc R.~Liu and K.~Lu}, {\em Exponential mixing and limit theorems of quasi-periodically forced 2{D} stochastic {N}avier-{S}tokes equations in the hypoelliptic setting}, Comm. Math. Phys., 406 (2025), pp.~Paper No. 55, 46.

\bibitem{Liu-Wu-Zhang2021}
{\sc W.~Liu, L.~Wu, and C.~Zhang}, {\em Long-time behaviors of mean-field interacting particle systems related to {M}c{K}ean-{V}lasov equations}, Comm. Math. Phys., 387 (2021), pp.~179--214.

\bibitem{McKean1967}
{\sc H.~P. McKean, Jr.}, {\em Propagation of chaos for a class of non-linear parabolic equations}, in Stochastic {D}ifferential {E}quations ({L}ecture {S}eries in {D}ifferential {E}quations, {S}ession 7, {C}atholic {U}niv., 1967), Air Force Office Sci. Res., Arlington, Va., 1967, pp.~41--57.

\bibitem{Muzychka-Vaninsky2011}
{\sc S.~A. Muzychka and K.~L. Vaninsky}, {\em A class of nonlinear random walks related to the {O}rnstein-{U}hlenbeck process}, Markov Process. Related Fields, 17 (2011), pp.~277--304.

\bibitem{rachev1984}
{\sc S.~T. Rachev}, {\em The {{Monge-Kantorovich}} problem on mass transfer and its applications in stochastics}, Akademiya Nauk SSSR. Teoriya Veroyatnoste\textbackslash u \i{} i ee Primeneniya, 29 (1984), pp.~625--653.

\bibitem{Ren-Wang2021}
{\sc P.~Ren and F.-Y. Wang}, {\em Exponential convergence in entropy and {W}asserstein for {M}c{K}ean-{V}lasov {SDE}s}, Nonlinear Anal., 206 (2021), pp.~Paper No. 112259, 20.

\bibitem{skorohod1965}
{\sc A.~Skorohod}, {\em Studies in the Theory of Random Processes}, {Translated from the Russian by Scripta Technica, Inc. Addison-Wesley Publishing Co., Inc., Reading, Mass.}, 1965.

\bibitem{Sznitman1991}
{\sc A.-S. Sznitman}, {\em Topics in propagation of chaos}, in \'{E}cole d'\'{E}t\'{e} de {P}robabilit\'{e}s de {S}aint-{F}lour {XIX}---1989, vol.~1464 of Lecture Notes in Math., Springer, Berlin, 1991, pp.~165--251.

\bibitem{Tugaut2014}
{\sc J.~Tugaut}, {\em Phase transitions of {M}c{K}ean-{V}lasov processes in double-wells landscape}, Stochastics, 86 (2014), pp.~257--284.

\bibitem{Uchiyama1988}
{\sc K.~Uchiyama}, {\em Derivation of the {B}oltzmann equation from particle dynamics}, Hiroshima Math. J., 18 (1988), pp.~245--297.

\bibitem{Villani2009}
{\sc C.~Villani}, {\em Optimal transport}, vol.~338 of Grundlehren der mathematischen Wissenschaften [Fundamental Principles of Mathematical Sciences], Springer-Verlag, Berlin, 2009.
\newblock Old and new.

\bibitem{wang2018}
{\sc F.-Y. Wang}, {\em Distribution dependent {{SDEs}} for {{Landau}} type equations}, Stochastic Processes and their Applications, 128 (2018), pp.~595--621.

\bibitem{Zhang2023}
{\sc S.-Q. Zhang}, {\em Existence and non-uniqueness of stationary distributions for distribution dependent {SDE}s}, Electron. J. Probab., 28 (2023), pp.~1--34.

\end{thebibliography}
\end{document}